\definecolor{labelkey}{rgb}{0,0.08,0.45}
\definecolor{refkey}{rgb}{0,0.6,0.0}
\definecolor{Brown}{rgb}{0.45,0.0,0.05}
\definecolor{dgreen}{rgb}{0.00,0.49,0.00}
\definecolor{dblue}{rgb}{0,0.08,0.75}
\def\argmin{\operatornamewithlimits{arg\,min}}
\providecommand{\scalT}[2]{\left\langle{#1},{#2}\right\rangle}
\theoremstyle{plain}
\newtheorem{theorem}{Theorem}[section]
\newtheorem{proposition}[theorem]{Proposition}
\newtheorem{lemma}[theorem]{Lemma}
\newtheorem{corollary}[theorem]{Corollary}
\theoremstyle{definition}
\newtheorem{assumption}[theorem]{Assumption}
\theoremstyle{remark}
\newtheorem{remark}[theorem]{Remark}
\author[1]{Marco Rando}
\author[2]{Samuel Vaiter}
\affil[1]{Universit\'e C\^ote d'Azur, Inria, CNRS, LJAD, Nice, France}
\affil[2]{CNRS \& Universit\'e C\^ote d'Azur, LJAD, Nice, France}
\title{ZOBA: An Efficient Single-loop Zeroth-order Bilevel Optimization Algorithm}
\date{}
\begin{document}

\maketitle

\begin{abstract}
\noindent Bilevel optimization problems consist of minimizing a value function whose evaluation depends on the solution of an inner optimization problem. These problems are typically tackled using first-order methods that require computing the gradient of the value function ({\it the hypergradient}). In several practical settings, however, first-order information is unavailable ({\it zeroth-order setting}), rendering these methods inapplicable. Finite-difference methods provide an alternative by approximating hypergradients using function evaluations along a set of directions. Nevertheless, such surrogates are notoriously expensive, and existing finite-difference bilevel methods rely on two-loop algorithms that are poorly parallelizable. In this work, we propose ZOBA, the first finite-difference single-loop algorithm for bilevel optimization. Our method leverages finite-difference hypergradient approximations based on delayed information to eliminate the need for nested loops. We analyze the proposed algorithm and establish convergence rates in the non-convex setting, achieving a complexity of $\mathcal{O}(p(d + p)^2\varepsilon^{-2})$, where $p$ and $d$ denote the dimension of inner and outer spaces respectively, which is better than prior approaches based on Hessian approximation. We further introduce and analyze HF-ZOBA, a Hessian-free variant that yields additional complexity improvements. Finally, we corroborate our findings with numerical experiments on synthetic functions and a real-world black-box task in adversarial machine learning. Our results show that our methods achieve accuracy comparable to state-of-the-art techniques while requiring less computation time.

\end{abstract}

\noindent {{\bf Keywords:} Zeroth-order Optimization, Black-box Optimization, Bilevel Optimization, Stochastic Optimization.}\\

\noindent {{\bf AMS Mathematics Subject Classification:} 90C56, 90C46, 90C15, 90C26, 90C30.}

\section{Introduction}

Bilevel optimization problems involve minimizing an outer objective ({\it the value function}) whose evaluation depends on the solution of an inner optimization problem. This class of problems arises naturally in a wide range of machine learning applications, including hyperparameter optimization \cite{bengio_gb_hyp,pedregosa16_hyperopt}, meta-learning \cite{franceschi18a_bilevel_hypopt}, representation learning \cite{ijcai2024p230}, and many others.
These problems are typically addressed using first-order algorithms that aim to compute the gradient of the value function, {\it the hypergradient} \cite{ghadimi2018approximation,ji_bilevel,chen_bilevel,soba_saba,arbel_mairal}.  In practice, computing the hypergradient exactly is prohibitively expensive or infeasible, since at each iteration it requires the action of the Hessian (and its inverse) of the inner objective, as well as a minimizer of the inner problem. For this reason, first-order bilevel methods approximate the hypergradient by relying on an approximate inner solution and inverse-Hessian actions obtained through iterative procedures. This naturally leads to a two-loop algorithmic structure, where inner loops construct these approximations by exploiting gradients and, in some cases, Hessians of the inner and outer objectives. 
However, in many practical scenarios, such information is either unavailable or prohibitively expensive to compute, and only function evaluations of the inner and outer objectives are accessible. Such problems are referred to as black-box bilevel optimization problems and are particularly relevant, as they arise in many real-world applications such as meta-training, complex control problem, fine-tuning large language models (LLMs) and many others  - see, e.g., \cite{Sapre2024,review_evo_bilevel,pujara2025reviewbileveloptimizationmethods,shirkavandbilevel,evo_for_control,Gomez_Ortega2025,conn2012bilevel}. To tackle black-box bilevel problems, several classes of algorithms have been proposed \cite{Talbi2013,metaheuristic_for_bilevel,conn2012bilevel,Aghasi2025}, and finite-difference is one of these. 
Finite-difference methods are iterative procedures that mimic first-order strategies by replacing gradients with surrogates constructed from function evaluations along a set of directions \cite{nesterov_random_2017,ghadimi_zeroth,ozd,sto_md,zoro}. Designing efficient finite-difference methods for bilevel optimization is particularly challenging.
Approximating all first- and second-order quantities involved in the hypergradient computations with finite-difference is extremely expensive, especially in high-dimensional settings, where a large number of function evaluations is required to obtain reliable gradient and Hessian surrogates. Existing finite-difference methods for black-box bilevel optimization \cite{Aghasi2025,aghasi2025optimalzerothorderbileveloptimization} typically mimic the first-order two-loop structure and attempt to mitigate the resulting computational cost by using computationally cheap but inaccurate gradient and Hessian approximations in the inner loops. To control the resulting approximation errors, these methods have to employ small step sizes and perform many inner-loop iterations, leading to significant computational overhead and poor parallelization in practice. More recent approaches \cite{aghasi2025optimalzerothorderbileveloptimization} aim to reduce this burden by regularization-based reformulations that avoid explicit Hessian approximation. 
While reducing per-iteration cost, these methods introduce bias into the bilevel objective and require careful tuning of regularization parameters, which may negatively affect solution accuracy. Moreover, the underlying two-loop structure is preserved, thereby retaining the same limitations in terms of parallelization. Recent advances in first-order bilevel optimization have shown that the two-loop structure can be avoided by using single-loop algorithms that rely on approximate hypergradients computed using delayed information \cite{soba_saba}. These methods offer substantial practical advantages, as they allow all quantities involved in hypergradient computation to be evaluated in parallel and significantly reduce per-iteration computational cost while preserving convergence guarantees. However, extending these ideas efficiently to black-box setting is far from straightforward. 
In particular, it is not evident that mimicking a single-loop scheme can achieve meaningful computational savings while sufficiently controlling finite-difference errors to preserve convergence and get reasonable complexity results. As a result, to the best of our knowledge, no single-loop finite-difference method for bilevel optimization has been previously proposed. 

\noindent In this work, we propose ZOBA, the first single-loop finite-difference algorithm for black-box bilevel optimization. Our method relies on delayed information and enables full parallelization in the approximation of the hypergradient. Moreover, ZOBA tackles the approximation cost per iteration by reusing the same function evaluations to compute multiple components of the hypergradient, thereby significantly reducing the number of required function evaluations. We analyze the proposed algorithm and derive convergence guarantees for non-convex bilevel problems. 
In particular, we show that ZOBA achieves a complexity of $\mathcal{O}(p(d+p)^2 \varepsilon^{-2})$, for an accuracy $\varepsilon \in (0,1)$ and where $p$ and $d$ denote the dimensions of the inner and outer variables, respectively, improving upon existing zeroth-order bilevel methods that rely on explicit Hessian approximations. We further introduce and analyze HF-ZOBA, a Hessian-free variant that entirely avoids explicit Hessian approximations and does not rely on regularization-based strategies, yielding additional improvements in the complexity results. Finally, we corroborate our theoretical findings through experiments on synthetic objectives and on the real-world task of computing minimal-distortion universal adversarial perturbations in black-box settings. The numerical results show that our methods, despite the additional errors introduced by delayed information and finite-difference surrogates, achieve solution quality comparable to state-of-the-art finite-difference algorithms for bilevel optimization while providing significant improvements in runtime.

\noindent The paper is organized as follows. In Section~\ref{sec:problem_setting}, we formally define the problem, introduce our algorithms, and discuss their relationship with related work. In Section~\ref{sec:main_results}, we state and discuss the main theoretical results. Section~\ref{sec:experiments} presents numerical experiments, and Section~\ref{sec:conclusion} concludes the paper with final remarks.

\section{Problem Setting \& Algorithm}\label{sec:problem_setting}

Let $(\Omega_1,\mathcal{F}_1,\mathbb{P}_1)$ and $(\Omega_2,\mathcal{F}_2,\mathbb{P}_2)$ be probability spaces, and let $\mathcal{Z}_1,\mathcal{Z}_2$ be measurable spaces. Let $\zeta:\Omega_1\to\mathcal{Z}_1$, $\xi:\Omega_2\to\mathcal{Z}_2$ be two random variables and let $f : \mathbb{R}^p \times\mathbb{R}^d \times \mathcal{Z}_1 \to \mathbb{R}$ and $g : \mathbb{R}^p \times\mathbb{R}^d \times \mathcal{Z}_2 \to \mathbb{R}$ be two functions. Consider the minimization of a function $\Psi : \mathbb{R}^d \to \mathbb{R}$ defined as 
\begin{equation}\label{eqn:problem}
\begin{aligned}
    \min\limits_{x\in\mathbb{R}^d} \Psi(x) := F(z^*(x), x) &= \mathbb{E}_\zeta[f(z^*(x), x, \zeta)]\\
\text{s.t.}\\
z^*(x) \in \argmin\limits_{z \in \mathbb{R}^p} G(z, x) &= \mathbb{E}_{\xi}[g(z,x,\xi)].
\end{aligned}
\end{equation}
Under standard regularity assumptions \cite{ghadimi2018approximation}, the function $\Psi$ is differentiable, and its gradient (the \emph{hypergradient}) admits the following representation
\begin{equation}\label{eqn:hypergrad}
\nabla \Psi(x) = \nabla_x F(z^*(x),x) + \nabla_{xz}^2 G(z^*(x),x), v^*(x),
\end{equation}
where
\begin{equation}\label{eqn:v_star}
v^*(x)= -\left[\nabla_{zz}^2 G(z^*(x),x)\right]^{-1}\nabla_z F(z^*(x),x).
\end{equation}
Here, $\nabla_x$ and $\nabla_z$ denote gradients with respect to $x$ and $z$, respectively, while $\nabla_{xz}^2$ and $\nabla_{zz}^2$ denote the corresponding Hessian blocks. We consider the \emph{stochastic zeroth-order} setting, in which no first- or higher-order information of $f$ and $g$ is available. Moreover, exact evaluations of $F$ and $G$ are inaccessible, and only noisy evaluations of $f$ and $g$ can be obtained. %
To address Problem~\eqref{eqn:problem}, we propose a stochastic zeroth-order algorithm, namely, an iterative procedure that relies solely on evaluations of $f$ and $g$. At each iteration $k$, our method constructs a hypergradient surrogate at the current iterate $x_k$ by approximating gradient and Hessian terms with mini-batch finite differences and using auxiliary sequences $z_k$ and $v_k$ to track $z^*(x_k)$ and $v^*(x_k)$. We next describe how these sequences are built ({\bf auxiliary sequence construction}) and how the hypergradient is approximated ({\bf hypergradient approximation}).

\paragraph*{Auxiliary sequence construction.} %
At every iteration $k\in \mathbb{N}$, %
let $(\xi_{i,k})_{i=1}^{b_1}$ be realizations of independent copies of the random variable $\xi$. For each $i \in [b_1] := \{1,\ldots,b_1\}$, let $\big(w_k^{(i,j)}\big)_{j=1}^{\ell_1} \subset \mathbb{R}^p$ be vectors sampled i.i.d. from $\mathcal{N}(0,I_p)$. For $h_k > 0$, let $g^{+,k}_{i,j} := g(z_k + h_k w_k^{(i,j)}, x_k, \xi_{i,k})$ and $g^{-,k}_{i,j} := g(z_k - h_k w_k^{(i,j)}, x_k, \xi_{i,k})$. Our algorithm approximates $\nabla_z G(z_k,x_k)$ with the following %
surrogate %
\begin{equation}\label{eqn:dz}
D_{z}^k := \frac{1}{b_1 \ell_1} \sum_{i=1}^{b_1} \sum_{j=1}^{\ell_1}
\frac{g_{i,j}^{+,k} - g_{i,j}^{-,k}}{2h_k} w_k^{(i,j)}.
\end{equation}
Let $\rho_k > 0$ be a stepsize, we define the sequence $(z_k)_{k\geq0}$ with the following iteration
\begin{equation}\label{eqn:z_update}
    z_{k +1} = z_k - \rho_k D_{z}^k.
\end{equation}
Then, denoting with
$g_{i}^k := g(z_k, x_k, \xi_{i,k})$ and  $c_{i,j}^k := (g^{+,k}_{i,j} + g^{-,k}_{i,j} - 2 g_i^k)/(2h_k^2)$, %
our method builds the surrogate of $\nabla_{zz}^{2}G(z_k,x_k)$ as follows
\begin{equation*}
    \begin{aligned}
        \hat{\nabla}_{zz}^2g_k := \frac{1}{b_1 \ell_1} \sum\limits_{i=1}^{b_1} \sum\limits_{j=1}^{\ell_1} c_{i,j}^k\left( w_k^{(i,j) }w_k^{(i,j)\top} - I_p \right).
    \end{aligned}
\end{equation*}
Notice that to compute this quantity we need to perform just $b_1$ evaluations since $g^{+,k}_{i,j}$ and $g^{-,k}_{i,j}$ are already computed for eq. \eqref{eqn:z_update}. 
Let $b_2,\ell_2 > 0$ and let $(\zeta_{i,k})_{i=1}^{b_2}$ be realizations of independent copies of $\zeta$. Let $f_{i,j}^{z,k,+} = f(z_k +h_k w_k^{(i,j)}, x_k, \zeta_{i,k})$ and $f_{i}^{k} := f(z_k, x_k, \zeta_{i,k})$. Our algorithm approximates $\nabla_z F(z_k,x_k)$ as
\begin{equation*}
    \begin{aligned}
        \hat{\nabla}_z f_k := \frac{1}{b_2 \ell_2}\sum\limits_{i=1}^{b_2} \sum\limits_{j=1}^{\ell_2} \frac{f_{i,j}^{z,k, +} - f_{i}^{k}}{h_k} w_k^{(i,j)}.
    \end{aligned}
\end{equation*}
Then, our procedure build the following search direction
\begin{equation}\label{eqn:dv}
    \begin{aligned}
        D_v^k := \hat{\nabla}_{zz}^2 g_kv_k + \hat{\nabla}_z f_k.
    \end{aligned}
\end{equation}
Thus, we construct the next auxiliary iterate $v_{k+1}$ as%
\begin{equation}\label{eqn:v_update}
    v_{k + 1} = v_k - \rho_k D_v^k.
\end{equation}
\paragraph*{Hypergradient approximation.} 
For $i =1,\cdots,b_1$ and $j=1,\cdots,\ell_1$, let $\hat{g}_{i,j}^{+,k} := g(z_k + h_k w_k^{(i,j)},x_k + h_k u_k^{(i,j)}, \xi_{i,k})$, $\hat{g}_{i,j}^{-,k} = g(z_k - h_k w_k^{(i,j)}, x_k - h_k u_k^{(i,j)}, \xi_{i,k})$ and $s_{i,j}^k = (\hat{g}_{i,j}^{+,k} + \hat{g}_{i,j}^{-,k} -2 g_i^k)/(2h_k^2)$. Our method approximates $\nabla_{xz}^2 G(z_k,x_k)$ as 
\begin{equation*}
    \begin{aligned}
        \hat{\nabla}_{xz}^2 g_k :=  \frac{1}{b_1 \ell_1} \sum\limits_{i=1}^{b_1} \sum\limits_{j=1}^{\ell_1} s_{i,j}^k u_k^{(i,j)}w_k^{(i,j)\top}.
    \end{aligned}
\end{equation*}
Notice that such surrogate requires only $2b_1\ell_1$ function evaluations since $(g_i^k)_{i=1}^{b_1}$ are already computed for previous quantities. Let $f_{i,j}^{x,k,+} = f(z_k, x_k + h_k u_k^{(i,j)}, \zeta_{i,k})$, consider the following approximation of $\nabla_x F$
\begin{equation*}
    \hat{\nabla}_x f_k := \frac{1}{\ell_2 b_2} \sum\limits_{i=1}^{b_2} \sum\limits_{j=1}^{\ell_2} \frac{f_{i,j}^{x,k,+} - f_i^k}{h_k}u_k^{(i,j)}.
\end{equation*}
Notice that since $(f_i^k)_{i=1}^{b_2}$ are already computed for eq. \eqref{eqn:dv}, such approximation requires $b_2\ell_2$ function evaluations. The hypergradient surrogate is thus built as follows
\begin{equation}\label{eqn:dx}
    \begin{aligned}
        D_x^k := \hat{\nabla}_{xz}^2 g_kv_k + \hat{\nabla}_x f_k.
    \end{aligned}
\end{equation}
Finally, the iterate is updated with the following iteration
\begin{equation}\label{eqn:x_update}
    x_{k + 1} = x_k - \gamma_k D_x^k.
\end{equation}
We summarize the entire procedure in Algorithm \ref{alg:zoba}.

\begin{algorithm}[H]
\caption{ZOBA: Zeroth-Order Bilevel Algorithm}
\label{alg:zoba}
\begin{algorithmic}[1]

\STATE \textbf{Input:} $x_0\in\mathbb{R}^d$, $z_0,v_0\in\mathbb{R}^p$, stepsizes $(\gamma_k)$, $(\rho_k)$, discretization parameter $h_k$, minibatch sizes $b_1,b_2$, number of directions $\ell_1,\ell_2$.
\STATE Let $b = \max(b_1,b_2)$ and $\ell = \max(\ell_1,\ell_2)$.

\FOR{$k=0,1,2,\ldots$}
\STATE Sample i.i.d.\ $\xi_{1,k},\ldots,\xi_{b,k}$ and $\zeta_{1,k},\ldots,\zeta_{b,k}$.

\STATE For each $i\in[b]$, sample $(w_{k}^{(i,j)})_{j=1}^\ell \subset\mathbb{R}^{p}$ and $(u_{k}^{(i,j)})_{j=1}^\ell \subset \mathbb{R}^d$ from $\mathcal{N}(0,I_p)$ and $\mathcal{N}(0,I_d)$.

\STATE Compute 
 \begin{equation*}
    \begin{aligned}
     z_{k + 1} &= z_k - \rho_k D_{z}^k(z_k,x_k)\\
    v_{k + 1} &= v_k - \rho_kD_{v}^k(z_k,x_k)\\
    x_{k + 1} &= x_k - \gamma_k D_{x}^k(z_k,x_k)
    \end{aligned}
 \end{equation*}

\ENDFOR

\end{algorithmic}
\end{algorithm}
\noindent 
Given initial guesses $z_0, v_0 \in \mathbb{R}^p$ and $x_0 \in \mathbb{R}^d$, at each iteration $k \in \mathbb{N}$, the algorithm gets two mini-batches of $b=\max(b_1,b_2)$ realizations $(\xi_{i,k})_{i=1}^b$ and $(\zeta_{i,k})_{i=1}^b$ of independent copies of $\xi$ and $\zeta$. For each $i \in [b]$, it then samples $\ell=\max(\ell_1,\ell_2)$ i.i.d. Gaussian directions $(w_k^{(i,j)})_{j=1}^{\ell}$ and $(u_k^{(i,j)})_{j=1}^{\ell}$ from $\mathcal{N}(0,I_p)$ and $\mathcal{N}(0,I_d)$. Using these samples, the algorithm constructs the search directions $D_z^k$, $D_v^k$, and $D_x^k$ defined in eqs.~\eqref{eqn:dz}, \eqref{eqn:dv}, and \eqref{eqn:dx}, and updates the sequences $z_k$, $v_k$, and $x_k$ accordingly. The update rules are guided by the following intuitions.
i) Eq. \eqref{eqn:z_update} corresponds to a single step of a mini-batch finite-difference method applied to the inner objective $z \mapsto G(z,x_k)$, yielding a one-step approximation of $z^*(x_k)$ (under assumptions, such as convexity in $z$). ii) Eq. \eqref{eqn:v_update} is a single mini-batch finite-difference step applied to the quadratic objective associated to the linear system in eq.~\eqref{eqn:v_star}, using $z_k$ in place of $z^*(x_k)$. iii) Finally, the iterate $x_k$ is updated using the resulting hypergradient surrogate.
Notice that the updates of the sequences are performed in parallel. This is due the use of delayed information. In particular, $v_{k+1}$ is computed using $z_k$ instead of $z_{k+1}$; since $z_k$ depends on $x_{k-1}$, this can be interpreted as a one-step delay as $z_k$ would be an approximation of $z^*(x_{k-1})$. While this delay induces additional error, it enables reuse the function evaluations employed in $D_z^k$ to estimate the Hessian block $\nabla_{zz}^2 G(z_k,x_k)$, reducing its cost from $b_1(2\ell_1+1)$ to $b_1$ evaluations. A similar delay appears in the outer update, where $x_{k+1}$ is computed using $z_k$ and $v_k$ instead of their updated counterparts, with $v_k$ itself being affected by a delayed update. While this further propagates approximation error, it enables reuse of function evaluations in the computation of $D_x^k$. As a result, the total number of function evaluations per iteration is $b_1(4\ell_1+1)+b_2(2\ell_2+1)$. 

\paragraph*{Hessian-free variant.} 
Algorithm~\ref{alg:zoba} constructs hypergradient surrogates by explicitly approximating the Hessian blocks in eq.~\eqref{eqn:hypergrad} via finite differences. Since accurate finite difference Hessian approximations requires a large number of function evaluations~\cite{aghasi2025optimalzerothorderbileveloptimization}, we introduce a Hessian-free variant that avoids explicit Hessian approximation. Instead, we approximate Hessian–vector products using first-order finite differences and estimate the resulting gradients with finite-differences. Let $\hat{g}^{+,z,k}_{i,j}(z,x) = g(z +h_kw_k^{(i,j)},x,\xi_{i,k})$, $\hat{g}^{+,x,k}_{i,j}(z,x) = g(z ,x+h_ku_k^{(i,j)},\xi_{i,k})$ and $\hat{g}^{k}_{i}(z,x) = g(z ,x,\xi_{i,k})$. We define the following minibatch gradient approximation in variable $z$ and $x$ for arbirtrary $z,x$ as
\begin{equation*}
    \hat{\nabla}_zg_k(z,x) := \frac{1}{b_1 \ell_1} \sum\limits_{i=1}^{b_1}\sum\limits_{j=1}^{\ell_1} \frac{\hat{g}^{+,z,k}_{i,j}(z,x) - \hat{g}^{k}_{i}(z,x)}{h_k}w_k^{(i,j)}.
\end{equation*}
\begin{equation*}
    \hat{\nabla}_xg_k(z,x) := \frac{1}{b_1 \ell_1} \sum\limits_{i=1}^{b_1}\sum\limits_{j=1}^{\ell_1} \frac{\hat{g}^{+,x,k}_{i,j}(z,x) - \hat{g}^{k}_{i}(z,x)}{h_k}u_k^{(i,j)}.
\end{equation*}
Let $\bar{h} > 0$, we approximate $\nabla_{zz}^2G(z_k,x_k)v_k$ and $\nabla_{xz}^2G(z_k,x_k)v_k$ with the following estimators 
\begin{equation*}
    \begin{aligned}
        H_{zz}^k &:= \frac{1}{\bar{h}_k}(\hat{\nabla}_z g_k(z_k + \bar{h}_k v_k,x_k) - \hat{\nabla}_z g_k(z_k,x_k))\\
        H_{xz}^k &:= \frac{1}{\bar{h}_k} (\hat{\nabla}_x g_k(z_k + \bar{h}_k v_k,x_k) - \hat{\nabla}_x g_k(z_k,x_k)).        
    \end{aligned}
\end{equation*}
Using these approximations, we define the search directions
\begin{equation}\label{eqn:hfzoba_search_directions}
\begin{aligned}
\hat{D}_z^k(z_k,x_k) &:= \hat{\nabla}_z g(z_k,x_k),\\
\hat{D}_v^k(z_k,x_k) &:= H_{zz}^k + \hat{\nabla}_z f(z_k,x_k),\\
\hat{D}_x^k(z_k,x_k) &:= H_{xz}^k + \hat{\nabla}_x f(z_k,x_k).
\end{aligned}
\end{equation}
Thus, we define the following algorithm

\begin{algorithm}[H]
\caption{HF-ZOBA: Hessian-free Zeroth-Order Bilevel Algorithm}
\label{alg:hfzoba}
\begin{algorithmic}[1]

\STATE \textbf{Input:} $x_0\in\mathbb{R}^d$, $z_0,v_0\in\mathbb{R}^p$, stepsizes $(\gamma_k)$, $(\rho_k)$, smoothing parameter $h_k$, minibatch sizes $b_1,b_2$, directions $\ell_1,\ell_2$.
\STATE Let $b = \max(b_1,b_2)$ and $\ell = \max(\ell_1,\ell_2)$.

\FOR{$k=0,1,2,\ldots$}
\STATE Sample i.i.d.\ $\xi_{1,k},\ldots,\xi_{b,k}$ and $\zeta_{1,k},\ldots,\zeta_{b,k}$.

\STATE For each $i\in[b]$, sample $(w_{k}^{(i,j)})_{j=1}^\ell \subset\mathbb{R}^{p}$ and $(u_{k}^{(i,j)})_{j=1}^\ell \subset \mathbb{R}^d$ from $\mathcal{N}(0,I_p)$ and $\mathcal{N}(0,I_d)$.

\STATE Compute 
 \begin{equation*}
    \begin{aligned}
     z_{k + 1} &= z_k - \rho_k \hat{D}_{z}^k(z_k,x_k)\\
    v_{k + 1} &= v_k - \rho_k \hat{D}_{v}^k(z_k,x_k)\\
    x_{k + 1} &= x_k - \gamma_k \hat{D}_{x}^k(z_k,x_k)
    \end{aligned}
 \end{equation*}
\ENDFOR
\end{algorithmic}
\end{algorithm}
The Hessian-free variant (HF-ZOBA) preserves the structure of Algorithm~\ref{alg:zoba}, differing only by replacing the search directions with the ones in eq. \eqref{eqn:hfzoba_search_directions}. The sequences $z_k$, $v_k$, and $x_k$ are updated in parallel, and delayed information is exploited to reuse function evaluations across all updates. 
In particular, function values used to compute $\hat{\nabla}_z g_k$ and $\hat{\nabla}_z f_k$ are reused in $H_{zz}^k, H_{xz}^k$ and $\hat{\nabla}_x f_k$. As a result, HF-ZOBA requires $2b_1(2\ell_1+1) + b_2(2\ell_2+1)$ function evaluations per iteration, which is higher than ZOBA, but without explicit Hessian estimation. In the next section, we review the related works and compare our algorithm with the state-of-the-arts methods.

\subsection{Related Works}
Several algorithms have been proposed for black-box bilevel optimization. We review the most related ones, highlighting differences with our methods, and briefly summarize other approaches.\\
\textbf{Finite-difference methods.} Several finite-difference approaches have been proposed for bilevel optimization. In \cite{Gu2021Optimizing} a finite-difference based algorithm for bilevel optimization is introduced. However, exact function values and access to an approximation of $z^*$ via first-order methods are assumed to be available and no convergence guarantees are provided. In \cite{shirkavandbilevel}, a finite-difference bilevel method for LLMs fine-tuning is proposed and analyzed in non-convex setting. However, their approach assumes access to gradients of the objectives, which are unavailable in our setting. In \cite{Aghasi2025}, the authors propose ZDSBA, a method for black-box bilevel optimization that approximates gradients and Hessians using single-sample, single-direction finite difference surrogates. The authors analyze their approach and obtain a complexity of $\mathcal{O}\left(\frac{(d+p)^4}{\varepsilon^{3}}\log\frac{d+p}{\varepsilon}\right)$ in non-convex setting. This is improved in ZMDSBA \cite{aghasi2025optimalzerothorderbileveloptimization} which extends ZDSBA by using mini-batch finite-difference to approximate $\nabla_x F$ and $\nabla_{xz}^2 G$, yielding a complexity of $\mathcal{O}\left(\frac{p(d+p)^2}{\varepsilon^{2}}\log\frac{1}{\varepsilon}\right)$ in the same setting. However, both our algorithms achieve better complexity. Moreover, ZDSBA and ZMDSBA rely on a two-loop structure, where the inner loops approximate $z^*$ and $v^*$. In these loops, single-sample, single-direction finite-difference surrogates are used to estimate gradients and Hessians. While cheap, these approximations are highly inaccurate, and in high dimensions many inner iterations are required for reliable estimates. In contrast, our methods adopt a single-loop design and approximate $z^*$ and $v^*$ in a single step using multi-direction, mini-batch finite differences. As observed in prior work on finite differences \cite{simple_rs_mania,salimans2017evolutionstrategiesscalablealternative,ozd,Rando2024,rando2025structured}, a single step of finite-difference methods with surrogates constructed with multiple directions provide better performance than many steps each with surrogate constructed with single-direction. Moreover, due to the single loop design, at every iteration all function evaluations can be parallelized while inner-loop iterations cannot. %
In \cite{aghasi2025optimalzerothorderbileveloptimization}, the authors also propose a Hessian-free variant (Opt-ZMDSBA) achieving a complexity of $\mathcal{O}\left((d+p)\varepsilon^{-2}\right)$. While HF-ZOBA matches this complexity, our approach offers several key advantages. First, Opt-ZMDSBA, although Hessian-free, is still a two-loop algorithm. Here the inner loop is used to approximate $z^*$, and therefore it inherits the same limitations of ZDSBA and ZMDSBA in high-dimensional settings, where many non-parallelizable inner iterations are required. Second, Opt-ZMDSBA relies on a regularization-based reformulation that transforms the bilevel problem into a single-level problem, introducing a regularization parameter $\lambda$ that must be carefully tuned to achieve accurate solutions. In contrast, HF-ZOBA operates directly on the original bilevel problem without regularization, remaining Hessian-free, single-loop, and fully parallelizable, allowing better practical performance in runtime. Finally, our gradient and Hessian surrogates are more general than those in \cite{aghasi2025optimalzerothorderbileveloptimization,Aghasi2025}, recovering the estimators of \cite{Aghasi2025} when $b_1=b_2=\ell_1=\ell_2=1$ and those of \cite{aghasi2025optimalzerothorderbileveloptimization} when $b_1=b_2>0$ and $\ell_1=\ell_2=1$.\\
\textbf{Other methods.} Different other strategies were proposed in the literature. %
In \cite{conn2012bilevel}, a zeroth-order trust-region method for bilevel optimization is proposed. However, only the deterministic setting with exact function evaluations is considered, and no convergence proof is provided. In \cite{Ehrhardt2021}, another trust-region algorithm is introduced and analyzed. However, the authors assume exact function values and first-order information of the inner objective to be available. Other strategies proposed are meta-heuristics. These are based on genetic algorithms \cite{Wang2007,ga_for_nonlinear_bilevel}, simulated annealing \cite{SAHIN199811,YU2010288}, particle swarm optimization \cite{ALVES2014547,Gao2011}, and other techniques - see \cite{metaheuristic_for_bilevel} for a review. %
While these approaches perform well in practice, they typically lack convergence guarantees.

\section{Main Results}\label{sec:main_results}

In this section, we analyze Algorithms \ref{alg:zoba} and \ref{alg:hfzoba}, providing convergence rates for different choices of parameters. More precisely, we provide bounds on the average squared norm of the gradient of the value function $\Psi$. In particular, we use the notation 
\begin{equation*}
\alpha_K := \left(\sum_{k=0}^K \gamma_k \, \mathbb{E}\!\left[ \left\| \nabla \Psi(x_k) \right\|^2 \right] \right)\bigg/ A_K,   
\end{equation*}
where $A_K = \sum_{k=0}^K \gamma_k$. In the following, we call {\it complexity} the number of function evaluations required to get $\alpha_K \leq \varepsilon$ with $\varepsilon\in (0,1)$. To improve readability, we define $R_\text{init} := \Psi(x_0) - \min \Psi + \phi_z^0 \| z_0 - z^*(x_0) \|^2 + \phi_v^0 \| v_0 - v^*(x_0) \|^2$,
where $\phi_z^0, \phi_v^0 > 0$ are constants. To perform the analysis, we consider targets satisfying the following hypothesis.
\begin{assumption}[Unbiasedness and Bounded Variance]\label{asm:bc_condition}
    Stochastic gradients and Hessians of inner and outer targets are unbiased estimators, i.e $\nabla F(z,x) = \mathbb{E}_\zeta\left[\nabla f(z,x,\zeta)\right]$, $\nabla G(z,x) = \mathbb{E}_\xi\left[\nabla g(z,x,\xi)\right]$ and $\nabla^2G(z,x) =\mathbb{E}_\xi\left[\nabla^2 g(z,x,\xi)\right]$. Moreover, there exist constants $\sigma_{1,G}, \sigma_{1,F} \geq 0$ s.t. for every $z \in \mathbb{R}^p$ and $x\in \mathbb{R}^d$, 
    \begin{equation*}
        \begin{aligned}
            \mathbb{E}_\zeta[\| \nabla f(z, x,\zeta) - \nabla F(z,x) \|^2] &\leq  \sigma_{1,F}^2\\
            \mathbb{E}_\xi[\| \nabla g(z, x,\xi) - \nabla G(z,x) \|^2] &\leq \sigma_{1,G}^2.
        \end{aligned}
    \end{equation*}
\end{assumption}

\begin{assumption}[Regularity of outer objective]\label{asm:F_smooth}
    There exist $L_{0,F}, L_{1,F} > 0$ such that for every $\zeta \in \mathcal{Z}_1$ the function $(z,x) \mapsto f(z,x,\zeta)$ is $L_{0,F}$-Lipschitz continuous, differentiable and its gradient is $L_{1,F}$-Lipschitz continuous.
\end{assumption}

\begin{assumption}[Regularity of inner objective]\label{asm:G_asm}
    There exist $L_{1,G}, L_{2,G}, \mu_G > 0$ such that, for every $\xi \in \mathcal{Z}_2$ the function $(z,x) \mapsto g(z,x,\xi)$ is twice differentiable, its gradient is $L_{1,G}$-Lipschitz and its Hessian is $L_{2,G}$-Lipschitz. Moreover, for every $x \in \mathbb{R}^d$, the function $z \mapsto g(z,x,\xi)$ is $\mu_G$-strongly convex.
\end{assumption}
\noindent This setting is standard in stochastic zeroth-order bilevel optimization and has been adopted in prior work \cite{Aghasi2025,aghasi2025optimalzerothorderbileveloptimization}. Unlike these approaches, which additionally assume boundedness of the optimal solution of the inner problem, our analysis does not rely on such an assumption. We next present convergence rates under this regime for both algorithms. %
For readability, explicit constants of theorems and corollaries are reported in the proofs - 
see Appendices \ref{app:aux_results} and \ref{app:proof_main_results}. We begin with the main result for Algorithm~\ref{alg:zoba}.
\begin{theorem}[Convergence Rate of ZOBA]\label{thm:conv_rate}
    Let Assumptions \ref{asm:bc_condition},\ref{asm:F_smooth},\ref{asm:G_asm} hold. For every $k \in \mathbb{N}$, let $z_k,v_k,x_k$ be the sequences generated by Algorithm \ref{alg:zoba}. Let $\rho_k < \min\left(1, \frac{\mu_G}{2 \omega_3 (\mu_G + 4)},  \frac{\mu_G}{2 \bar{C}_v} \right)$, $\gamma_k <c_\gamma \rho_k$ %
    and $h_k  = \mathcal{O}\left(\min\left(\sqrt{\frac{b_1\ell_1+p^3}{p^4}}, \frac{1}{\sqrt{p^4+pd^3}}, d^{-5/2}\right)\right)$, where $c_\gamma, \omega_{3} >0$ and $\bar{C}_v = \mathcal{O}(\frac{p(p + 6)^2}{b_1 \ell_1})$ are defined in the proofs. 
    Then, for $K > 0$,
    \begin{equation*}
        \begin{aligned}
        \alpha_K &\leq \frac{2}{A_K} \left( R_\text{init} + \sum\limits_{k=0}^K\bar{C}_k \right) ,
        \end{aligned}
    \end{equation*}
    where $\bar{C}_k \geq 0$ is a sequence of errors depending on $\gamma_k,\rho_k,h_k$ defined in the proof.
\end{theorem}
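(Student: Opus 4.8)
The plan is to follow the single-loop Lyapunov framework for bilevel optimization, adapting it to control the additional bias and variance introduced by the finite-difference surrogates and the delayed information. First I would collect the auxiliary estimates (established in Appendix~\ref{app:aux_results}): the smoothness of the value function $\Psi$ with some constant $L_\Psi$, the Lipschitz continuity of the maps $x \mapsto z^*(x)$ and $x \mapsto v^*(x)$ (which follows from Assumptions~\ref{asm:F_smooth} and \ref{asm:G_asm} together with the strong convexity $\mu_G$), and, most importantly, bias and second-moment bounds for each finite-difference estimator $D_z^k$, $\hat{\nabla}_{zz}^2 g_k$, $\hat{\nabla}_z f_k$, $\hat{\nabla}_{xz}^2 g_k$, and $\hat{\nabla}_x f_k$. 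The bias terms scale like $h_k$ or $h_k^2$ times the relevant Lipschitz constant, whereas the variances scale polynomially in the ambient dimension (the Hessian surrogates contribute the dominant $\mathcal{O}(p(p+6)^2/(b_1\ell_1))$ factor appearing as $\bar C_v$); balancing these bias and variance contributions is precisely what forces the stated choice of $h_k$.

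Next I would introduce a Lyapunov function of the form
\[
V_k := \big(\Psi(x_k) - \min \Psi\big) + \phi_z^k \,\|z_k - z^*(x_k)\|^2 + \phi_v^k \,\|v_k - v^*(x_k)\|^2 ,
\]
and derive three coupled one-step inequalities. For the outer iterate I would apply $L_\Psi$-smoothness to the update~\eqref{eqn:x_update}, split $D_x^k - \nabla\Psi(x_k)$ into a deterministic tracking part (controlled by $\|z_k - z^*(x_k)\|$ and $\|v_k - v^*(x_k)\|$) and a finite-difference/stochastic error part, and then use Young's inequality to extract a $-\gamma_k\|\nabla\Psi(x_k)\|^2$ term while pushing the tracking errors into the potential. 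For the inner iterate $z_k$, I would exploit the $\mu_G$-strong convexity of $G(\cdot,x_k)$ to show that~\eqref{eqn:z_update} contracts $\|z_k-z^*(x_k)\|^2$ by a factor $1-\mathcal{O}(\rho_k)$, and then account for the drift $\|z^*(x_{k+1})-z^*(x_k)\|\le L_{z^*}\gamma_k\|D_x^k\|$ via the Lipschitz continuity of $z^*$, which re-injects a $\gamma_k^2\|D_x^k\|^2$ term.

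The analogous recursion for $v_k$ is the crux of the argument and the step I expect to be the main obstacle. The update~\eqref{eqn:v_update} is a finite-difference step on the strongly convex quadratic associated with the linear system~\eqref{eqn:v_star}, so it again contracts $\|v_k-v^*(x_k)\|^2$; however, three coupling effects must be handled simultaneously. First, the estimator $D_v^k$ uses $z_k$ rather than $z^*(x_k)$, producing a term proportional to $\|z_k-z^*(x_k)\|^2$ that must be absorbed by the $z$-potential. Second, the one-step delay, in which $z_k$ effectively tracks $z^*(x_{k-1})$ rather than $z^*(x_k)$, adds further mismatch that has to be bounded by the outer step. Third, the variance of the finite-difference Hessian–vector product scales as $\mathcal{O}(p(p+6)^2/(b_1\ell_1))$, contributing the dominant dimensional factor. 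Careful bookkeeping of these terms, together with the Lipschitz continuity of $v^*$ for the drift, yields the $v$-recursion with the constant $\bar C_v$.

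Finally I would combine the three inequalities into a single descent on $V_k$. The potential coefficients $\phi_z^k,\phi_v^k$ and the step-size constraints $\rho_k<\min(1,\tfrac{\mu_G}{2\omega_3(\mu_G+4)},\tfrac{\mu_G}{2\bar C_v})$ and $\gamma_k<c_\gamma\rho_k$ are chosen precisely so that the contraction of each tracking error dominates the cross terms (the $\gamma_k^2\|D_x^k\|^2$ drifts and the $z$--$v$ coupling), leaving
\[
\mathbb{E}[V_{k+1}] \le \mathbb{E}[V_k] - \tfrac{\gamma_k}{2}\,\mathbb{E}\big[\|\nabla\Psi(x_k)\|^2\big] + \bar C_k ,
\]
where $\bar C_k\ge 0$ gathers all finite-difference bias and variance contributions. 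Summing from $k=0$ to $K$, using $V_0=R_{\text{init}}$ and $V_{K+1}\ge 0$, and dividing by $A_K$ yields the claimed bound on $\alpha_K$. The requirement $\gamma_k<c_\gamma\rho_k$, i.e. the outer iterate moving slower than the inner sequences, is what guarantees the tracking sequences stay close enough to their targets for this descent to close.
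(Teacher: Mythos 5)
Your proposal is correct and follows essentially the same route as the paper: a Lyapunov function $\Psi(x_k) + \phi_z^k\|z_k - z^*(x_k)\|^2 + \phi_v^k\|v_k - v^*(x_k)\|^2$, a smoothness-based descent step on $\Psi$ that isolates $-\tfrac{\gamma_k}{2}\|\nabla\Psi(x_k)\|^2$, strong-convexity contractions for the $z$- and $v$-tracking errors with drift terms $\gamma_k^2\|D_x^k\|^2$ controlled by the Lipschitz continuity of $z^*$ and $v^*$, stepsize conditions ($\rho_k$ small, $\gamma_k < c_\gamma \rho_k$, $h_k$ small) chosen so the contractions absorb all coupling terms, and a final telescoping sum divided by $A_K$. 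The only cosmetic differences are that the paper extracts the gradient term via the polarization identity $\langle a,b\rangle = \tfrac12(\|a\|^2+\|b\|^2-\|a-b\|^2)$ rather than Young's inequality, and handles the delayed-information effects implicitly through the $\|z_k-z^*(x_k)\|^2$ coupling rather than as a separate mismatch term.
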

\noindent In the following corollary, we derive rates for specific choices of the parameters.
\begin{corollary}\label{cor:param_choices}
    Under same assumptions of Theorem \ref{thm:conv_rate}.\\
    (I) Let $\gamma_k = \gamma$, $\rho_k = \rho$ and $h_k = h$, we have
    \begin{equation*}
        \begin{aligned}
            \alpha_K &\leq \frac{R_\text{init}}{\gamma (K + 1)} +\mathcal{O}\left(\frac{\rho^3+\rho^2}{\gamma}+\gamma \right)+ \mathcal{O}\left( \frac{\rho+\rho^2+\rho^3}{\gamma}+\gamma \right) h^2 .
        \end{aligned}
    \end{equation*}
    (II) Let $\gamma_k = \gamma (k + 1)^{-\alpha_1}$, $\rho_k = \rho (k + 1)^{-\alpha_2}$ and $h_k = h (k + 1)^{-\alpha_3}$ with $\gamma, \rho, h >0$ and $\alpha_1, \alpha_2 \in (1/2, 1)$ and $\alpha_3 >1$,
    \begin{equation*}
        \begin{aligned}
        \alpha_K &\leq    \frac{R_\text{init}}{\gamma (K + 1)^{1 - \alpha_1}} + \mathcal{O} \left(\frac{1}{(K + 1)^{1 - \alpha_1}} \right).
        \end{aligned}
    \end{equation*}
    (III) Fix an accuracy $\varepsilon \in (0,1)$. Let %
    $\gamma = \sqrt{\frac{1}{C_{5,1}}\left(\frac{\Delta_1}{ K + 1} +C_3 \mu_G \bar{\phi}_z\rho^3 + C_{6,1} \rho^2 \right)}$, $\rho = \frac{\varepsilon}{\sqrt{16 C_{5,1} C_{6,1}}}$ and $h = \sqrt{\frac{\varepsilon}{2 C(\gamma, \rho)}}$ where the constants $C_3, \bar{\phi}_z, C_{6,1},\Delta_1, C_{5,1}, C(\gamma,\rho) >0$ are defined in the proof. Let $b_1\ell_1 = \mathcal{O}(\lfloor p(d + p)^2/c_1 \rfloor)$ and $b_2 \ell_2 = \mathcal{O}(\lfloor p(d + p)^2/c_2\rfloor)$ for $c_1,c_2 \in \mathbb{R}_+$. Let $K + 1\geq \mathcal{O}(\frac{p(d + p)^2}{b_1\ell_1} \varepsilon^{-2})$. Then, $\alpha_K \leq \varepsilon$ 
    and the complexity is $\mathcal{O}\left(p (d + p)^2 \varepsilon^{-2}\right)$.

\end{corollary}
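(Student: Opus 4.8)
The plan is to treat Corollary~\ref{cor:param_choices} as a specialization of the master bound $\alpha_K \leq \frac{2}{A_K}\big(R_{\text{init}} + \sum_{k=0}^K \bar C_k\big)$ established in Theorem~\ref{thm:conv_rate}, so that no new descent argument is needed: the entire task reduces to (i) evaluating $A_K = \sum_{k=0}^K \gamma_k$ and the error sum $\sum_k \bar C_k$ for each prescribed schedule, and (ii) for part~(III), choosing the free parameters $\gamma,\rho,h$, the batch products $b_1\ell_1,b_2\ell_2$, and the horizon $K$ so that every term is $\mathcal{O}(\varepsilon)$, and then multiplying the resulting iteration count by the per-iteration evaluation cost. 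The first preparatory step is therefore to read off from the proof of Theorem~\ref{thm:conv_rate} the explicit shape of $\bar C_k$: it decomposes into a variance/stochastic contribution carrying a factor $1/(b\ell)$ together with dimensional constants such as $\bar C_v = \mathcal{O}(p(p+6)^2/(b_1\ell_1))$, a finite-difference discretization bias proportional to $h_k^2$, and delay/coupling terms, all weighted by appropriate powers of $\gamma_k$ and $\rho_k$.

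For part~(I) I would set $A_K = \gamma(K+1)$ and use that each $\bar C_k$ is a constant $\bar C$, so $\sum_k \bar C_k = (K+1)\bar C$; the $R_{\text{init}}$ term then yields the $\tfrac{R_{\text{init}}}{\gamma(K+1)}$ contribution and $\tfrac{2\bar C}{\gamma}$ produces the stationary floor. Grouping $\bar C$ by the monomials in $\rho$ and $\gamma$ and by the $h^2$-weighted block reproduces the two $\mathcal{O}(\cdot)$ expressions in the statement. For part~(II) I would bound $A_K = \sum_{k=0}^K \gamma(k+1)^{-\alpha_1} = \Theta\big((K+1)^{1-\alpha_1}\big)$ by the standard integral comparison (valid since $\alpha_1 \in (1/2,1) \subset (0,1)$), and show $\sum_k \bar C_k = \mathcal{O}(1)$: the conditions $\alpha_1,\alpha_2 > 1/2$ force $\sum_k \gamma_k^2$, $\sum_k \rho_k^2$, $\sum_k \rho_k^3$, and the cross terms $\sum_k \gamma_k\rho_k$ to converge (each exponent exceeds $1$), while $\alpha_3 > 1$ makes the $\sum_k h_k^2$ block summable. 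Dividing a bounded numerator by $A_K$ then gives the $\mathcal{O}\big((K+1)^{-(1-\alpha_1)}\big)$ rate.

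Part~(III) is the heart of the statement, and here I would proceed by balancing. The batch products are chosen as $b_1\ell_1, b_2\ell_2 = \Theta(p(d+p)^2)$ precisely so that the dimensional variance constants (e.g. $\bar C_v$, and the factors hidden inside $C_{5,1}, C_{6,1}$) collapse to $\mathcal{O}(1)$, since $p(p+6)^2/(p(d+p)^2) = \mathcal{O}(1)$. With these constants tamed, I would set $\rho = \Theta(\varepsilon)$ so the bias floor $\tfrac{\rho^2+\rho^3}{\gamma}$ scales like $\varepsilon$, then verify from the prescribed formula for $\gamma$ that the dominant contributions inside the square root are $C_{6,1}\rho^2$ and $\tfrac{\Delta_1}{K+1}$, both of order $\varepsilon^2$ (the $\rho^3$ term being of order $\varepsilon^3$), so that $\gamma = \Theta(\varepsilon)$. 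Taking $K+1 = \Theta\big(\tfrac{p(d+p)^2}{b_1\ell_1}\varepsilon^{-2}\big) = \Theta(\varepsilon^{-2})$ makes the optimization term $\tfrac{R_{\text{init}}}{\gamma(K+1)} = \mathcal{O}(\varepsilon)$, and choosing $h = \sqrt{\varepsilon/(2C(\gamma,\rho))}$ cancels the $h^2$ block against $\varepsilon/2$; one also checks that this $h$ and the constant $\gamma,\rho$ respect the admissibility window $\rho < \min(\cdots)$, $\gamma < c_\gamma\rho$, $h = \mathcal{O}(\cdots)$ required by Theorem~\ref{thm:conv_rate}. Summing the four $\mathcal{O}(\varepsilon)$ pieces yields $\alpha_K \leq \varepsilon$.

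Finally, for the complexity I would multiply the horizon by the per-iteration cost $b_1(4\ell_1+1)+b_2(2\ell_2+1) = \mathcal{O}(b_1\ell_1 + b_2\ell_2) = \mathcal{O}(p(d+p)^2)$, obtaining $(K+1)\cdot\mathcal{O}(b_1\ell_1+b_2\ell_2) = \mathcal{O}\big(\tfrac{p(d+p)^2}{b_1\ell_1}\varepsilon^{-2}\big)\cdot\mathcal{O}(b_1\ell_1) = \mathcal{O}(p(d+p)^2\varepsilon^{-2})$; note the product is essentially invariant to the batch choice, which is why any admissible $b_1\ell_1$ gives the same rate. The main obstacle I anticipate is not the algebra of telescoping but the bookkeeping of the dimension dependence: one must track how each variance constant scales in $p$, $d$, and $1/(b\ell)$ through the proof of Theorem~\ref{thm:conv_rate}, confirm that the single choice $b\ell = \Theta(p(d+p)^2)$ simultaneously neutralizes all of them, and check that the constant schedule of part~(III) lies inside the admissible stepsize window — in particular that the constants in $\gamma < c_\gamma\rho$ remain mutually consistent when both $\gamma$ and $\rho$ are $\Theta(\varepsilon)$.
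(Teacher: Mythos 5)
Your overall route is the paper's own: treat the corollary as a pure specialization of the master bound of Theorem~\ref{thm:conv_rate}, then evaluate $A_K$ and $\sum_k \bar C_k$ for each schedule. Your parts (I) and (II) match the paper's proof essentially step for step (constant schedules and grouping of monomials for (I); integral comparison for $A_K$ and summability of every error block from $\alpha_1,\alpha_2>1/2$, $\alpha_3>1$ for (II)), and they are correct. Your final complexity count for (III) (horizon times per-iteration cost $b_1(4\ell_1+1)+b_2(2\ell_2+1)=\mathcal{O}(b_1\ell_1+b_2\ell_2)$) also matches the paper.

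Part (III), however, has a genuine gap, and it sits exactly where you flagged your ``anticipated obstacle'' and then deferred it. Your balancing argument declares the $C_3\mu_G\bar\phi_z\rho^3$ term inside the prescribed $\gamma$ negligible (order $\varepsilon^3$), which presupposes that the Lyapunov weight $\bar\phi_z$ is an $\varepsilon$-independent constant. Under that reading the admissibility window of Theorem~\ref{thm:conv_rate} fails: the theorem requires $\gamma_k<\min\left(\tfrac{\mu_G^2}{16\,\omega_{v,1}^2(\mu_G+4)}\bar\phi_z\rho_k^2,\ \tfrac{-2L_{1,G}^2+\sqrt{4L_{1,G}^4+8\omega_{dx}\bar C_x A\rho_k^2}}{4\omega_{dx}\bar C_x}\right)$, and for small $\rho_k$ both entries scale like $\bar\phi_z\rho_k^2$ (the second through $A\propto\bar\phi_v\propto\bar\phi_z$). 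With $\bar\phi_z=\mathcal{O}(1)$ and $\rho=\Theta(\varepsilon)$ this forces $\gamma=\mathcal{O}(\varepsilon^2)$, not $\Theta(\varepsilon)$, and then driving $R_\text{init}/(\gamma(K+1))$ below $\varepsilon$ requires $K+1=\Omega(\varepsilon^{-3})$, destroying the claimed $\mathcal{O}(p(d+p)^2\varepsilon^{-2})$ complexity. The missing idea, which is how the paper closes the hole, is to make the Lyapunov weight $\rho$-dependent: choose $\bar\phi_z=C_{6,1}/(C_3\mu_G\rho)=\Theta(1/\varepsilon)$. This keeps the ratio $\bar\phi_z\rho$ constant, so a uniform $c_\gamma$ exists and $\gamma=\Theta(\rho)$ is admissible; as a by-product $C_3\mu_G\bar\phi_z\rho^3=C_{6,1}\rho^2$, i.e.\ the $\rho^3$ term is balanced against the $\rho^2$ term rather than discarded --- which is precisely why the prescribed formula for $\gamma$ carries it. This coupling also feeds $\bar\phi_z$ into $\Delta_1$ and $R_\text{init}$, so the final bookkeeping of the iteration count is not the $\varepsilon$-free-constant computation your plan describes; any complete write-up has to track that dependence rather than ``check the window'' after the fact.
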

\noindent In the following theorem and corollary, we show the rate for Algorithm \ref{alg:hfzoba} and we specialize the result with different choices of parameters. 
\begin{theorem}[Convergence Rate of HF-ZOBA]\label{thm:hfzoba_conv_rate}
    Under Assumptions \ref{asm:bc_condition},\ref{asm:F_smooth},\ref{asm:G_asm}. Let $\rho_k < \min\left(1, \frac1{16 C_{B,1}L_{1,G}^2}, \frac{1}{4 C_{A,1}L_{1,G}^2}\right)$, $\gamma_k < \bar{c}_\gamma \rho_k$, $\bar{h}_k = \begin{cases} \hat{h}_k / \|v_k\|, & \|v_k\| \neq 0 \\ \hat{h}_k, & \text{otherwise} \end{cases}$, and $
h_k \leq \frac{\mu_G}{4 \sqrt{(\mu_G + 2) \bar{C}_L}} \sqrt{\bar{\phi}_v}\hat{h}_k$ %
    where $\bar{c}_\gamma > 0, C_{A,1} =\mathcal{O}\left( p/ b_1\ell_1 \right)$, $C_{B,1} = \mathcal{O}\left( p/b_1 \ell_1 \right)$, $\hat{h}_k > 0$,  $\bar{C}_L = \mathcal{O}(\frac{(p + 6)^3 +(d + 6)^3}{b_1 \ell_1})$ and $\bar{\phi}_v > 0$ 
    are defined in the proof. 
Let $z_k,v_k,x_k$ be the sequences generated by eq. \eqref{eqn:hfzoba_search_directions}. Then, for $K>0$,
    \begin{equation*}
        \begin{aligned}
        \alpha_K &\leq \frac{2}{A_K} \left( R_\text{init} + \sum\limits_{k=0}^K\hat{C}_k \right).
        \end{aligned}
    \end{equation*}
    Where $\hat{C}_k \geq 0$ is a sequence of errors depending on $\gamma_k,\rho_k,h_k$ defined in the proof.
\end{theorem}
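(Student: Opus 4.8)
The plan is to construct a Lyapunov function that couples the outer suboptimality with the tracking errors of the two auxiliary sequences and to establish a one-step descent inequality for it. Concretely, I would set
\[
V_k := \Psi(x_k) - \min\Psi + \phi_z^k \|z_k - z^*(x_k)\|^2 + \phi_v^k \|v_k - v^*(x_k)\|^2,
\]
with weights $\phi_z^k,\phi_v^k>0$ chosen later so that $V_0 = R_\text{init}$ and $V_{K+1}\geq 0$. The target is a bound of the form $\mathbb{E}[V_{k+1}\mid\mathcal{F}_k] \leq V_k - \tfrac{\gamma_k}{2}\,\|\nabla\Psi(x_k)\|^2 + \hat{C}_k$ with $\hat{C}_k\geq 0$ collecting variance and smoothing-bias contributions; telescoping over $k=0,\dots,K$, using $V_{K+1}\geq 0$ and dividing by $A_K$ then yields $\alpha_K \leq \tfrac{2}{A_K}(R_\text{init}+\sum_k \hat{C}_k)$, which is exactly the claim. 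Here I would work over the natural filtration $\mathcal{F}_k$ generated by the randomness up to iteration $k$, and repeatedly use the regularity of $z^*$ and $v^*$ (Lipschitz continuity, and invertibility of $\nabla_{zz}^2 G$ with bound $1/\mu_G$) implied by Assumptions~\ref{asm:F_smooth}--\ref{asm:G_asm} via the standard bilevel argument of \cite{ghadimi2018approximation}.

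First I would treat the descent on $\Psi$. Since $\Psi$ is $L_\Psi$-smooth, the update $x_{k+1}=x_k-\gamma_k\hat{D}_x^k$ gives $\Psi(x_{k+1})\leq \Psi(x_k)-\gamma_k\langle\nabla\Psi(x_k),\hat{D}_x^k\rangle+\tfrac{L_\Psi\gamma_k^2}{2}\|\hat{D}_x^k\|^2$. The crux is to relate $\mathbb{E}[\hat{D}_x^k\mid\mathcal{F}_k]$ to $\nabla\Psi(x_k)$ by splitting the discrepancy into three pieces: (i) the finite-difference smoothing bias of $\hat\nabla_x f_k$ and of the Hessian–vector estimator $H_{xz}^k$, of order $\mathcal{O}(h_k)$, which I would bound by Taylor expansion and the Lipschitz-Hessian hypothesis in Assumption~\ref{asm:G_asm}; (ii) the error from evaluating at $(z_k,v_k)$ rather than $(z^*(x_k),v^*(x_k))$, controlled by $\|z_k-z^*(x_k)\|$ and $\|v_k-v^*(x_k)\|$; and (iii) the substitution of $v^*(x_k)$ through its defining linear system in eq.~\eqref{eqn:v_star}. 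The second-moment term $\mathbb{E}[\|\hat{D}_x^k\|^2\mid\mathcal{F}_k]$ is where dimension enters: the variance of the nested estimator $H_{xz}^k$ scales like $\bar{C}_L=\mathcal{O}(((p+6)^3+(d+6)^3)/(b_1\ell_1))$ (and the gradient-estimate variances feed in via Assumption~\ref{asm:bc_condition}), and these terms drive $\hat{C}_k$.

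Next I would derive the two auxiliary recursions. For the inner sequence, $z_{k+1}=z_k-\rho_k\hat{D}_z^k$ is one mini-batch forward-difference gradient step on the $\mu_G$-strongly convex map $z\mapsto G(z,x_k)$, so a smoothed-gradient argument gives a contraction $\mathbb{E}[\|z_{k+1}-z^*(x_k)\|^2]\leq(1-c\rho_k\mu_G)\|z_k-z^*(x_k)\|^2+\mathcal{O}(\rho_k^2 h_k^2+\rho_k^2\sigma_{1,G}^2)$; passing from $z^*(x_k)$ to $z^*(x_{k+1})$ via Lipschitzness of $z^*$ and a Young inequality then pays a coupling term $\mathcal{O}(\gamma_k^2\|\hat{D}_x^k\|^2)$. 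For $v_{k+1}=v_k-\rho_k\hat{D}_v^k$, I would view $\hat{D}_v^k=H_{zz}^k+\hat\nabla_z f_k$ as an approximate gradient of the $\mu_G$-strongly convex quadratic whose minimizer is the solution of the system in eq.~\eqref{eqn:v_star} at $(z_k,x_k)$; this yields a contraction in $\|v_k-v^*(x_k)\|^2$ together with a coupling to $\|z_k-z^*(x_k)\|^2$ (since $H_{zz}^k,\hat\nabla_z f_k$ are evaluated at $z_k\neq z^*(x_k)$) and again a $\mathcal{O}(\gamma_k^2\|\hat{D}_x^k\|^2)$ term. The new ingredient here is the bias/variance analysis of the Hessian-free estimator $H_{zz}^k$: the normalization $\bar h_k=\hat h_k/\|v_k\|$ keeps the directional perturbation of controlled magnitude, and balancing the smoothing bias against the finite-difference-in-$\bar h_k$ error is precisely what forces $h_k\leq \tfrac{\mu_G}{4\sqrt{(\mu_G+2)\bar C_L}}\sqrt{\bar\phi_v}\,\hat h_k$.

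Finally I would fix the weights $\phi_z^k,\phi_v^k$ and invoke the step-size conditions $\rho_k<\min(1,(16C_{B,1}L_{1,G}^2)^{-1},(4C_{A,1}L_{1,G}^2)^{-1})$ with $C_{A,1},C_{B,1}=\mathcal{O}(p/(b_1\ell_1))$ and $\gamma_k<\bar c_\gamma\rho_k$, so that every coupling term is absorbed: the $\mathcal{O}(\gamma_k^2\|\hat{D}_x^k\|^2)$ contributions from both tracking recursions are dominated by the $\mathcal{O}(\rho_k\mu_G)$ contraction gains, and the cross terms between the $z$- and $v$-errors cancel against their own contractions. What survives is exactly the $-\tfrac{\gamma_k}{2}\|\nabla\Psi(x_k)\|^2$ descent plus the nonnegative residual $\hat{C}_k$, and telescoping closes the argument. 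The main obstacle I anticipate is the third paragraph: controlling the nested Hessian-vector estimators $H_{zz}^k,H_{xz}^k$, whose construction by differencing two finite-difference gradients risks an inflated variance, and whose delayed evaluation at $(z_k,v_k)$ rather than the updated iterates introduces extra error. Making the normalization $\bar h_k=\hat h_k/\|v_k\|$ interact correctly with $h_k$ and $\hat h_k$, and verifying that the induced dimension factor $(p+6)^3+(d+6)^3$ is the correct order, is the technically heaviest step.
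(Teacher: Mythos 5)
Your proposal follows essentially the same route as the paper's proof: the same Lyapunov function $L_k = \Psi(x_k) + \phi_z^k\|z_k - z^*(x_k)\|^2 + \phi_v^k\|v_k - v^*(x_k)\|^2$, the same three ingredients (a descent lemma for $\Psi$ along $\hat{D}_x^k$, contraction recursions for the $z$- and $v$-tracking errors with $\mathcal{O}(\gamma_k^2\|\hat{D}_x^k\|^2)$ coupling terms), the same absorption of cross terms via the stepsize conditions and the choice of $\bar{\phi}_v$ proportional to $\bar{\phi}_z$, and the same telescoping conclusion. You also correctly identified the technically delicate point — that the normalization $\bar{h}_k = \hat{h}_k/\|v_k\|$ turns the $h_k^2/\bar{h}_k^2$ variance terms of the nested Hessian–vector estimators into $\|v_k\|^2$-weighted contributions which must be split into a $\|v_k - v^*(x_k)\|^2$ part (absorbed by the contraction, forcing the stated condition on $h_k/\hat{h}_k$) and a bounded residual — which is exactly how the paper handles it.
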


\begin{corollary}\label{cor:hfzoba_param_choices}
    Under same assumptions of Theorem \ref{thm:hfzoba_conv_rate}.\\
    (I) Let $\gamma_k = \gamma$, $\rho_k = \rho$, $h_k = h$ and $\hat{h}_k = \hat{h}$. Then,\\
    \begin{equation*}
        \begin{aligned}
            \alpha_K &\leq \frac{R_\text{init}}{\gamma (K + 1)} +\mathcal{O}\left(\gamma + \frac{\rho^3}{\gamma}\right) + \mathcal{O}\left(1 + \frac{\rho^2}{\gamma} + \gamma\right)\, \hat{h}^2\\
            &+\mathcal{O}\left(1 + \frac{\rho^2}{\gamma}  +  \frac{\rho^3}{\gamma} \right) h^2 +\mathcal{O}\left(\frac{\rho^2}{\gamma} + \gamma \right)\frac{h^2}{\hat{h}^2}.
        \end{aligned}
    \end{equation*}
    (II) Let $\gamma_k = \gamma (k + 1)^{-\alpha_1}$, $\rho_k = \rho (k + 1)^{-\alpha_2}$, $h_k = h (k + 1)^{-\alpha_3}$ and $\hat{h}_k = \hat{h}(k +1)^{-\alpha_4}$ with $\gamma, \rho, h, \hat{h} >0$, $\alpha_1, \alpha_2 \in (1/2, 1)$, $\alpha_3, \alpha_4 >1/2$ and $\alpha_3 - \alpha_4 \geq 0$,
    \begin{equation*}
        \begin{aligned}
        \alpha_K &\leq  \frac{R_\text{init}}{\gamma (K + 1)^{1 - \alpha_1}} + \mathcal{O} \left(\frac{1}{(K + 1)^{1 - \alpha_1}} \right).
        \end{aligned}
    \end{equation*}
    (III) Fix an accuracy $\varepsilon \in (0,1)$. Let $\bar{\Delta}_4 = \mathcal{O}(\frac{d + p}{b_1 \ell_1} + \frac{d + p}{b_2 \ell_2})$ and fix $K + 1\geq \frac{32 R_\text{init} \bar{\Delta}_4}{\hat{c}_\gamma} \varepsilon^{-2}$. Let $\rho =  \sqrt{\frac{2R_\text{init}}{\bar{c}_\gamma \bar{\Delta}_4 (K + 1)}}$, $\gamma = \hat{c}_\gamma \rho$ where $\hat{c}_\gamma <\bar{c}_\gamma = \mathcal{O}((b_1\ell_1)/(d + p))$. Let $h = \hat{h}^2$ and $\hat{h} \leq \min \left(1,  \sqrt{\frac{ \hat{c}_\gamma\varepsilon}{2\left( \bar{\Delta}_1 + \bar{\Delta}_2 + \bar{\Delta}_3 \right)}}\right)$ where $\bar{\Delta}_1, \bar{\Delta}_2, \bar{\Delta}_3 >0$ are defined in the proof.  %
    Let $b_1\ell_1 = \mathcal{O}(\lfloor (d + p)/c_1 \rfloor)$ and $b_2 \ell_2 = \mathcal{O}(\lfloor (d + p)/c_1\rfloor)$ for $c_1,c_2 \in \mathbb{R}_+$. Then, $ \alpha_K \leq \varepsilon$ 
    and the complexity is $\mathcal{O}((d + p) \varepsilon^{-2})$.%
\end{corollary}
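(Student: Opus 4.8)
The plan is to start from the constant-stepsize estimate of part (I) and reduce the whole statement to a term-by-term bookkeeping under the prescribed parameter choices. Writing the bound of (I) schematically as
\[
\alpha_K \leq \frac{R_\text{init}}{\gamma(K+1)} + \mathcal{O}\!\left(\gamma + \frac{\rho^3}{\gamma}\right) + \mathcal{O}\!\left(1 + \frac{\rho^2}{\gamma} + \gamma\right)\hat{h}^2 + \mathcal{O}\!\left(1 + \frac{\rho^2}{\gamma} + \frac{\rho^3}{\gamma}\right)h^2 + \mathcal{O}\!\left(\frac{\rho^2}{\gamma} + \gamma\right)\frac{h^2}{\hat{h}^2},
\]
I would first use $\gamma = \hat{c}_\gamma\rho$ to eliminate $\gamma$ (so $\rho^2/\gamma = \rho/\hat{c}_\gamma$ and $\rho^3/\gamma = \rho^2/\hat{c}_\gamma$), and split the five contributions into an \emph{optimization $+$ variance} part $\frac{R_\text{init}}{\gamma(K+1)} + \mathcal{O}(\gamma + \rho^3/\gamma)$ and a \emph{finite-difference bias} part collecting the $\hat{h}^2$, $h^2$, and $h^2/\hat{h}^2$ terms. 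The target is to show each part is at most $\varepsilon/2$.

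For the optimization part, I would substitute $\rho = \sqrt{2R_\text{init}/(\bar{c}_\gamma\bar{\Delta}_4(K+1))}$, which is precisely the value balancing $\frac{R_\text{init}}{\gamma(K+1)}$ against the dominant variance term (the $\mathcal{O}(\gamma)$ term, whose hidden constant is proportional to the per-step estimator variance $\bar{\Delta}_4$). With $\gamma = \hat{c}_\gamma\rho \propto (K+1)^{-1/2}$ both contributions become of order $\sqrt{R_\text{init}\bar{\Delta}_4/(K+1)}$, while the leftover $\rho^3/\gamma = \rho^2/\hat{c}_\gamma = \mathcal{O}(1/(K+1))$ is strictly lower order. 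Inserting the iteration lower bound $K+1 \geq 32 R_\text{init}\bar{\Delta}_4\hat{c}_\gamma^{-1}\varepsilon^{-2}$ turns $\sqrt{R_\text{init}\bar{\Delta}_4/(K+1)}$ into $\mathcal{O}(\sqrt{\hat{c}_\gamma}\,\varepsilon)$, and taking $\hat{c}_\gamma$ a fixed fraction of $\bar{c}_\gamma$ forces this part below $\varepsilon/2$.

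For the bias part, as $\rho\to 0$ all three coefficients $\mathcal{O}(1 + \rho/\hat{c}_\gamma + \cdots)$ collapse to dimension-free constants $\bar{\Delta}_1,\bar{\Delta}_2,\bar{\Delta}_3$; the key move is then the choice $h = \hat{h}^2$ with $\hat{h}\leq 1$, which tames the nested finite-difference (Hessian--vector-product) term: $h^2/\hat{h}^2 = \hat{h}^2$ and $h^2 = \hat{h}^4 \leq \hat{h}^2$, so the whole bias collapses to $(\bar{\Delta}_1+\bar{\Delta}_2+\bar{\Delta}_3)\hat{h}^2$, and the prescribed $\hat{h}\leq(\hat{c}_\gamma\varepsilon/(2(\bar{\Delta}_1+\bar{\Delta}_2+\bar{\Delta}_3)))^{1/2}$ makes it $\leq\varepsilon/2$. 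Adding the two parts gives $\alpha_K\leq\varepsilon$. For the complexity I would multiply the per-iteration cost $2b_1(2\ell_1+1)+b_2(2\ell_2+1) = \mathcal{O}(b_1\ell_1 + b_2\ell_2)$ by $K+1$: setting $b_1\ell_1 = b_2\ell_2 = \mathcal{O}(d+p)$ makes the per-iteration cost $\mathcal{O}(d+p)$, while simultaneously $\bar{\Delta}_4 = \mathcal{O}((d+p)/(b_1\ell_1) + (d+p)/(b_2\ell_2)) = \mathcal{O}(1)$ and $\bar{c}_\gamma = \mathcal{O}(1)$ reduce $K+1$ to $\mathcal{O}(\varepsilon^{-2})$, yielding the claimed $\mathcal{O}((d+p)\varepsilon^{-2})$.

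The main obstacle I anticipate is not any individual estimate but the simultaneous consistency of all constraints once the batch sizes are pinned to $\mathcal{O}(d+p)$. The step-size and smoothing restrictions of Theorem~\ref{thm:hfzoba_conv_rate} involve the dimension-dependent quantities $C_{A,1}, C_{B,1} = \mathcal{O}(p/(b_1\ell_1))$ and $\bar{C}_L = \mathcal{O}(((p+6)^3+(d+6)^3)/(b_1\ell_1))$, so the admissibility condition $h_k \leq \frac{\mu_G}{4\sqrt{(\mu_G+2)\bar{C}_L}}\sqrt{\bar{\phi}_v}\,\hat{h}_k$ is dimension-sensitive; I must check that $h=\hat{h}^2$ together with a sufficiently small $\hat{h}$ meets both this smoothing constraint and the accuracy requirement, and that the constants $\bar{\Delta}_1,\bar{\Delta}_2,\bar{\Delta}_3$ hidden in the bias part do not silently reintroduce powers of $d+p$ that would inflate the final complexity. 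Verifying that the $\mathcal{O}(1)$ coefficients inherited from part (I) remain genuinely dimension-free under the prescribed batch sizes is where the real care lies.
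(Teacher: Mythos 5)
Your proposal is correct and follows essentially the same route as the paper's proof: substitute constant parameters into the theorem's bound, set $\gamma=\hat{c}_\gamma\rho$, collapse the bias terms via $h=\hat{h}^2$ and $\hat{h}\leq 1$ so they are controlled by the prescribed choice of $\hat{h}$, balance $\rho$ against the $\mathcal{O}(\gamma)$ variance term to get the initialization-plus-variance part below $\varepsilon/2$ for the stated $K$, and multiply the per-iteration cost by $K+1$ for the complexity. Your only imprecision is calling $\bar{\Delta}_1,\bar{\Delta}_2,\bar{\Delta}_3$ dimension-free (they are not, even with $b_1\ell_1=\mathcal{O}(d+p)$), but as you yourself anticipate this is immaterial: these constants only constrain the smoothing parameter $\hat{h}$, which can be taken arbitrarily small at no cost in function evaluations, so the final complexity is unaffected.
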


\paragraph*{Discussion.} 

The bounds in Theorems \ref{thm:conv_rate} and \ref{thm:hfzoba_conv_rate} depend on two terms: the initialization and an error component. Such errors arise from the use of stochastic information, delayed information, zeroth-order approximations of stochastic gradients and Hessians, and (in case of HF-ZOBA) the first-order finite-difference approximation of Hessian–vector products.
In Corollaries \ref{cor:param_choices} and \ref{cor:hfzoba_param_choices} we provide two parameter regimes. The first (I) uses constant stepsizes, which is the most common choice in practice, while the second (II) ensures asymptotic convergence. Under choice (I), both algorithms converge to a neighborhood of a stationary point where the size is determined by stochastic variance (the term depending only on $\gamma$ and $\rho$) and finite-difference bias (i.e. the term depending on $h$), consistent with classical results for stochastic zeroth-order methods - see e.g. \cite{nesterov_random_2017,ghadimi_zeroth}. For HF-ZOBA, this neighborhood size further depends on the accuracy of the Hessian–vector product approximation, as reflected by the $\hat{h}$ and $h/\hat{h}$ terms, which respectively represent the finite-difference bias of the first-order Hessian surrogate and the zeroth-order approximation error of the gradients used within it. For both algorithms, expressing the outer stepsize $\gamma$ as a fraction of $\rho$ makes explicit that taking $\rho$ (and $h$) smaller shrinks the size of such neighborhoods but slows the decay of the initialization term through its $1/\gamma$ dependence, mirroring the behavior of first-order stochastic methods with constant stepsize.
Under choice (II), both algorithms recover the convergence rates of first-order stochastic bilevel methods \cite{soba_saba,ghadimi2018approximation,chen_bilevel} up to dependence on dimension. In particular, choosing $\alpha_1$ arbitrarily close to $1/2$ yields rates approaching $\mathcal{O}((K+1)^{-1/2})$, which is order-optimal \cite{aghasi2025optimalzerothorderbileveloptimization}. Moreover, for HF-ZOBA, guaranteeing convergence requires the parameter $h_k$ to decay faster than $\hat{h}_k$, to compensate for the zeroth-order bias in the Hessian–vector approximation.
In (III) we derive the complexities of the proposed algorithms. As shown in the corollaries, both methods improve upon ZDSBA \cite{Aghasi2025}, which attains a complexity of $\mathcal{O}\left(\frac{(d+p)^4}{\varepsilon^3}\log\left(\frac{d+p}{\varepsilon}\right)\right)$. Moreover, they also improve over ZMDSBA \cite{aghasi2025optimalzerothorderbileveloptimization}, which achieves a complexity of $\mathcal{O}\left(\frac{p(d+p)^2}{\varepsilon^2}\log\left(\frac{1}{\varepsilon}\right)\right)$. As in ZMDSBA, ZOBA retains the $\mathcal{O}(p(d+p)^2)$ term, which is consistent with the observation in \cite{aghasi2025optimalzerothorderbileveloptimization} that this dimensional dependence arises from explicit Hessian approximations. %
HF-ZOBA %
matches the complexity of Opt-ZMDSBA \cite{aghasi2025optimalzerothorderbileveloptimization}, which is indicated to be optimal in this regime. However, in contrast to Opt-ZMDSBA and ZMDSBA, whose such dimension dependence in the complexity partially comes from inner-loop iterations, our methods’ dependence arises from directions and batch sizes, which can be fully parallelized, yielding faster practical runtime.

\section{Experiments}\label{sec:experiments}
In this section, we present numerical experiments to evaluate the empirical performance of the proposed methods. We compare ZOBA and HF-ZOBA with ZDSBA \cite{Aghasi2025}, ZMDSBA \cite{aghasi2025optimalzerothorderbileveloptimization}, and Opt-ZMDSBA \cite{aghasi2025optimalzerothorderbileveloptimization}. We consider two problems: a synthetic quadratic bilevel problem and the minimal universal black-box adversarial attack. For each algorithm and each task, hyperparameters are tuned via grid search. In both experiments, we run the algorithms with a fixed budget of $10^5$ function evaluations. All experiments are repeated $10$ times, and the mean and standard deviation of the results are reported in Figures~\ref{fig:synthetic_exp} and~\ref{fig:adv_exp}. Additional details are provided in Appendix~\ref{app:exp_details}.

\paragraph*{Synthetic target.} Here, we evaluate the proposed methods on a synthetic quadratic bilevel problem. %
We consider settings with equal inner and outer dimensions $p=d$, and perform experiments considering $d \in\{25,50,100\}$. %
\begin{figure}[H]
    \centering
    \includegraphics[width=0.32\linewidth]{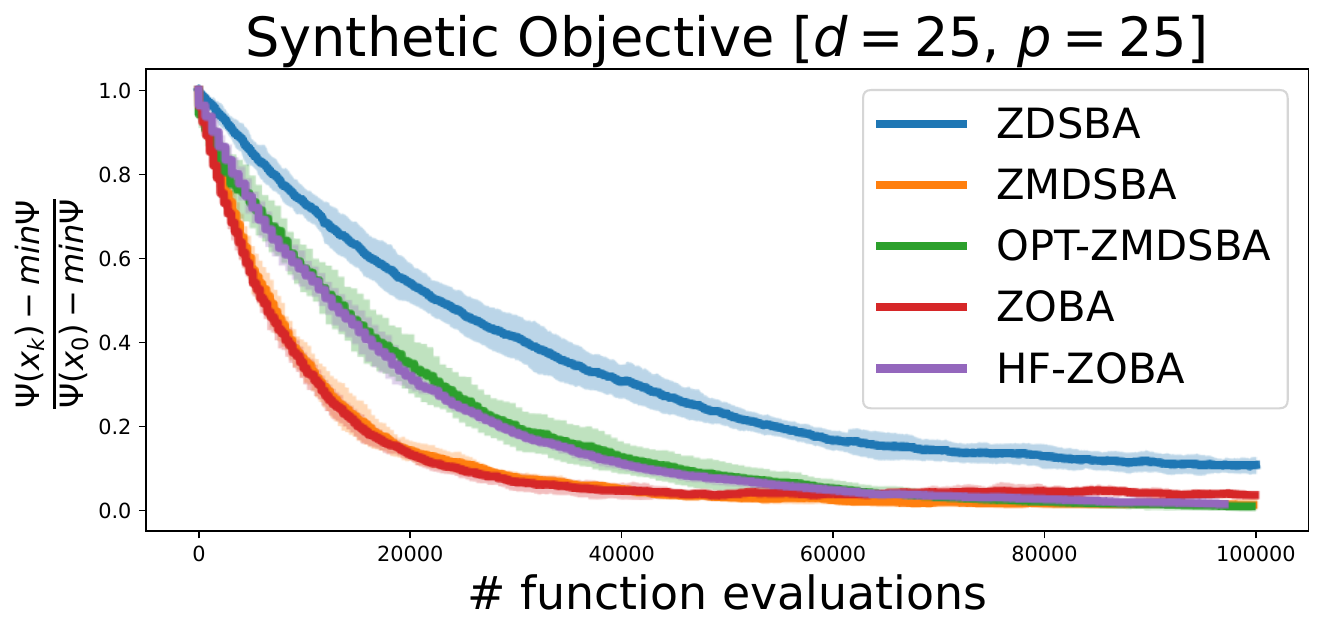}
    \includegraphics[width=0.32\linewidth]{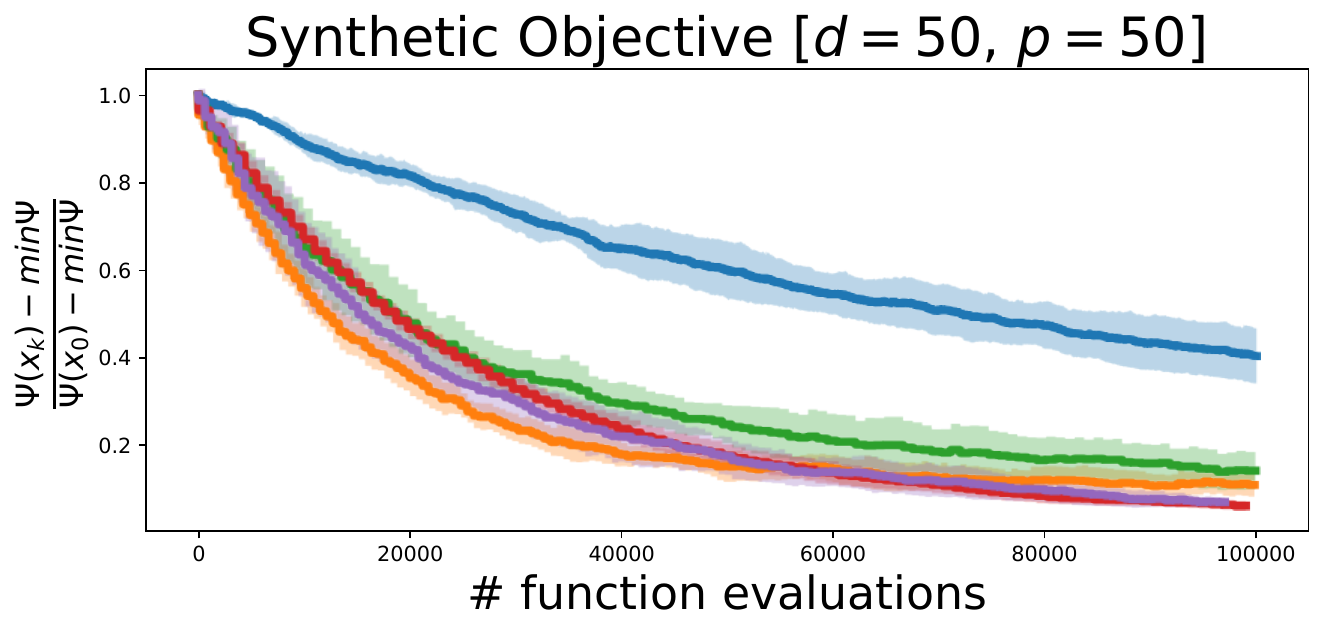}
    \includegraphics[width=0.32\linewidth]{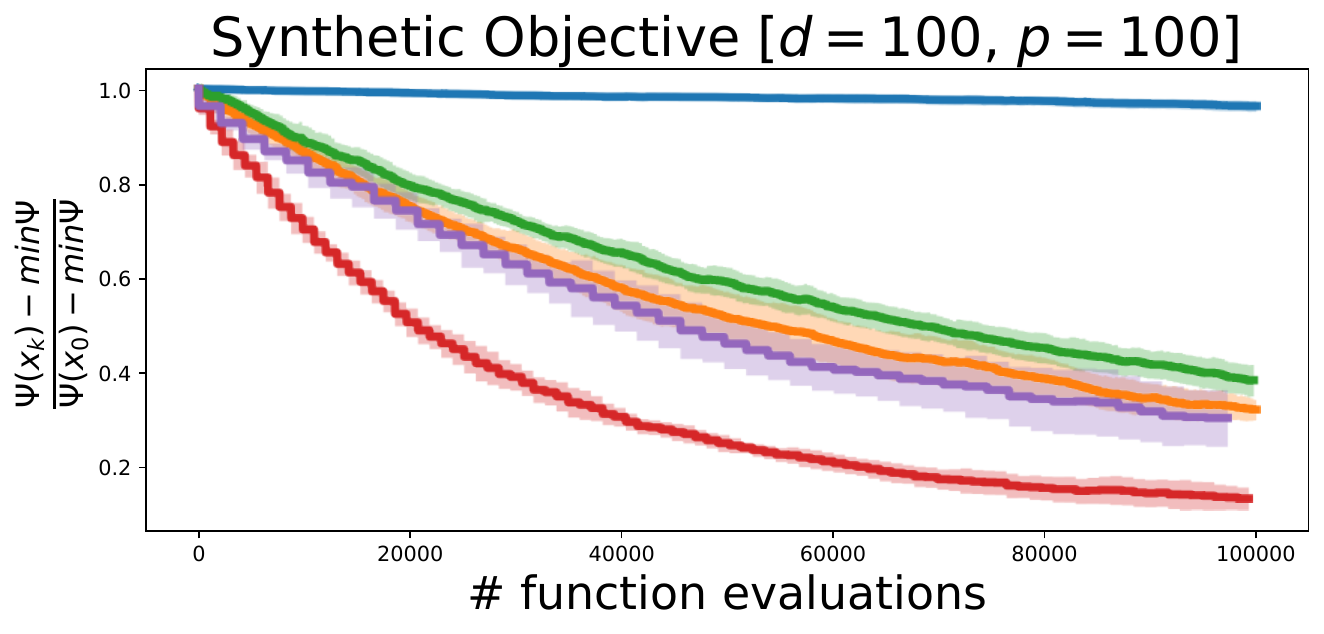}\\
    \includegraphics[width=0.32\linewidth]{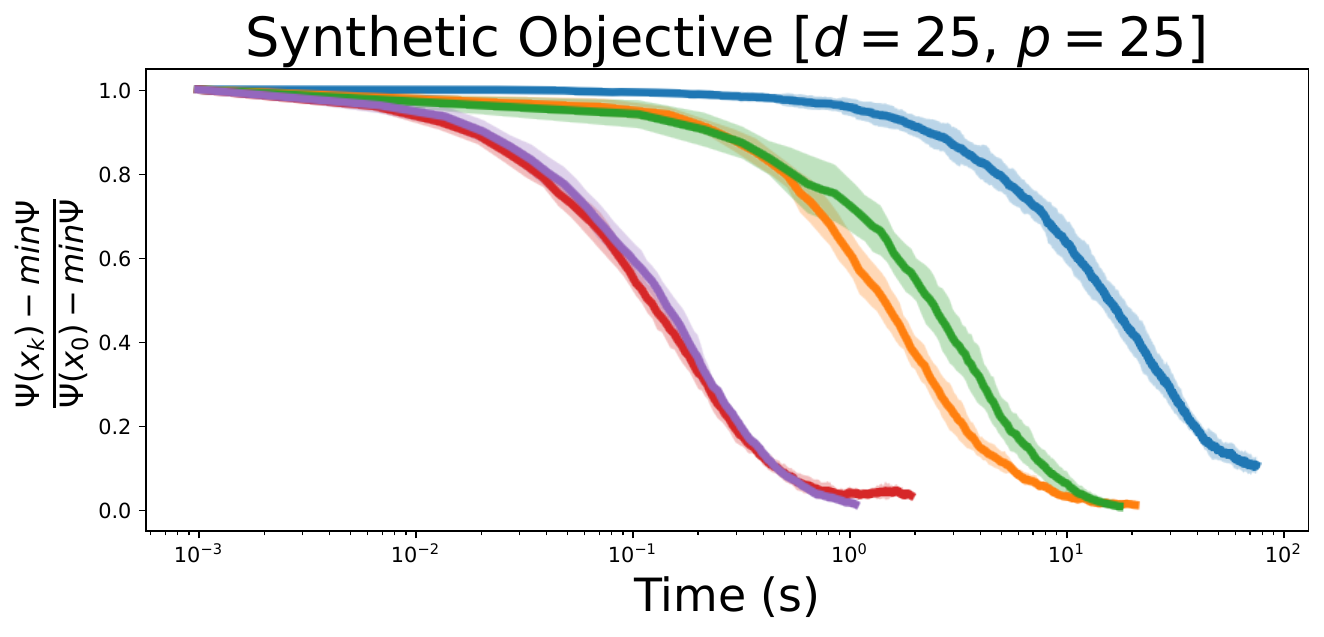}
    \includegraphics[width=0.32\linewidth]{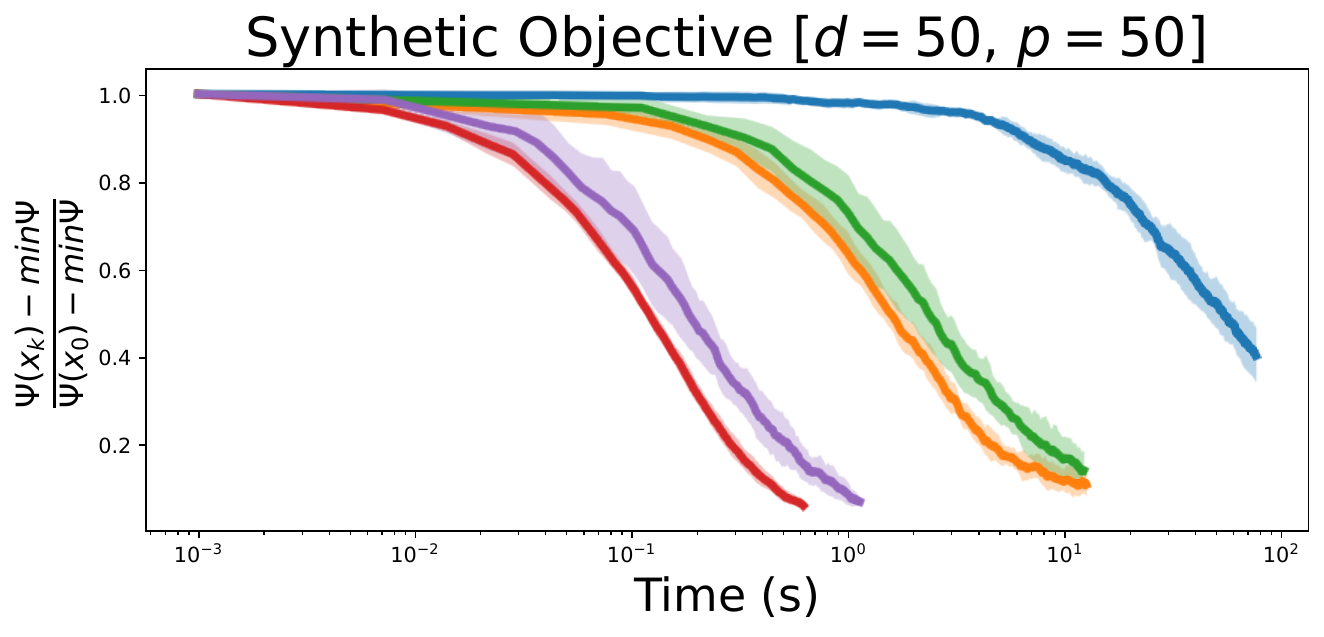}
    \includegraphics[width=0.32\linewidth]{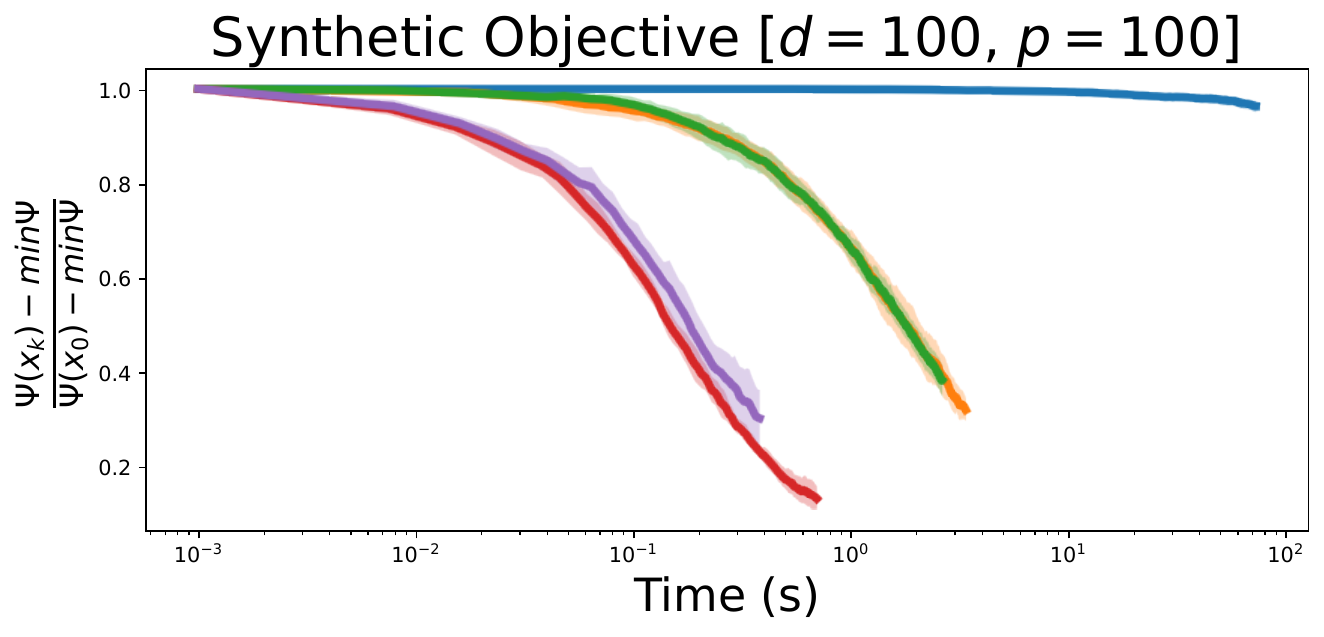}\\
    \caption{Comparison of algorithms on synthetic bilevel problems with varying dimensions: objective value versus function evaluations (first row) and versus wall-clock time in seconds (second row).}
    \label{fig:synthetic_exp}
\end{figure}
\noindent In Figure~\ref{fig:synthetic_exp} (first row) we report the normalized function value gap i.e. $(\Psi(x_k)-\min\Psi)/(\Psi(x_0)-\min\Psi)$ as a function of the number of stochastic function evaluations. We observe that ZDSBA performs worst across all problem sizes, which is consistent with its theoretical complexity. For $d=25$ and $d=50$, our methods attain performance comparable to ZMDSBA and Opt-ZMDSBA, while in the case $d=100$ they yield higher performance. This improvement can be attributed to two main factors. First, our gradient and Hessian surrogates allow flexible choices of the number of samples $b_1,b_2$ and directions $\ell_1,\ell_2$, enabling a more favorable trade-off between stochastic noise and approximation accuracy, whereas ZMDSBA and Opt-ZMDSBA are restricted to $\ell_1=\ell_2=1$ by design. Second, ZMDSBA and Opt-ZMDSBA rely on a two-loop structure in which the inner loop is used to approximate $v^*$ (or its surrogate in Opt-ZMDSBA) and the inner solution $z^*$ via multiple iterations based on gradient or Hessian approximations with a single Gaussian direction. As the dimension increases, more such inner iterations are required; %
this behavior is also reflected in their theoretical complexity, where dimension-dependent terms appear in the required number of inner-loop iterations. In contrast, our single-loop methods control the accuracy of such approximations %
through the number of directions used in the finite-difference gradient surrogates, confirming (and extending to this bilevel setting) the well-known observation in the finite-difference literature that using multiple directions leads to more accurate gradient estimates and higher practical performance than single-direction schemes - see e.g. \cite{simple_rs_mania,sto_md,salimans2017evolutionstrategiesscalablealternative,ZO-BCD,zoro,ozd,Rando2024,rando2025structuredtouroptimizationfinite,rando2025structured}. Moreover, despite the theoretical guarantees of Hessian-free approaches (Opt-ZMDSBA and HF-ZOBA), the methods based on explicit Hessian approximations (ZMDSBA and ZOBA) achieve comparable or better performance. A similar phenomenon was reported in \cite{aghasi2025optimalzerothorderbileveloptimization}, suggesting that for problems in which coarse Hessian surrogates already provide sufficiently accurate approximations, faster convergence may be observed in practice. In Figure~\ref{fig:synthetic_exp} (second row), we report the normalized function value gap as a function of wall-clock time. We observe that our methods complete the experiments in significant less time than competing methods. 
This gain stems from the parallelization and information reuse enabled by delayed updates, unlike ZDSBA, ZMDSBA, and Opt-ZMDSBA, which rely on non-parallelizable inner loops.

\paragraph*{Minimal distortion universal perturbation.}  We evaluate our methods on generating minimal-distortion universal adversarial perturbations (MD-UAP) for a multiclass MNIST classifier \cite{mnist_dataset}; see Appendix \ref{app:universal_pert_details} for details. We tackle this problem using subspace-based black-box attacks \cite{Wang2020}, which parameterize the perturbation as, e.g., $\delta = X z$, where $X \in \mathbb{R}^{d_{\text{img}} \times p}$ defines a low-dimensional subspace where $d_\text{img} = 784$ is the dimension of images and $z \in \mathbb{R}^p$ denotes the coefficients. %
While most existing approaches fix or compute $X$ heuristically, we instead optimize it by formulating MD-UAP as a black-box bilevel optimization problem, where the outer objective measures the average distortion and the inner objective measures the average confidence of correct classification. Through this reformulation, the bilevel algorithm seeks a subspace $X$ that minimizes the average distortion while finding coefficients $z^\star(X) \in \mathbb{R}^p$ that minimize the average confidence within that subspace, yielding the parameterization $\delta = X z^\star(X)$. To the best of our knowledge, this is the first work to formulate subspace-based MD-UAP by explicitly optimizing the subspace within a bilevel framework. We set $p=100$ and construct perturbations for images labeled $4$. 
\begin{figure}[h]
    \centering
    \includegraphics[width=0.8\linewidth]{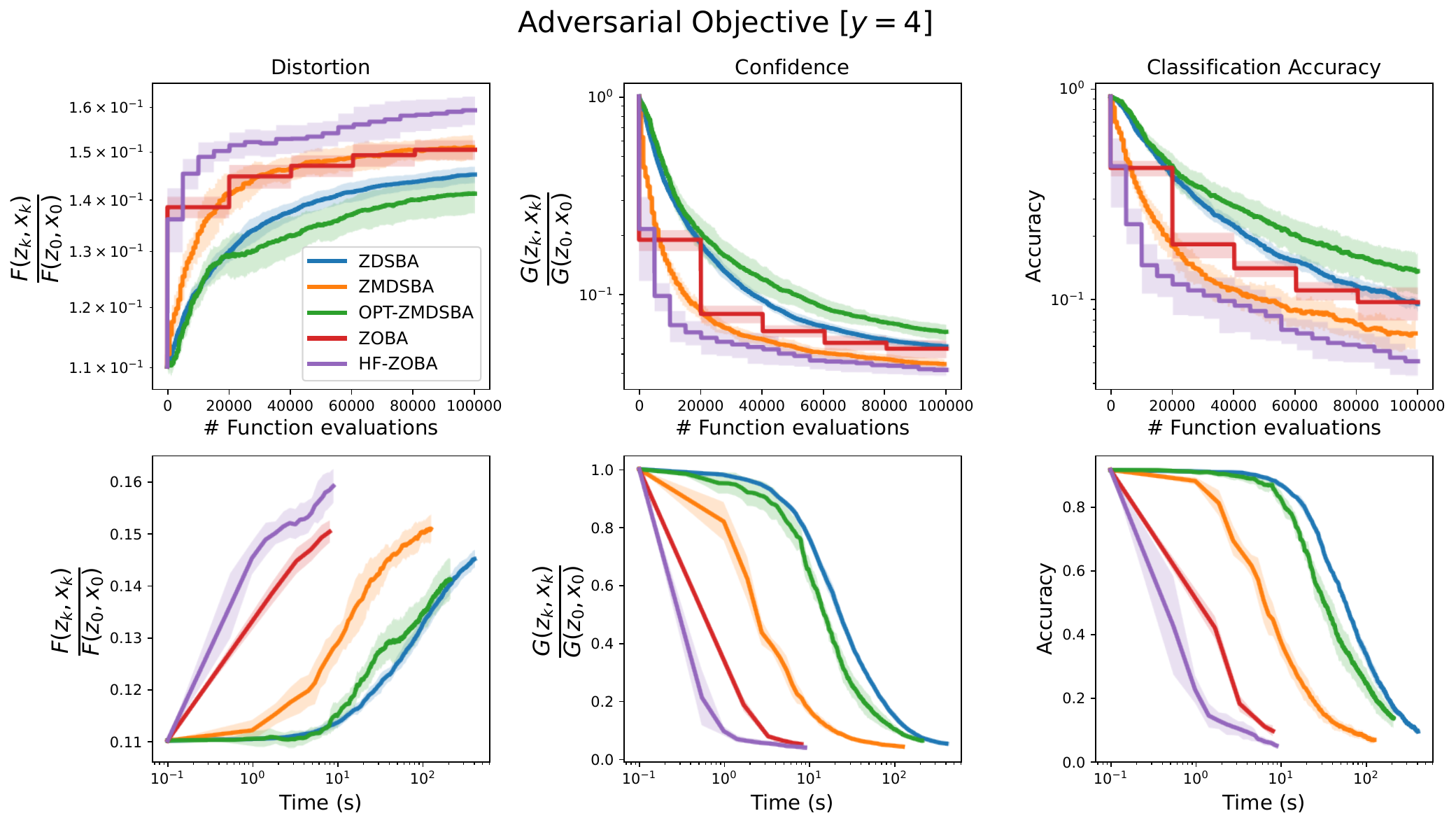}
    \caption{Comparison of algorithms on minimal distortion universal perturbation attack. Normalized outer function values and inner function values on iterations $z_k,x_k$ and classification accuracy are reported as function of number of function evaluations (first row) and versus wall-clock time in seconds (second row).}
    \label{fig:adv_exp}
\end{figure}
In Figure~\ref{fig:adv_exp} (first row), we report the outer (distortion) and inner (confidence) objective values along the sequences ${X_k}$ and ${z_k}$, together with the classification accuracy, as functions of the number of stochastic function evaluations. As in the previous experiment, values are repeated according to the number of stochastic evaluations performed per iteration. We observe that our methods achieve performance comparable to ZMDSBA and Opt-ZMDSBA, and better performance than ZDSBA. Notice that for all methods, the average distortion initially increases before reaching a plateau; this behavior is expected since the perturbation is initialized with very small magnitude and must grow in norm to effectively reduce the classification confidence. Although the final performance is similar across the considered methods, our approaches reach comparable accuracy and confidence levels in fewer iterations by leveraging multiple function evaluations per iteration to construct high-quality gradient and Hessian surrogates. This yields a practical advantage due to full parallelization of the evaluations. Such an effect is illustrated in Figure~\ref{fig:adv_exp} (second row), where the same quantities are plotted against wall-clock time. Here, we observe that our methods achieve substantially better runtime performance than competing approaches. Notice that ZOBA requires many function evaluations per iteration due to costly Hessian-block estimation. In contrast, HF-ZOBA requires fewer evaluations for estimating gradients and, by preserving a single-loop structure, achieves higher performance within the same wall-clock time.

\section{Conclusion}\label{sec:conclusion}
In this work, we introduced ZOBA, the first single-loop algorithm for fully zeroth-order bilevel optimization, and HF-ZOBA, its Hessian-free variant. We analyzed both algorithms and derived convergence rates. %
Our experiments show that single-loop approaches provide clear practical advantages in wall-clock time over current two-loop state-of-the-art methods, making them particularly suitable for computationally expensive problems. Future directions include extending the analysis to non-smooth settings, relaxing the strong-convexity assumptions and developing adaptive parameter selection strategies.%

\paragraph*{Acknowledgments.} This work has been supported by the French government, through the 3IA Cote d’Azur Investments in the project managed by the National Research Agency (ANR) with the reference number ANR-23-IACL-0001, the ANR project PRC MAD ANR-24-CE23-1529 and the support of the ``France 2030'' funding ANR-23-PEIA-0004 (PDE-AI).

\bibliography{bibliography}
\bibliographystyle{plain}

\newpage
\appendix

\section{Cost Comparison with Finite-Difference Bilevel Optimization Methods}

In this appendix, we compare our methods with state-of-the-art finite-difference algorithms in terms of the function evaluations performed at each iteration. More precisely, we analyze how many function evaluations are required per iteration by state-of-the-art methods and how these evaluations are used.

\paragraph*{ZDSBA \cite{Aghasi2025}.}
At each iteration $k \in \mathbb{N}$, ZDSBA first approximates the inner solution $z^*(x_k)$ at the current iterate $x_k$ by constructing the sequence $(z_k){i=1}^{N\text{inner}}$ through $N_\text{inner} > 0$ iterations of a stochastic zeroth-order method. This procedure involves the construction of a stochastic gradient surrogate of the inner objective at every step. Such a surrogate is built using finite differences with a single sample and a single Gaussian direction. Each inner iteration therefore requires two function evaluations, resulting in a total cost of $2N_\text{inner}$ function evaluations. Next, the method approximates $v^*(x_k)$ by constructing a sequence $(v_k){k=1}^{N\text{inverse}}$ via $N_\text{inverse}$ iterations of a stochastic zeroth-order method applied to the quadratic form in eq. \eqref{eqn:v_star}, where $z^*(x_k)$ is replaced by $z_{N_\text{inner}}$. Each iteration again relies on single-sample, single-direction finite-difference estimators. Since this quadratic form requires approximations of both the gradient of the outer objective with respect to the first variable and the corresponding Hessian block, each iteration incurs a cost of $5$ function evaluations. Finally, the Hessian cross-block and the gradient of the outer objective with respect to the second variable are approximated. Again, such approximations are built using single-sample, single Gaussian-direction estimators, incurring an additional cost of $5$ function evaluations. Therefore, the total number of function evaluations per iteration $k$ is
\begin{equation*}
2N_\text{inner} + 5N_\text{inverse} + 5.
\end{equation*}
\paragraph*{ZMDSBA \cite{aghasi2025optimalzerothorderbileveloptimization}.} ZMDSBA generalizes ZDSBA by allowing multiple samples in the estimation of the Hessian cross-block and the gradient of the outer objective with respect to the second variable. Specifically, these quantities are approximated using finite differences with a single Gaussian direction and $N_\text{sample}$ samples (i.e. our gradient/hessian estimators with $\ell_1 = \ell_2 = 1$ and $b_1 = b_2 = N_\text{sample}$). The other steps are identical to ZDSBA. As a result, the total number of function evaluations per iteration becomes
\begin{equation*}
2N_\text{inner} + 5N_\text{inverse} + 5N_\text{sample}.
\end{equation*}

\paragraph*{OPT-ZMDSBA \cite{aghasi2025optimalzerothorderbileveloptimization}.} OPT-ZMDSBA is based on a regularization-based reformulation of the bilevel problem, where the inner optimization is incorporated via a penalty function. This approach allows the algorithm to avoid explicit Hessian approximations. At each iteration $k$, the algorithm constructs approximations of $z^*(x_k)$ with a sequence $(z_k)_{k=1}^{N_\text{inner}}$ and an auxiliary sequence $(y_k)_{k=1}^{N_\text{inner}}$ by performing $N_\text{inner}$ iterations of a stochastic zeroth-order method using single-sample, single Gaussian direction gradient surrogates. More precisely, these sequences require computing three gradient surrogates of the inner and outer objectives per iteration, resulting in a total cost of $6N_\text{inner}$ function evaluations. Then, the algorithm constructs a search direction to update the iterate. Such a directions requires computing three surrogate gradients of the inner and outer objectives using single-direction estimators with $N_\text{sample}$ samples, incurring a cost of $6N_\text{sample}$ function evaluations. Therefore, the total number of function evaluations per iteration is
\begin{equation*}
6N_\text{inner} + 6N_\text{sample}.
\end{equation*}
In Table \ref{tab:alg_comparison}, we summarize these methods by reporting the number of function evaluations per iteration (\# FE) and their theoretical complexity, i.e. the number of function evaluations required to achieve $\varepsilon \in (0,1)$ accuracy.

\begin{table}[H]
    \centering
    \caption{Comparison of bilevel zeroth-order algorithms in terms of complexity and number of function evaluations performed at every iteration (\# FE).} \label{tab:alg_comparison}
    \begin{tabular}{llll}
    \toprule
       Method & Loops & \# FE & Complexity   \\
     \midrule
       ZDSBA & two-loops & $2N_\text{inner} + 5N_\text{inverse} + 5$ & $\mathcal{O}((d + p)^4 \varepsilon^{-3} \log((d + p)/\varepsilon))$\\
       ZMDSBA & two-loops & $2N_\text{inner} + 5N_\text{inverse} + 5N_\text{sample}$ & $\mathcal{O}(p(d + p)^2 \varepsilon^{-2} \log(1/\varepsilon))$\\
       OPT-ZMDSBA & two-loops & $6N_\text{inner} + 6N_\text{sample}$ & $\mathcal{O}((d + p) \varepsilon^{-2})$\\
       ZOBA & single-loop & $b_1(4\ell_1+1)+b_2(2\ell_2+1)$& $\mathcal{O}(p(d + p)^2 \varepsilon^{-2})$\\
       HF-ZOBA & single-loop & $2b_1(2\ell_1+1) + b_2(2\ell_2+1)$ & $\mathcal{O}((d + p) \varepsilon^{-2})$\\
    \bottomrule
    \end{tabular}
\end{table}
\noindent In Table \ref{tab:alg_comparison}, we observe that in the limit case $b_1 = b_2 = \ell_1 = \ell_2 = N_\text{inner} = N_\text{sample} = N_\text{inverse} = 1$, our methods require fewer function evaluations per iteration than other finite-difference bilevel algorithms. This reduction stems from the reuse of function evaluations enabled by the use of delayed information in ZOBA and HF-ZOBA.  In addition, the function evaluations required by ZOBA and HF-ZOBA can be performed in parallel across directions and batch samples, while the evaluations in ZDSBA, ZMDSBA and OPT-ZMDSBA are inherently sequential due to their inner-loop structure. As a result, the effective wall-clock cost of ZOBA and HF-ZOBA can be significantly lower in practice, even when the number of function evaluations is comparable or even higher.

\section{Experimental Details}\label{app:exp_details}
In this appendix we provide all details on the experimental setup and parameter tuning used to perform the experiments reported in Section \ref{sec:experiments}. We implemented all scripts in Python~3 (version 3.11) and used the NumPy (version 1.25.0) \cite{numpy}, PyTorch (version 2.0.1) \cite{pytorch}, and Matplotlib (version 3.10.8) \cite{matplotlib} libraries. This appendix is organized as follows. In Appendix \ref{app:quad_problem_details}, we provide details of the experiments on the synthetic quadratic problem while in Appendix \ref{app:universal_pert_details}, we provide details of the experiments on the minimal-distortion universal perturbation task. %
Details on the machine used to perform the experiments are reported in Table \ref{tab:machine_details}. %

\begin{table}[H]
    \centering
    \caption{Machine used to perform the experiments} \label{tab:machine_details}
    \begin{tabular}{ll}
    \toprule
       Feature  &   \\
     \midrule
       CPU  & 64 x AMD EPYC 7301 16-Core\\
       GPU & 1 x NVIDIA Quadro RTX 6000\\
       RAM & 256 GB\\
    \bottomrule
    \end{tabular}
\end{table}

\subsection{Synthetic Problem}\label{app:quad_problem_details}
Here we describe the synthetic problem considered in Section \ref{sec:experiments}. We consider the minimization of the following function
\begin{equation}\label{eqn:app_synthetic_problem}
    \begin{aligned}
 \min_{x \in \mathbb{R}^d} \Psi(x) &:= F(z^*(x), x) :=\frac{1}{2n}\sum_{i=1}^{n}\left(C_{i} z^*(x) - D_{i} x - b_i\right)^2 + \frac{1}{2} \|x - \bar{x}\|_2^2 \\
\text{s.t.}\quad z^*(x) &\in \arg\min_{z \in \mathbb{R}^p}G(z,x) :=\frac{1}{2m}\sum_{j=1}^{m} \left( A_{j} z -B_{j} x - a_j\right)^2,
    \end{aligned}
\end{equation}
where $A \in \mathbb{R}^{m \times p},B \in \mathbb{R}^{m \times d},C \in \mathbb{R}^{n \times p},D \in \mathbb{R}^{n \times d}$ are Gaussian matrices where every entry is sampled from $\mathcal{N}(0,1)$. For fixed $\bar{z} \in \mathbb{R}^p$ and $\bar{x} \in \mathbb{R}^d$, we define for every $j=1,\cdots,m$ and $i=1,\cdots, n$
\begin{equation*}
    \begin{aligned}
        a := A \bar{z} - B\bar{x} \quad \text{and} \quad b := C\bar{z} - D\bar{x}.
    \end{aligned}
\end{equation*}
In the experiments in Section \ref{sec:experiments}, we fixed $n = m = 1000$, $\bar{z} = [2,\cdots,2] \in \mathbb{R}^p$ and $\bar{x} = [1,\dots, 1] \in \mathbb{R}^d$. We considered the setting $d = p =25, 50, 100$.

\paragraph{Parameter Tuning.}%

All parameters of every algorithm were tuned via grid search. For every algorithm and each parameter configuration in its parameter grid, the algorithm was run $6$ times with a fixed budget of $10^5$ function evaluations, using $6$ different random initializations. The same set of initializations was used across all parameter configurations to ensure a fair comparison. The optimal parameter configuration was selected as the one minimizing, on average over the runs, the normalized function value gap at the final iterate,
\begin{equation*}
    \begin{aligned}
        \frac{\Psi(x_K) - \min \Psi}{\Psi(x_0) - \min \Psi},
    \end{aligned}
\end{equation*}
where $x_K$ denotes the iterate obtained at the last iteration performed $K$. Initializations of $x_0$ and $z_0$ were sampled independently from the uniform distribution over the hypercubes $[-5.0, 10.0]^d$ and $[-5.0, 10.0]^p$, respectively. The initialization of $v_0$ (i.e. the approximation of the inverse Hessian-vector product) was set to the zero vector. Once the optimal parameter configuration was identified, the algorithm was run $11$ additional times using the same evaluation budget. To mitigate the effect of initialization overhead (e.g., library loading) on runtime measurements, the first run was discarded, and all reported results were computed by averaging over the remaining $10$ runs.

\paragraph*{Search Spaces.} 
For all algorithms, every discretization parameter used are fixed to $h = 10^{-3}$. Every stepsize parameter (used by all algorithms) was tuned over the set $\{10^{-5}, 10^{-4}, 10^{-3}, 10^{-2}\}$. For ZOBA and HF-ZOBA, the number of directions $\ell_1$ and $\ell_2$ used to approximate stochastic gradients and Hessians was selected from $\{1, 10,  50, 100\}$. Every batch sizes ($b_1, b_2$ in ZOBA, HF-ZOBA and $s_k$ in ZMDSBA and OPT-ZMDSBA of \cite{aghasi2025optimalzerothorderbileveloptimization}) were tuned over $\{1, 10, 50, 100\}$. Notice that the number of directions and batch sizes are not required to be equal, i.e., $\ell_1, \ell_2, b_1, b_2$ can all take different values. The number of inner-loop iterations used to approximate $z^*$ and $v^*$ for ZDSBA, ZMDSBA, and OPT-ZMDSBA was tuned over $\{1, 10, 50\}$. Finally, the regularization parameter $\lambda$ for OPT-ZMDSBA was selected from $\{1.0, 10.0, 100.0\}$. All parameters tuned, their ranges and the algorithms that used them are summarized in Table \ref{tab:parameter_grids}.  For each method, the full search grid was defined as the Cartesian product of the candidate sets corresponding to its parameters.

\begin{table}[h]
\centering
\caption{Parameter grids used for each algorithm.}
\label{tab:parameter_grids}
\begin{tabular}{l l l}
\toprule
\textbf{Parameter} & \textbf{Algorithm(s)} & \textbf{Candidate Values} \\
\midrule
Stepsizes & All algorithms & $\{10^{-5}, 10^{-4}, 10^{-3}, 10^{-2}\}$ \\
Number of directions & ZOBA, HF-ZOBA & $\{1, 10, 25, 50, 100\}$ \\
Batch sizes & ZOBA, HF-ZOBA, ZMDSBA, OPT-ZMDSBA & $\{1, 10, 50, 100\}$ \\
Inner-loop iterations & ZDSBA, ZMDSBA, OPT-ZMDSBA & $\{1, 10, 50\}$ \\
Regularization  & OPT-ZMDSBA & $\{1.0, 10.0, 100.0\}$ \\
\bottomrule
\end{tabular}
\end{table}
\paragraph*{Impact of the number of directions and batchsize.} We study how the performance of our algorithms is affected by the choice of the number of directions and the batch-size parameters. Specifically, we consider the optimization of the synthetic objective defined in eq. \eqref{eqn:app_synthetic_problem} with $p = d = 50$ and $n = m = 1000$. We fix a budget of $10^4$ function evaluations (i.e., fewer than those used in Section \ref{sec:experiments}) and run ZOBA and HF-ZOBA with a constant stepsize $\rho$, chosen from a grid of 20 logarithmically spaced values between $10^{-5}$ and $1.0$. The outer stepsize is set as a fraction of $\rho$, namely $\gamma = \rho/c$ and, in particular, we fix $c = 5.0$. The discretization parameters are fixed to $h = \hat{h} = 10^{-3}$. To reduce the number of hyperparameters, we impose $b := b_1 = b_2$ and $\ell := \ell_1 = \ell_2$. Each experiment is repeated five times, and we report the mean and standard deviation of the results. In Figure \ref{fig:app_same_bl_zoba}, we plot the normalized objective value progress $(\Psi(x_K) - \min \Psi)/(\Psi(x_0) - \min \Psi)$ at the final iterate $x_K$ obtained by ZOBA as a function of the stepsize $\rho$. If the algorithm diverges, this quantity is clipped to one.
\begin{figure}[h]
    \centering
    \includegraphics[width=0.8\linewidth]{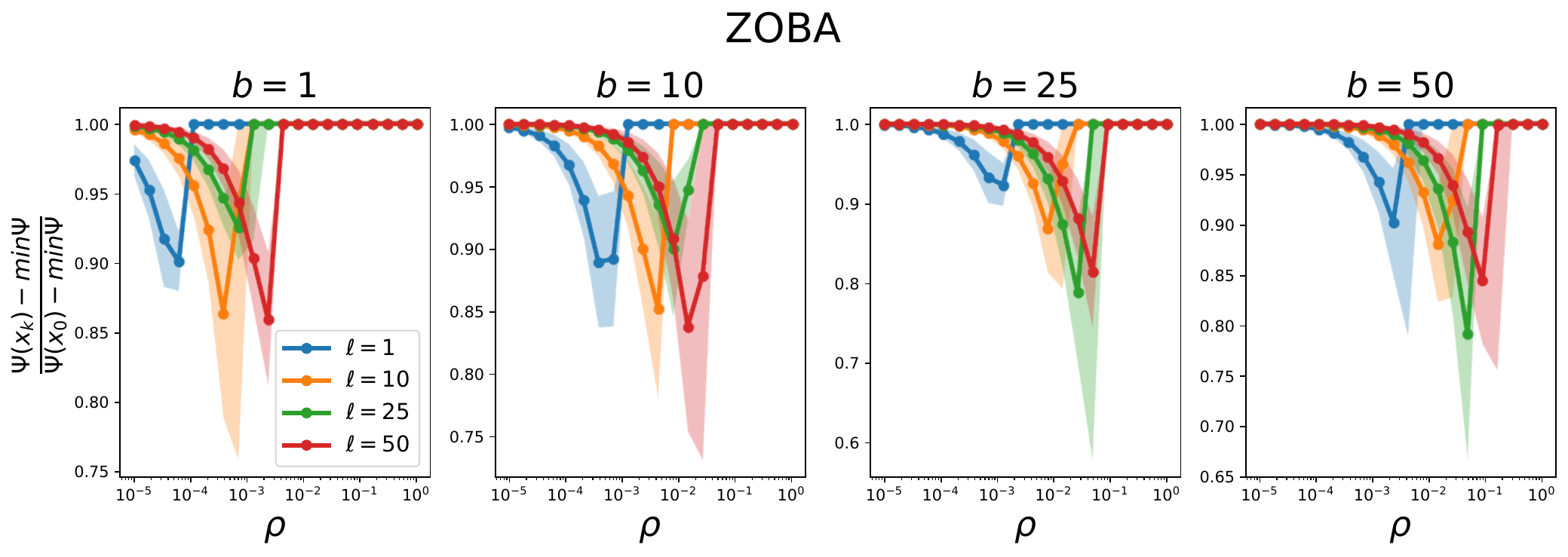}
    \includegraphics[width=0.8\linewidth]{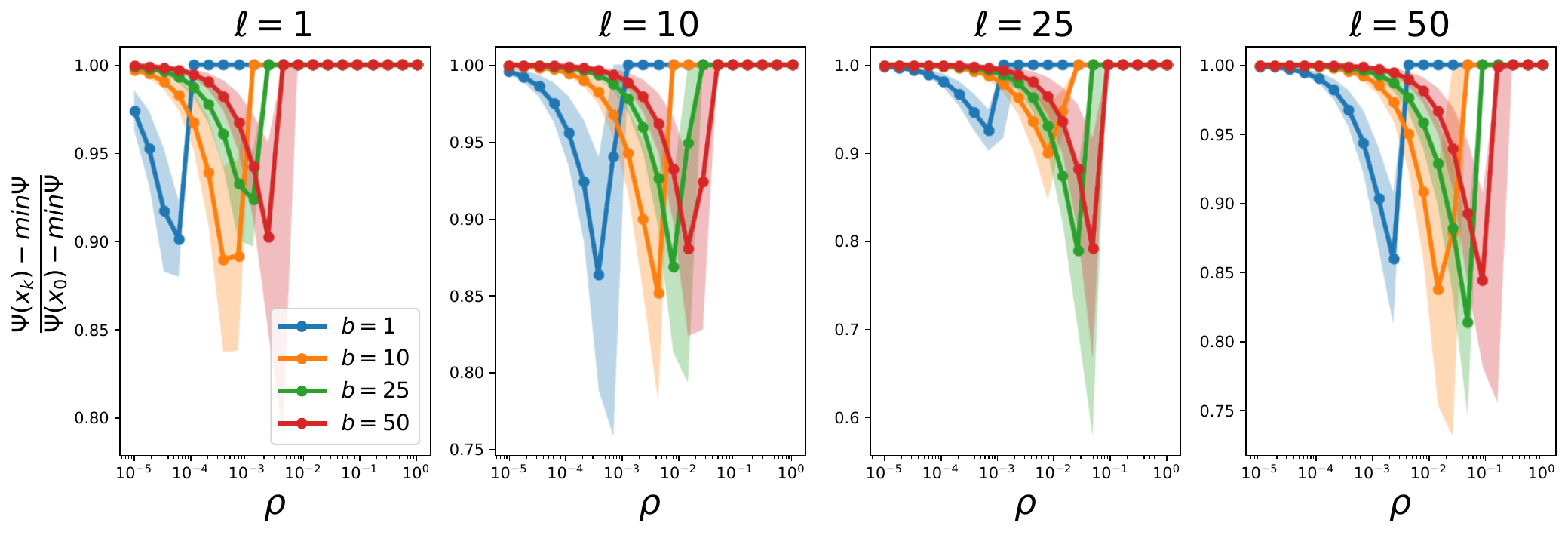}
    \caption{Normalized objective value progress at the final iterate obtained by running Algorithm \ref{alg:zoba} with different stepsizes $\rho$, numbers of directions $\ell$, and batch sizes $b$. If the algorithm diverges, the quantity $(\Psi(x_K) - \min \Psi)/(\Psi(x_0) - \min \Psi)$ is clipped to one. In the first row, every plot show how this quantity change for different values of $\ell$ by fixing the batch size $b$; in the second row, instead, every plot show how such value change for different values of $b$ by fixing $\ell$.}
    \label{fig:app_same_bl_zoba}
\end{figure}
From Figure \ref{fig:app_same_bl_zoba}, we observe that increasing the number of directions $\ell$, i.e., improving the quality of the stochastic gradient approximations for both the inner and outer objectives, makes ZOBA more stable and allows for the use of larger stepsize $\rho$. Moreover, increasing $\ell$ generally leads to better or comparable final performance in terms of solution quality, while also being potentially more time-efficient in practice since function evaluations can be computed in parallel. When $\ell$ (and eventually $b$) becomes too large, performance deteriorates. This behavior is due to the limited budget of function evaluations: large values of $\ell$ and $b$ consume many evaluations per iteration, resulting in too few optimization steps to make meaningful progress. Comparing the first and second rows of Figure \ref{fig:app_same_bl_zoba}, we further observe that although $\ell$ and $b$ play a similar conceptual role (both improving the quality of gradient and Hessian surrogates), fixing $b$ and increasing $\ell$ may yield better results in practice. For example, fixing $b = 1$ and increasing $\ell$ achieves a lower normalized objective gap $(\Psi(x_K) - \min \Psi)/(\Psi(x_0) - \min \Psi)$ than fixing $\ell = 1$ and increasing $b$. Moreover, this advantage diminishes or vanishes when the fixed parameter takes a large value. This behavior can again be attributed to the limited evaluation budget, or to the fact that when either $\ell$ or $b$ is sufficiently large, the variance becomes small and the two parameters play increasingly similar roles. One possible explanation of that phenomenon is that, in some regimes, stochastic (or mini-batch) gradients are already sufficiently accurate and it is also easier to obtain good zeroth-order approximations via multiple directions than improving surrogate quality through batching with poor approximations i.e. good approximations of high-variance stochastic gradients (large $\ell$, small $b$) may thus be preferable to poor approximations of low-variance minibatch gradients (small $\ell$, large $b$). %
A similar phenomenon has been observed in \cite{rando2025structured}, suggesting that this behavior is not specific in bilevel optimization but it can also arise in other stochastic settings. These findings provide empirical evidence suggesting that achieving a good balance between $\ell$ and $b$ is important for maximizing the performance of the method, and they support the use of flexible surrogates such as ours, rather than special cases in which $\ell$ is fixed to $1$ and only $b$ varies, as in \cite{Aghasi2025,aghasi2025optimalzerothorderbileveloptimization}.
\begin{figure}[H]
    \centering
    \includegraphics[width=0.8\linewidth]{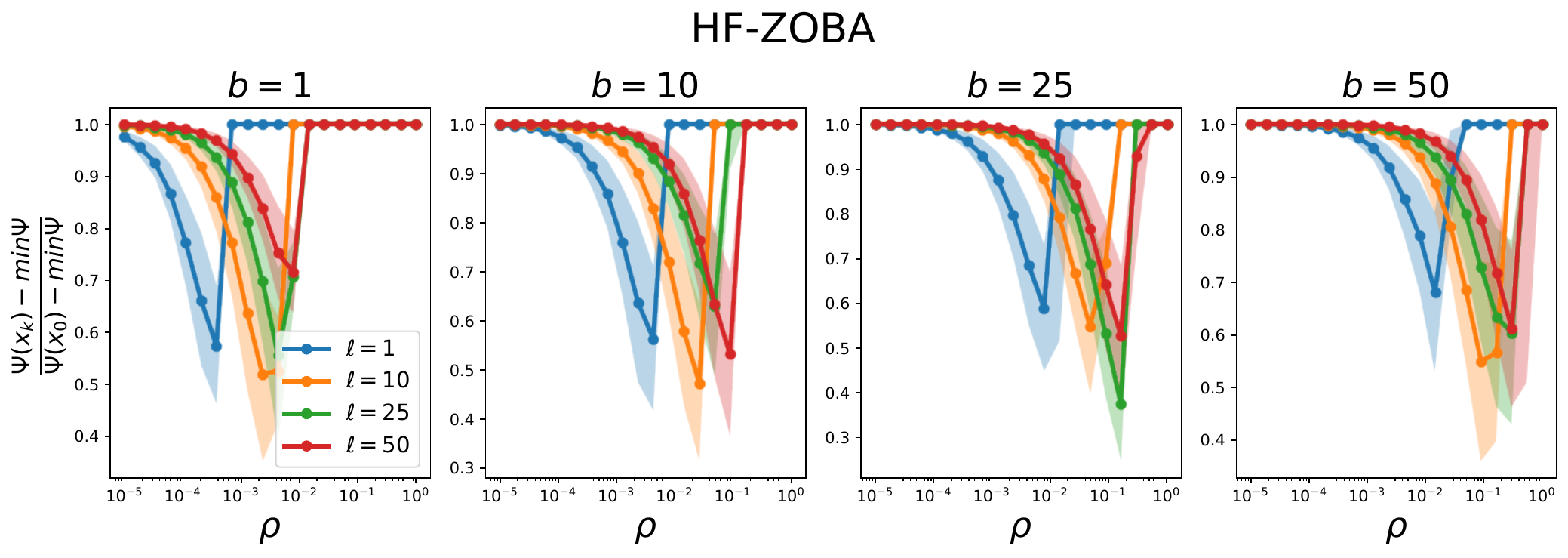}    \includegraphics[width=0.8\linewidth]{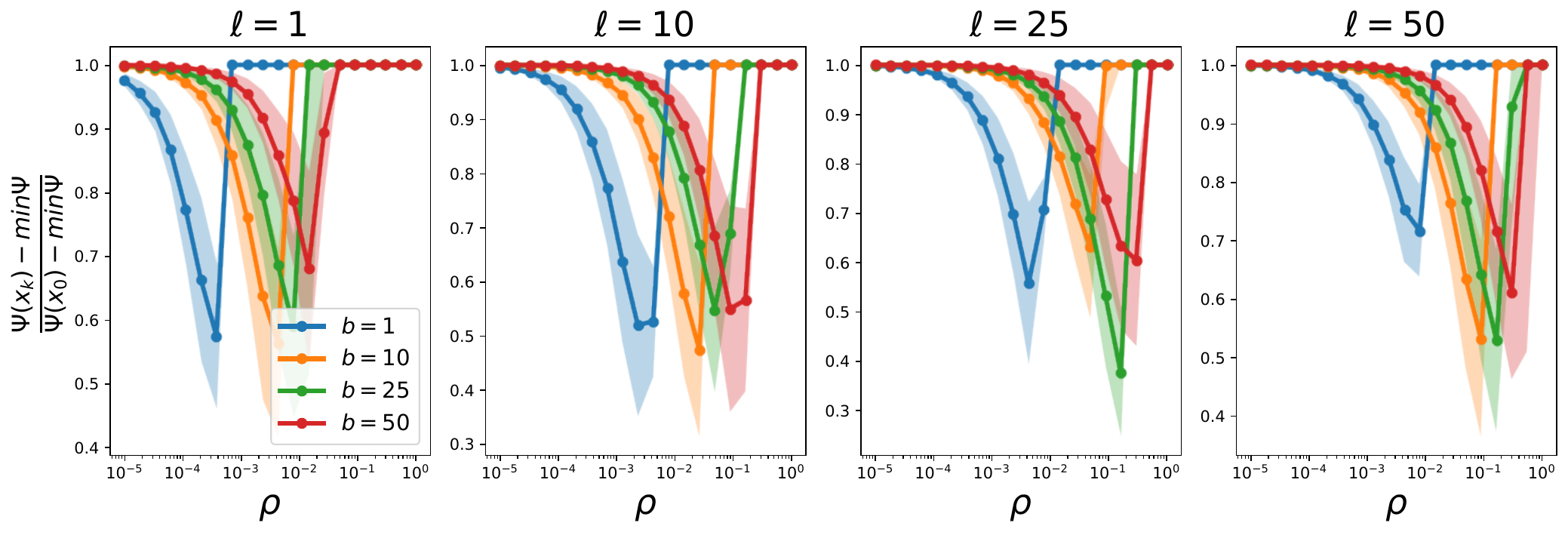}
    \caption{Normalized objective value progress at the final iterate obtained by running Algorithm \ref{alg:hfzoba} with $\gamma = \rho/5$ for different stepsizes $\rho$, numbers of directions $\ell$, and batch sizes $b$. If the algorithm diverges, the quantity $(\Psi(x_K) - \min \Psi)/(\Psi(x_0) - \min \Psi)$ is clipped to one. In the first row, every plot show how this quantity change for different values of $\ell$ by fixing the batch size $b$; in the second row, instead, every plot show how such value change for different values of $b$ by fixing $\ell$.}%
    \label{fig:app_same_bl_hfzoba}
\end{figure}
\noindent In Figure \ref{fig:app_same_bl_hfzoba}, HF-ZOBA exhibits a similar pattern: increasing $\ell$ or $b$ improves stability and allows for larger stepsizes $\rho$, and fixing a small $b$ while increasing $\ell$ achieves lower normalized objective gaps than the opposite. Comparing Figures \ref{fig:app_same_bl_hfzoba} and \ref{fig:app_same_bl_zoba}, HF-ZOBA achieves significantly better results than ZOBA under the limited evaluation budget, even though both methods perform similarly in Section \ref{sec:experiments}. This can be attributed to the smaller budget: while higher budgets allow using more directions and larger batch sizes, under limited evaluations fewer directions or smaller batch sizes must be used. ZOBA relies on explicit Hessian approximations, which can be noisy with few directions, leading to higher variance in the updates. As a result, HF-ZOBA, which relies solely on gradient surrogates, does not suffer from this variance, allowing for more precise steps and thus better performance under limited evaluation budget settings.

\subsection{Minimal-distortion Universal Perturbation}\label{app:universal_pert_details}

In this appendix, we report the experimental details of the minimal-distortion universal adversarial attack experiment presented in Section~\ref{sec:experiments}. More precisely, we describe the problem and we propose our bilevel formulation. We describe how we implemented and trained the classifier and how we choose the parameters of the optimizer.

\paragraph*{Problem Description \& Bilevel Formulation.} 
The goal of the minimal-distortion universal perturbation problem is the following: let $\hat{f} : \mathbb{R}^{d_{\text{img}}} \to \mathbb{R}^C$ be a trained multiclass classifier and let $S_{\text{test}}=\{(\hat{x}_i,y_i)\}_{i=1}^m$ with $\hat{x}_i \in \mathbb{R}^{d_{\mathrm{img}}}$ and $y_i \in \{1, \dots, C\}$ be a test dataset. Let $\psi : \mathbb{R}^{d_{\text{img}}} \times \mathbb{R}^{d_{\text{img}}} \to \mathbb{R}^{d_{\text{img}}}$ be a manipulation function that, given a data point $\hat{x}$ and a perturbation $\delta$, returns the perturbed data point with perturbation $\delta$. Given a function $L(y,\hat{f}(x))$ that measures the confidence of classifying $x$ with label $y$ and a distortion measure $D(\hat{x}, \delta)$ that quantifies the amount of distortion obtained by perturbing $\hat{x}$ with $\delta$, the minimal-distortion universal perturbation problem can be formulated as the following constrained optimization problem:
\begin{equation*}
    \begin{aligned}
        \min\limits_{\delta \in \mathbb{R}^{d_{\text{img}}}}& \frac{1}{m} \sum\limits_{i=1}^m D(\hat{x}_i, \delta) \quad \text{s.t.} \quad  \delta \in \argmin\limits_{\delta^\prime \in \mathbb{R}^{d_{\text{img}}}} \frac{1}{m} \sum\limits_{i=1}^m L(y_i,\hat{f}(\psi(\hat{x}_i, \delta))).
    \end{aligned}
\end{equation*}
\noindent Based on prior observations that adversarial perturbations tend to concentrate in low-dimensional subspaces of the input space (see, e.g., \cite{zoo,Bayer2025}) and the fact that zeroth-order methods can perform poorly in high-dimensional settings due to their dependence on dimension in the convergence rates (see, e.g., \cite{nesterov_random_2017}), we restrict the attack to a low-dimensional subspace. More precisely, we parametrize the perturbation as $\delta = Az$, with $A \in \mathbb{R}^{d_{\text{img}} \times p}$ and $z \in \mathbb{R}^p$ with $p < d_{\text{img}}$, and we rewrite the minimal-distortion universal perturbation problem as a zeroth-order bilevel optimization problem, where the outer problem chooses the transformation $A$ that minimizes the distortion measure and the inner problem selects the coefficients $z$ that minimize the classification confidence. The idea of restricting the attack to a lower-dimensional subspace has been proposed in previous works and is common, for instance, in spanning attacks~\cite{Wang2020}. In such works, the subspace is generally either heuristically fixed or selected at random; in contrast, we propose to optimize it. This approach relies on the intuition that minimizing the average distortion is an "easier" problem than minimizing classification confidence, which is instead recast as the inner optimization problem in $\mathbb{R}^p$ with a smaller $p$. In our experiments, we trained a multiclass classifier $\hat{f}$ on the MNIST dataset~\cite{mnist_dataset}. We fixed the manipulation function to be the Carlini \& Wagner transformation~\cite{carlini_wagner}, i.e., for all $\hat{x}, \delta \in \mathbb{R}^{d_{\text{img}}}$,
\begin{equation*}
    \psi(\hat{x}, \delta) = \frac{1}{2} \tanh{\left(\tanh^{-1}{\left(2\hat{x} \right)} +\delta \right)}.
\end{equation*} 
We considered $D$ to be the $\ell_1$ norm as the distortion measure, i.e., for all $\hat{x}, \delta \in \mathbb{R}^{d_{\text{img}}}$,
\begin{equation*}
    D(\hat{x}, \delta) = \| \hat{x} - \psi(\hat{x}, \delta)\|_1.
\end{equation*}
The accuracy criterion $L$ is fixed to be the black-box untargeted loss proposed in~\cite{zoo}, i.e.,
\begin{equation*}
    L\left(y,\hat{f} \left(\hat{x} \right) \right) = \max\left(\max\limits_{t \neq y} \log \left[\hat{f}(\hat{x}) \right]_t  - \log \left[ \hat{f}(\hat{x}) \right]_y, -\kappa \right),
\end{equation*}
with $\kappa = -1.0$. Informally, this loss measures how much the confidence of predicting the true class $y$ is smaller than the largest confidence of classfing the input in one of the other classes. If the true class already has the lowest confidence among all classes, the loss is clipped at $-\kappa$ to prevent excessively negative values. By minimizing this loss, the optimizer increases the relative confidence of an incorrect class compared to the true class, encouraging missclassification. We therefore consider the following bilevel optimization problem.
\begin{equation*}
    \begin{aligned}
        \min\limits_{A \in \mathbb{R}^{d_{\text{img}} \times p }}& \frac{1}{m} \sum\limits_{i=1}^m \| \hat{x}_i - \psi(\hat{x}_i, Az^*(A))\|_1 \quad \text{s.t.} \quad  z^*(A) \in \arg\min\limits_{z \in \mathbb{R}^{p}} \frac{1}{m} \sum\limits_{i=1}^m L(y_i,\hat{f}(\psi(\hat{x}_i, Az))).
    \end{aligned}
\end{equation*}
In our experiments, since we considered the MNIST dataset, the number of pixels in the images is $d_{\text{img}} = 784$, and we fixed $p = 100$.

\paragraph*{Data Preprocessing.} We downloaded the training and test sets of MNIST dataset \cite{mnist_dataset} and we normalized the images to be in $[-0.5, 0.5]^{d_\text{img}}$ as suggested in previous works - see e.g. \cite{liu_svr,rando2025structuredtouroptimizationfinite}.

\paragraph*{Network Architecture.} The multiclass classifier has been implemented as a convolutional neural network (CNN), following the architecture described in~\cite{rando2025structuredtouroptimizationfinite}. The network consists of five convolutional layers with ReLU activations, interleaved with max-pooling layers, and followed by fully connected layers.  Specifically, the first two convolutional layers use $3 \times 3$ kernels. The first layer has $1$ input channel and $32$ output channels, while the second layer has $32$ input channels and $64$ output channels. These two layers are followed by a $2 \times 2$ max-pooling layer with stride $2$. The third and fourth convolutional layers have $64$ input and output and use $3 \times 3$ kernels. A second $2 \times 2$ max-pooling layer with stride $2$ is applied thereafter. The resulting feature maps are flattened and passed through two fully connected layers, each with $200$ hidden units and ReLU activation. The final output layer is a fully connected layer with $10$ units and a softmax activation, corresponding to the $10$ classes of the MNIST dataset. The network architecture is summarized in Table~\ref{tab:network_architecture}.

\begin{table}[H]
\centering
\caption{CNN architecture}\label{tab:network_architecture}
\begin{tabular}{llll}
\toprule
Layer & Type & Parameters & Activation \\
\midrule
1 & Convolutional & in=$1$, out=$32$, kernel=$3\times3$ & ReLU \\
2 & Convolutional & in=$32$, out=$64$, kernel=$3\times 3$ & ReLU  \\
3 & Max Pooling & kernel=$2 \times 2$, stride=$2$ & --  \\
4 & Convolutional & in=$64$, out=$64$, kernel=$3 \times 3$ & ReLU  \\
5 & Convolutional & in=$64$, out=$64$, kernel=$3 \times 3$ & ReLU  \\
6 & Max Pooling & kernel=$2 \times 2$, stride=$2$ & --  \\
7 & Dense & in=$1024$, out=$200$ & ReLU  \\
8 & Dense & in=$200$, out=$200$ & ReLU  \\
9 & Dense & in=$200$, out=$10$ & Softmax  \\
\bottomrule
\end{tabular}
\end{table}

\paragraph*{Training Details.} We describe here the training procedure and hyperparameters used to train the classifier. In particular, we adopt the defensive distillation technique introduced in~\cite{defensive_distillation} to improve robustness against adversarial perturbations. Defensive distillation is a training procedure based on a teacher-student framework. Let $\mathcal{D} = \{(\hat{x}_i, y_i)\}_{i=1}^n$ denote the labeled training dataset, where $\hat{x}_i \in \mathbb{R}^{d_{\text{img}}}$ and $y_i \in \{1, \dots, C\}$ are the data points and labels respectively, with $C > 1$ denoting the number of classes. In the first stage, a teacher network $\hat{f}_{\text{teacher}}: \mathbb{R}^{d_{\text{img}}} \rightarrow \mathbb{R}^C$ is trained using a softened softmax output with temperature $T > 1$. Specifically, given the logits $x^{\text{log}} \in \mathbb{R}^C$ produced by the network for an input $\hat{x}$, the temperature-scaled softmax is defined as
\begin{equation*}
[\text{softmax}_T(x^{\text{log}})]_i = \frac{\exp(x^{\text{log}}_i / T)}{\sum_{j=1}^C \exp(x^{\text{log}}_j / T)}, \quad i = 1, \dots, C.    
\end{equation*}
The teacher model is trained by minimizing the standard cross-entropy loss between the softened predictions and the ground-truth labels. In the second stage, the trained teacher network is used to generate soft labels $\bar{y}_i = \text{softmax}_T(\hat{f}_{\text{teacher}}(\hat{x}_i))$ for $i = 1, \dots, n$ and a student network $\hat{f}_{\text{student}}$, with the same architecture as the teacher, is then trained to mimic such soft predictions by minimizing the cross-entropy loss with respect to these instead of the hard labels $y_i$. After training, the temperature is reset to $T = 1$ and the student model is used for inference. In our experiments, the temperature was set to $T = 100$ during both the teacher and student training stages, as suggested in~\cite{defensive_distillation}. Both the student and teacher networks are initialized from the weights obtained after one SGD step with temperature $T=1$, starting from random initialization, with all weights using Kaiming uniform initialization \cite{He_2015_ICCV} and biases set to zero. The training dataset is split into two disjoint subsets, with $80\%$ of the data used for training and $20\%$ for validation. All models were trained on training part using the SGD optimizer with learning rate $0.01$, momentum $0.9$, and batch size $32$. Early stopping based on the validation loss was employed, with a patience of $5$ epochs, as suggested in \cite{rando2025structuredtouroptimizationfinite}. Training was performed for up to $100$ epochs. Additionally, dropout with rate $0.8$ was applied to the last hidden layer before the output layer. A summary of the hyperparameters is provided in Table~\ref{tab:training_params}. After training, we assessed the performance of the student model by evaluating its training and test accuracy, achieving a training accuracy of $99.16\%$ and a test accuracy of $98.87\%$.%
\begin{table}[h]
    \centering
    \caption{Training hyperparameters}\label{tab:training_params}
    \begin{tabular}{ll}
    \toprule
    Parameter &   \\
    \midrule
    Optimizer & SGD with momentum \\
    Learning rate & $0.01$ \\
    Momentum & $0.9$ \\
    Batch size & $32$ \\
    Max epochs & $100$ \\
    Temperature & $100$ \\
    Early stopping & Patience of $5$ epochs (on validation loss) \\
    Dropout & $0.8$  \\
    \bottomrule
    \end{tabular}

\end{table}

\paragraph*{Parameter Tuning.} The procedure to tune parameters of the optimization algorithms is the same of the one adopted in the synthetic problem. Parameters are tuned with grid search. For every algorithm and every parameter configuration of the respective grid, we run such an algorithm $6$ times with a fixed budget of $10^5$ function evaluations. Since we cannot access to the function values of $\Psi$, we selected the best configuration of parameters for each algorithm as the one which minimize the average classification accuracy all over the runs computed at the last iteration performed. Initializations of $x_0$ and $z_0$ are sampled from Gaussian distributions with $0$ mean and $0.5$ of standard deviation while the initialization of $v_0$ is set to the zero vector. As in the synthetic problem, algorithms with optimal configuration are re-executed $5$ times to produce the plots in Section \ref{sec:experiments}. Parameter grids used are the same for the synthetic problems (see Table \ref{tab:parameter_grids}) with the only difference that the stepsize parameters are tuned over the set $\{10^{-7}, 10^{-6}, 10^{-5}, 10^{-4}, 10^{-3}, 10^{-2} \}$.

\paragraph*{Additional Experiments.} We repeated the adversarial experiment of Section \ref{sec:experiments} by constructing universal perturbations for images labeled with $y=1$, $y=6$, and $y=9$. In Figure \ref{fig:app_other_adv_exps}, we report, for each group of images with labels $1$, $6$, $4$ (used in Section \ref{sec:experiments}), and $9$, the normalized outer (distortion) and inner (confidence) objective values along the sequences ${x_k}$ and ${z_k}$, together with classification accuracy, as functions of the number of stochastic function evaluations (top row of each plot) and wall-clock time (bottom row of each plot). As in Section \ref{sec:experiments}, values plotted against the number of stochastic evaluations are repeated according to the number of evaluations performed per iteration. From Figure \ref{fig:app_other_adv_exps}, we observe again that our methods achieve comparable performance to state-of-the-art algorithms in significantly less time. In particular, ZOBA requires a large number of function evaluations per iteration, since obtaining reliable estimators of Hessian blocks is costly. This limits performance in budget-constrained settings, where only a few updates can be performed. In contrast, HF-ZOBA avoids this limitation by requiring fewer function evaluations per iteration, as it approximates Hessian–vector products using finite differences rather than explicitly estimating Hessian blocks. Moreover, by leveraging delayed information and a single-loop structure, HF-ZOBA reuses function evaluations to compute multiple terms of the search direction while retaining the benefits of parallelization. As a result, HF-ZOBA runs faster than other approaches while maintaining comparable performance, and without relying on a regularized version of the problem (as in Opt-ZMDSBA \cite{aghasi2025optimalzerothorderbileveloptimization}), which can introduce additional bias and reduce performance.

\begin{figure}[H]
    \centering
    \includegraphics[width=0.49\linewidth]{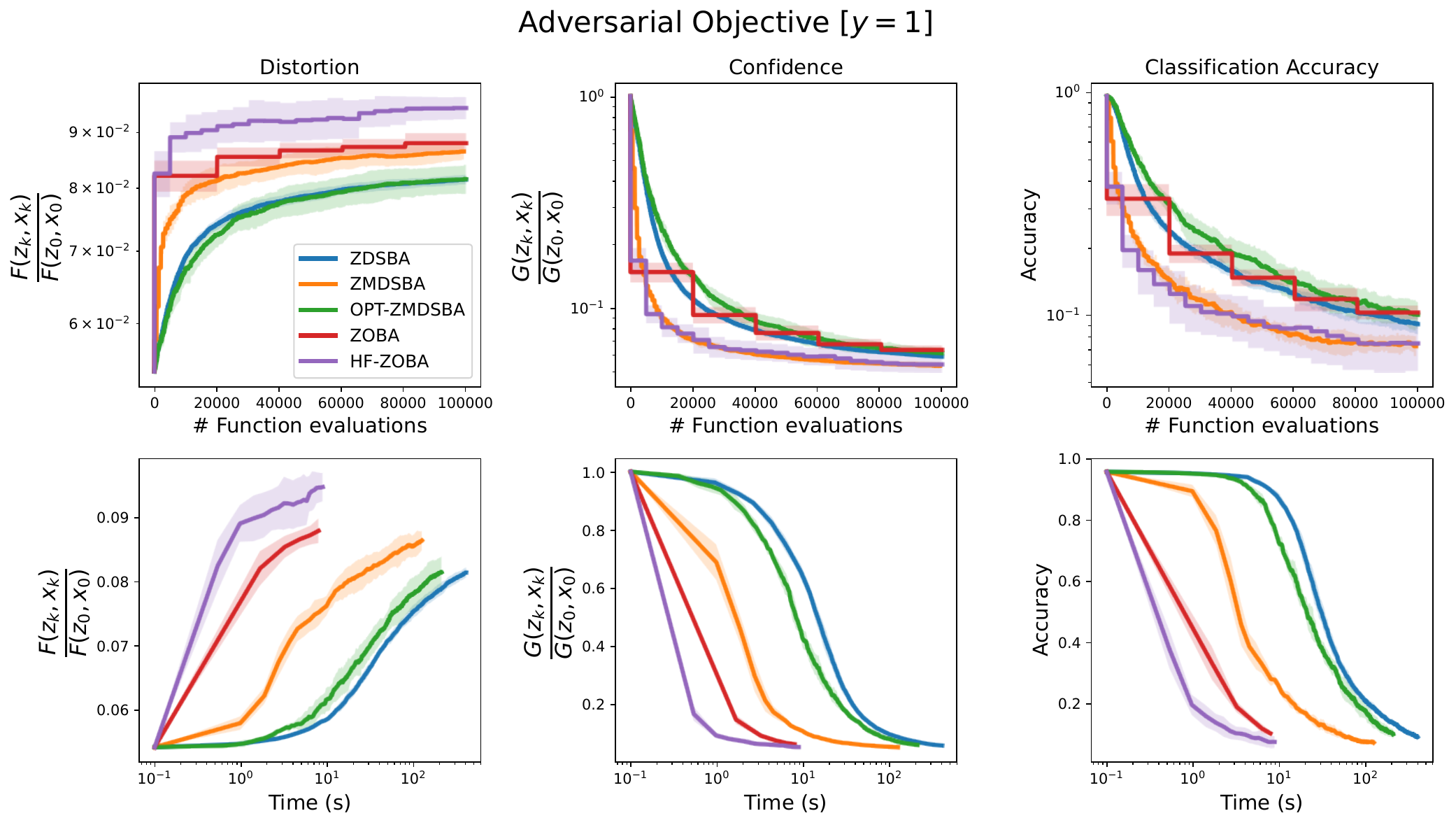}
    \includegraphics[width=0.49\linewidth]{img/adversarial/appendix/Adversarial_4_100000_5_100_comparison.pdf}
    \includegraphics[width=0.49\linewidth]{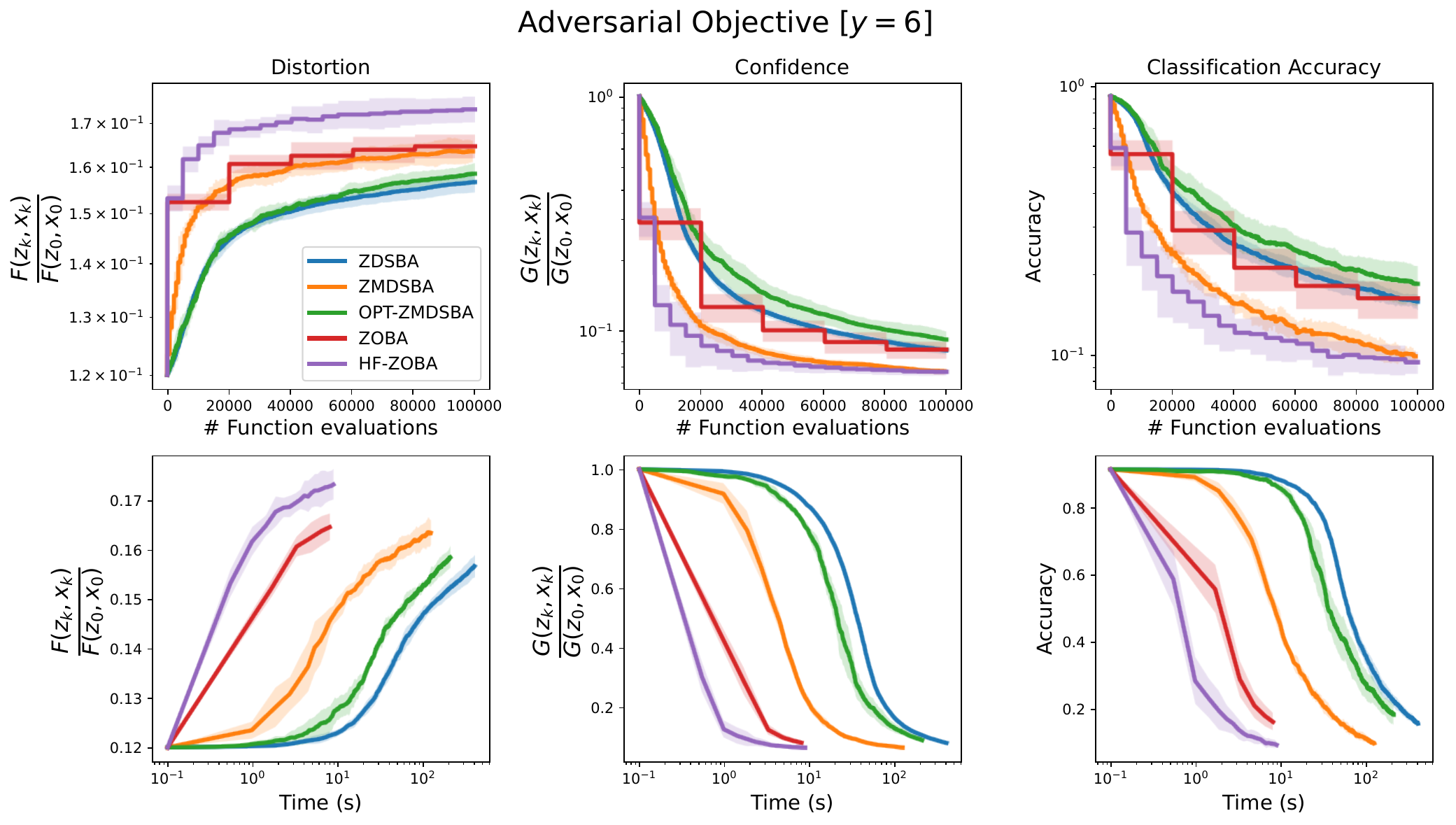}
    \includegraphics[width=0.49\linewidth]{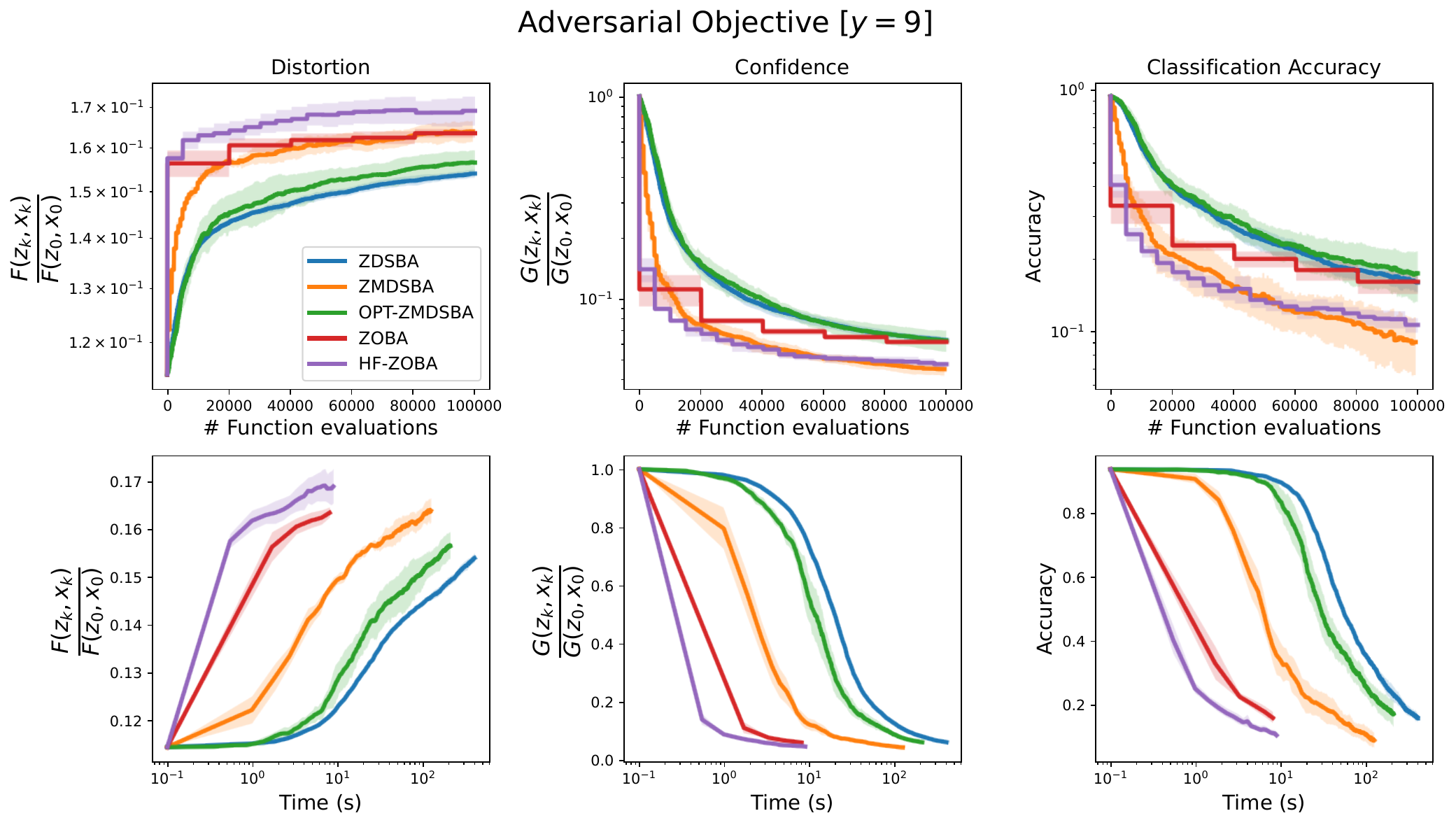}
    \caption{Comparison of algorithms on minimal-distortion universal perturbation attacks across different test sets with different labels $y$. For each test set, we report the normalized outer function values, inner function values at iterations $z_k, x_k$, and classification accuracy. The top row of each block shows these metrics as a function of the number of function evaluations, while the bottom row shows them versus wall-clock time (in seconds).}
    \label{fig:app_other_adv_exps}
\end{figure}

\section{Auxiliary Results}\label{app:aux_results}
In this appendix, we introduce the main concepts and report and prove the lemmas and propositions required to prove the main theorems. 
\paragraph*{Notation.} In the following, we denote by $I_p$ the identity matrix $p \times p$. For a function $f : \mathbb{R}^p \times \mathbb{R}^d \to \mathbb{R}$, we indicate with $\nabla_z f(z,x)$ and $\nabla_x f(z,x)$, the gradient with respect to the first and second variable respectively. Moreover, we denote with $\mathbb{E}_k[\cdot]$ the conditional expectation conditioning on the entire history of every random variable $(w_{t}^{(i,j)})_{t < k}, (u_{t}^{(i,j)})_{t < k}, (\xi_{i,t})_{t < k}$ and $(\zeta_{i,t})_{t < k}$ for every $i=1,\cdots, b_1$ and $j=1,\cdots,\ell_1$ i.e. the expectation is taken only on random variables at iteration $k$
\begin{equation*}
    \mathbb{E}_k[\cdot] := \mathbb{E}_{\xi_{1,k},\zeta_{1,k}}\left[\cdots \left[\mathbb{E}_{\xi_{b_1,k},\zeta_{b_1,k}} \left[ \mathbb{E}_{w_k^{(1,1)},u_k^{(i,j)}} \left[\cdots\mathbb{E}_{w_k^{(b_1,\ell_1)},u_k^{(b_1,\ell_1)}} \left[\cdot \right] \right] \right] \right] \right].
\end{equation*}
Moreover, we denote with $\mathbb{E}_{W_k^{(i)}}[\cdot]$ the conditional expectation on $w_k^{(i,j)}$ for $j = 1,\cdots,\ell_1$ i.e. the expectation is taken only on $w_k^{(i,j)}$ for every $j = 1,\cdots,\ell_1$ (i.e. $i$ is fixed)
\begin{equation*}
    \mathbb{E}_{W_k^{(i)}}[\cdot] :=  \mathbb{E}_{w_k^{(i,1)}} \left[ \mathbb{E}_{w_k^{(i,2)}}\left[\cdots \mathbb{E}_{w_k^{(i,\ell_1)}}\left[ \cdot \right] \right]  \right].
\end{equation*}
Similarly, we denote with $\mathbb{E}_{W_k}[\cdot]$ the conditional expectation on $w_k^{(i,j)}$ for every $i=1,\cdots,b_1$ and $j = 1,\cdots,\ell_1$ i.e. the expectation is taken only on $w_k^{(i,j)}$ for every $i=1,\cdots,b_1$ and $j = 1,\cdots,\ell_1$
\begin{equation*}
    \begin{aligned}
        \mathbb{E}_{W_k}[\cdot] &:= \mathbb{E}_{W_k^{(1)}} \left[ \cdots \mathbb{E}_{W_k^{(b_1)}} \left[ \cdot \right] \right] =\mathbb{E}_{w_k^{(1,1)}} \left[ \mathbb{E}_{w_k^{(1,2)}}\left[\cdots \mathbb{E}_{w_k^{(1,\ell_1)}}\left[ \mathbb{E}_{w_k^{(2,1)}} \left[\cdots\mathbb{E}_{w_k^{(b_1,\ell_1)}} \left[\cdot \right] \right] \right] \right]  \right].
    \end{aligned}
\end{equation*}
For $i = 1,\cdots, b_2$, we denote with 
\begin{equation*}
    \begin{aligned}
        \mathbb{E}_{U_k^{(i)}}[\cdot] = \mathbb{E}_{u_k^{(i,1)}} \left[ \mathbb{E}_{u_k^{(i,2)}}\left[\cdots \mathbb{E}_{u_k^{(i,\ell_2)}}\left[ \cdot \right] \right]  \right] \quad \text{and} \quad \mathbb{E}_{U_k}[\cdot] = \mathbb{E}_{U_k^{(1)}} \left[ \cdots \mathbb{E}_{U_k^{(b_2)}} \left[ \cdot \right] \right].
    \end{aligned}
\end{equation*}
Similarly, we denote with 
\begin{equation*}
    \mathbb{E}_{\xi_k} [\cdot] = \mathbb{E}_{\xi_{1,k}}[\cdots \mathbb{E}_{\xi_{b_1,k}}[\cdot]] \quad \text{and} \quad \mathbb{E}_{\zeta_k} [\cdot] = \mathbb{E}_{\zeta_{1,k}}[\cdots \mathbb{E}_{\zeta_{b_2,k}}[\cdot]].
\end{equation*}

\begin{lemma}\label{lem:norm_gaus_vector}
    Let $u \sim \mathcal{N}(0,I_d)$. Then, for $p \in [0,2]$,
    \begin{equation*}
        \mathbb{E}_u[\|u\|^p] \leq d^{p/2}.
    \end{equation*}
    For $p \geq 2$,
    \begin{equation*}
        \mathbb{E}_u[\|u\|^p] \leq (d + p)^{p/2}.
    \end{equation*}
    Moreover, let $A,B \in \mathbb{R}^{d \times d}$ be symmetric matrices. Then,
    \begin{equation*}
        \begin{aligned}
            \mathbb{E}_u[u^\top A u] &= tr(A), \quad \mathbb{E}_u[u^\top A u u^\top B u] = 2tr(AB) + tr(A)tr(B),\\
            \mathbb{E}_u[uu^\top u u^\top] &= (d + 2)I_d.
        \end{aligned}
    \end{equation*}
    where $tr(A)$ denotes the trace of matrix $A$.
\end{lemma}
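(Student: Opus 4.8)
The plan is to dispatch the two moment bounds and the three algebraic identities with distinct elementary tools, since they are essentially independent claims packaged into one lemma. For the first bound (valid for $p \in [0,2]$) I would use Jensen's inequality: the map $t \mapsto t^{p/2}$ is concave on $[0,\infty)$ when $p/2 \in [0,1]$, and $\mathbb{E}_u[\|u\|^2] = \sum_{i=1}^d \mathbb{E}[u_i^2] = d$, so $\mathbb{E}_u[\|u\|^p] = \mathbb{E}_u[(\|u\|^2)^{p/2}] \le (\mathbb{E}_u[\|u\|^2])^{p/2} = d^{p/2}$. This is the only place the sharper exponent $d$ (rather than $d+p$) appears, and it is the entire content of that case.

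For the second bound ($p \ge 2$) Jensen points the wrong way, so I would instead pass through an exact moment formula. Since $\|u\|^2$ is chi-squared with $d$ degrees of freedom (a $\mathrm{Gamma}(d/2,2)$ variable), a standard polar-coordinate computation gives $\mathbb{E}_u[\|u\|^p] = 2^{p/2}\Gamma((d+p)/2)/\Gamma(d/2)$. Writing $a = d/2$ and $s = p/2 \ge 1$, it then suffices to prove the gamma-ratio inequality $\Gamma(a+s)/\Gamma(a) \le (a+s)^s$, because $2^{p/2}(a+s)^{p/2} = (d+p)^{p/2}$. I would obtain it from $\log\Gamma(a+s) - \log\Gamma(a) = \int_a^{a+s}\psi(t)\,dt$ combined with the classical digamma bound $\psi(t) \le \log t$ and the monotonicity estimate $\int_a^{a+s}\log t\,dt \le s\log(a+s)$. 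This gamma inequality is where the real difficulty lies; everything around it is bookkeeping, and one could alternatively cite the Nesterov--Spokoiny moment lemma for this exact statement.

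For the three matrix identities I would expand in coordinates and apply the Gaussian (Wick/Isserlis) moment formulas. The trace identity is immediate: $\mathbb{E}_u[u^\top A u] = \sum_{i,j}A_{ij}\mathbb{E}[u_iu_j] = \sum_i A_{ii} = tr(A)$, using $\mathbb{E}[u_iu_j] = \delta_{ij}$. The remaining two rest on the fourth-moment formula $\mathbb{E}[u_iu_ju_ku_l] = \delta_{ij}\delta_{kl} + \delta_{ik}\delta_{jl} + \delta_{il}\delta_{jk}$. For the quadratic-form product I expand $\mathbb{E}_u[u^\top A u\,u^\top B u] = \sum_{i,j,k,l}A_{ij}B_{kl}\mathbb{E}[u_iu_ju_ku_l]$; the three Wick pairings contribute $tr(A)tr(B)$, $tr(AB)$, and $tr(AB)$ respectively (the last two coinciding because $A,B$ are symmetric), giving $2tr(AB)+tr(A)tr(B)$. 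For the final identity I compute the $(i,l)$ entry of $\mathbb{E}_u[uu^\top u u^\top]$ as $\sum_k \mathbb{E}[u_iu_lu_k^2] = \sum_k(\delta_{il} + 2\delta_{ik}\delta_{lk}) = (d+2)\delta_{il}$, so the matrix equals $(d+2)I_d$. I expect no obstacle in this last paragraph; the only genuinely nontrivial step in the whole lemma is the gamma-ratio bound underpinning the $p\ge 2$ case.
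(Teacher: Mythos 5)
Your proof is correct, but note that it is not parallel to anything in the paper: the paper's entire proof of this lemma is a citation to standard references (Nesterov--Spokoiny for the moment bounds, matrix-analysis texts for the identities), so what you have written is a self-contained derivation of results the authors deliberately outsourced. All three ingredients check out. Jensen's inequality with concavity of $t\mapsto t^{p/2}$ is exactly the right tool for $p\in[0,2]$. For $p\ge 2$, the chi-squared moment formula $\mathbb{E}_u[\|u\|^p]=2^{p/2}\,\Gamma((d+p)/2)/\Gamma(d/2)$ reduces everything to the gamma-ratio bound $\Gamma(a+s)\le\Gamma(a)(a+s)^s$, and your digamma argument ($\psi(t)\le\log t$ combined with monotonicity of $\log$ over the integration interval) proves it cleanly; this inequality is precisely the nontrivial content of the Nesterov--Spokoiny lemma the paper points to, and your argument in fact works for all $s\ge 0$, not only $s\ge 1$. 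The Wick computations for the three identities are also correct; the one point worth making explicit is that symmetry is genuinely needed in only one of the two cross pairings, namely to convert $\sum_{i,j}A_{ij}B_{ij}=tr(AB^{\top})$ into $tr(AB)$, whereas $\sum_{i,j}A_{ij}B_{ji}=tr(AB)$ holds for arbitrary matrices. What your route buys is self-containedness and an explicit view of where the $(d+p)^{p/2}$ exponent comes from; what the paper's route buys is brevity, at the cost of deferring the only genuinely nontrivial step (the gamma-ratio estimate) to the literature.
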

\begin{proof}
    These are standard results for Gaussian variables and can be found in several textbooks - see e.g. \cite{nesterov_random_2017,petersen2008matrix,seber2003linear}.%
\end{proof}

\noindent The following appendices collect auxiliary results used in the proofs of the main theorems and corollaries. In Appendix \ref{app:aux_results_bilevel}, we present lemmas establishing bounds and regularity properties of the quantities arising in bilevel optimization under the considered assumptions. In Appendix \ref{app:gaussian_smoothing}, we review the Gaussian smoothing framework for the bilevel problem and states the associated lemmas and properties used to analyze our algorithms. In Appendix \ref{app:aux_res_zoba}, we provide bounds on the sequences generated by Algorithm \ref{alg:zoba} together with the corresponding descent lemma and in Appendix \ref{app:aux_res_hfzoba} we report analogous results for Algorithm \ref{alg:hfzoba}.

\subsection{Auxiliary results for Bilevel Optimization}\label{app:aux_results_bilevel}
\begin{lemma}[Regularity of $\Psi$, $z^*$ and $v^*$]\label{lem:smt_setting_smooth_value}
    Under Assumptions \ref{asm:F_smooth},\ref{asm:G_asm}, the following holds:\\
    (I) the function $\Psi$ is $L_{\Psi}$-smooth for some $L_{\Psi} > 0$.\\
    (II) there exists $L_* > 0$ such that for every $x_1,x_2 \in \mathbb{R}^d$,
    \begin{equation*}
        \| z^*(x_1) - z^*(x_2)\| \leq L_* \|x_1 - x_2 \| \quad \text{and} \quad \|v^*(x_1) - v^*(x_2)\| \leq L_* \|x_1 - x_2 \|,
    \end{equation*}
    where $z^*(\cdot) \in \argmin\limits_{z \in \mathbb{R}^p} G(z,\cdot)$ and $v^*(\cdot)$ is defined in eq. \eqref{eqn:v_star}.
\end{lemma}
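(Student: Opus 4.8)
The plan is to first transfer the per-realization regularity in Assumptions~\ref{asm:F_smooth} and~\ref{asm:G_asm} to the expected objectives $F$ and $G$. Since these properties hold for every $\zeta$ and $\xi$ with \emph{uniform} constants, linearity of expectation together with Jensen's inequality yields that $\nabla_z F, \nabla_x F$ are bounded by $L_{0,F}$ and $L_{1,F}$-Lipschitz, that $\nabla_z G$ is $L_{1,G}$-Lipschitz (hence the Hessian blocks $\nabla_{zz}^2 G, \nabla_{xz}^2 G$ are bounded in operator norm by $L_{1,G}$), that $\nabla^2 G$ is $L_{2,G}$-Lipschitz, and that $z \mapsto G(z,x)$ stays $\mu_G$-strongly convex. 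In particular $z^*(x)$ is well-defined, unique, and characterized by the stationarity condition $\nabla_z G(z^*(x),x) = 0$. For part (II), I would obtain the Lipschitz bound on $z^*$ via the implicit function theorem: differentiating the stationarity condition gives $\nabla z^*(x) = -[\nabla_{zz}^2 G(z^*(x),x)]^{-1}\nabla_{zx}^2 G(z^*(x),x)$, whose operator norm is at most $L_{1,G}/\mu_G$ using strong convexity to bound the inverse Hessian by $1/\mu_G$ and gradient-Lipschitzness to bound the cross block by $L_{1,G}$. This already gives a candidate $L_*$ for $z^*$.

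Next I would bound $v^*$ and prove its Lipschitz continuity. Writing $H(x) := \nabla_{zz}^2 G(z^*(x),x)$ and $b(x) := \nabla_z F(z^*(x),x)$, so that $v^*(x) = -H(x)^{-1} b(x)$, the bound $\|v^*(x)\| \leq L_{0,F}/\mu_G =: C_v$ follows from $\|H(x)^{-1}\| \leq 1/\mu_G$ and $\|b(x)\| \leq L_{0,F}$. For the Lipschitz estimate I would use the standard resolvent decomposition $v^*(x_1) - v^*(x_2) = -H_1^{-1}(b_1 - b_2) + H_1^{-1}(H_1 - H_2)H_2^{-1} b_2$, and bound the two terms separately. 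The first needs $\|b_1 - b_2\| \leq L_{1,F}(\|z^*(x_1) - z^*(x_2)\| + \|x_1 - x_2\|) \leq L_{1,F}(L_* + 1)\|x_1 - x_2\|$, chaining the Lipschitzness of $\nabla_z F$ with that of $z^*$. The second needs $\|H_1 - H_2\| \leq L_{2,G}(L_* + 1)\|x_1 - x_2\|$ from Hessian-Lipschitzness composed with the Lipschitzness of $z^*$, together with $\|H_i^{-1}\|\leq 1/\mu_G$ and $\|b_2\|\leq L_{0,F}$. Combining yields a Lipschitz constant for $v^*$; taking the maximum of this and the $z^*$ constant gives a single $L_*$ covering both, as in the statement.

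Finally, for part (I) I would use the closed form $\nabla\Psi(x) = \nabla_x F(z^*(x),x) + \nabla_{xz}^2 G(z^*(x),x)\, v^*(x)$ from eq.~\eqref{eqn:hypergrad} and show each summand is Lipschitz in $x$. The first term is Lipschitz with constant $L_{1,F}(L_*+1)$ by the same chaining argument. For the product term I would invoke the elementary fact that if $A(x)$ is bounded by $M_A$ and $L_A$-Lipschitz and $v(x)$ is bounded by $M_v$ and $L_v$-Lipschitz, then $x \mapsto A(x)v(x)$ is Lipschitz with constant $M_A L_v + L_A M_v$; here $A(x) = \nabla_{xz}^2 G(z^*(x),x)$ has $M_A = L_{1,G}$ and, via Hessian-Lipschitzness and $z^*$-Lipschitzness, $L_A = L_{2,G}(L_*+1)$, while $v = v^*$ has $M_v = C_v$ and $L_v$ the constant just derived. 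Summing the two contributions defines $L_\Psi$.

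The routine parts are the norm bookkeeping; the main obstacle is the Lipschitz estimate for $v^*$, since it forces one to control the inverse-Hessian perturbation $H_1^{-1} - H_2^{-1}$ and thereby to combine \emph{all} three regularity constants ($L_{0,F}$, $L_{1,F}$, $L_{2,G}$) and the strong-convexity modulus $\mu_G$ at once. Getting the resolvent decomposition and the boundedness of $v^*$ in place first is what makes the rest of the argument, including the smoothness of $\Psi$, go through cleanly. Notably, this derivation never requires boundedness of $z^*$ itself, which is consistent with the remark preceding the lemma that our analysis avoids that extra assumption.
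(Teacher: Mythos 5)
Your proposal is correct, but it takes a genuinely different route from the paper, which offers no self-contained argument at all: the paper's entire proof of this lemma is a citation of \cite{ghadimi2018approximation} for part (I) and of \cite{soba_saba} for part (II). What you have written is, in effect, a reconstruction of the standard proofs contained in those references: implicit differentiation for the Lipschitzness of $z^*$, the resolvent decomposition $H_1^{-1}-H_2^{-1}=H_1^{-1}(H_2-H_1)H_2^{-1}$ combined with $\|H_i^{-1}\|\le 1/\mu_G$ and $\|b\|\le L_{0,F}$ for $v^*$, and the bounded-times-Lipschitz product rule applied to the representation \eqref{eqn:hypergrad} for $\nabla\Psi$. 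All of the individual estimates check out, and your final constants are of the expected form. Two remarks. First, the Lipschitz bound on $z^*$ can be obtained more elementarily, without the implicit function theorem and without even asserting differentiability of $z^*$: from $\nabla_z G(z^*(x_i),x_i)=0$ and $\mu_G$-strong convexity of $z\mapsto G(z,x_2)$ one gets $\mu_G\|z^*(x_1)-z^*(x_2)\|^2\le\left\langle \nabla_z G(z^*(x_1),x_2),\, z^*(x_1)-z^*(x_2)\right\rangle$, and since $\nabla_z G(z^*(x_1),x_2)=\nabla_z G(z^*(x_1),x_2)-\nabla_z G(z^*(x_1),x_1)$ has norm at most $L_{1,G}\|x_1-x_2\|$, Cauchy--Schwarz gives the same constant $L_{1,G}/\mu_G$; your IFT route is valid (the Hessian is continuous and uniformly positive definite), but the strong-convexity argument needs strictly less. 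Second, a small caveat: the transfer of per-realization regularity to $F$ and $G$ (exchanging $\nabla$ and $\nabla^2$ with $\mathbb{E}$) requires differentiation under the integral sign, which your uniform Lipschitz constants do justify but which deserves a sentence; note that the lemma as stated assumes only Assumptions~\ref{asm:F_smooth} and~\ref{asm:G_asm}, so you cannot simply quote the unbiasedness identities of Assumption~\ref{asm:bc_condition}. What your version buys over the paper's citation is explicit, traceable constants and a visible confirmation of the paper's claim that boundedness of $z^*$ is never needed; the price is length.
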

\begin{proof}
    The point (I)  has been proved in \cite{ghadimi2018approximation} and (II) has been proved in \cite{soba_saba}.%
\end{proof}

\begin{lemma}[Bound on norm of $v^*$]\label{lem:bound_norm_v}
    Under Assumptions \ref{asm:F_smooth}, \ref{asm:G_asm}. Let $v^*(\cdot)$ be defined as in eq. \eqref{eqn:v_star}. Then, for every $x \in \mathbb{R}^d$,
    \begin{equation*}
        \| v^*(x) \| \leq \frac{L_{0,F}}{\mu_G}.
    \end{equation*}
\end{lemma}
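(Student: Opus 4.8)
The plan is to bound $\|v^*(x)\|$ by controlling separately the operator norm of the inverse Hessian block and the Euclidean norm of the outer gradient, and then to combine them through submultiplicativity of the spectral norm applied directly to the representation in eq.~\eqref{eqn:v_star}. The whole argument rests on two standard facts, so the only subtlety is passing from the per-sample regularity hypotheses on $f$ and $g$ to the corresponding bounds on $F$ and $\nabla_{zz}^2 G$ via unbiasedness.

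First I would control the gradient term. Assumption~\ref{asm:F_smooth} guarantees that $(z,x)\mapsto f(z,x,\zeta)$ is $L_{0,F}$-Lipschitz for every $\zeta$, whence its partial gradient satisfies $\|\nabla_z f(z,x,\zeta)\|\le L_{0,F}$ pointwise. Since Assumption~\ref{asm:bc_condition} gives $\nabla_z F(z,x)=\mathbb{E}_\zeta[\nabla_z f(z,x,\zeta)]$, Jensen's inequality yields $\|\nabla_z F(z,x)\|\le \mathbb{E}_\zeta[\|\nabla_z f(z,x,\zeta)\|]\le L_{0,F}$ for all $(z,x)$; in particular this holds at $(z^*(x),x)$.

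Next I would control the inverse Hessian. Assumption~\ref{asm:G_asm} states that $z\mapsto g(z,x,\xi)$ is $\mu_G$-strongly convex for every $\xi$ and $x$, hence $\nabla_{zz}^2 g(z,x,\xi)\succeq \mu_G I_p$. Using unbiasedness once more, $\nabla_{zz}^2 G(z,x)=\mathbb{E}_\xi[\nabla_{zz}^2 g(z,x,\xi)]\succeq \mu_G I_p$, so every eigenvalue of the symmetric positive definite matrix $\nabla_{zz}^2 G(z^*(x),x)$ is at least $\mu_G$, and therefore $\big\|[\nabla_{zz}^2 G(z^*(x),x)]^{-1}\big\|\le 1/\mu_G$ in spectral norm.

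Finally, combining these two estimates through eq.~\eqref{eqn:v_star} gives
\[
\|v^*(x)\|=\big\|[\nabla_{zz}^2 G(z^*(x),x)]^{-1}\nabla_z F(z^*(x),x)\big\|\le \big\|[\nabla_{zz}^2 G(z^*(x),x)]^{-1}\big\|\,\|\nabla_z F(z^*(x),x)\|\le \frac{L_{0,F}}{\mu_G}.
\]
I do not anticipate any genuine obstacle: the result is immediate once the Lipschitz bound on the gradient and the spectral lower bound from strong convexity are in place, and the only care required is the (routine) transfer of these per-sample properties to the expectations $F$ and $\nabla_{zz}^2 G$ using Assumption~\ref{asm:bc_condition}.
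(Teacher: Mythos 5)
Your proof is correct and takes essentially the same route as the paper's: bound the inverse Hessian block in spectral norm by $1/\mu_G$ using strong convexity, bound $\|\nabla_z F(z^*(x),x)\|$ by $L_{0,F}$ using Lipschitz continuity, and combine via submultiplicativity applied to eq.~\eqref{eqn:v_star}. The only cosmetic difference is that you route the per-sample-to-expectation transfer through Assumption~\ref{asm:bc_condition}, which the lemma does not list among its hypotheses; this detour is avoidable, since $L_{0,F}$-Lipschitz continuity of $F$ and $\mu_G$-strong convexity of $G(\cdot,x)$ follow directly from the per-sample properties and the defining expectations $F=\mathbb{E}_\zeta[f]$, $G=\mathbb{E}_\xi[g]$, which is how the paper argues.
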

\begin{proof}
    Since $v^*(x) = - [\nabla_{zz}^2 G(z^*(x),x)]^{-1} \nabla_z F(z^*(x),x)$, $G$ is $\mu_G$-strongly convex and $F$ is $L_{0,F}$-Lipschitz continuous, we have
    \begin{equation*}
        \left\| v^*(x) \right\| = \left\| [\nabla_{zz}^2 G(z^*(x),x)]^{-1} \nabla_z F(z^*(x),x) \right\| \leq \frac{L_{0,F}}{\mu_G}.
    \end{equation*}
\end{proof}

\subsection{Gaussian Smoothing for Bilevel Optimization}\label{app:gaussian_smoothing}
The analysis of Algorithm \ref{alg:zoba} and \ref{alg:hfzoba} rely on the fact that the surrogates used to construct the search directions (in eq. \eqref{eqn:dz}, \eqref{eqn:dv} and \eqref{eqn:dx} for Algorithm \ref{alg:zoba} and eq. \eqref{eqn:hfzoba_search_directions} for Algorithm \ref{alg:hfzoba}) are unbiased estimators of gradients and Hessians of a smooth approximation of the inner and outer objective functions $f$ and $g$. More precisely, let $(\Omega, \mathcal{F}, \mathbb{P})$ be a probability space and $\mathcal{Z}$ be a measurable space. Let $f : \mathbb{R}^p \times \mathbb{R}^d \times \mathcal{Z} \to \mathbb{R}$ and let $\zeta : \Omega \to \mathcal{Z}$ be a random varianble. Then for any  $h, \eta \geq 0$, we define the following smoothed version of $f$ for every $z \in \mathbb{R}^p$, $x \in \mathbb{R}^d$ and every realization of $\zeta$.
\begin{equation}\label{eqn:gaussian_smoothing}
f_{h,\eta}(z, x, \zeta)= \mathbb{E}_{w, u}\left[f(z + h w, x + \eta u, \zeta)\right],
\end{equation}
where $w \sim \mathcal{N}(0, I_p)$ and $u \sim \mathcal{N}(0, I_d)$. For $h, \eta > 0$, the function $f_{h,\eta}$ is differentiable even when $f$ is not, and its properties depends on those of $f$ \cite{Aghasi2025} - see Proposition \ref{prop:smoothing_properties}. Note that this construction differs from the standard Gaussian smoothing considered in~\cite{nesterov_random_2017}, as it applies distinct smoothing parameters to different variable blocks. This flexibility is advantageous in bilevel optimization, where we want to compute gradients and hessian only a of subset of variables. 
In the following proposition we show some properties of the smoothing.
\begin{proposition}[Properties of the smoothing]\label{prop:smoothing_properties}
Let $(\Omega, \mathcal{F}, \mathbb{P})$ be a probability space and let $\mathcal{Z}$ be a measurable space. Let $f : \mathbb{R}^p \times \mathbb{R}^d \times \mathcal{Z} \to \mathbb{R}$ and let $f_{h,\eta}$ be the smooth surrogate of $f$ defined as in eq. \eqref{eqn:gaussian_smoothing}. Then, the following hold:\\
\noindent (I) If $f$ is convex (or $\mu_f$ strongly convex) then $f_{h,\eta}$ is convex (or $\mu_f$ strongly convex).\\
\noindent (II) If $f$ is $L_0$-Lipschitz then we have that $f_{h,\eta}$ is $L_0$-Lipschitz.\\
\noindent (III) If $f$ is $L_1$-smooth then we have that $f_{h,\eta}$ is $L_1$-smooth.

\end{proposition}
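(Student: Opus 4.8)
The plan is to recognize that $f_{h,\eta}$ is nothing but an averaging of translated copies of $f$, and that each of the three properties is preserved both under translation (trivially, since translations are isometries and degrade no constant) and under taking expectations (by linearity or the triangle inequality). To make the bookkeeping uniform I would stack the variables as $y := (z,x) \in \mathbb{R}^{p+d}$ and write the random shift as $\delta := (hw, \eta u)$, so that $\delta \sim \mathcal{N}(0, \Sigma)$ with $\Sigma = \mathrm{diag}(h^2 I_p, \eta^2 I_d)$ and $f_{h,\eta}(y,\zeta) = \mathbb{E}_\delta[f(y + \delta, \zeta)]$. For each fixed realization of $\delta$, the map $y \mapsto f(y + \delta, \zeta)$ is a pure translation of $f$, hence inherits convexity, strong convexity, the $L_0$-Lipschitz property, and $L_1$-smoothness with identical constants. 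The proposition then reduces to checking that each property survives integration against the Gaussian density of $\delta$ (and the same argument works verbatim if convexity is assumed only in a subset of the variables, simply holding the remaining block fixed).

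For (I), I would invoke the combination inequality characterizing $\mu_f$-strong convexity: for every $t \in [0,1]$,
\[
f(t y_1 + (1-t) y_2 + \delta, \zeta) \leq t\, f(y_1 + \delta, \zeta) + (1-t)\, f(y_2 + \delta, \zeta) - \tfrac{\mu_f}{2}\, t(1-t)\, \| y_1 - y_2 \|^2 .
\]
This holds for every fixed $\delta$ with the same $\mu_f$, because the quadratic penalty depends only on $y_1 - y_2$, which is translation-invariant. Taking $\mathbb{E}_\delta$ of both sides preserves the inequality and yields exactly $\mu_f$-strong convexity of $f_{h,\eta}$ (and convexity when $\mu_f = 0$). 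For (II), Jensen's inequality followed by the Lipschitz bound inside the expectation gives
\[
| f_{h,\eta}(y_1, \zeta) - f_{h,\eta}(y_2, \zeta) | \leq \mathbb{E}_\delta\!\left[ | f(y_1 + \delta, \zeta) - f(y_2 + \delta, \zeta) | \right] \leq L_0 \| y_1 - y_2 \| .
\]

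For (III), the first task is to justify differentiating under the expectation so that $\nabla f_{h,\eta}(y, \zeta) = \mathbb{E}_\delta[\nabla f(y + \delta, \zeta)]$; this is legitimate because $L_1$-smoothness yields the linear growth bound $\| \nabla f(y + \delta, \zeta) \| \leq \| \nabla f(y, \zeta) \| + L_1 \| \delta \|$, which is integrable against the Gaussian and supplies a local dominating function for a dominated-convergence argument. Once the interchange is established, the same triangle-inequality-plus-Lipschitz pattern as in (II), now applied to $\nabla f$, gives $\| \nabla f_{h,\eta}(y_1,\zeta) - \nabla f_{h,\eta}(y_2,\zeta) \| \leq L_1 \| y_1 - y_2 \|$, i.e.\ $L_1$-smoothness of $f_{h,\eta}$.

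The main obstacle is the measure-theoretic step in (III): verifying integrability and the validity of exchanging gradient and integral. Everything else is a one-line consequence of linearity of the expectation or the triangle inequality, so I expect the bulk of the work to lie in setting up the dominated-convergence justification cleanly.
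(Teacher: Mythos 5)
Your proposal is correct, but it is worth noting that the paper does not actually prove this proposition at all: its ``proof'' is a one-line citation to the reference \cite{Aghasi2025}, so your argument is strictly more self-contained than what appears in the paper. The route you take --- viewing $f_{h,\eta}(y,\zeta)=\mathbb{E}_\delta[f(y+\delta,\zeta)]$ with $y=(z,x)$, $\delta=(hw,\eta u)$, observing that each translate $y\mapsto f(y+\delta,\zeta)$ inherits every property with identical constants, and then passing the defining inequalities through the expectation --- is the standard Gaussian-smoothing argument (it is essentially the one in Nesterov--Spokoiny-type analyses, and presumably what the cited reference does), and all three steps check out: the strong-convexity combination inequality is translation-invariant because its quadratic penalty depends only on $y_1-y_2$; the Lipschitz bound passes through $|\mathbb{E}[\cdot]|\le\mathbb{E}|\cdot|$; and the gradient--integral interchange in (III) is legitimately dominated, since $\|\nabla f(y+\delta,\zeta)\|\le\|\nabla f(y,\zeta)\|+L_1\|\delta\|$ is Gaussian-integrable and controls the difference quotients locally uniformly via the mean value theorem. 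Two features of your write-up add genuine value relative to the bare citation: you note that the argument works verbatim when (strong) convexity holds only in the $z$-block with the other block held fixed --- which is exactly the form needed in the paper, since Assumption~\ref{asm:G_asm} imposes strong convexity of $g$ only in $z$ --- and your argument makes no use of non-degeneracy of the covariance, so it covers the boundary cases $h=0$ or $\eta=0$ that the paper's definition in eq.~\eqref{eqn:gaussian_smoothing} explicitly allows and that Lemma~\ref{lem:smooth_val_fun_descent} actually invokes (there the smoothing is used with $h_2=0$).
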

\begin{proof}
The results on convexity, Lipschitz continuity, and smoothness were proved in \cite{Aghasi2025}. 
\end{proof}
\noindent In the following lemma we show that gradients and hessians of the smoothed approximation of a target can be expressed as expectation of finite difference.
\begin{lemma}[Gradient and Hessian of Smoothing]\label{lem:grad_hess_smoothing}
   Let $(\Omega, \mathcal{F}, \mathbb{P})$ be a probability space and let $\mathcal{Z}$ be a measurable space. Let $\zeta : \Omega \to \mathcal{Z}$, and let $w \sim \mathcal{N}(0, I_p)$ and $u \sim \mathcal{N}(0, I_d)$. Let $f : \mathbb{R}^p \times \mathbb{R}^d \times \mathcal{Z} \to \mathbb{R}$, and let $f_{h,\eta}$ denote the smooth surrogates of $f$ defined as in eq.~\eqref{eqn:gaussian_smoothing}. Then, for every $z \in \mathbb{R}^p$, and $x \in \mathbb{R}^d$,
    \begin{equation}\label{eqn:smt_lem_grad}
        \begin{aligned}
            \nabla_z f_{h,\eta}(z,x,\zeta) &=\mathbb{E}_{w,u} \left[ \frac{f(z + hw, x + \eta u,\zeta) }{h}w \right] \quad \text{and} \quad\nabla_x f_{h,\eta}(z,x) = \mathbb{E}_{w,u} \left[ \frac{f(z + hw, x + \eta u,\zeta) }{\eta} u \right].
        \end{aligned}
    \end{equation}    
    Moreover, 
    \begin{equation}\label{eqn:smt_lam_hess}
        \begin{aligned}
            \nabla_{zz}^2 f_{h,\eta}(z,x,\zeta) &=\mathbb{E}_{w,u} \left[ \frac{f(z + hw, x + \eta u, \zeta)}{h^2}(ww^\top - I_p) \right],\\
            \nabla_{xz}^2 f_{h,\eta}(z,x,\zeta) &=\mathbb{E}_{w,u} \left[ \frac{f(z + hw, x + \eta u, \zeta)}{\eta h}uw^\top \right].
        \end{aligned}
    \end{equation}    
\end{lemma}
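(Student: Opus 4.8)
The plan is to establish all four identities through the classical change-of-variables argument for Gaussian smoothing, differentiating under the integral and exploiting the explicit form of the Gaussian density. Writing the smoothing \eqref{eqn:gaussian_smoothing} as an integral against the product density and performing the shift $w' = z + hw$, $u' = x + \eta u$ (with Jacobian $h^p\eta^d$) moves the variables $z,x$ out of the argument of $f$ and into the kernels:
\[
f_{h,\eta}(z,x,\zeta) = \frac{1}{(2\pi)^{\frac{p+d}{2}} h^p \eta^d} \int_{\mathbb{R}^p}\!\int_{\mathbb{R}^d} f(w',u',\zeta)\, e^{-\frac{\|w'-z\|^2}{2h^2}}\, e^{-\frac{\|u'-x\|^2}{2\eta^2}}\, du'\, dw'.
\]
All derivatives now act only on the Gaussian kernels, whose derivatives are explicit.

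For the first-order identities \eqref{eqn:smt_lem_grad}, I would differentiate under the integral with respect to $z$. Only the first kernel depends on $z$, and $\nabla_z e^{-\|w'-z\|^2/(2h^2)} = \frac{w'-z}{h^2}\, e^{-\|w'-z\|^2/(2h^2)}$. Reverting the substitution via $w'-z = hw$ turns the prefactor $\frac{w'-z}{h^2}$ into $\frac{w}{h}$, yielding $\nabla_z f_{h,\eta} = \mathbb{E}_{w,u}[\frac{f}{h}w]$. Differentiating the second kernel with respect to $x$ produces the factor $\frac{u}{\eta}$ in the same way, which establishes the formula for $\nabla_x f_{h,\eta}$.

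For the Hessian blocks \eqref{eqn:smt_lam_hess} I would differentiate the gradient representation a second time. For $\nabla_{zz}^2$, applying the product rule to $\frac{w'-z}{h^2}e^{-\|w'-z\|^2/(2h^2)}$ produces $(\frac{(w'-z)(w'-z)^\top}{h^4} - \frac{I_p}{h^2})e^{-\|w'-z\|^2/(2h^2)}$; reverting the change of variables collapses this to $\frac{1}{h^2}(ww^\top - I_p)$, giving the claimed identity. For the cross block $\nabla_{xz}^2$, I would instead differentiate the $\nabla_z$ representation with respect to $x$: since the two kernels factorize, the $z$-differentiation contributes $\frac{w}{h}$ and the $x$-differentiation contributes $\frac{u}{\eta}$, and assembling them into the $d\times p$ outer product yields $\frac{1}{\eta h}uw^\top$, as required.

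The only genuinely delicate point is justifying the interchange of differentiation and integration at each of these steps, which I would obtain from the dominated convergence theorem. The polynomial prefactors arising from the kernel derivatives (namely $\|w'-z\|/h^2$ and its quadratic analogue) are integrable against the rapidly decaying Gaussian weights, so that together with the mild growth of $f$ guaranteed by the regularity of the targets (Assumptions \ref{asm:F_smooth} and \ref{asm:G_asm}) one can produce integrable dominating functions on a neighborhood of any fixed $(z,x)$. With this justification in place the remaining computations are routine and parallel the single-block smoothing identities of \cite{nesterov_random_2017}.
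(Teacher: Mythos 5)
Your proposal is correct and follows essentially the same route as the paper's proof: rewrite the smoothing via the change of variables $w' = z+hw$, $u' = x+\eta u$ so that $z,x$ appear only in the Gaussian kernels, differentiate the kernels explicitly, and revert the substitution to obtain the finite-difference-type representations; the paper performs the $z$-substitution alone for \eqref{eqn:smt_lem_grad} and the $zz$ block and both substitutions for the cross block, which is only a cosmetic difference. Your explicit appeal to dominated convergence to justify differentiation under the integral is a point the paper's proof passes over silently, so it is a welcome (if minor) addition rather than a divergence.
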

\begin{proof}
    Although this result was stated in \cite{Aghasi2025}, no explicit proof was given. We therefore provide a proof here for completeness. By the definition of the smooth surrogate (eq. \eqref{eqn:gaussian_smoothing}), we have
    \begin{equation*}
        f_{h,\eta}(z,x,\zeta) = \mathbb{E}_{w,u}[f(z + hw, x + \eta u, \zeta)].
    \end{equation*}
    Since $w,u$ are independent and $f$ is absolutely measurable, by Fubini's Theorem, we have
    \begin{equation}\label{eqn:smt_lem_eq1}
        f_{h,\eta}(z,x,\zeta) = \mathbb{E}_{w,u}[f(z + hw, x + \eta u, \zeta)] = \mathbb{E}_{w}[ \mathbb{E}_u[f(z + hw, x + \eta u, \zeta)]].
    \end{equation}
    Now, we prove eq. \eqref{eqn:smt_lem_grad}. Let $y = z + hw$. Thus, by substitution, we have 
    \begin{equation*}
        \begin{aligned}
        f_{h,\eta}(z,x,\zeta) &= \frac{1}{(\sqrt{2\pi})^{p/2}}\int  \mathbb{E}_u[f(z + hw, x + \eta u, \zeta)] e^{-\frac{\|w\|^2}{2}} dw\\
        &= \frac{1}{(\sqrt{2\pi})^{p/2}}\int  \mathbb{E}_u[f(y, x + \eta u, \zeta)] e^{-\frac{\|y - x\|^2}{2h^2}} \frac{dy}{h^p}.    
        \end{aligned}
    \end{equation*}
    Therefore, computing the gradient in $z$, we have
    \begin{equation*}
        \begin{aligned}
        \nabla_z f_{h,\eta}(z,x,\zeta) &= \frac{1}{(\sqrt{2\pi})^{p/2}}\int  \mathbb{E}_u[f(y, x + \eta u, \zeta)] \nabla_z e^{-\frac{\|y - z\|^2}{2h^2}} \frac{dy}{h^p}\\
        &=\frac{1}{(\sqrt{2\pi})^{p/2}}\int  \mathbb{E}_u[f(y, x + \eta u, \zeta)] \left(\frac{y - z}{h^2} \right)  e^{-\frac{\|y - z\|^2}{2h^2}} \frac{dy}{h^p}\\
        &=\frac{1}{(\sqrt{2\pi})^{p/2}}\int  \mathbb{E}_u \left[ \frac{f(z + hw, x + \eta u, \zeta)}{h} w \right]  e^{-\frac{\|w\|^2}{2}} dw\\
        &=\mathbb{E}_{w,u} \left[\frac{f(z + hw, x + \eta u, \zeta)}{h}w \right]. 
        \end{aligned}
    \end{equation*}
    Similarly we can compute the gradient in $x$ i.e. $\nabla_x f_{h,\eta}(z,x,\zeta)$. Now, we prove eq. \eqref{eqn:smt_lam_hess}. Setting $y = z + hw$, by substitution,
    \begin{equation}\label{eqn:gaus_smt_sub1}
        \begin{aligned}
        f_{h,\eta}(z,x,\zeta) &=\frac{1}{(\sqrt{2\pi})^{p/2}}\int  \mathbb{E}_u[f(y, x + \eta u, \zeta)] e^{-\frac{\|y - z\|^2}{2h^2}} \frac{dy}{h^p}.    
        \end{aligned}
    \end{equation}
    Computing the Hessian block in $zz$, we have
    \begin{equation*}
        \begin{aligned}
        \nabla_{zz}^2f_{h,\eta}(z,x,\zeta) &=\frac{1}{(\sqrt{2\pi})^{p/2}}\int  \mathbb{E}_u[f(y, x + \eta u, \zeta)] \nabla_{zz}^2 \left(e^{-\frac{\|y - z\|^2}{2h^2}} \right) \frac{dy}{h^p}\\
        &=\frac{1}{(\sqrt{2\pi})^{p/2}}\int  \mathbb{E}_u[f(y, x + \eta u, \zeta)] \left( \frac{(y - z)(y - z)^\top}{h^4} - \frac{1}{h^2}I_p \right)e^{-\frac{\|y - z\|^2}{2h^2}}  \frac{dy}{h^p}\\
        &=\frac{1}{(\sqrt{2\pi})^{p/2}}\int  \mathbb{E}_u \left[\frac{f(y, x + \eta u, \zeta)}{h^2}  \left( ww^\top-I_p \right) \right]e^{-\frac{\|w\|^2}{2}}  dw\\
        &=\mathbb{E}_{w,u} \left[\frac{f(y, x + \eta u, \zeta)}{h^2}  \left( ww^\top-I_p \right) \right].
        \end{aligned}
    \end{equation*}
    Now, to get the expression of the cross-block, we restart from eq. \eqref{eqn:gaus_smt_sub1}. Let $\bar{y}=x + \eta u$, by substitution we have
    \begin{equation*}
        \begin{aligned}
        f_{h,\eta}(z,x,\zeta) &=\frac{1}{(\sqrt{2\pi})^{p/2}} \frac{1}{(\sqrt{2\pi})^{d/2}}\int \left(\int f(y, x + \eta u, \zeta)e^{-\frac{\|u\|^2}{2}}du \right)e^{-\frac{\|y - z\|^2}{2h^2}} \frac{dy}{h^p}\\
        &=\frac{1}{(\sqrt{2\pi})^{p/2}} \frac{1}{(\sqrt{2\pi})^{d/2}}\int \left(\int f(y, \bar{y}, \zeta) e^{-\frac{\|\bar{y}-x\|^2}{2\eta^2}} \frac{d\bar{y}}{\eta^d} \right)e^{-\frac{\|y - z\|^2}{2h^2}} \frac{dy}{h^p}\\
        &=\frac{1}{(\sqrt{2\pi})^{p/2}} \frac{1}{(\sqrt{2\pi})^{d/2}}\int \int f(y, \bar{y}, \zeta) e^{-\frac{\|\bar{y}-x\|^2}{2\eta^2}}  e^{-\frac{\|y - z\|^2}{2h^2}} \frac{d\bar{y}}{\eta^d} \frac{dy}{h^p}.
        \end{aligned}
    \end{equation*}
    Computing the mixed block $xz$ we have,
    \begin{equation*}
        \begin{aligned}
        \nabla_{xz}^2 f_{h,\eta}(z,x,\zeta) &= \frac{1}{(\sqrt{2\pi})^{p/2}} \frac{1}{(\sqrt{2\pi})^{d/2}}\int \int f(y, \bar{y}, \zeta) \nabla_{xz}^2\left(e^{-\frac{\|\bar{y}-x\|^2}{2\eta^2}}  e^{-\frac{\|y - z\|^2}{2h^2}} \right) \frac{d\bar{y}}{\eta^d} \frac{dy}{h^p}\\
        &= \frac{1}{(\sqrt{2\pi})^{p/2}} \frac{1}{(\sqrt{2\pi})^{d/2}}\int \int f(y, \bar{y}, \zeta) \left( \frac{(y - z) (\bar{y} - x)^\top}{h^2 \eta^2} \right) e^{-\frac{\|\bar{y}-x\|^2}{2\eta^2}}  e^{-\frac{\|y - z\|^2}{2h^2}}  \frac{d\bar{y}}{\eta^d} \frac{dy}{h^p}\\
        &= \frac{1}{(\sqrt{2\pi})^{p/2}} \frac{1}{(\sqrt{2\pi})^{d/2}}\int \int \frac{f(z + hw, x + \eta u, \zeta)}{h \eta} uw^\top  e^{-\frac{\|u\|^2}{2}}  e^{-\frac{\|w\|^2}{2}}  du dw\\
        &=\mathbb{E}_{w,u} \left[ \frac{f(z + hw, x + \eta u, \zeta)}{h \eta} uw^\top \right].
        \end{aligned}
    \end{equation*}
\end{proof}
\begin{remark}
    Notice that by symmetry, the gradient of smoothing can be expressed as expectation of forward or central finite difference
    \begin{equation*}
        \begin{aligned}
            \nabla_z f_{h,\eta}(z,x,\zeta) &=\mathbb{E}_{w,u} \left[ \frac{f(z + hw, x + \eta u,\zeta) }{h}w \right]\\
            &= \mathbb{E}_{w,u} \left[ \frac{f(z + hw, x + \eta u,\zeta) - f(z,x,\zeta) }{h}w \right]\\
            &= \mathbb{E}_{w,u} \left[ \frac{f(z + hw, x + \eta u,\zeta) - f(z - hw, x - \eta u,\zeta) }{2h}w \right].
        \end{aligned}
    \end{equation*}
    Analogously, we can express the gradient with respect to the second variable.   Moreover, also the hessian blocks can be expressed as finite difference
    \begin{equation*}
        \begin{aligned}
            \nabla_{zz}^2 f_{h,\eta}(z,x,\zeta) &=\mathbb{E}_{w,u} \left[ \frac{f(z + hw, x + \eta u, \zeta)}{h^2}(ww^\top - I_p) \right]\\
            &=\mathbb{E}_{w,u} \left[ \frac{f(z + hw, x + \eta u, \zeta) + f(z - hw, x - \eta u, \zeta) - 2 f(z,x,\zeta)}{2h^2}(ww^\top - I_p) \right],\\
            \nabla_{xz}^2 f_{h,\eta}(z,x,\zeta) &=\mathbb{E}_{w,u} \left[ \frac{f(z + hw, x + \eta u, \zeta)}{\eta h}uw^\top \right]\\
            &=\mathbb{E}_{w,u} \left[ \frac{f(z + hw, x + \eta u, \zeta) + f(z - hw, x - \eta u, \zeta) - 2 f(z,x,\zeta)}{2\eta h}uw^\top \right].
        \end{aligned}
    \end{equation*}
\end{remark}
\noindent Notice that if we define $F(z,x) = \mathbb{E}_\zeta[f(z,x,\zeta)]$, we can define the smoothed surrogate $F_{h,\eta}(z,x)$ as in eq. \eqref{eqn:gaussian_smoothing} i.e. let $w \sim \mathcal{N}(0,I_p)$ and $u \sim \mathcal{N}(0, I_d)$,
\begin{equation*}
    F_{h,\eta}(z,x) = \mathbb{E}_{w,u}[F(z + hw,x + \eta u)].
\end{equation*}
Moreover, we have
\begin{equation*}
    \begin{aligned}
    F_{h,\eta}(z,x) &= \mathbb{E}_{w,u}[F(z + hw, x + \eta u)]\\
    &= \mathbb{E}_{w,u}[\mathbb{E}_\zeta[f(z + hw, x + \eta u,\zeta)]]\\
    &= \mathbb{E}_\zeta[\mathbb{E}_{w,u}[f(z + hw, x + \eta u,\zeta)]] = \mathbb{E}_\zeta[f_{h,\eta}(z,x,\zeta)].      
    \end{aligned}
\end{equation*}
\noindent In the following lemma, we report the error of using the gradient of the smoothing instead of the exact gradient of the target.
\begin{lemma}[Smoothing Error]\label{lem:smoothing_error}
    Let $F : \mathbb{R}^{p} \times\mathbb{R}^d \to \mathbb{R}$ be a $L_{1,F}$-smooth function and let $F_{h,\eta}$ denotes its gaussian smoothed surrogate defined as in eq. \eqref{eqn:gaussian_smoothing}. Then, for every $z \in \mathbb{R}^p$ and $x \in \mathbb{R}^d$,
    \begin{equation}\label{eqn:smt_error_c1}
        \begin{aligned}
            (\forall h>0,\eta\geq0) \quad \| \nabla_zF_{h,\eta}(z, x) - \nabla_z F(z,x) \| &\leq \frac{L_{1,F}}{2}(p + 3)^{3/2}h + \frac{L_{1,F} \eta^2}{2h} d \sqrt{p}.\\
            (\forall h\geq0,\eta>0) \quad\| \nabla_xF_{h,\eta}(z, x) - \nabla_x F(z,x) \| &\leq \frac{L_{1,F} p\sqrt{d}}{2} \frac{h^2}{\eta} + \frac{L_{1,F} }{2} (d + 3)^{3/2} \eta.
        \end{aligned}
    \end{equation}
    Let $F$ be a twice differentiable functions with $L_{2,G}$-Lipschitz continuous hessian. Then for every $h > 0$ and $\eta \geq0$ we have
    \begin{equation*}
        \| \nabla_z F_{h,\eta}(z,x) - \nabla_z F(z,x) \| \leq \frac{2 L_{G,2}}{3} \left(h^2(p + 4)^2 + \frac{\eta^3}{h}\sqrt{p} (d + 3)^{2/3}  \right),
    \end{equation*}
    and,
    \begin{equation}\label{eqn:hessian_block1_error}
        \|\nabla^2_{zz} F_{h, \eta}(z,x) - \nabla_{zz}^2 F(z,x) \| \leq \frac{2L_{2,G}}{3}\left(h (d + 5)^{5/2} + h (d +3)^{3/2} + \frac{\eta^3}{h^2}(d + 1) (p +3)^{3/2} \right).
    \end{equation}

\end{lemma}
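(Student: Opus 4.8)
The plan is to write each of the four errors as the expectation of a \emph{Taylor remainder} of $F$ weighted by the corresponding finite-difference direction, and then to control that remainder using the regularity assumption together with the Gaussian moment bounds of Lemma~\ref{lem:norm_gaus_vector}. The representations of the smoothed gradients and Hessian blocks as expectations of finite differences (Lemma~\ref{lem:grad_hess_smoothing} and the subsequent remark) are the starting point throughout, and independence of $w\sim\mathcal N(0,I_p)$ and $u\sim\mathcal N(0,I_d)$ is used repeatedly to split joint expectations.

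For the two inequalities in \eqref{eqn:smt_error_c1} (the $L_{1,F}$-smooth regime) I would use the forward-difference form of $\nabla_z F_{h,\eta}$. Since $\mathbb{E}_w[w]=0$ and $\mathbb{E}_w[ww^\top]=I_p$, one checks that $\nabla_z F(z,x)=\mathbb{E}_{w,u}\!\big[h^{-1}\big(F(z,x)+\langle\nabla F(z,x),(hw,\eta u)\rangle\big)w\big]$, so the error equals $\mathbb{E}_{w,u}[h^{-1}r(w,u)\,w]$, where $r$ is the first-order Taylor remainder of $F$ at $(z,x)$. The bound $|r(w,u)|\le \tfrac{L_{1,F}}{2}(h^2\|w\|^2+\eta^2\|u\|^2)$ together with independence and the moments $\mathbb{E}\|w\|^3\le(p+3)^{3/2}$, $\mathbb{E}\|w\|\le\sqrt p$, $\mathbb{E}\|u\|^2\le d$ gives the first inequality; the symmetric computation for $\nabla_x F_{h,\eta}$ (dividing by $\eta$ and weighting by $u$) gives the second. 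For the $\nabla_z$ estimate under a Lipschitz Hessian I would instead use the \emph{central}-difference form, in which the quadratic Taylor term is odd and cancels; the error becomes $\mathbb{E}_{w,u}[(2h)^{-1}(r_+-r_-)\,w]$ with $r_\pm$ the second-order remainders, bounded by $\tfrac{L_{2,G}}{6}\|(hw,\eta u)\|^3$. Splitting $(h^2\|w\|^2+\eta^2\|u\|^2)^{3/2}\le\sqrt2\,(h^3\|w\|^3+\eta^3\|u\|^3)$ and using $\mathbb{E}\|w\|^4\le(p+4)^2$, $\mathbb{E}\|u\|^3\le(d+3)^{3/2}$, $\mathbb{E}\|w\|\le\sqrt p$ produces the stated $h^2$-order bound.

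The main work is the Hessian-block estimate \eqref{eqn:hessian_block1_error}. Starting from the central second-difference form of $\nabla_{zz}^2 F_{h,\eta}$, I would Taylor-expand $F$ to second order about $(z,x)$: the linear terms cancel in the symmetric sum, leaving the quadratic form $\tfrac12 h^2\,w^\top\nabla_{zz}^2F\,w+h\eta\,w^\top\nabla_{zx}^2F\,u+\tfrac12\eta^2\,u^\top\nabla_{xx}^2F\,u$ plus a third-order remainder. The key step is that, after multiplying by $(ww^\top-I_p)$ and taking the expectation, this quadratic form reproduces exactly $\nabla_{zz}^2F$: the cross term vanishes because $\mathbb{E}_u[u]=0$, the $xx$ term vanishes because $\mathbb{E}_w[ww^\top-I_p]=0$ (both by independence), and for the $zz$ term I would invoke the identity $\mathbb{E}_w[(w^\top A w)(ww^\top-I_p)]=2A$ for symmetric $A$. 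This identity follows from the scalar fourth-moment identity of Lemma~\ref{lem:norm_gaus_vector} by testing against an arbitrary symmetric $B$, since $\mathbb{E}[(w^\top A w)(w^\top B w)]=2\,\mathrm{tr}(AB)+\mathrm{tr}(A)\mathrm{tr}(B)=\mathrm{tr}\!\big(B(2A+\mathrm{tr}(A)I_p)\big)$, whence $\mathbb{E}[(w^\top A w)ww^\top]=2A+\mathrm{tr}(A)I_p$ and the $-I_p$ cancels the trace term. What is left is the third-order remainder weighted by $(ww^\top-I_p)$; bounding $\|ww^\top-I_p\|\le\|w\|^2+1$ and combining the moments $\mathbb{E}\|w\|^5\le(p+5)^{5/2}$, $\mathbb{E}\|w\|^3\le(p+3)^{3/2}$, $\mathbb{E}\|w\|^2\le p$ and $\mathbb{E}\|u\|^3\le(d+3)^{3/2}$ yields a bound of the claimed form.

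I expect the identity-plus-cancellation step in the last paragraph to be the delicate part: it is precisely what makes the symmetric second-difference estimator \emph{consistent}, i.e. what guarantees the leading quadratic contribution collapses onto $\nabla_{zz}^2F$ and isolates the error at third order in the step sizes. Once that is in place, the remaining estimates are routine bookkeeping of Gaussian moments via Lemma~\ref{lem:norm_gaus_vector}, with the only care being to split the mixed power $\|(hw,\eta u)\|^3$ and to keep track of which factors carry $w$ (dimension $p$) versus $u$ (dimension $d$) so that the correct dimensional exponents appear.
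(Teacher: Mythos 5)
Your route is by construction different from the paper's: the paper does not prove this lemma at all, its ``proof'' is the single sentence ``These results have been proved in \cite{Aghasi2025}.'' Your self-contained argument is the standard and correct one, and its skeleton works: the finite-difference representations of Lemma~\ref{lem:grad_hess_smoothing}, the fact that the Gaussian average of the first-order (resp.\ second-order) Taylor polynomial reproduces $\nabla_z F$ (resp.\ $\nabla_{zz}^2F$) exactly, and moment bookkeeping via Lemma~\ref{lem:norm_gaus_vector}. In particular, the identity $\mathbb{E}_w[(w^\top A w)(ww^\top-I_p)]=2A$ and your derivation of it by testing $\mathbb{E}[(w^\top Aw)(w^\top Bw)]=2\,\mathrm{tr}(AB)+\mathrm{tr}(A)\mathrm{tr}(B)$ against arbitrary symmetric $B$ is valid (two symmetric matrices with equal pairings against all symmetric $B$ coincide), the cross and $xx$ terms vanish by independence exactly as you say, the power-mean splitting $(a+b)^{3/2}\le\sqrt2\,(a^{3/2}+b^{3/2})$ is correct, and for the two inequalities in \eqref{eqn:smt_error_c1} your computation lands exactly on the stated constants $\tfrac{L_{1,F}}{2}(p+3)^{3/2}h+\tfrac{L_{1,F}\eta^2}{2h}d\sqrt p$ and $\tfrac{L_{1,F}p\sqrt d}{2}\tfrac{h^2}{\eta}+\tfrac{L_{1,F}}{2}(d+3)^{3/2}\eta$.

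There is, however, a concrete mismatch for the last two inequalities: what your computation yields is not the statement as printed. For the third bound your remainder term is $\tfrac{\sqrt2}{6}L_{2,G}\tfrac{\eta^3}{h}\sqrt p\,(d+3)^{3/2}$, while the statement has $(d+3)^{2/3}$; your bound is strictly weaker than the printed one for every $d\ge1$ (the ratio is $\tfrac{\sqrt2}{4}(d+3)^{5/6}\ge 2^{1/6}>1$), so it does not imply it. For \eqref{eqn:hessian_block1_error} your (dimensionally correct) bookkeeping attaches the $w$-moments to $p$ and the $u$-moments to $d$, giving $h(p+5)^{5/2}+h(p+3)^{3/2}+\tfrac{\eta^3}{h^2}(p+1)(d+3)^{3/2}$ up to constants, whereas the printed right-hand side is this expression with $p$ and $d$ exchanged; when $p\ne d$ neither bound implies the other. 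Almost certainly the printed statement is at fault rather than your argument ($2/3$ is a transposed $3/2$, and the $p$/$d$ placement is inconsistent with which variable carries the smoothing parameter $h$; note the paper then applies \eqref{eqn:hessian_block1_error} to the \emph{cross} block $\nabla^2_{xz}G$ inside Lemma~\ref{lem:bilevel_smth_error}, a case the statement does not even cover and which your proof would also need a separate moment computation for, of the type $\mathbb{E}[(r_++r_-)\,uw^\top/(2\eta h)]$). So: sound method, exact for \eqref{eqn:smt_error_c1}, but as a verbatim proof of the lemma the last two inequalities do not come out; you should either correct the statement or redo the constants to match before splicing this in.
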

\begin{proof}
    These results have been proved in \cite{Aghasi2025}.
\end{proof}

\noindent Now, building on the previous lemmas, we introduce a result that bounds the error of a smoothed approximate hypergradient computed for arbitrary $z \in \mathbb{R}^p$ and $v \in \mathbb{R}^p$, rather than for the exact inner solution and the true inverse Hessian action.
\begin{lemma}[Bilevel Smoothing Error]\label{lem:bilevel_smth_error}
    Let Assumptions \ref{asm:F_smooth} and \ref{asm:G_asm} holds. For every $h_1, \eta_1,h_2,\eta_2 >0$ and every $x \in \mathbb{R}^d,z\in\mathbb{R}^p$, let $G_{h_1,\eta_1}(z, x)$ and $F_{h_2,\eta_2}(z,x)$ be the smooth surrogates of the inner and outer targets in eq. \eqref{eqn:problem} respectively. Then, for every $z,v\in \mathbb{R}^p$ and $x \in \mathbb{R}^d$,
    \begin{equation*}
        \begin{aligned}
            \| \nabla^2_{xz}G_{h_1,\eta_1}(z, x)v + \nabla_x F_{h_2,\eta_2}(z,x) - \nabla \Psi(x) \| &\leq  \left(L_{1,F} + \frac{L_{0,F} L_{1,G}}{\mu_G} \right)\|z -z^*(x)\|+  L_{1,G} \|v - v^*(x)\|\\
           &+ \frac{L_{0,F}}{\mu_G}\frac{2L_{2,G}}{3}\bigg(h_1 (d + 5)^{5/2}\\
           &+ h_1 (d +3)^{3/2} + \frac{\eta_1^3}{h_1^2}(d + 1) (p +3)^{3/2} \bigg)\\
           & +\frac{L_{1,F} p\sqrt{d}}{2} \frac{h_2^2}{\eta_2} + \frac{L_{1,F} }{2} (d + 3)^{3/2} \eta_2.\\
        \end{aligned}
    \end{equation*}
\end{lemma}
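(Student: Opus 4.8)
The plan is to start from the hypergradient representation \eqref{eqn:hypergrad}, $\nabla\Psi(x)=\nabla_x F(z^*(x),x)+\nabla_{xz}^2 G(z^*(x),x)\,v^*(x)$, and split the quantity of interest by the triangle inequality into an \emph{outer-gradient} part $\mathrm{(A)}:=\|\nabla_x F_{h_2,\eta_2}(z,x)-\nabla_x F(z^*(x),x)\|$ and a \emph{Hessian--vector} part $\mathrm{(B)}:=\|\nabla_{xz}^2 G_{h_1,\eta_1}(z,x)v-\nabla_{xz}^2 G(z^*(x),x)v^*(x)\|$, so that the left-hand side is at most $\mathrm{(A)}+\mathrm{(B)}$. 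Each part is then controlled by inserting intermediate quantities evaluated at the arbitrary point $z$ and invoking the auxiliary results already in hand.

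For $\mathrm{(A)}$ I would add and subtract the exact (unsmoothed) gradient $\nabla_x F(z,x)$ at the point $z$. The difference $\nabla_x F_{h_2,\eta_2}(z,x)-\nabla_x F(z,x)$ is precisely the Gaussian-smoothing error of the outer objective, bounded by the second line of \eqref{eqn:smt_error_c1} in Lemma \ref{lem:smoothing_error}, giving the terms $\tfrac{L_{1,F}p\sqrt d}{2}\tfrac{h_2^2}{\eta_2}+\tfrac{L_{1,F}}{2}(d+3)^{3/2}\eta_2$. The residual $\nabla_x F(z,x)-\nabla_x F(z^*(x),x)$ is bounded by $L_{1,F}\|z-z^*(x)\|$ using the $L_{1,F}$-Lipschitz continuity of $\nabla_x F$ from Assumption \ref{asm:F_smooth}.

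For $\mathrm{(B)}$ the crucial point is to decompose so that the arbitrary (a priori unbounded) vector $v$ never multiplies an error term; only $v-v^*(x)$ and the \emph{bounded} vector $v^*(x)$ should appear. I would therefore write $\nabla_{xz}^2 G_{h_1,\eta_1}(z,x)v-\nabla_{xz}^2 G(z^*(x),x)v^*(x)$ as the sum of $\nabla_{xz}^2 G_{h_1,\eta_1}(z,x)\,(v-v^*(x))$ and $[\nabla_{xz}^2 G_{h_1,\eta_1}(z,x)-\nabla_{xz}^2 G(z^*(x),x)]\,v^*(x)$. The first summand is at most $L_{1,G}\|v-v^*(x)\|$, because by Proposition \ref{prop:smoothing_properties} the smoothed inner objective $G_{h_1,\eta_1}$ inherits $L_{1,G}$-smoothness, so the operator norm of its $xz$ Hessian block is at most $L_{1,G}$. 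In the second summand I factor out $\|v^*(x)\|\le L_{0,F}/\mu_G$ via Lemma \ref{lem:bound_norm_v}, and split the Hessian difference by inserting $\nabla_{xz}^2 G(z,x)$: the piece $\nabla_{xz}^2 G_{h_1,\eta_1}(z,x)-\nabla_{xz}^2 G(z,x)$ is the Hessian-block smoothing error, bounded as in \eqref{eqn:hessian_block1_error} of Lemma \ref{lem:smoothing_error} (yielding the $\tfrac{2L_{2,G}}{3}(\cdots)$ factor, scaled by $L_{0,F}/\mu_G$), while $\nabla_{xz}^2 G(z,x)-\nabla_{xz}^2 G(z^*(x),x)$ is controlled through the Lipschitz continuity of the Hessian of $G$ (Assumption \ref{asm:G_asm}) times $\|z-z^*(x)\|$. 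Combining this last contribution with the $L_{1,F}$ term from $\mathrm{(A)}$ assembles the coefficient multiplying $\|z-z^*(x)\|$ in the statement.

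The argument is essentially bookkeeping layered on the previously established lemmas, so there is no deep obstacle; the one step requiring genuine care is the decomposition in $\mathrm{(B)}$: peeling off $v-v^*(x)$ first against the smoothed Hessian (whose norm is controlled uniformly by $L_{1,G}$ through Proposition \ref{prop:smoothing_properties}) and attaching every remaining error to the \emph{bounded} vector $v^*(x)$. This is exactly what keeps $\|v\|$ out of the final estimate and lets Lemma \ref{lem:bound_norm_v} convert all Hessian-related errors into the factor $L_{0,F}/\mu_G$. A secondary point is to ensure the smoothing-error bound of Lemma \ref{lem:smoothing_error} is applied to the correct ($xz$) Hessian block of $G$ with the inner smoothing parameters $h_1,\eta_1$.
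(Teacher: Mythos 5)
Your proposal matches the paper's proof essentially step for step: the same triangle-inequality split into the outer-gradient part and the Hessian--vector part, the same insertions of the intermediate quantities $\nabla_x F(z,x)$, $\nabla_{xz}^2 G_{h_1,\eta_1}(z,x)\,v^*(x)$ and $\nabla_{xz}^2 G(z,x)\,v^*(x)$, and the same use of Lemma~\ref{lem:smoothing_error}, Proposition~\ref{prop:smoothing_properties}, Lemma~\ref{lem:bound_norm_v} and the regularity assumptions. One remark: bounding $\|\nabla_{xz}^2 G(z,x)-\nabla_{xz}^2 G(z^*(x),x)\|$ via Hessian Lipschitz continuity, as you (correctly) do, produces the constant $L_{2,G}$, so the coefficient of $\|z-z^*(x)\|$ comes out as $L_{1,F}+L_{0,F}L_{2,G}/\mu_G$ rather than the $L_{1,F}+L_{0,F}L_{1,G}/\mu_G$ appearing in the statement --- the paper's own proof contains the same slip (its prose invokes $L_{2,G}$-Lipschitzness of the Hessian but the displayed bound writes $L_{1,G}$), so this is an inconsistency in the paper rather than a gap in your argument.
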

\begin{proof}
    Since $\nabla \Psi(x) := \nabla_x F(z^*(x),x) + \nabla^2_{2,1} G(z^*(x),x)v^*(x)$, we have
    \begin{equation*}
        \begin{aligned}
        \| \nabla^2_{xz}G_{h_1,\eta_1}(z, x)v + \nabla_x F_{h_2,\eta_2}(z,x) - \nabla \Psi(x) \| &\leq \underbrace{\| \nabla_x F_{h_2,\eta_2}(z,x) - \nabla_x F(z^*(x),x) \|}_{A}\\
        &+ \underbrace{\| \nabla^2_{xz}G_{h_1,\eta_1}(z, x)v - \nabla^2_{xz}G(z^*(x), x)v^*(x)\|}_{B}.
        \end{aligned}
    \end{equation*}
    To bound $A$, we add and subtract $\nabla_x F(z,x)$,
    \begin{equation*}
        \begin{aligned}
            \| \nabla_x F_{h_2,\eta_2}(z,x) - \nabla_x F(z^*(x),x) \| &\leq \| \nabla_x F_{h_2,\eta_2}(z,x) - \nabla_x F(z,x)\| + \|\nabla_x F(z,x) - \nabla_x F(z^*(x),x) \|.
        \end{aligned}
    \end{equation*}
    By eq. \eqref{eqn:smt_error_c1} and $L_{1,F}$-smoothness we have
    \begin{equation*}
        \begin{aligned}
            \| \nabla_x F_{h_2,\eta_2}(z,x) - \nabla_x F(z^*(x),x) \| &\leq \frac{L_{1,F} p\sqrt{d}}{2} \frac{h_2^2}{\eta_2} + \frac{L_{1,F} }{2} (d + 3)^{3/2} \eta_2+L_{1,F}\|z -z^*(x)\|.
        \end{aligned}
    \end{equation*}
    To bound $B$, we add and subtract $\nabla^2_{xz}G_{h_{1},\eta_1}(z,x) v^*(x)$,
    \begin{equation*}
        \begin{aligned}
            \| \nabla^2_{xz}G_{h_1,\eta_1}(z, x)v - \nabla^2_{xz}G(z^*(x), x)v^*(x)\| &\leq \underbrace{\| \nabla^2_{xz}G_{h_1,\eta_1}(z, x)v - \nabla^2_{xz}G_{h_1,\eta_1}(z, x)v^*(x) \|}_{C}\\
            &+\underbrace{\|\nabla^2_{xz}G_{h_1,\eta_1}(z, x)v^*(x) - \nabla^2_{xz}G(z^*(x), x)v^*(x)\|}_{D}.
        \end{aligned}
    \end{equation*}
    We bound $C$, by $L_{1,G}$-smoothness of $G$, 
    \begin{equation*}
        \begin{aligned}
           \| \nabla^2_{xz}G_{h_1,\eta_1}(z, x)v - \nabla^2_{xz}G_{h_1,\eta_1}(z, x)v^*(x) \| &= \|  \nabla^2_{xz}G_{h_1,\eta_1}(z, x)(v - v^*(x)) \|\\
           &\leq \|\nabla^2_{xz}G_{h_1,\eta_1}(z, x) \| \|v - v^*(x)\|\\
           &\leq L_{1,G} \|v - v^*(x)\|.         
        \end{aligned}
    \end{equation*}
    We bound $D$, by adding and subtracting $\nabla^2_{xz}G(z, x)v^*(x)$
    \begin{equation*}
        \begin{aligned}
           \|\nabla^2_{xz}G_{h_1,\eta_1}(z, x)v^*(x) - \nabla^2_{xz}G(z^*(x), x)v^*(x)\| &\leq  
           \|(\nabla^2_{xz}G(z, x) - \nabla^2_{xz}G(z^*(x), x))v^*(x)\|\\
           &+ \|\nabla^2_{xz}G_{h_1,\eta_1}(z, x)v^*(x) - \nabla^2_{xz}G(z, x)v^*(x) \|
        \end{aligned}
    \end{equation*}
    Since hessian of $G$ is $L_{2,G}$-Lipschitz, adding and subtracting $\nabla_{xz}^2 G(z,x)v^*(x)$,
    \begin{equation*}
        \begin{aligned}
           \|\nabla^2_{xz}G_{h_1,\eta_1}(z, x)v^*(x) - \nabla^2_{xz}G(z^*(x), x)v^*(x)\| &\leq  \|(\nabla^2_{xz}G(z, x) - \nabla^2_{xz}G(z^*(x), x))v^*(x)\|\\
           &+ \|\nabla^2_{xz}G_{h_1,\eta_1}(z, x)v^*(x) - \nabla^2_{xz}G(z, x)v^*(x) \|\\
           &\leq L_{1,G}\| z - z^*(x)\| \|v^*(x)\|\\
           &+ \|\nabla^2_{xz}G_{h_1,\eta_1}(z, x) - \nabla^2_{xz}G(z, x)\| \|v^*(x) \|
        \end{aligned}
    \end{equation*}
    By Lemma \ref{lem:smoothing_error} (eq. \eqref{eqn:hessian_block1_error}) and Lemma \ref{lem:bound_norm_v}, we have
    \begin{equation*}
        \begin{aligned}
           \|\nabla^2_{xz}G_{h_1,\eta_1}(z, x)v^*(x) - \nabla^2_{xz}G(z^*(x), x)v^*(x)\| &\leq \frac{L_{0,F} L_{1,G}}{\mu_G}\| z - z^*(x)\|\\
           &+ \frac{L_{0,F}}{\mu_G}\frac{2L_{2,G}}{3}\bigg(h_1 (d + 5)^{5/2} + h_1 (d +3)^{3/2}\\
           &+ \frac{\eta_1^3}{h_1^2}(d + 1) (p +3)^{3/2} \bigg).
        \end{aligned}
    \end{equation*}
    Putting together all these bounds, we get the claim.
\end{proof}

\noindent Notice that in Algorithms \ref{alg:zoba} and \ref{alg:hfzoba}, the search directions are constructed using unbiased estimators of the gradient and Hessian of the smoothed surrogates of the outer and inner objectives  $f$ and $g$. In the following appendices, we study such estimators and bound their approximation errors. In particular, in Appendix \ref{app:aux_res_zoba} we study the approximation of the gradients and Hessians used in Algorithm \ref{alg:zoba}, while in Appendix \ref{app:aux_res_hfzoba} we study the surrogate constructed in Algorithm \ref{alg:hfzoba}.%

\subsection{Auxiliary results for Algorithm \ref{alg:zoba}}\label{app:aux_res_zoba}
In this appendix, we analyze the approximation error of the finite-difference estimators and the search directions constructed in Algorithm \ref{alg:zoba}. To simplify reading, for this section, we introduce some notation that we will use in the proofs. 
\paragraph*{Notation.} Let $b = \max(b_1, b_2)$ and $\ell = \max(\ell_1, \ell_2)$. For every iteration $k \in \mathbb{N}$, let $(\xi_{i,k})_{i=1}^b$ and $(\zeta{i,k})_{i=1}^b$ be independent realizations of the random variables $\xi$ and $\zeta$ sampled at iteration $k$. For every $i = 1, \ldots, b$, let $(w_k^{(i,j)}){j=1}^{\ell}$ and $(u_k^{(i,j)}){j=1}^{\ell}$ be random vectors sampled i.i.d from $\mathcal{N}(0, I_p)$ and $\mathcal{N}(0, I_d)$, respectively, at iteration $k \in \mathbb{N}$. We denote by $\hat{\nabla}_z g_{h_k}(z_k, x_k, \xi{i,k})$, $\hat{\nabla}_z f_{h_k}(z_k, x_k, \zeta_{i,k})$, and $\hat{\nabla}_x f_{h_k}(z_k, x_k, \zeta_{i,k})$ the finite-difference surrogates of the stochastic gradients of the inner and outer objectives $g$ and $f$ with respect to the variables $z$ and $x$, computed at iteration $k \in \mathbb{N}$ using $z_k$, $x_k$, and $\xi_{i,k}$ (or $\zeta_{i,k}$), as follows. %
\begin{equation}\label{eqn:zoba_grad_approximations}
    \begin{aligned}
        \hat{\nabla}_z g_{h_{k}}(z_k,x_k,\xi_{i,k}) &= \frac{1}{\ell_1} \sum\limits_{j=1}^{\ell_1} \frac{g(z_k + h_kw_k^{(i,j)},x_k,\xi_{i,k}) -g(z_k - h_kw_k^{(i,j)},x_k,\xi_{i,k})}{2 h_k} w_k^{(i,j)}\\
        \hat{\nabla}_z f_{h_{k}}(z_k,x_k,\zeta_{i,k}) &= \frac{1}{\ell_2} \sum\limits_{j=1}^{\ell_2} \frac{f(z_k + h_kw_k^{(i,j)},x_k,\zeta_{i,k}) - f(z_k,x_k,\zeta_{i,k})}{ h_k} w_k^{(i,j)}\\    
        \hat{\nabla}_x f_{h_{k}}(z_k,x_k,\zeta_{i,k}) &= \frac{1}{\ell_2} \sum\limits_{j=1}^{\ell_2} \frac{f(z_k ,x_k + h_ku_k^{(i,j)},\zeta_{i,k}) - f(z_k,x_k,\zeta_{i,k})}{ h_k} u_k^{(i,j)}.\\    
    \end{aligned}
\end{equation}
Let
\begin{equation*}
    \begin{aligned}
        c_{k}^{(i,j)}(z_k,x_k,\xi_{i,k}) &=\frac{g(z_k + h_kw_k^{(i,j)},x_k,\xi_{i,k}) +g(z_k - h_kw_k^{(i,j)},x_k,\xi_{i,k}) - 2g(z_k ,x_k,\xi_{i,k})}{2 h_k^2},\\
        s_k^{(i,j)}(z_k,x_k,\xi_{i,k}) &= \frac{g(z_k + h_kw_k^{(i,j)},x_k +h_ku_k^{(i,j)},\xi_{i,k}) +g(z_k - h_kw_k^{(i,j)},x_k -h_ku_k^{(i,j)},\xi_{i,k}) - 2g(z_k ,x_k,\xi_{i,k})}{2 h_k^2}
    \end{aligned}
\end{equation*}
We denote the finite-difference surrogates of Hessian block $zz$ and $xz$ of the inner target as follows.
\begin{equation}\label{eqn:zoba_hessian_approximations}
    \begin{aligned}
        \hat{\nabla}_{zz}^2 g_{h_{k}}(z_k,x_k,\xi_{i,k}) &= \frac{1}{\ell_1} \sum\limits_{j=1}^{\ell_1} c_{k}^{(i,j)}(z_k,x_k,\xi_{i,k}) (w_k^{(i,j)}w_k^{(i,j)\top} - I_p).\\
        \hat{\nabla}_{xz}^2 g_{h_{k}}(z_k,x_k,\xi_{i,k}) &= \frac{1}{\ell_1} \sum\limits_{j=1}^{\ell_1} s_k^{(i,j)}(z_k,x_k,\xi_{i,k})  u_k^{(i,j)}w_k^{(i,j)\top}.
    \end{aligned}
\end{equation}
Moreover, we denote by $\nabla_z g_{h_k}$, $\nabla_z f_{h_k}$, and $\nabla_x f_{h_k}$ the gradients with respect to the variables $z$ and $x$ of the smooth surrogates (eq. \eqref{eqn:gaussian_smoothing}) of $g$ and $f$ at iteration $k \in \mathbb{N}$. These are defined as indicated in Lemma \ref{lem:grad_hess_smoothing} e.g.,
\begin{equation*}
\begin{aligned}
\nabla_z g_{h_k}(z_k, x_k, \xi_{i,k})
&= \frac{1}{\ell_1} \sum\limits_{j=1}^{\ell_1}
\mathbb{E}_{w_k^{(i,j)}} \left[
\frac{g(z_k + h_k w_k^{(i,j)}, x_k, \xi{i,k}) - g(z_k - h_k w_k^{(i,j)}, x_k, \xi_{i,k})}{2 h_k}  w_k^{(i,j)}
\right] \\
&= \mathbb{E}_{w} \left[
\frac{g(z_k + h_k w, x_k, \xi{i,k}) - g(z_k - h_k w, x_k, \xi_{i,k})}{2 h_k}  w
\right],
\end{aligned}
\end{equation*}
where the second equality follows from the fact that all $w_k^{(i,j)}$ are i.i.d. Analogously, $\nabla_z f_{h_k}$ and $\nabla_x f_{h_k}$ are defined. We begin by providing a lemma that bounds the expected norm of the surrogate gradient approximations.

\begin{lemma}[Approximation Error Bounds]\label{lem:approx_error}
    Let Assumptions \ref{asm:bc_condition},\ref{asm:F_smooth},\ref{asm:G_asm} hold. Let $\hat{\nabla}_z g_{h_{k}}$,$\hat{\nabla}_z f_{h_{k}}$ and $\hat{\nabla}_x f_{h_{k}}$ be the stochastic gradient surrogates defined in eq. \eqref{eqn:zoba_grad_approximations}. Let $(z_k)_{k \in \mathbb{N}}$ and $(x_k)_{k \in \mathbb{N}}$ be the sequences generated by Algorithm \ref{alg:zoba}. Then, for every $k \in \mathbb{N}$ 
    \begin{equation}\label{eqn:app_err_approx_g}
        \begin{aligned}
        \mathbb{E}_k \left[ \left\| \frac{1}{b_1} \sum\limits_{i=1}^{b_1}  \hat{\nabla}_z g_{h_{k}}(z_k,x_k,\xi_{i,k})  \right\|^2\right] &\leq 2 \left(2 + \frac{p + 2}{b_1 \ell_1} \right) \left\|\nabla_z G(z_k, x_k) \right\|^2\\
        &+ \left( \frac{(p + 6)^3}{2 b_1 \ell_1} + \left( \frac{3}{b_1} + 1 \right) (p + 3)^3 \right)L_{1,G}^2 h_{k}^2\\
        &+ 2 \left(3 +\frac{p + 2}{\ell_1} \right)\frac{\sigma_{1,G}^2}{b_1},
        \end{aligned}
    \end{equation}
    \begin{equation*}
        \begin{aligned}
            \mathbb{E}_k \left[ \left\| \frac{1}{b_2} \sum\limits_{i=1}^{b_2} \hat{\nabla}_z f_{h_{k}}(z_k,x_k,\zeta_{i,k})  \right\|^2\right] &\leq 2 \left(1 + \frac{p + 2}{b_2 \ell_2} \right) \left\|\nabla_z F(z_k, x_k) \right\|^2\\
        & +\left( \frac{(p + 6)^3}{2 b_2 \ell_2} + \left(\frac{3}{b_2} + 1 \right)(p + 3)^{3}  \right) L_{1,F}^2 h_{k}^2\\
        &+ 2\left(\frac{p + 2}{\ell_2} + 3 \right ) \frac{\sigma_{1,F}^2}{b_2}, \\
        \end{aligned}
    \end{equation*}
    and,
    \begin{equation*}
        \begin{aligned}
            \mathbb{E}_k \left[  \left\| \frac{1}{b_2} \sum\limits_{i=1}^{b_2}\hat{\nabla}_x f_{h_{k}}(z_k,x_k,\zeta_{i,k})  \right\|^2\right] &\leq 2 \left(1 + \frac{d + 2}{b_2 \ell_2} \right) \left\|\nabla_x F(z_k, x_k) \right\|^2\\
        & +\left( \frac{(d + 6)^3}{2 b_2 \ell_2} + \left(\frac{3}{b_2} + 1 \right)(d + 3)^{3}  \right) L_{1,F}^2 h_{k}^2\\
        &+ 2\left(\frac{d + 2}{b_2 \ell_2} +\frac{3}{b_2} \right )\sigma_{1,F}^2.
        \end{aligned}
    \end{equation*}
\end{lemma}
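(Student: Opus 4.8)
The plan is to prove all three bounds with the same template and to treat in detail only the first one (the central-difference surrogate $\hat\nabla_z g_{h_k}$); the bound for $\hat\nabla_z f_{h_k}$ is identical except that the forward difference replaces the central difference, and the bound for $\hat\nabla_x f_{h_k}$ is the same with the Gaussian directions living in $\mathbb{R}^d$, so that every occurrence of $p$ becomes $d$. The guiding observation is that, by Lemma \ref{lem:grad_hess_smoothing} and its remark, each per-sample surrogate is an unbiased estimator (over the Gaussian directions) of the smoothed stochastic gradient $\nabla_z g_{h_k}(z_k,x_k,\xi_{i,k})$, whose own expectation over $\xi$ is $\nabla_z G_{h_k}(z_k,x_k)$. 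The whole proof is then a matter of peeling off the second moment one randomness level at a time: the samples $i$, the directions $j$, and the stochastic index $\xi$.

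First I would exploit that $D_i := \hat\nabla_z g_{h_k}(z_k,x_k,\xi_{i,k})$, $i=1,\dots,b_1$, are i.i.d.\ conditionally on the history, and write the bias--variance identity
\begin{equation*}
\mathbb{E}_k\!\left[\Big\|\tfrac{1}{b_1}\sum_{i=1}^{b_1} D_i\Big\|^2\right]
= \big\|\nabla_z G_{h_k}(z_k,x_k)\big\|^2
+ \tfrac{1}{b_1}\,\mathbb{E}_k\!\left[\big\|D_1-\mathbb{E}_k[D_1]\big\|^2\right].
\end{equation*}
For the mean term I would invoke the smoothing-error bound of Lemma \ref{lem:smoothing_error} with $\eta=0$, giving $\|\nabla_z G_{h_k}-\nabla_z G\|\le \tfrac{L_{1,G}}{2}(p+3)^{3/2}h_k$, and then $\|a\|^2\le 2\|a-b\|^2+2\|b\|^2$ to produce the leading $\|\nabla_z G\|^2$ contribution and an $O(h_k^2)$ bias. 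For the variance term I would condition on $\xi_1$ and split it, via the law of total variance, into a direction part $\mathbb{E}_{\xi_1}\mathbb{E}_{W}\|D_1-\nabla_z g_{h_k}(\cdot,\xi_1)\|^2$ and a stochastic part $\mathbb{E}_{\xi_1}\|\nabla_z g_{h_k}(\cdot,\xi_1)-\nabla_z G_{h_k}\|^2$. The direction part collapses to $1/\ell_1$ times the single-direction second moment because the $\ell_1$ directions are i.i.d.; the stochastic part is controlled by $\sigma_{1,G}^2$ using that $\nabla_z g_{h_k}(\cdot,\xi)=\mathbb{E}_w[\nabla_z g(z_k+h_k w,x_k,\xi)]$, Jensen, Fubini, and the variance bound of Assumption \ref{asm:bc_condition} at the shifted point.

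The crux is the single-direction second moment. Writing $d=\frac{g(z_k+h_kw)-g(z_k-h_kw)}{2h_k}w$ for a single $w\sim\mathcal{N}(0,I_p)$ (suppressing $x_k,\xi$), I would Taylor-expand using the $L_{1,G}$-smoothness of $g$ (Assumption \ref{asm:G_asm}) to get $d=\langle\nabla_z g,w\rangle w + r$ with $\|r\|\le \tfrac{L_{1,G}h_k}{2}\|w\|^3$, whence $\|d\|^2\le 2\langle\nabla_z g,w\rangle^2\|w\|^2+\tfrac{L_{1,G}^2h_k^2}{2}\|w\|^6$. Taking $\mathbb{E}_w$ and applying Lemma \ref{lem:norm_gaus_vector}, namely $\mathbb{E}_w[\langle a,w\rangle^2\|w\|^2]=(p+2)\|a\|^2$ (read off from $\mathbb{E}[u^\top A u\,u^\top B u]=2\mathrm{tr}(AB)+\mathrm{tr}(A)\mathrm{tr}(B)$ with $A=aa^\top$, $B=I_p$) and $\mathbb{E}_w[\|w\|^6]\le(p+6)^3$, yields the $(p+2)\|\nabla_z g\|^2$ and $(p+6)^3 L_{1,G}^2 h_k^2$ shapes. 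Converting $\mathbb{E}_\xi\|\nabla_z g\|^2$ into $\|\nabla_z G\|^2+\sigma_{1,G}^2$ (the $z$-block inherits the variance bound of the full gradient in Assumption \ref{asm:bc_condition}) finishes this piece.

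The main obstacle is purely the bookkeeping of constants: collecting the $O(h_k^2)$ contributions coming simultaneously from the smoothing error of the mean and from the finite-difference remainder inside the variance forces one to track both the $(p+3)^3$ and $(p+6)^3$ terms, and explains the mixed coefficient $\big(\tfrac{3}{b_1}+1\big)(p+3)^3+\tfrac{(p+6)^3}{2b_1\ell_1}$, since the remainder has nonzero mean equal to the smoothing error and therefore feeds both the squared-mean and the variance levels. For $\hat\nabla_z f_{h_k}$ the forward-difference remainder has the same order, $\|r\|\le\tfrac{L_{1,F}h_k}{2}\|w\|^3$, so the identical argument applies with $L_{1,F},\sigma_{1,F}$ in place of $L_{1,G},\sigma_{1,G}$; for $\hat\nabla_x f_{h_k}$ the directions are $u\sim\mathcal{N}(0,I_d)$, so $p$ is replaced by $d$ throughout.
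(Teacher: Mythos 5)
Your proposal is correct, but it is organized differently from the paper's proof, and the difference is worth recording. The paper peels off the direction noise first: it adds and subtracts the per-sample smoothed gradients $\nabla_z g_{h_k}(z_k,x_k,\xi_{i,k})$, kills the cross term by unbiasedness over the directions (Lemma~\ref{lem:grad_hess_smoothing}), and is then left with the squared norm of the sample average of smoothed stochastic gradients, which it controls by a further chain of add-and-subtract steps, each costing a factor $2$ or $3$; that chain is exactly what produces the coefficients $2\left(2+\tfrac{p+2}{b_1\ell_1}\right)$, $\left(\tfrac{3}{b_1}+1\right)(p+3)^3$ and $\tfrac{6}{b_1}$ in the statement. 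You instead apply the bias--variance identity across the conditionally i.i.d.\ samples $D_i$ first, and then the law of total variance to split the per-sample fluctuation into a direction part and a $\xi$ part: the direction part reduces to the same single-direction second moment (Taylor remainder plus the Gaussian moments of Lemma~\ref{lem:norm_gaus_vector}, exactly the paper's term $C$), while the $\xi$ part is bounded by $\sigma_{1,G}^2$ in one stroke via the interchange $\nabla_z g_{h_k}(\cdot,\xi)=\mathbb{E}_w[\nabla_z g(\cdot+h_k w,\cdot,\xi)]$, Jensen, and Fubini, where the paper instead uses a three-way split that injects extra smoothing-error terms. Your route buys strictly tighter constants: a leading term $\left(2+\tfrac{2(p+2)}{b_1\ell_1}\right)\|\nabla_z G\|^2$ rather than $\left(4+\tfrac{2(p+2)}{b_1\ell_1}\right)\|\nabla_z G\|^2$, and a bias coefficient $\tfrac12(p+3)^3$ rather than $\left(1+\tfrac{3}{b_1}\right)(p+3)^3$; since your bound is below the stated right-hand side, the lemma follows \emph{a fortiori} (and your constants in fact reproduce the stated leading coefficient $2(1+\cdot)$ of the second and third inequalities exactly). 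Two small corrections: (i) your closing remark that the finite-difference remainder ``feeds both the squared-mean and the variance levels'' describes the paper's accounting, not yours---in your derivation the smoothing bias sits only in the mean term, which is precisely why your constants come out smaller, so the remark should be dropped or rephrased; (ii) the interchange of $\nabla_z$ and $\mathbb{E}_w$ deserves a line of justification (differentiation under the integral sign, valid here because $g(\cdot,\cdot,\xi)$ has an $L_{1,G}$-Lipschitz gradient), since Lemma~\ref{lem:grad_hess_smoothing} only states the finite-difference representation of $\nabla_z g_{h_k}$, not this identity.
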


\begin{proof}
    We start proving eq. \eqref{eqn:app_err_approx_g}. %
    Define
    \begin{equation*}
        \begin{aligned}
        g^{(i,j)}_k&=\frac{g(z_k + h_{k}w_k^{(i,j)}, x_k, \xi_{i,k}) - g(z_k - h_{k}w_k^{(i,j)}, x, \xi_{i,k})}{2h_{k}}w_{k}^{(i,j)},\\
        \delta^{(i,j)}_k &= g^{(i,j)}_k - \nabla_zg_{h_{k}}(z_k,x_k,\xi_{i,k}), \quad \text{and} \quad \delta^{(i)}_k = \sum\limits_{j=1}^{\ell_1} \delta^{(i,j)}_k. 
        \end{aligned}
    \end{equation*}
    Adding and subtracting $\frac{1}{b_1\ell_1} \sum\limits_{i = 1}^{b_1} \sum\limits_{j = 1}^{\ell_1} \nabla_z g_{h_{k}}(z_k, x_k, \xi_{i,k})$, we have
    \begin{equation*}
        \begin{aligned}
        \left\| \frac{1}{b_1} \sum\limits_{i=1}^{b_1}  \hat{\nabla}_z g_{h_{k}}(z_k,x_k,\xi_{i,k})  \right\|^2 &= \frac{1}{b_1^2 \ell_1^2} \left\| \sum\limits_{i =1}^{b_1}\sum\limits_{j =1}^{\ell_1} \delta^{(i,j)}_k +\nabla_z g_{h_k}(z_k,x_k,\xi_{i,k}) \right\|^2\\
        &= \frac{1}{b_1^2 \ell_1^2} \left\| \sum\limits_{i =1}^{b_1}\sum\limits_{j =1}^{\ell_1} \delta^{(i,j)}_k \right\|^2 +\frac{1}{b^2_1} \left\| \sum\limits_{i = 1}^{b_1}\nabla_z g_{h_{k}}(z_k,x_k,\xi_{i,k}) \right\|^2\\
        &+ \frac{2}{b^2_1 \ell^2_1}\scalT{\sum\limits_{i =1}^{b_1}\sum\limits_{j =1}^{\ell_1} \delta^{(i,j)}_k}{\sum\limits_{i =1}^{b_1}\sum\limits_{j =1}^{\ell_1} \nabla_z g_{h_{k}}(z_k,x_k,\xi_{i,k})}.
        \end{aligned}
    \end{equation*}
    Taking the expectation in $w_{k}^{(i,j)}$ for every $i = 1,\cdots,b_1$ and $j = 1,\cdots,\ell_1$, we get
    \begin{equation*}
        \begin{aligned}
        \mathbb{E}_{W_k}\left[\left\| \frac{1}{b_1} \sum\limits_{i=1}^{b_1}  \hat{\nabla}_z g_{h_{k}}(z_k,x_k,\xi_{i,k})  \right\|^2\right] &= \frac{1}{b_1^2 \ell_1^2} \mathbb{E}_{W_k}\left[ \left\| \sum\limits_{i =1}^{b_1}\sum\limits_{j =1}^{\ell_1} \delta^{(i,j)}_k \right\|^2\right] + \frac{1}{b^2_1} \left\| \sum\limits_{i = 1}^{b_1}\nabla_z g_{h_{k}}(z_k,x_k,\xi_{i,k}) \right\|^2\\
        &+ \frac{2}{b_1^2 \ell_1^2}\scalT{\sum\limits_{i =1}^{b_1}\sum\limits_{j =1}^{\ell_1} \mathbb{E}_{w^{(i,j)}_k}[\delta^{(i,j)}_k]}{\sum\limits_{i =1}^{b_1}\sum\limits_{j =1}^{\ell_1} \nabla_z g_{h_{k}}(z_k,x_k,\xi_{i,k})}.
        \end{aligned}
    \end{equation*}
    By Lemma \ref{lem:grad_hess_smoothing}, for every $i$ and $j$, we have $\mathbb{E}_{w^{(i,j)}_k}\left[g^{(i,j)}_k \right] = \nabla_z g_{h_{k}}(z_k, x_k, \xi_{i,k})$. Thus, observing that
    \begin{equation*}
        \begin{aligned}
            \mathbb{E}_{w^{(i,j)}_k}\left[\delta^{(i,j)}_k\right] = \mathbb{E}_{w^{(i,j)}_k}\left[g^{(i,j)}_k - \nabla_zg_{h_{k}}(z_k,x_k,\xi_{i,k})\right] = \mathbb{E}_{w^{(i,j)}_k}\left[g^{(i,j)}_k \right] - \nabla_zg_{h_{k}}(z_k,x_k,\xi_{i,k}) =0,
        \end{aligned}
    \end{equation*}
    we have,
    \begin{equation}\label{eqn:approx_err_grad_cfd}
        \begin{aligned}
        \mathbb{E}_{W_k}\left[\left\| \frac{1}{b_1} \sum\limits_{i=1}^{b_1}  \hat{\nabla}_z g_{h_{k}}(z_k,x_k,\xi_{i,k})  \right\|^2\right] &= \underbrace{\frac{1}{b_1^2 \ell_1^2} \mathbb{E}_{W_k}\left[ \left\| \sum\limits_{i =1}^{b_1}\sum\limits_{j =1}^{\ell_1} \delta^{(i,j)}_k \right\|^2\right]}_{A} + \underbrace{\frac{1}{b^2_1} \left\| \sum\limits_{i = 1}^{b_1}\nabla_z g_{h_{k}}(z,x,\xi_i) \right\|^2}_{B}.
        \end{aligned}
    \end{equation}
    The term $B$ can be bounded by adding and subtracting the full-gradient of the smoothing $\nabla_z G_{h_{k}}(z_k,x_k)$, we get%
    \begin{equation*}
        \begin{aligned}
            \frac{1}{b^2_1} \left\| \sum\limits_{i = 1}^b\nabla_z g_{h_{k}}(z_k,x_k,\xi_{i,k}) \right\|^2 %
            &\leq \frac{2}{b^2_1} \left\| \sum\limits_{i = 1}^{b_1}\nabla_z g_{h_{k}}(z_k,x_k,\xi_{i,k}) - \nabla_z G_{h_{k}}(z_k, x_k) \right\|^2 + 2 \left \|\nabla_z G_{h_{k}}(z_k, x_k) \right\|^2.
        \end{aligned}
    \end{equation*}
    Let $\bar{\delta}_{i,k} := g_{h_{k}}(z_k,x_k,\xi_{i,k}) - \nabla_z G_{h_{k}}(z_k, x_k)$. Then, taking the conditional expectation on $\xi_{i,k}$ for every $i =1,\cdots,b_1$, we have
    \begin{equation*}
        \begin{aligned}
            \frac{1}{b^2_1} \mathbb{E}_{\xi_k}\left[\left\| \sum\limits_{i = 1}^{b_1}\nabla_z g_{h_{k}}(z_k,x_k,\xi_{i,k}) \right\|^2\right] &\leq \frac{2}{b_1^2} \mathbb{E}_{\xi_k} \left[\left\| \sum\limits_{i = 1}^{b_1}\bar{\delta}_{i,k} \right\|^2 \right] + 2 \left \|\nabla_z G_{h_{k}}(z_k, x_k) \right\|^2\\
            &= \frac{2}{b_1^2} \bigg( \sum\limits_{i=1}^{b_1}\mathbb{E}_{\xi_{i,k}} \left[\left\| \bar{\delta}_{i,k} \right\|^2 \right] + \sum\limits_{i=1}^{b_1}\sum\limits_{t \neq i}\mathbb{E}_{\xi_{i,k}} \left[\scalT{\bar{\delta}_{i,k}}{\bar{\delta}_{t,k}} \right]\bigg)\\
            &+ 2 \left \|\nabla_z G_{h_{k}}(z_k, x_k) \right\|^2.
        \end{aligned}
    \end{equation*}
    By Assumption \ref{asm:bc_condition} and, since for every $i\neq t$, $\xi_{i,k}$ and $\xi_{t,k}$ are independent, we have
    \begin{equation*}
        \begin{aligned}
            \frac{1}{b^2_1} \mathbb{E}_{\xi_k}\left[\left\| \sum\limits_{i = 1}^{b_1}\nabla_z g_{h_{k}}(z_k,x_k,\xi_{i,k}) \right\|^2\right] &\leq \frac{2}{b_1^2} \bigg( \sum\limits_{i=1}^{b_1}\mathbb{E}_{\xi_{i,k}} \left[\left\| \bar{\delta}_{i,k} \right\|^2 \right] + \sum\limits_{i=1}^{b_1}\sum\limits_{t \neq i}\underbrace{\scalT{\mathbb{E}_{\xi_{i,k}} \left[\bar{\delta}_{i,k}\right] }{\mathbb{E}_{\xi_{t,k}} \left[\bar{\delta}_{t,k}\right]}}_{=0}\bigg)\\
            &+ 2 \left \|\nabla_z G_{h_{k}}(z_k, x_k) \right\|^2.
        \end{aligned}
    \end{equation*}
    Adding and subtracting $\nabla_z g(z_k, x_k,\xi_{i,k})$ and $\nabla_z G(z_k,x_k)$, we get
    \begin{equation*}
        \begin{aligned}
            \frac{1}{b^2_1} \mathbb{E}_{\xi_k}\left[\left\| \sum\limits_{i = 1}^{b_1}\nabla_z g_{h_{k}}(z_k,x_k,\xi_{i,k}) \right\|^2\right] &\leq \frac{2}{b_1^2}  \sum\limits_{i=1}^{b_1} \bigg(3\mathbb{E}_{\xi_{i,k}} \left[\left\| \nabla_z g_{h_{k}}(z_k,x_k,\xi_{i,k}) - \nabla_z g(z_k, x_k, \xi_{i,k}) \right\|^2 \right]\\
            &+3 \| \nabla_z G(z_k, x_k) - \nabla_z G_{h_{k}}(z_k, x_k) \|^2 \\
            &+3 \mathbb{E}_{\xi_{i,k}} \left[ \| \nabla_z g(z_k, x_k, \xi_{i,k}) - \nabla_z G(z_k, x_k) \|^2 \right]\bigg)\\
            &+ 2 \left \|\nabla_z G_{h_{k}}(z_k, x_k) \right\|^2.
        \end{aligned}
    \end{equation*}    
    By Lemma \ref{lem:smoothing_error} and Assumption \ref{asm:bc_condition},
    \begin{equation*}
        \begin{aligned}
            \frac{1}{b^2_1} \mathbb{E}_{\xi_k}\left[\left\| \sum\limits_{i = 1}^{b_1}\nabla_z g_{h_{k}}(z_k,x_k,\xi_{i,k}) \right\|^2\right] &\leq 
            \frac{6}{b_1} \bigg(\frac{L_{1,G}^2}{2}(p + 3)^{3}h_{k}^2 + \sigma_{1,G}^2\bigg) + 2 \left \|\nabla_z G_{h_{k}}(z_k, x_k) \right\|^2.
        \end{aligned}
    \end{equation*}    
    Adding and subtracting $\nabla_z G(z_k,x_k)$,
    \begin{equation*}
        \begin{aligned}
            \frac{1}{b^2_1} \mathbb{E}_{\xi_k}\left[\left\| \sum\limits_{i = 1}^{b_1}\nabla_z g_{h_{k}}(z_k,x_k,\xi_{i,k}) \right\|^2\right] &\leq \frac{6}{b_1} \bigg(\frac{L_{1,G}^2}{2}(p + 3)^{3}h_{k}^2 + \sigma_{1,G}^2\bigg)\\
            &+ 2 \left \|\nabla_z G_{h_{k}}(z_k, x_k) - \nabla_z G(z_k, x_k) + \nabla_z G(z_k, x_k) \right\|^2\\
            &\leq \frac{6}{b_1} \bigg(\frac{L_{1,G}^2}{2}(p + 3)^{3}h_{k}^2 + \sigma_{1,G}^2\bigg) + 4 \left \|\nabla_z G_{h_{k}}(z_k, x_k) - \nabla_z G(z_k, x_k) \right\|^2\\
            &+ 4 \left\|\nabla_z G(z_k, x_k) \right\|^2.
        \end{aligned}
    \end{equation*}    
    By Lemma \ref{lem:smoothing_error}, we get
    \begin{equation}\label{eqn:approx_err_grad_cfd_B}
        \begin{aligned}
            \frac{1}{b^2_1} \mathbb{E}_{\xi_k}\left[\left\| \sum\limits_{i = 1}^{b_1}\nabla_z g_{h_{k}}(z_k,x_k,\xi_{i,k}) \right\|^2\right] &\leq  4 \left\|\nabla_z G(z_k, x_k) \right\|^2 + \left(\frac{3}{b_1} + 1 \right)L_{1,G}^2(p + 3)^{3}h_{k}^2 + \frac{6\sigma_{1,G}^2}{b_1}.
        \end{aligned}
    \end{equation}    
    Now, we focus on bounding the term $A$. %
    We have that 
    \begin{equation*}
        \begin{aligned}
            \frac{1}{b_1^2 \ell_1^2} \mathbb{E}_{W_k}\left[ \left\| \sum\limits_{i =1}^{b_1}\sum\limits_{j =1}^{\ell_1} \delta^{(i,j)}_k \right\|^2\right] &= \frac{1}{b_1^2 \ell_1^2} \left( \sum\limits_{i=1}^{b_1} \mathbb{E}_{W_k^{(i)}}\left[ \| \delta^{(i)}_k \|^2 \right] + \sum\limits_{i =1}^{b_1} \sum\limits_{t \neq i} \mathbb{E}_{W_k^{(i)}}\left[\scalT{\delta^{(i)}_k}{\delta^{(t)}_k} \right] \right).
        \end{aligned}
    \end{equation*}
    Notice that for $i \neq t$, we have that for every $j= 1,\cdots, \ell_1$, $w^{(i,j)}, w^{(t,j)}$ are all independent. Therefore, we have
    \begin{equation*}
        \begin{aligned}
            \frac{1}{b_1^2 \ell_1^2} \mathbb{E}_{W_k}\left[ \left\| \sum\limits_{i =1}^{b_1}\sum\limits_{j =1}^{\ell_1} \delta^{(i,j)}_k \right\|^2\right] %
            &=\frac{1}{b_1^2 \ell_1^2} \left( \sum\limits_{i=1}^{b_1} \mathbb{E}_{W^{(i)}_k}\left[ \| \delta^{(i)}_k \|^2 \right] + \sum\limits_{i =1}^{b_1} \sum\limits_{t \neq i} \underbrace{\scalT{\mathbb{E}_{W_k^{(i)}}[\delta^{(i)}_k] }{\mathbb{E}_{W_k^{(t)}}[\delta^{(t)}_k] }}_{=0} \right).
        \end{aligned}
    \end{equation*}
    Similarly, we get
    \begin{equation*}
        \begin{aligned}
            \frac{1}{b_1^2 \ell_1^2} \mathbb{E}_{W_k}\left[ \left\| \sum\limits_{i =1}^{b_1}\sum\limits_{j =1}^{\ell_1} \delta^{(i,j)}_k \right\|^2\right] %
            &= \frac{1}{b_1^2 \ell_1^2}\sum\limits_{i=1}^{b_1} \left( \sum\limits_{j=1}^{\ell_1} \mathbb{E}_{w_k^{(i,j)}}\left[ \| \delta^{(i,j)}_k \|^2 \right] + \sum\limits_{j=1}^{\ell_1} \sum\limits_{t\neq j} \mathbb{E}_{w_k^{(i,j)}}\left[ \scalT{\delta^{(i,j)}_k}{\delta^{(i,t)}_k}  \right] \right).
        \end{aligned}
    \end{equation*}
    Again, observing that for $j\neq t$, we have that $w^{(i,j)},w^{(i,t)}$ are independent. Thus, we have
    \begin{equation*}
        \begin{aligned}
            \frac{1}{b_1^2 \ell_1^2} \mathbb{E}_{W_k}\left[ \left\| \sum\limits_{i =1}^{b_1}\sum\limits_{j =1}^{\ell_1} \delta^{(i,j)}_k \right\|^2\right] &= \frac{1}{b_1^2 \ell_1^2}\sum\limits_{i=1}^{b_1} \left( \sum\limits_{j=1}^{\ell_1} \mathbb{E}_{w_k^{(i,j)}}\left[ \| \delta^{(i,j)}_k \|^2 \right] + \sum\limits_{j=1}^{\ell_1} \sum\limits_{t\neq j}  \underbrace{\scalT{\mathbb{E}_{w_k^{(i,j)}}\left[\delta^{(i,j)}_k\right]}{\mathbb{E}_{w_k^{(i,t)}}\left[\delta^{(i,t)}_k\right]}}_{=0}\right)\\
            &=\frac{1}{b_1^2 \ell_1^2}\sum\limits_{i=1}^{b_1} \sum\limits_{j=1}^{\ell_1} \mathbb{E}_{w_k^{(i,j)}}\left[ \| \delta^{(i,j)}_k \|^2 \right].
        \end{aligned}
    \end{equation*}
    Developing the square, we get
    \begin{equation}\label{eqn:approx_err_grad_cfd_A}
        \begin{aligned}
            \frac{1}{b_1^2 \ell_1^2} \mathbb{E}_{W_k}\left[ \left\| \sum\limits_{i =1}^{b_1}\sum\limits_{j =1}^{\ell_1} \delta^{(i,j)}_k \right\|^2\right] 
            &=\frac{1}{b_1^2 \ell_1^2}\sum\limits_{i=1}^{b_1} \sum\limits_{j=1}^{\ell_1} \mathbb{E}_{w_k^{(i,j)}}\left[ \| \delta^{(i,j)}_k \|^2 \right]\\
            &=\frac{1}{b_1^2 \ell_1^2}\sum\limits_{i=1}^{b_1} \sum\limits_{j=1}^{\ell_1} \bigg(  \mathbb{E}_{w_k^{(i,j)}}\left[ \left\| g^{(i,j)}_k \right\|^2 \right]+ \| \nabla_z g_{h_{k}}(z_k, x_k, \xi_{i,k}) \|^2\\
            &- 2\mathbb{E}_{w_k^{(i,j)}}\left[ \scalT{g^{(i,j)}_k}{\nabla_zg_{h_{k}}(z_k, x_k, \xi_{i,k})}  \right]\bigg)\\
            &=\frac{1}{b_1^2 \ell_1^2}\sum\limits_{i=1}^{b_1} \sum\limits_{j=1}^{\ell_1} \left(  \mathbb{E}_{w_k^{(i,j)}}\left[ \left\| g^{(i,j)}_k \right\|^2 \right]-  \| \nabla_zg_{h_{k}}(z_k, x_k, \xi_{i,k}) \|^2 \right)\\
            &\leq \frac{1}{b_1^2 \ell_1^2}\sum\limits_{i=1}^{b_1} \sum\limits_{j=1}^{\ell_1} \mathbb{E}_{w_k^{(i,j)}}\left[ \left\| g^{(i,j)}_k \right\|^2 \right].
        \end{aligned}
    \end{equation}
    Therefore, taking the conditional expectation on $\xi_{i,k}$ for every $i$ in eq. \eqref{eqn:approx_err_grad_cfd} and using inequalities \eqref{eqn:approx_err_grad_cfd_A} and \eqref{eqn:approx_err_grad_cfd_B}, we have
    \begin{equation}\label{eqn:approx_err_grad_cfd_AB}
        \begin{aligned}
        \mathbb{E}_{\xi_k} \left[\mathbb{E}_{W_k}\left[\left\| \frac{1}{b_1} \sum\limits_{i=1}^{b_1}  \hat{\nabla}_z g_{h_{k}}(z_k,x_k,\xi_{i,k})  \right\|^2\right] \right] &\leq \frac{1}{b_1^2 \ell_1^2}\sum\limits_{i=1}^{b_1} \sum\limits_{j=1}^{\ell_1} \underbrace{\mathbb{E}_{\xi_{i,k}} \left[\mathbb{E}_{w_k^{(i,j)}}\left[ \left\| g^{(i,j)}_k \right\|^2 \right] \right]}_{C} + 4 \left\|\nabla_z G(z_k, x_k) \right\|^2\\
        &+ \left(\frac{3}{b_1} + 1 \right)L_{1,G}^2(p + 3)^{3}h_{k}^2 + \frac{6\sigma_{1,G}^2}{b_1}.
        \end{aligned}
    \end{equation}
    We focus on bounding $C$. Adding and subtracting $g(z_k,x_k,\xi_{i,k})$, we get
    \begin{equation*}
        \begin{aligned}
            \mathbb{E}_{\xi_{i,k}} \left[\mathbb{E}_{w_k^{(i,j)}}\left[ \left\| g^{(i,j)}_k \right\|^2 \right] \right] &\leq \frac{1}{2h_{k}^2} \mathbb{E}_{\xi_{i,k}} \left[\mathbb{E}_{w_k^{(i,j)}} \left[ \left(g(z_k + h_{k} w_k^{(i,j)}, x_k, \xi_{i,k}) - g(z_k, x_k,\xi_{i,k})\right)^2\|w_k^{(i,j)}\|^2 \right]\right]\\
            &+ \frac{1}{2h_{k}^2} \mathbb{E}_{\xi_{i,k}} \left[\mathbb{E}_{w_k^{(i,j)}} \left[ \left(g(z_k, x_k,\xi_{i,k}) - g(z_k - h_{k} w_k^{(i,j)}, x_k, \xi_{i,k}) \right)^2 \|w_k^{(i,j)}\|^2 \right] \right].
        \end{aligned}
    \end{equation*}
    By symmetry, we have
    \begin{equation*}
        \begin{aligned}
            \mathbb{E}_{\xi_{i,k}} \left[\mathbb{E}_{w_k^{(i,j)}}\left[ \left\| g^{(i,j)}_k \right\|^2 \right] \right] &\leq \frac{1}{h_{k}^2} \mathbb{E}_{\xi_{i,k}} \left[\mathbb{E}_{w_k^{(i,j)}} \left[ \left(g(z_k + h_{k} w_k^{(i,j)}, x_k, \xi_{i,k}) - g(z_k, x_k,\xi_{i,k})\right)^2\|w_k^{(i,j)}\|^2 \right] \right].
        \end{aligned}
    \end{equation*}
    Adding and subtracting $\scalT{\nabla g(z_k,x_k,\xi_{i,k})}{[h_{k} w_k^{(i,j)}, 0]}$ where $[h_{k} w_k^{(i,j)}, 0] \in \mathbb{R}^{p + d}$ denotes the concatenation between $h_{k} w_k^{(i,j)}$ and the vector of zeros of $d$ entries.
    \begin{equation*}
        \begin{aligned}
            \mathbb{E}_{\xi_{i,k}} \left[\mathbb{E}_{w_k^{(i,j)}}\left[ \left\| g^{(i,j)}_k \right\|^2 \right] \right] &\leq \frac{2}{h_{k}^2} \mathbb{E}_{\xi_{i,k}} \bigg[\mathbb{E}_{w_k^{(i,j)}} \bigg[ \bigg(g(z_k + h_{k} w_k^{(i,j)}, x_k, \xi_{i,k}) - g(z_k, x_k,\xi_{i,k})\\
            &-\scalT{\nabla g(z_k,x_k,\xi_{i,k})}{[h_{k} w_k^{(i,j)}, 0]} \bigg)^2\|w_k^{(i,j)}\|^2 \bigg] \bigg]\\
            &+ \frac{2}{h_{k}^2} \mathbb{E}_{\xi_{i,k}} \left[\mathbb{E}_{w_k^{(i,j)}} \left[ \left(\scalT{\nabla g(z_k,x_k,\xi_{i,k})}{[h_{k} w_k^{(i,j)}, 0]} \right)^2\|w_k^{(i,j)}\|^2 \right] \right].
        \end{aligned}
    \end{equation*}
    Notice that $\scalT{\nabla g(z_k,x_k,\xi_{i,k})}{[h_{k} w_k^{(i,j)}, 0]}  = \scalT{\nabla_z g(z_k,x_k,\xi_{i,k})}{h_{k} w_k^{(i,j)}}$. By the Descent Lemma \cite{polyak1987introduction},
    \begin{equation*}
        \begin{aligned}
            \mathbb{E}_{\xi_{i,k}} \left[\mathbb{E}_{w_k^{(i,j)}}\left[ \left\| g^{(i,j)}_k \right\|^2 \right] \right] &\leq \frac{L_{1,G}^2}{2} h_{k}^2\mathbb{E}_{w_k^{(i,j)}} \bigg[\|w_k^{(i,j)}\|^6 \bigg] \bigg]\\
            &+ 2 \mathbb{E}_{\xi_{i,k}} \left[ \nabla_z g(z_k,x_k,\xi_{i,k})^\top \mathbb{E}_{w_k^{(i,j)}} \left[ w_k^{(i,j)}w_k^{(i,j) \top} w_k^{(i,j)}w_k^{(i,j) \top} \right] \nabla_z g(z_k,x_k,\xi_{i,k}) \right].
        \end{aligned}
    \end{equation*}
    By Lemma \ref{lem:norm_gaus_vector}, we have
    \begin{equation*}
        \begin{aligned}
            \mathbb{E}_{\xi_{i,k}} \left[\mathbb{E}_{w_k^{(i,j)}}\left[ \left\| g^{(i,j)}_k \right\|^2 \right] \right] &\leq \frac{L_{1,G}^2}{2} (p + 6)^3 h_{k}^2 + 2 (p + 2)\mathbb{E}_{\xi_{i,k}} \left[ \| \nabla_z g(z_k,x_k,\xi_{i,k}) \|^2 \right].%
        \end{aligned}
    \end{equation*}
    Adding and subtracting $\nabla_z G(z_k, x_k)$ and by Assumption \ref{asm:bc_condition}, we get
    \begin{equation*}
        \begin{aligned}
            \mathbb{E}_{\xi_{i,k}} \left[\mathbb{E}_{w_k^{(i,j)}}\left[ \left\| g^{(i,j)}_k \right\|^2 \right] \right] &\leq \frac{L_{1,G}^2}{2} (p + 6)^3 h_{k}^2\\
            &+ 2 (p + 2)\mathbb{E}_{\xi_{i,k}} \left[ \| \nabla_z g(z_k,x_k,\xi_{i,k}) - \nabla_z G(z_k, x_k) + \nabla_z G(z_k, x_k) \|^2 \right]\\
            &= \frac{L_{1,G}^2}{2} (p + 6)^3 h_{k}^2+ 2 (p + 2)\mathbb{E}_{\xi_{i,k}} \left[ \| \nabla_z g(z_k,x_k,\xi_{i,k}) - \nabla_z G(z_k, x_k) \|^2 \right]\\
            &+ 2 (p + 2)\|\nabla_z G(z_k, x_k) \|^2\\
            &+ 4(p + 2) \underbrace{\scalT{\mathbb{E}_{\xi_{i,k}} \left[ \nabla_z g(z_k,x_k,\xi_{i,k}) - \nabla_z G(z_k, x_k) \right]}{\nabla_z G(z_k,x_k)}}_{=0}\\
            &\leq \frac{L_{1,G}^2}{2} (p + 6)^3 h_{k}^2+ 2 (p + 2) \sigma_{1,G}^2 + 2 (p + 2)\|\nabla_z G(z_k, x_k) \|^2.
        \end{aligned}
    \end{equation*}
    Using this inequality in eq. \eqref{eqn:approx_err_grad_cfd_AB}, we get the first claim
    \begin{equation*}
        \begin{aligned}
        \mathbb{E}_{\xi_k} \left[\mathbb{E}_{W_k}\left[\left\|\frac{1}{b_1} \sum\limits_{i=1}^{b_1}  \hat{\nabla}_z g_{h_{k}}(z_k,x_k,\xi_{i,k})   \right\|^2\right] \right] 
        &\leq 2 \left(2 + \frac{p + 2}{b_1 \ell_1} \right) \left\|\nabla_z G(z_k, x_k) \right\|^2\\
        &+ \left( \frac{(p + 6)^3}{2 b_1 \ell_1} + \left( \frac{3}{b_1} + 1 \right) (p + 3)^3 \right)L_{1,G}^2 h_{k}^2\\
        &+ 2 \left(3 +\frac{p + 2}{\ell_1} \right)\frac{\sigma_{1,G}^2}{b_1}.
        \end{aligned}
    \end{equation*}
    The proof of the second claim follows the same line. We report below the proof since the finite-difference surrogate used is different. %
    Define
    \begin{equation*}
        \begin{aligned}
        f^{(i,j)}_k&=\frac{f(z_k + h_{k}w_k^{(i,j)}, x_k, \zeta_{i,k}) - f(z_k, x_k, \zeta_{i,k})}{h_{k}}w_{k}^{(i,j)},\\
        \delta^{(i,j)}_{f,k} &= f^{(i,j)}_k - \nabla_z f_{h_{k}}(z_k,x_k,\zeta_{i,k}), \quad \text{and} \quad \delta^{(i)}_{f,k} = \sum\limits_{j=1}^{\ell_2} \delta^{(i,j)}_{f,k}. 
        \end{aligned}
    \end{equation*}
    Following the same steps of the previous claim, we get
    \begin{equation}\label{eqn:grad_approx_f_z_AB}
        \begin{aligned}
        \mathbb{E}_{\zeta_k} \left[\mathbb{E}_{W_k}\left[\left\|  \frac{1}{b_2} \sum\limits_{i=1}^{b_2} \hat{\nabla}_z f_{h_{k}}(z_k,x_k,\zeta_{i,k})  \right\|^2\right] \right] &\leq \frac{1}{b_2^2 \ell_2^2}\sum\limits_{i=1}^{b_2} \sum\limits_{j=1}^{\ell_2} \underbrace{\mathbb{E}_{\zeta_{i,k}} \left[\mathbb{E}_{w_k^{(i,j)}}\left[ \left\| f^{(i,j)}_k \right\|^2 \right] \right]}_{D} + 4 \left\|\nabla_z F(z_k, x_k) \right\|^2\\
        &+ \left(\frac{3}{b_2} + 1 \right)L_{1,F}^2(p + 3)^{3}h_{k}^2 + \frac{6\sigma_{1,F}^2}{b_2}.
        \end{aligned}
    \end{equation}
    Similarly to the previous $C$ term, we bound the term $D$. Adding and subtracting $\scalT{\nabla f(z_k,x_k,\zeta_{i,k})}{[h_{k} w_k^{(i,j)}, 0]}$, we have    
    \begin{equation*}
        \begin{aligned}
            \mathbb{E}_{\zeta_{i,k}} \left[\mathbb{E}_{w_k^{(i,j)}}\left[ \left\| f^{(i,j)}_k \right\|^2 \right] \right] &= \frac{1}{h_{k}^2} \mathbb{E}_{\zeta_{i,k}} \left[\mathbb{E}_{w_k^{(i,j)}} \left[ \left(f(z_k + h_{k} w_k^{(i,j)}, x_k, \zeta_{i,k}) - f(z_k, x_k)\right)^2\|w_k^{(i,j)}\|^2 \right] \right]\\
            &\leq \frac{2}{h_{k}^2} \mathbb{E}_{\zeta_{i,k}} \bigg[\mathbb{E}_{w_k^{(i,j)}} \bigg[ \bigg(f(z_k + h_{k} w_k^{(i,j)}, x_k, \zeta_{i,k}) - f(z_k, x_k)\\
            &-\scalT{\nabla f(z_k,x_k,\zeta_{i,k})}{(h_{k}w_{k}^{(i,j)},0)}\bigg)^2\|w_k^{(i,j)}\|^2 \bigg] \bigg]\\
            &+ \frac{2}{h_{k}^2} \mathbb{E}_{\zeta_{i,k}} \left[\mathbb{E}_{w_k^{(i,j)}} \left[ \left(\scalT{\nabla f(z_k,x_k,\zeta_{i,k})}{(h_{k}w_{k}^{(i,j)},0)}\right)^2\|w_k^{(i,j)}\|^2 \right] \right].
        \end{aligned}
    \end{equation*}
    Notice that $\scalT{\nabla f(z_k,x_k,\zeta_{i,k})}{[h_{k} w_k^{(i,j)}, 0]}  = \scalT{\nabla_z f(z_k,x_k,\xi_{i,k})}{h_{k} w_k^{(i,j)}}$. By the Descent Lemma \cite{polyak1987introduction},
    \begin{equation*}
        \begin{aligned}
            \mathbb{E}_{\zeta_{i,k}} \left[\mathbb{E}_{w_k^{(i,j)}}\left[ \left\| f^{(i,j)}_k \right\|^2 \right] \right] &\leq \frac{L_{1,F}^2}{2} h_{k}^2\mathbb{E}_{w_k^{(i,j)}} \bigg[\|w_k^{(i,j)}\|^6 \bigg] \bigg]\\
            &+ 2 \mathbb{E}_{\zeta_{i,k}} \left[ \nabla_z f(z_k,x_k,\zeta_{i,k})^\top \mathbb{E}_{w_k^{(i,j)}} \left[ w_k^{(i,j)}w_k^{(i,j) \top} w_k^{(i,j)}w_k^{(i,j) \top} \right] \nabla_z f(z_k,x_k,\zeta_{i,k}) \right].
        \end{aligned}
    \end{equation*}
    By Lemma \ref{lem:norm_gaus_vector}, we have
    \begin{equation*}
        \begin{aligned}
            \mathbb{E}_{\zeta_{i,k}} \left[\mathbb{E}_{w_k^{(i,j)}}\left[ \left\| f^{(i,j)}_k \right\|^2 \right] \right] &\leq \frac{L_{1,F}^2}{2} (p + 6)^3 h_{k}^2 + 2 (p + 2)\mathbb{E}_{\zeta_{i,k}} \left[ \| \nabla_z f(z_k,x_k,\zeta_{i,k}) \|^2 \right].
        \end{aligned}
    \end{equation*}
    Adding and subtracting $\nabla_z F(z_k, x_k)$ and by Assumption \ref{asm:bc_condition}, we get
    \begin{equation*}
        \begin{aligned}
            \mathbb{E}_{\zeta_{i,k}} \left[\mathbb{E}_{w_k^{(i,j)}}\left[ \left\| f^{(i,j)}_k \right\|^2 \right] \right] &\leq \frac{L_{1,F}^2}{2} (p + 6)^3 h_{k}^2\\
            &+ 2 (p + 2)\mathbb{E}_{\zeta_{i,k}} \left[ \| \nabla_z f(z_k,x_k,\xi_{i,k}) - \nabla_z F(z_k, x_k) + \nabla_z F(z_k, x_k) \|^2 \right]\\
            &= \frac{L_{1,F}^2}{2} (p + 6)^3 h_{k}^2+ 2 (p + 2)\mathbb{E}_{\zeta_{i,k}} \left[ \| \nabla_z f(z_k,x_k,\zeta_{i,k}) - \nabla_z F(z_k, x_k) \|^2 \right]\\
            &+ 2 (p + 2)\|\nabla_z F(z_k, x_k) \|^2\\
            &+ 4(p + 2) \underbrace{\scalT{\mathbb{E}_{\xi_{i,k}} \left[ \nabla_z f(z_k,x_k,\zeta_{i,k}) - \nabla_z F(z_k, x_k) \right]}{\nabla_z F(z_k,x_k)}}_{=0}\\
            &\leq \frac{L_{1,F}^2}{2} (p + 6)^3 h_{k}^2+ 2 (p + 2) \sigma_{1,F}^2 + 2 (p + 2)\|\nabla_z F(z_k, x_k) \|^2.
        \end{aligned}
    \end{equation*}
    Using this inequality in eq. \eqref{eqn:grad_approx_f_z_AB}, we get the second claim
    \begin{equation*}
        \begin{aligned}
        \mathbb{E}_{\zeta_k} \left[\mathbb{E}_{W_k}\left[\left\|  \frac{1}{b_2} \sum\limits_{i=1}^{b_2} \hat{\nabla}_z f_{h_{k}}(z_k,x_k,\zeta_{i,k})  \right\|^2\right] \right] &\leq 2 \left(1 + \frac{p + 2}{b_2 \ell_2} \right) \left\|\nabla_z F(z_k, x_k) \right\|^2\\
        & +\left( \frac{(p + 6)^3}{2 b_2 \ell_2} + \left(\frac{3}{b_2} + 1 \right)(p + 3)^{3}  \right) L_{1,F}^2 h_{k}^2\\
        &+ 2\left(\frac{p + 2}{b_2 \ell_2} +\frac{3}{b_2} \right )\sigma_{1,F}^2\\  %
        \end{aligned}
    \end{equation*}
    The proof of the last claim follows the same line.
\end{proof}
\noindent We now focus on deriving bounds on the expected norm of the product between a surrogate Hessian approximation and a vector. We begin by stating an auxiliary proposition that bounds the expected norm of the product of a surrogate Hessian approximation (constructed using a single direction and a single sample) with a vector. This result is a special case of Proposition 2.5 in \cite{Aghasi2025}; we report it here because it will be used and subsequently extended to derive bounds for our estimators.
\begin{proposition}[Single direction Hessian estimator approximation error]\label{prop:aghasi_hess_approx}
    Let Assumption \ref{asm:G_asm} holds. Let $w,u$ be sampled i.i.d from $\mathcal{N}(0,I_p)$ and $\mathcal{N}(0, I_d)$, respectively. For $h > 0$, define 
    \begin{equation*}
        \begin{aligned}
            c_h(z,x,w,\xi) &= \frac{g(z +hw,x,\xi) +g(z - hw,x,\xi) - 2g(z,x,\xi)}{2h^2}\\
            s_h(z,x,w,\xi) &= \frac{g(z + hw,x + hu,\xi) +g(z - hw,x- hu,\xi) - 2g(z,x,\xi)}{2h^2}.
        \end{aligned}
    \end{equation*}
    Then, for every $z,v \in \mathbb{R}^p$, $x \in \mathbb{R}^d$, we have
   \begin{equation*}
       \begin{aligned}
            \mathbb{E}_{\xi}\left[ \mathbb{E}_{w} \left[ \| c_h(z,x,w,\xi)(ww^{\top} - I_p)v) \|^2 \right] \right] &\leq \bigg(  4 L_{2,G}^2 (p +16)^4 h^2  +  \frac{15}{2} (p + 6)^2 \mathbb{E}_{\xi} \left[\|\nabla_{zz}^2 g(z,x,\xi)\|_F^2 \right]  \bigg)\|v\|^2,\\
        \end{aligned}
   \end{equation*} 
   and,
   \begin{equation*}
       \begin{aligned}
            \mathbb{E}_{\xi}\left[ \mathbb{E}_{w} \left[ \| s_h(z,x,w,\xi)uw^{\top} v) \|^2 \right] \right] &\leq \bigg(8L_{2,G}^2 h^2 \left( (p + 8)^4 + 2p(d + 12)^3 \right)\\
            &+ \bigg( 6 (p + 4)(p + 2) \mathbb{E}_{\xi}[\| \nabla_{zz}^2 g_{h}(z,x,\xi) \|_F^2]\\
            &+ 36 (p + 2) \mathbb{E}_{\xi}[\| \nabla_{xz}^2 g_{h}(z,x,\xi) \|_F^2]\\
            &+ 30 p(d + 2) \mathbb{E}_{\xi}[\|\nabla^2_{xx} g_{h}(z, x, \xi) \|^2_F]\bigg)\bigg) \| v\|^2.
        \end{aligned}
   \end{equation*} 
\end{proposition}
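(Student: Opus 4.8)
The plan is to reduce both estimates to polynomial moments of Gaussian vectors by isolating, inside each second-order finite difference, the exact quadratic form in the sampling directions and treating the Taylor remainder separately. Writing $H=\nabla_{zz}^2 g(z,x,\xi)$, Taylor's theorem together with the $L_{2,G}$-Lipschitz continuity of the Hessian (Assumption~\ref{asm:G_asm}) gives the pointwise decomposition
\[
c_h(z,x,w,\xi)=\tfrac12\, w^\top H w + e_h,\qquad |e_h|\le \tfrac{L_{2,G}}{6}\,h\,\|w\|^3,
\]
so that $c_h(ww^\top-I_p)v$ splits into a main term $\tfrac12 (w^\top H w)(ww^\top-I_p)v$ and an error term $e_h(ww^\top-I_p)v$. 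An analogous expansion of $g(z\pm hw,x\pm hu,\xi)$, in which the linear parts cancel, will be used for the cross block.

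For the first claim I would apply $\|a+b\|^2\le 2\|a\|^2+2\|b\|^2$ and bound each piece in expectation. The error piece is dominated pointwise by $\tfrac{L_{2,G}^2}{18}h^2\|w\|^6\|(ww^\top-I_p)v\|^2$; expanding $\|(ww^\top-I_p)v\|^2=(w^\top v)^2\|w\|^2-2(w^\top v)^2+\|v\|^2$ reduces $\mathbb{E}_w[\cdot]$ to Gaussian norm moments $\mathbb{E}_w[\|w\|^a(w^\top v)^2]$, each controlled by Lemma~\ref{lem:norm_gaus_vector}, and produces the finite-difference bias term $4L_{2,G}^2(p+16)^4 h^2\|v\|^2$. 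The main piece requires $\mathbb{E}_w[(w^\top H w)^2\|(ww^\top-I_p)v\|^2]$, which after the same expansion is a sum of Gaussian moments evaluated by Wick/Isserlis contractions, using in particular $\mathbb{E}_w[w^\top A w\, w^\top B w]=2\,\mathrm{tr}(AB)+\mathrm{tr}(A)\mathrm{tr}(B)$ from Lemma~\ref{lem:norm_gaus_vector}; this yields a bound of the form $\tfrac{15}{2}(p+6)^2\|H\|_F^2\|v\|^2$. Taking $\mathbb{E}_\xi$ then closes the first claim.

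For the second claim the quadratic part of $s_h$ now carries three Hessian blocks,
\[
s_h=\tfrac12\bigl(w^\top\nabla_{zz}^2 g\,w+2\,w^\top\nabla_{zx}^2 g\,u+u^\top\nabla_{xx}^2 g\,u\bigr)+\tilde{e}_h,\qquad |\tilde{e}_h|\le \tfrac{L_{2,G}}{6}h\,(\|w\|^2+\|u\|^2)^{3/2}.
\]
Multiplying by $uw^\top v=(w^\top v)u$, squaring, and using $\|\sum_i a_i\|^2\le n\sum_i\|a_i\|^2$ splits the quantity into one contribution per block plus a remainder. I would then integrate using the independence of $w$ and $u$ and the fourth-moment identities of Lemma~\ref{lem:norm_gaus_vector} (notably $\mathbb{E}[uu^\top uu^\top]=(d+2)I_d$): each block contributes its Frobenius-norm term with the dimension factors $(p+4)(p+2)$, $(p+2)$, and $p(d+2)$ respectively, while the remainder, bounded again through Lemma~\ref{lem:norm_gaus_vector}, produces the $h^2\bigl((p+8)^4+2p(d+12)^3\bigr)$ term.

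The main obstacle is the bookkeeping of the high-order Gaussian moments in the main terms: the squared norms are degree-eight polynomials in $w$ (and of mixed degree in $w,u$ for the cross block), so recovering the sharp dimension factors $(p+6)^2$, $(p+4)(p+2)$, etc.\ rather than crude $\|H\|_F^2\|w\|^4$-type estimates requires evaluating the moments exactly and tracking precisely which traces survive each contraction. Since the single-direction, single-sample surrogates here coincide exactly with those analyzed in \cite{Aghasi2025}, these computations follow their Proposition~2.5, and the remaining error terms are routine applications of Lemma~\ref{lem:norm_gaus_vector}.
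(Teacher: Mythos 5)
Your proposal is correct and ultimately takes the same route as the paper: the paper proves this proposition purely by observing that it is a special case of Proposition~2.5 in \cite{Aghasi2025} (part (a) with $\eta = h$, $\mu = 0$; part (b) with $\eta = \mu = h$), and your argument, after correctly sketching the Taylor-expansion-plus-Gaussian-moment structure underlying that result, likewise defers the decisive sharp moment bookkeeping to that same cited proposition. The sketch itself is sound (the decomposition $c_h = \tfrac12 w^\top H w + e_h$ with $|e_h| \le \tfrac{L_{2,G}}{6} h \|w\|^3$ and the observation that crude bounds like $|w^\top H w| \le \|H\|_F\|w\|^2$ would lose a dimension factor are both accurate), so there is no gap --- just no independent proof either.
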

\begin{proof}
The first inequality is a special case of Proposition 2.5(a) in \cite{Aghasi2025} with $\eta = h$ and $\mu = 0$, while the second inequality is a special case of Proposition 2.5(b) in \cite{Aghasi2025} with $\eta = \mu = h$.
\end{proof}
\noindent Now, with the following lemma, we extend this result for hessian block estimators used in Algorithm \ref{alg:zoba}.
\begin{lemma}[Hessian Blocks approximation error]\label{lem:hess_approx_error}
    Let Assumptions \ref{asm:G_asm} holds. For every $k \in \mathbb{N}$, let $\hat{\nabla}_{zz}^2 g_{h_{k}}$ and $\hat{\nabla}_{xz}^2 g_{h_{k}}$ be the Hessian block surrogates defined in eq. \eqref{eqn:zoba_hessian_approximations}. Let $(x_k)_{k\in\mathbb{N}}$, $(v_k)_{k \in \mathbb{N}}$ and $(z_k)_{k \in \mathbb{N}}$ be the sequences generated by Algorithm \ref{alg:zoba}. Let $(\xi_{i,k})_{i=1}^{b_1}$ be realizations of independent copies of random variable $\xi$ obtained at iteration $k$. Then,
    \begin{equation}\label{eqn:approx_hess_b11}
        \begin{aligned}
            \mathbb{E}_{\xi_k}\left[ \mathbb{E}_{W_k} \left[ \left\| \frac{1}{b_1} \sum\limits_{i=1}^{b_1} \hat{\nabla}_{zz}^2 g_{h_{k}}(z_k,x_k,\xi_{i,k})v_k  \right\|^2 \right] \right] &\leq \bigg( \frac{4 L_{2,G}^2 (p +16)^4}{{b_1} \ell_1} h_{k}^2 + \bigg(1 +\frac{15 (p + 6)^2p}{2 b_1 \ell_1} \bigg)   L_{1,G}^2 \bigg)\|v_k\|^2,%
        \end{aligned}
    \end{equation}
    and, 
    \begin{equation}\label{eqn:approx_hess_cross}
    \begin{aligned}
        \mathbb{E}_{\xi_k}\left[ \mathbb{E}_{W_k,U_k} \left[ \left\| \frac{1}{b_1} \sum\limits_{i=1}^{b_1} \hat{\nabla}_{xz}^2 g_{h_{k}}(z_k,x_k,\xi_{i,k})v_k  \right\|^2 \right] \right] &\leq \bigg(\frac{1}{b_1 \ell_1} \bigg(8L_{2,G}^2 h_k^2 \left( (p + 8)^4 + 2p(d + 12)^3 \right)\\
            &+ 6(p + 4)(p + 2)p + 36 (p + 2) \min(p, d)\\
            &+ 30 p(d + 2)d L_{1,G}^2\bigg) + L_{1,G}^2\bigg)\| v_k\|^2.
    \end{aligned}        
    \end{equation}
\end{lemma}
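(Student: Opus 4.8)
The plan is to prove both inequalities with the same variance--bias decomposition, reducing each to the single-direction second-moment estimates of Proposition~\ref{prop:aghasi_hess_approx} and controlling the conditional means through operator-norm bounds on the smoothed Hessian blocks. I write the argument for \eqref{eqn:approx_hess_b11}; the cross block \eqref{eqn:approx_hess_cross} is identical up to which single-direction bound is invoked. First I set $X_{i,j} := c_{k}^{(i,j)}(z_k,x_k,\xi_{i,k})\big(w_k^{(i,j)}w_k^{(i,j)\top}-I_p\big)v_k$, so that $\frac{1}{b_1}\sum_i \hat{\nabla}_{zz}^2 g_{h_k}(z_k,x_k,\xi_{i,k})v_k = \frac{1}{b_1\ell_1}\sum_{i,j}X_{i,j}$. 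By the central-difference representation of the smoothed Hessian in Lemma~\ref{lem:grad_hess_smoothing} (and the subsequent Remark), the conditional mean $\bar{X}_i := \mathbb{E}_{w_k^{(i,j)}}[X_{i,j}] = \nabla_{zz}^2 g_{h_k}(z_k,x_k,\xi_{i,k})v_k$ is independent of $j$. Splitting $\frac{1}{b_1\ell_1}\sum_{i,j}X_{i,j} = \frac{1}{b_1\ell_1}\sum_{i,j}(X_{i,j}-\bar{X}_i) + \frac{1}{b_1}\sum_i \bar{X}_i$ and taking $\mathbb{E}_{W_k}$ with $\xi_k$ fixed, the cross term vanishes since $\mathbb{E}_{w_k^{(i,j)}}[X_{i,j}-\bar{X}_i]=0$ while $\frac{1}{b_1}\sum_i\bar{X}_i$ is deterministic given $\xi_k$.

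For the variance term, conditioned on $\xi_k$ the directions $(w_k^{(i,j)})$ are i.i.d.\ and each $X_{i,j}-\bar{X}_i$ has zero conditional mean, so every cross term in the expansion of the square vanishes and
\begin{equation*}
\mathbb{E}_{W_k}\left[\left\|\frac{1}{b_1\ell_1}\sum_{i,j}(X_{i,j}-\bar{X}_i)\right\|^2\right] = \frac{1}{b_1^2\ell_1^2}\sum_{i,j}\mathbb{E}_{W_k}\left[\|X_{i,j}-\bar{X}_i\|^2\right] \leq \frac{1}{b_1^2\ell_1^2}\sum_{i,j}\mathbb{E}_{w_k^{(i,j)}}\left[\|X_{i,j}\|^2\right].
\end{equation*}
Taking $\mathbb{E}_{\xi_k}$ and applying the first inequality of Proposition~\ref{prop:aghasi_hess_approx} termwise bounds each summand by $\big(4L_{2,G}^2(p+16)^4 h_k^2 + \tfrac{15}{2}(p+6)^2\,\mathbb{E}_{\xi_{i,k}}[\|\nabla_{zz}^2 g(z_k,x_k,\xi_{i,k})\|_F^2]\big)\|v_k\|^2$. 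Since the Hessian of $g$ has operator norm $\leq L_{1,G}$ by Assumption~\ref{asm:G_asm} and is $p\times p$, its squared Frobenius norm is $\leq p L_{1,G}^2$; summing $b_1\ell_1$ identical bounds produces the $\tfrac{1}{b_1\ell_1}$-scaled contribution $\tfrac{1}{b_1\ell_1}\big(4L_{2,G}^2(p+16)^4 h_k^2 + \tfrac{15}{2}(p+6)^2 p\, L_{1,G}^2\big)\|v_k\|^2$.

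For the mean term I use convexity together with the fact that Gaussian smoothing preserves $L_{1,G}$-smoothness (Proposition~\ref{prop:smoothing_properties}(III)), so $\|\nabla_{zz}^2 g_{h_k}(z_k,x_k,\xi)\|_{\mathrm{op}}\leq L_{1,G}$ for every $\xi$; hence $\big\|\frac{1}{b_1}\sum_i\bar{X}_i\big\|^2 \leq \frac{1}{b_1}\sum_i \|\nabla_{zz}^2 g_{h_k}\|_{\mathrm{op}}^2\|v_k\|^2 \leq L_{1,G}^2\|v_k\|^2$, which is preserved under $\mathbb{E}_{\xi_k}$. Adding the two contributions yields \eqref{eqn:approx_hess_b11}. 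The cross block \eqref{eqn:approx_hess_cross} follows by the same three steps with $Y_{i,j}:=s_k^{(i,j)}(z_k,x_k,\xi_{i,k})\,u_k^{(i,j)}w_k^{(i,j)\top}v_k$, conditional mean $\nabla_{xz}^2 g_{h_k}(z_k,x_k,\xi_{i,k})v_k$, and the second inequality of Proposition~\ref{prop:aghasi_hess_approx}; the three Frobenius-norm factors are bounded by $\|\nabla_{zz}^2 g_{h_k}\|_F^2\leq p L_{1,G}^2$, $\|\nabla_{xz}^2 g_{h_k}\|_F^2\leq \min(p,d)L_{1,G}^2$, and $\|\nabla_{xx}^2 g_{h_k}\|_F^2\leq d L_{1,G}^2$, and the mean term again contributes $L_{1,G}^2\|v_k\|^2$.

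The main obstacle is the bookkeeping of the three layers of conditioning ($\xi_k$, then the $i$-batch, then the $j$-directions) and verifying that each cross term vanishes at the correct level of the tower of expectations; the only nontrivial estimate is recognizing that the off-diagonal $xz$ block, being a submatrix of a symmetric matrix of operator norm $\leq L_{1,G}$, has rank at most $\min(p,d)$, so $\|\nabla_{xz}^2 g_{h_k}\|_F^2\leq\min(p,d)L_{1,G}^2$ rather than the naive $pd\,L_{1,G}^2$, which is exactly what is needed to recover the stated constant.
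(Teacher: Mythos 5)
Your proof is correct and takes essentially the same approach as the paper's: the identical bias--variance decomposition around $\nabla_{zz}^2 g_{h_k}(z_k,x_k,\xi_{i,k})v_k$ (resp.\ $\nabla_{xz}^2 g_{h_k}(z_k,x_k,\xi_{i,k})v_k$), cancellation of all cross terms via independence and the unbiasedness from Lemma~\ref{lem:grad_hess_smoothing}, the single-direction second-moment bounds of Proposition~\ref{prop:aghasi_hess_approx}, and the estimate $\|A\|_F^2 \leq \mathrm{rank}(A)\,\|A\|^2$ combined with $L_{1,G}$-smoothness of the smoothed function (Proposition~\ref{prop:smoothing_properties}), including the $\min(p,d)$ rank observation for the cross block that the paper also uses. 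No gaps.
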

\begin{proof}
    We start by proving eq. \eqref{eqn:approx_hess_b11}. For $i = 1,\cdots,b_1$ and $j=1,\cdots,\ell_1$, we define%
    \begin{equation*}
        \begin{aligned}
            c^{(i,j)}_k &= c_k^{(i,j)}(z_k,x_k,\xi_{i,k}) =\frac{g(z_k + h_{k}w^{(i,j)}_k, x_k, \xi_{i,k}) + g(z_k - h_{k}w^{(i,j)}_k, x_k, \xi_{i,k}) - 2g(z_k,x_k, \xi_{i,k})}{2h_{k}^2},\\
            \delta^{(i,j)}_k &= c^{(i,j)}_k \left(w^{(i,j)}_k w^{(i,j) \top}_k - I_p \right)v_k - \nabla_{zz}^2 g_{h_{k}}(z_k,x_k,\xi_{i,k})v_k,\\
            \Delta^{(i)}_k &= \sum\limits_{j=1}^{\ell_1} \delta^{(i,j)}_k.%
        \end{aligned}
    \end{equation*}
    Adding and subtracting $\nabla_{zz}^2 g_{h_{k}}(z_k,x_k,\xi_{i,k})v_k$ and developing the square, we get
    \begin{equation*}
        \begin{aligned}
            \left\| \frac{1}{b_1} \sum\limits_{i=1}^{b_1} \hat{\nabla}_{zz}^2 g_{h_{k}}(z_k,x_k,\xi_{i,k})v_k  \right\|^2 %
            &= \frac{1}{b_1^2 \ell_1^2} \left\|  \sum\limits_{i=1}^{b_1}  \Delta_k^{(i)} \right\|^2 + \frac{1}{b_1^2} \left\| \sum\limits_{i=1}^{b_1} \nabla_{zz}^2 g_{h_{k}}(z_k,x_k,\xi_{i,k})v_k \right\|^2\\
            &+ \frac{2}{b_1^2 \ell_1^2}\scalT{\sum\limits_{i=1}^{b_1}  \Delta_k^{(i)}}{\sum\limits_{i=1}^{b_1} \sum\limits_{j=1}^{\ell_1} \nabla_{zz}^2 g_{h_{k}}(z_k,x_k,\xi_{i,k})v_k}.
        \end{aligned}
    \end{equation*}
    Taking the expectation on $w^{(i,j)}_k$ for every $i=1,\cdots,b_1$ and $j = 1,\cdots,\ell_1$, and observing that by Lemma \ref{lem:grad_hess_smoothing}, we have, 
    \begin{equation*}
        \mathbb{E}_{W_k}\left[ \sum\limits_{i=1}^{b_1}\Delta_k^{(i)} \right] = \sum\limits_{i=1}^{b_1} \sum\limits_{j=1}^{\ell_1} \mathbb{E}_{w^{(i,j)}_k}\left[c^{(i,j)}_k \left(w^{(i,j)}_k w^{(i,j) \top}_k - I_p \right) \right]v_k - \nabla_{zz}^2 g_{h_{k}}(z_k,x_k,\xi_{i,k})v_k =0,
    \end{equation*}
    we get
    \begin{equation}\label{eqn:approx_hblock_b11_1}
        \begin{aligned}
            \mathbb{E}_{W_k}\left[ \left\| \frac{1}{b_1} \sum\limits_{i=1}^{b_1} \hat{\nabla}_{zz}^2 g_{h_{k}}(z_k,x_k,\xi_{i,k})v_k \right\|^2\right] &= \frac{1}{b_1^2 \ell_1^2} \mathbb{E}_{W_k} \left[\left\|  \sum\limits_{i=1}^{b_1} \Delta_k^{(i)} \right\|^2\right]+ \frac{1}{b^2_1} \left\| \sum\limits_{i=1}^{b_1} \nabla_{zz}^2 g_{h_{k}}(z_k,x_k,\xi_{i,k})v_k \right\|^2\\
            &+ \frac{2}{b_1^2 \ell_1^2}\scalT{\underbrace{ \mathbb{E}_{W_k} \left[\sum\limits_{i=1}^{b_1} \Delta_k^{(i)} \right]}_{=0}}{\sum\limits_{i=1}^{b_1} \nabla_{zz}^2 g_{h_{k}}(z_k,x_k,\xi_{i,k})v_k}.
        \end{aligned}
    \end{equation}
    By independence, we have
    \begin{equation*}
        \begin{aligned}
            \mathbb{E}_{W_k} \left[\left\|  \sum\limits_{i=1}^b  \Delta_k^{(i)} \right\|^2\right] &=  \sum\limits_{i=1}^{b_1} \mathbb{E}_{W_k^{(i)}}\left[\| \Delta_k^{(i)} \|^2\right] + \underbrace{\sum\limits_{i=1}^{b_1} \sum\limits_{t \neq i} \scalT{\mathbb{E}_{W_k^{(i)}}\left[\Delta_k^{(i)} \right]}{\mathbb{E}_{W_k^{(t)}}\left[\Delta_k^{(t)}\right]}}_{=0}\\
            &=\sum\limits_{i=1}^{b_1} \left(\sum\limits_{j=1}^{\ell_1} \mathbb{E}_{w_k^{(i,j)}}\left[ \left\|\delta_k^{(i,j)} \right\|^2\right] + \underbrace{\sum\limits_{j=1}^{\ell_1} \sum\limits_{t \neq j} \scalT{\mathbb{E}_{w_k^{(i,j)}}\left[\delta_k^{(i,j)} \right]}{\mathbb{E}_{w_k^{(i,t)}}\left[\delta_k^{(i,t)} \right]}}_{=0}  \right)\\
            &=\sum\limits_{i=1}^{b_1} \sum\limits_{j=1}^{\ell_1} \mathbb{E}_{w_k^{(i,j)}} \left[ \left\| \left(c^{(i,j)}_k \left(w^{(i,j)}_k w^{(i,j) \top}_k - I_p \right) - \nabla_{zz}^2 g_{h_{k}}(z_k,x_k,\xi_{i,k}) \right)v_k \right\|^2 \right]\\
            &\leq\sum\limits_{i=1}^{b_1} \sum\limits_{j=1}^{\ell_1} \mathbb{E}_{w_k^{(i,j)}} \left[ \left\|c^{(i,j)}_k \left(w^{(i,j)}_k w^{(i,j) \top}_k - I_p \right)v_k \right\|^2 \right] .
        \end{aligned}
    \end{equation*}
    Here, the last inequality holds by the fact that $\mathbb{E}_{w_k^{(i,j)}} \left[ c^{(i,j)}_k \left(w^{(i,j)}_k w^{(i,j) \top}_k - I_p \right)v_k\right] =\nabla_{zz}^2 g_{h_{k}}(z_k,x_k,\xi_{i,k}) v_k$. Taking the expectation on $\xi_{i,k}$ for every $i=1,\cdots,b_1$, by Proposition \ref{prop:aghasi_hess_approx},%
    \begin{equation*}
        \begin{aligned}
            \mathbb{E}_{\xi_k} \left[\mathbb{E}_{W_k} \left[\left\|  \sum\limits_{i=1}^{b_1}  \Delta_k^{(i)} \right\|^2\right] \right] &\leq \sum\limits_{i=1}^{b_1} \sum\limits_{j=1}^{\ell_1} \mathbb{E}_{\xi_{i,k}} \left[\mathbb{E}_{w_k^{(i,j)}} \left[ \left\|c^{(i,j)}_k \left(w^{(i,j)}_k w^{(i,j) \top}_k - I_p \right)v_k \right\|^2 \right] \right] \\
            &\leq \sum\limits_{i=1}^{b_1} \sum\limits_{j=1}^{\ell_1}  \bigg(  4 L_{2,G}^2 (p +16)^4 h_{k}^2  +  \frac{15}{2} (p + 6)^2 \mathbb{E}_{\xi_{i,k}} \left[\|\nabla_{zz}^2 g(z_k,x_k,\xi_{i,k})\|_F^2 \right]  \bigg)\|v_k\|^2\\
            &\leq \sum\limits_{i=1}^{b_1} \sum\limits_{j=1}^{\ell_1} \bigg( 4 L_{2,G}^2 (p +16)^4 h_{k}^2%
            + %
            \frac{15}{2} (p + 6)^2 p \mathbb{E}_{\xi_{i,k}} \left[\|\nabla_{zz}^2 g(z_k,x_k,\xi_{i,k})\|^2 \right] \bigg) \|v_k\|^2.%
        \end{aligned}
    \end{equation*}
    Since for every $\xi$, we have that the function $(z, x) \mapsto g(z, x, \xi)$ is $L_{1,G}$-smooth, we have that $\| \nabla^2_{zz} g(z,x, \xi) \| \leq \| \nabla^2 g(z,x, \xi) \|  \leq L_{1,G}$. %
    Therefore, we get
    \begin{equation*}
        \begin{aligned}
            \mathbb{E}_{\xi_k} \left[\mathbb{E}_{W_k} \left[\left\|  \sum\limits_{i=1}^{b_1}  \Delta_k^{(i)} \right\|^2\right] \right] &\leq \sum\limits_{i=1}^{b_1} \sum\limits_{j=1}^{\ell_1} \bigg( 4 L_{2,G}^2 (p +16)^4 h_{k}^2 + \frac{15}{2} (p + 6)^2 p L_{1,G}^2 \bigg)\|v_k\|^2.
        \end{aligned}
    \end{equation*}
    Using this inequality in eq. \eqref{eqn:approx_hblock_b11_1}, we get%
    \begin{equation*}
        \begin{aligned}
            \mathbb{E}_{\xi_k} \left[\mathbb{E}_{W_k}\left[ \left\| \frac{1}{b_1} \sum\limits_{i=1}^{b_1} \hat{\nabla}_{zz}^2 g_{h_{k}}(z_k,x_k,\xi_{i,k})v_k \right\|^2\right] \right] &\leq 
            \frac{1}{{b_1} \ell_1}  \bigg( 4 L_{2,G}^2 (p +16)^4 h_{k}^2 + \frac{15}{2} (p + 6)^2 p L_{1,G}^2 \bigg)\|v_k\|^2\\
            &+ \frac{1}{b^2_1} \mathbb{E}_{\xi_k} \left[\left\| \sum\limits_{i=1}^{b_1} \nabla_{zz}^2 g_{h_{k}}(z_k,x_k,\xi_{i,k})v_k \right\|^2 \right].
        \end{aligned}
    \end{equation*}
    By Proposition \ref{prop:smoothing_properties}, we have that $g_{h_{k}}$ is $L_{1,G}$-smooth. Thus $\| \nabla_{zz}^2 g_{h_{k}}(z_k, x_k, \xi_{i,k}) \| \leq L_{1,G}$ and, therefore, we have
    \begin{equation*}
        \begin{aligned}
            \mathbb{E}_{\xi_k} \left[\mathbb{E}_{W_k}\left[ \left\|\frac{1}{b_1} \sum\limits_{i=1}^{b_1} \hat{\nabla}_{zz}^2 g_{h_{k}}(z_k,x_k,\xi_{i,k})v_k  \right\|^2\right] \right] &\leq \bigg( \frac{4 L_{2,G}^2 (p +16)^4}{{b_1} \ell_1} h_{k}^2 + \left(1 +\frac{15 (p + 6)^2p}{2 b_1 \ell_1} \right)   L_{1,G}^2 \bigg)\|v_k\|^2.%
        \end{aligned}
    \end{equation*}
    Now, we prove eq. \eqref{eqn:approx_hess_cross}. For $i = 1,\cdots,b$ and $j=1,\cdots,\ell$ we define
    \begin{equation*}
        \begin{aligned}
        s^{(i,j)}_k &= s_k^{(i,j)}(z_k,x_k, \xi_{i,k}), \qquad %
        \bar{\delta}^{(i,j)}_k =  s^{(i,j)}_k  u^{(i,j)}_k w^{(i,j)\top}_k v_k - \nabla_{xz}^2 g_{h_k}(z_k,x_k,\xi_{i,k})v_k,\\
        \bar{\Delta}^{(i)}_k &= \sum\limits_{j=1}^{\ell_1} \bar{\delta}^{(i,j)}_k.            
        \end{aligned}
    \end{equation*}
    Adding and subtracting $\nabla_{xz}^2 g_{h_k}(z_k,x_k,\xi_{i,k})v_k$ and developing the square, we get
    \begin{equation*}
        \begin{aligned}
            \left\| \frac{1}{b_1} \sum\limits_{i=1}^{b_1} \hat{\nabla}_{xz}^2 g(z_k,x_k,\xi_{i,k})v_k  \right\|^2 %
            &= \frac{1}{b_1^2 \ell_1^2} \left\|  \sum\limits_{i=1}^{b_1} \bar{\Delta}^{(i)}_k\right\|^2+ \frac{1}{b^2_1} \left\| \sum\limits_{i=1}^{b_1} \nabla_{xz}^2 g_{h_k}(z_k,x_k,\xi_{i,k})v_k \right\|^2\\
            &+ \frac{2}{b_1^2 \ell_1^2}\scalT{\sum\limits_{i=1}^{b_1} \bar{\Delta}^{(i)}_k}{\sum\limits_{i=1}^{b_1} \nabla_{xz}^2 g_{h_k}(z_k,x_k,\xi_{i,k})v_k}.
        \end{aligned}
    \end{equation*}
    Taking the conditional expectation on $w^{(i,j)}_k,u^{(i,j)}_k$ for every $i=1,\cdots,b_1$ and $j = 1,\cdots, \ell_1$, we have that since $w^{(i,j)}_k, u^{(i,j)}_k, \xi_{i,k}$ are independent,
    \begin{equation}\label{eqn:approx_hblock_cross}
        \begin{aligned}
            \mathbb{E}_{W_k,U_k}\left[\left\| \frac{1}{b_1} \sum\limits_{i=1}^{b_1} \hat{\nabla}_{xz}^2 g(z_k,x_k,\xi_{i,k})v_k  \right\|^2\right] &= \frac{1}{b_1^2 \ell_1^2} \underbrace{\mathbb{E}_{W_k,U_k} \left[\left\|  \sum\limits_{i=1}^{b_1} \bar{\Delta}^{(i)}_k  \right\|^2\right]}_{A} + \frac{1}{b^2_1} \underbrace{\left\| \sum\limits_{i=1}^{b_1} \nabla_{xz}^2 g_{h_k}(z_k,x_k,\xi_{i,k})v_k \right\|^2}_{B}\\
            &+ \frac{2}{b_1^2 \ell_1^2}\scalT{\underbrace{\sum\limits_{i=1}^{b_1} \mathbb{E}_{W_k,U_k}[\bar{\Delta}_k^{(i)}]}_{=0}}{\sum\limits_{i=1}^{b_1} \nabla_{xz}^2 g_{h_k}(z_k,x_k,\xi_{i,k})v_k}.
        \end{aligned}
    \end{equation}
    We bound the terms $A$ and $B$ separately. We start from $A$,
    \begin{equation*}
        \begin{aligned}
            \mathbb{E}_{W_k,U_k} \left[\left\|  \sum\limits_{i=1}^{b_1} \bar{\Delta}^{(i)}_k  \right\|^2\right] = \sum\limits_{i=1}^{b_1} \mathbb{E}_{W_k^{(i)},U_k^{(i)}} \left[\left\|  \bar{\Delta}^{(i)}_k  \right\|^2\right] + \sum\limits_{i=1}^{b_1} \sum\limits_{t \neq i} \mathbb{E}_{W_k,U_k} \left[ \scalT{\bar{\Delta}^{(i)}_k }{\bar{\Delta}^{(t)}_k}\right].
        \end{aligned}
    \end{equation*}
    Observing that, for every $i \neq t$, $w_k^{(i,j)}, w_k^{(t,j)}, u_k^{{(i,j)}}, u_{k}^{(t,j)}$ are independent, and that by Lemma \ref{lem:grad_hess_smoothing}, we have
    \begin{equation*}
        \begin{aligned}
        \mathbb{E}_{W_k^{(i)},U_k^{(i)}} \left[\bar{\Delta}^{(i)}_k \right] %
        &= \sum\limits_{j=1}^{\ell_1} \mathbb{E}_{w_k^{(i,j)},u_k^{(i,j)}}\left[s^{(i,j)}_k  u^{(i,j)}_k w^{(i,j)\top}_k \right] v_k - \nabla_{xz}^2 g_{h_k}(z_k,x_k,\xi_{i,k})v_k = 0,\\
        \end{aligned}
    \end{equation*}
    we get
    \begin{equation*}
        \begin{aligned}
            \mathbb{E}_{W_k,U_k} \left[\left\|  \sum\limits_{i=1}^{b_1} \bar{\Delta}^{(i)}_k  \right\|^2\right] = \sum\limits_{i=1}^{b_1} \mathbb{E}_{W_k^{(i)},U_k^{(i)}} \left[\left\|  \bar{\Delta}^{(i)}_k  \right\|^2\right] + \sum\limits_{i=1}^{b_1} \sum\limits_{t \neq i}  \underbrace{\scalT{\mathbb{E}_{W_k^{(i)}, U_k^{(i)}}\left[\bar{\Delta}^{(i)}_k \right] }{\mathbb{E}_{W_k^{(t)}, U_k^{(t)}} \left[\bar{\Delta}^{(t)}_k \right]}}_{=0}.
        \end{aligned}
    \end{equation*}
    Similarly, we have
    \begin{equation*}
        \begin{aligned}
            \mathbb{E}_{W_k,U_k} \left[\left\|  \sum\limits_{i=1}^{b_1} \bar{\Delta}^{(i)}_k  \right\|^2\right] &= \sum\limits_{i=1}^{b_1} \mathbb{E}_{W_k^{(i)},U_k^{(i)}} \left[\left\|  \bar{\Delta}^{(i)}_k  \right\|^2\right]\\
            &= \sum\limits_{i=1}^{b_1} \bigg(\sum\limits_{j=1}^{\ell_1} \mathbb{E}_{w_k^{(i,j)},u_k^{(i,j)}} \left[\left\|  \bar{\delta}^{(i,j)}_k  \right\|^2\right] + \sum\limits_{j=1}^{\ell_1} \sum\limits_{t \neq j} \mathbb{E}_{W_k^{(i)},U_k^{(i)}} \left[ \scalT{\bar{\delta}^{(i,j)}_k}{\bar{\delta}_k^{(i,t)}} \right]\bigg).
        \end{aligned}
    \end{equation*}
    Again, since for every $j \neq t$, $w_{k}^{(i,j)},u_{k}^{(i,j)},w_{k}^{(i,t)},u_{k}^{(i,t)}$ are independent and, by Lemma \ref{lem:grad_hess_smoothing}, we have
    \begin{equation*}
        \begin{aligned}
            \mathbb{E}_{w_k^{(i,j)}, u_k^{(i,j)}}[ \bar{\delta}_k^{(i,j)}] %
            &=\mathbb{E}_{w_k^{(i,j)},u_k^{(i,j)}}\left[s^{(i,j)}_k  u^{(i,j)}_k w^{(i,j)\top}_k \right] v_k - \nabla_{xz}^2 g_k(z_k,x_k,\xi_{i,k})v_k = 0\\
        \end{aligned}
    \end{equation*}
    we get
    \begin{equation*}
        \begin{aligned}
            \mathbb{E}_{W_k,U_k} \left[\left\|  \sum\limits_{i=1}^{b_1} \bar{\Delta}^{(i)}_k  \right\|^2\right] &= \sum\limits_{i=1}^{b_1} \bigg(\sum\limits_{j=1}^{\ell_1} \mathbb{E}_{w_k^{(i,j)},u_k^{(i,j)}} \left[\left\|  \bar{\delta}^{(i,j)}_k  \right\|^2\right] + \sum\limits_{j=1}^{\ell_1} \sum\limits_{t \neq j} \underbrace {\scalT{ \mathbb{E}_{w_k^{(i,j)},u_k^{(i,j)}} \left[\bar{\delta}^{(i,j)}_k \right]}{\mathbb{E}_{w_k^{(i,t)},u_k^{(i,t)}}\left[\bar{\delta}_k^{(i,t)}\right]}}_{=0}\bigg)\\
            &=\sum\limits_{i=1}^{b_1} \sum\limits_{j=1}^{\ell_1} \mathbb{E}_{w_k^{(i,j)},u_k^{(i,j)}} \left[\left\|s_k^{(i,j)} u_k^{(i,j)} w_k^{(i,j)\top} v_k - \nabla_{xz}^2 g_{h_k}(z_k, x_k, \xi_{i,k})v_k \right\|^2\right]\\
            &\leq \sum\limits_{i=1}^{b_1} \sum\limits_{j=1}^{\ell_1}\mathbb{E}_{w_k^{(i,j)},u_k^{(i,j)}} \left[\left\|s_k^{(i,j)} u_k^{(i,j)} w_k^{(i,j)\top} v_k \right\|^2 \right].
        \end{aligned}
    \end{equation*}
    Taking the expectation on $\xi_k$, by Proposition \ref{prop:aghasi_hess_approx}, %
    and since $\| A\|^2_F \leq rank(A) \|A\|^2$ where $\|A\|_F^2$ is the squared Frobenius norm of a matrix $A$, we have
    \begin{equation*}
        \begin{aligned}
            \mathbb{E}_{\xi_k} \left[\mathbb{E}_{W_k,U_k} \left[\left\|  \sum\limits_{i=1}^{b_1} \bar{\Delta}^{(i)}_k  \right\|^2\right] \right] %
            &\leq \sum\limits_{i=1}^{b_1} \sum\limits_{j=1}^{\ell_1} \bigg(8L_{2,G}^2 h_k^2 \left( (p + 8)^4 + 2p(d + 12)^3 \right)\\
            &+ \bigg(6(p + 4)(p + 2)p \mathbb{E}_{\xi_{i,k}}[\| \nabla_{zz}^2 g_{h_k}(z_k,x_k,\xi_{i,k}) \|^2]\\
            &+ 36 (p + 2) \min(p, d) \mathbb{E}_{\xi_{i,k}}[\| \nabla_{xz}^2 g_{h_k}(z_k,x_k,\xi_{i,k}) \|^2]\\
            &+ 30 p(d + 2)d \mathbb{E}_{\xi_{i,k}}[\|\nabla^2_{xx} g_{h_k}(z_k, x_k, \xi_{i,k}) \|^2]\bigg)\bigg) \| v_k\|^2.
        \end{aligned}
    \end{equation*}
    Since, by Proposition \ref{prop:smoothing_properties}, the function $g_{h_k}$ is $L_{1,G}$-smooth, we have
    \begin{equation}\label{eqn:bound_cross_hess_A}
        \begin{aligned}
            \mathbb{E}_{\xi_k} \left[\mathbb{E}_{W_k,U_k} \left[\left\|  \sum\limits_{i=1}^{b_1} \bar{\Delta}^{(i)}_k  \right\|^2\right] \right] &\leq \sum\limits_{i=1}^{b_1} \sum\limits_{j=1}^{\ell_1} \bigg(8L_{2,G}^2 h_k^2 \left( (p + 8)^4 + 2p(d + 12)^3 \right)\\
            &+ \bigg(6(p + 4)(p + 2)p \\
            &+ 36 (p + 2) \min(p, d) + 30 p(d + 2)d \bigg) L_{1,G}^2\bigg) \| v_k\|^2.
        \end{aligned}
    \end{equation}
    Now, we bound the term $B$ of eq. \eqref{eqn:approx_hblock_cross}. By Cauchy-Schwartz, and since $g_{h_k}$ is $L_{1,G}$-smooth, we have
    \begin{equation*}
        \begin{aligned}
        \left\| \sum\limits_{i=1}^{b_1} \nabla_{xz}^2 g_{h_k}(z_k,x_k,\xi_{i,k})v_k \right\|^2 &\leq b_1 \sum\limits_{i=1}^{b_1} \| \nabla_{xz}^2 g_{h_k}(z_k, x_k, \xi_{i,k})v_k\|^2\\
        &\leq b_1 \sum\limits_{i=1}^{b_1} \| \nabla_{xz}^2 g_{h_k}(z_k, x_k, \xi_{i,k}) \|^2 \|v_k\|^2 \\  
        &\leq b_1^2 L_{1,G}^2 \|v_k\|^2.
        \end{aligned}
    \end{equation*}
    Using this inequality and eq. \eqref{eqn:bound_cross_hess_A} in eq. \eqref{eqn:approx_hblock_cross}, we get the claim.
\end{proof}
\noindent In the following lemma, we provide bound on the norm of the search directions used in Algorithm \ref{alg:zoba}.
\begin{lemma}[Bound on direction norm]\label{lem:bound_search_directions}
    Let Assumptions \ref{asm:bc_condition},\ref{asm:F_smooth},\ref{asm:G_asm} holds. Let $D_z^k$, $D_v^k$ and $D_x^k$ be the search directions computed at iteration $k \in \mathbb{N}$ with eq. \eqref{eqn:dz}, \eqref{eqn:dv} and \eqref{eqn:dx}. Let $(z_k)_{k \in \mathbb{N}},(v_k)_{k \in \mathbb{N}}$ and $(x_k)_{k \in \mathbb{N}}$ be the sequences generated by Algorithm \ref{alg:zoba}. Then, for every $k \in \mathbb{N}$, we have
    \begin{equation}\label{eqn:bound_norm_dz}
        \begin{aligned}
        \mathbb{E}_k \left[\| D_z^k \|^2 \right]  &\leq 2 \left(2 + \frac{p + 2}{b_1 \ell_1} \right) L_{1,G}^2 \| z_k - z^*(x_k) \|^2\\
            &+ \left( \frac{(p + 6)^3}{2 b_1 \ell_1} + \left( \frac{3}{b_1} + 1 \right) (p + 3)^3 \right)L_{1,G}^2 h_{k}^2 + 2 \left(3 +\frac{p + 2}{\ell_1} \right)\frac{\sigma_{1,G}^2}{b_1}.
        \end{aligned}
    \end{equation}
    Moreover, let $C_v^k = 4 \bigg( \frac{4 L_{2,G}^2 (p +16)^4}{{b_1} \ell_1} h_{k}^2 + \left(1 +\frac{15 (p + 6)^2p}{2 b_1 \ell_1} \right)   L_{1,G}^2 \bigg)$. Then, for every $k \in \mathbb{N}$,
    \begin{equation}\label{eqn:bound_norm_dv}
        \begin{aligned}
            \mathbb{E}_k \left[ \| D_v^k \|^2 \right] &\leq C_v^k\|v_k - v^*(x_k)\|^2+ C_v^k \frac{L_{0,F}^2}{\mu_G^2} \\
            &+ 4 \left(1 + \frac{p + 2}{b_2 \ell_2} \right)L_{0,F}^2 + 4\left(\frac{p + 2}{\ell_2} + 3 \right ) \frac{\sigma_{1,F}^2}{b_2}\\
            & +2\left( \frac{(p + 6)^3}{2 b_2 \ell_2} + \left(\frac{3}{b_2} + 1 \right)(p + 3)^{3}  \right) L_{1,F}^2 h_{k}^2,
        \end{aligned}        
    \end{equation}
    and,
    \begin{equation}\label{eqn:bound_norm_dx}
        \begin{aligned}
            \mathbb{E}_k\left[ \left\| D_x^k \right\|^2 \right] &\leq 2C_x^k\| v_k - v^*(x_k)\|^2+2C_x^k \frac{L_{0,F}^2}{\mu_G^2}+2\left( \frac{(d + 6)^3}{2 b_2 \ell_2} + \left(\frac{3}{b_2} + 1 \right)(d + 3)^{3}  \right) L_{1,F}^2 h_{k}^2\\
            &+4 \left(1 + \frac{d + 2}{b_2 \ell_2} \right) L_{0,F}^2 + 4\left(\frac{d + 2}{b_2 \ell_2} +\frac{3}{b_2} \right )\sigma_{1,F}^2,
        \end{aligned}
    \end{equation}
    with $C_x^k=2 \bigg(\frac{1}{b_1 \ell_1} \bigg(  C_d^{(1)}h_k^2+ C_d^{(2)}\bigg) + L_{1,G}^2\bigg)$ where 
    \begin{equation*}
        \begin{aligned}
            C_d^{(1)} &= 8L_{2,G}^2\left( (p + 8)^4 + 2p(d + 12)^3 \right) \\
            C_d^{(2)} &= 6(p + 4)(p + 2)p + 36 (p + 2) \min(p, d)+ 30 p(d + 2)d L_{1,G}^2 \\
        \end{aligned}
    \end{equation*}
\end{lemma}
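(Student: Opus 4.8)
The plan is to bound each of the three search directions by invoking the approximation-error estimates already established (Lemmas \ref{lem:approx_error} and \ref{lem:hess_approx_error}) and then rewriting the resulting gradient- and solution-dependent quantities in terms of the tracking errors $\|z_k - z^*(x_k)\|$ and $\|v_k - v^*(x_k)\|$, which are the quantities controlled elsewhere in the analysis.

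For the inner direction $D_z^k$ I would start directly from eq.~\eqref{eqn:app_err_approx_g} in Lemma \ref{lem:approx_error}, whose leading term involves $\|\nabla_z G(z_k,x_k)\|^2$. Since $z^*(x_k)$ minimizes $z \mapsto G(z,x_k)$ we have $\nabla_z G(z^*(x_k),x_k)=0$, and the $L_{1,G}$-smoothness of $G$ (Assumption \ref{asm:G_asm}) yields $\|\nabla_z G(z_k,x_k)\| = \|\nabla_z G(z_k,x_k)-\nabla_z G(z^*(x_k),x_k)\| \le L_{1,G}\|z_k-z^*(x_k)\|$. Substituting $\|\nabla_z G(z_k,x_k)\|^2 \le L_{1,G}^2\|z_k-z^*(x_k)\|^2$ into the bound produces exactly eq.~\eqref{eqn:bound_norm_dz}.

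For $D_v^k = \hat{\nabla}_{zz}^2 g_k v_k + \hat{\nabla}_z f_k$ I would apply $\|a+b\|^2 \le 2\|a\|^2 + 2\|b\|^2$ to sidestep the cross term, then bound the Hessian--vector term with eq.~\eqref{eqn:approx_hess_b11} of Lemma \ref{lem:hess_approx_error} and the gradient term with the second estimate of Lemma \ref{lem:approx_error}. The Hessian term carries a factor $\|v_k\|^2$, which I would split via $\|v_k\|^2 \le 2\|v_k - v^*(x_k)\|^2 + 2\|v^*(x_k)\|^2$ and then bound $\|v^*(x_k)\| \le L_{0,F}/\mu_G$ using Lemma \ref{lem:bound_norm_v}; the chosen constant $C_v^k$ is precisely four times the bracket in eq.~\eqref{eqn:approx_hess_b11}, so that the factor-two losses from the $2\|a\|^2$ step and from the $\|v_k\|^2$ split combine to give the coefficient $C_v^k$ appearing in eq.~\eqref{eqn:bound_norm_dv}. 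For the gradient term, the $L_{0,F}$-Lipschitz continuity of $F$ (Assumption \ref{asm:F_smooth}) gives $\|\nabla_z F(z_k,x_k)\| \le L_{0,F}$, replacing the gradient norm by the constant $L_{0,F}^2$ and yielding eq.~\eqref{eqn:bound_norm_dv}.

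The bound on $D_x^k = \hat{\nabla}_{xz}^2 g_k v_k + \hat{\nabla}_x f_k$ follows the identical template, now using the cross-block estimate eq.~\eqref{eqn:approx_hess_cross} (whose bracket equals $C_x^k/2$ by the definition of $C_x^k$) for the Hessian--vector term, and the third estimate of Lemma \ref{lem:approx_error} together with $\|\nabla_x F(z_k,x_k)\| \le L_{0,F}$ for the gradient term, again splitting $\|v_k\|^2$ around $v^*(x_k)$ and invoking Lemma \ref{lem:bound_norm_v} to reach eq.~\eqref{eqn:bound_norm_dx}. Since all the analytically demanding work, namely the variance/bias decompositions and the Gaussian moment computations, is already absorbed into Lemmas \ref{lem:approx_error} and \ref{lem:hess_approx_error}, I do not expect a genuine mathematical obstacle; the only delicate point is the bookkeeping of constants, specifically verifying that the definitions of $C_v^k$ and $C_x^k$ correctly absorb the factor-two losses incurred both by $\|a+b\|^2 \le 2\|a\|^2+2\|b\|^2$ and by the decomposition of $\|v_k\|^2$ about $v^*(x_k)$.
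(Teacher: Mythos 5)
Your proposal is correct and follows essentially the same route as the paper's proof: eq.~\eqref{eqn:app_err_approx_g} plus $\nabla_z G(z^*(x_k),x_k)=0$ and $L_{1,G}$-smoothness for $D_z^k$; the inequality $\|a+b\|^2\le 2\|a\|^2+2\|b\|^2$, Lemma~\ref{lem:hess_approx_error}, Lemma~\ref{lem:approx_error}, the split $\|v_k\|^2\le 2\|v_k-v^*(x_k)\|^2+2\|v^*(x_k)\|^2$, and Lemmas~\ref{lem:bound_norm_v} for $D_v^k$ and $D_x^k$. Your bookkeeping of how the factor-two losses are absorbed into $C_v^k$ (four times the bracket of eq.~\eqref{eqn:approx_hess_b11}) and $C_x^k$ (twice the bracket of eq.~\eqref{eqn:approx_hess_cross}, with the remaining factor two displayed explicitly) also matches the paper exactly.
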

\begin{proof}
    We start by proving eq. \eqref{eqn:bound_norm_dz}. By eq. \eqref{eqn:dz} and Lemma \ref{lem:approx_error},
    \begin{equation*}
        \begin{aligned}
            \mathbb{E}_k \left[\| D_z^k \|^2 \right] &\leq 2 \left(2 + \frac{p + 2}{b_1 \ell_1} \right) \left\|\nabla_z G(z_k, x_k) \right\|^2\\
        &+ \left( \frac{(p + 6)^3}{2 b_1 \ell_1} + \left( \frac{3}{b_1} + 1 \right) (p + 3)^3 \right)L_{1,G}^2 h_{k}^2\\
        &+ 2 \left(3 +\frac{p + 2}{\ell_1} \right)\frac{\sigma_{1,G}^2}{b_1}.
        \end{aligned}
    \end{equation*}
    Since $\nabla_z G(z^*(x_k), x_k) = 0$, by $L_{1,G}$-smoothness, we have
    \begin{equation*}
        \begin{aligned}
            \mathbb{E}_k \left[\| D_z^k \|^2 \right] &\leq 2 \left(2 + \frac{p + 2}{b_1 \ell_1} \right) L_{1,G}^2 \| z_k - z^*(x_k) \|^2\\
            &+ \left( \frac{(p + 6)^3}{2 b_1 \ell_1} + \left( \frac{3}{b_1} + 1 \right) (p + 3)^3 \right)L_{1,G}^2 h_{k}^2\\
            &+ 2 \left(3 +\frac{p + 2}{\ell_1} \right)\frac{\sigma_{1,G}^2}{b_1}.
        \end{aligned}
    \end{equation*}
    Now we prove eq. \eqref{eqn:bound_norm_dv}. By eq. \eqref{eqn:dv}, we have
    \begin{equation*}
        \begin{aligned}
            \mathbb{E}_k \left[ \| D_v^k \|^2 \right] &= \mathbb{E}_k \left[ \left\| \left(\frac{1}{b_1} \sum\limits_{i=1}^{b_1} \hat{\nabla}^2_{zz}g_{h_{k}}(z_k,x_k,\xi_{i,k}, W_{i,k}) \right)v_k + \frac{1}{b_2}\sum\limits_{i=1}^{b_2} \hat{\nabla}_z f_{h_{k}}(z_k,x_k,\zeta_{i,k}, \bar{W}_{i,k}) \right\|^2 \right]\\
            &\leq 2 \mathbb{E}_k \left[ \left\| \left(\frac{1}{b_1} \sum\limits_{i=1}^{b_1} \hat{\nabla}^2_{zz}g_{h_{k}}(z_k,x_k,\xi_{i,k}, W_{i,k}) \right)v_k \right\|^2 \right]\\
            &+ 2 \mathbb{E}_k \left[ \left\| \frac{1}{b_2}\sum\limits_{i=1}^{b_2} \hat{\nabla}_z f_{h_{k}}(z_k,x_k,\zeta_{i,k}, \bar{W}_{i,k}) \right\|^2 \right].
        \end{aligned}
    \end{equation*}
    By Lemma \ref{lem:approx_error} and Lemma \ref{lem:hess_approx_error}, we get
    \begin{equation*}
        \begin{aligned}
            \mathbb{E}_k \left[ \| D_v^k \|^2 \right] &\leq 2 \bigg( \frac{4 L_{2,G}^2 (p +16)^4}{{b_1} \ell_1} h_{k}^2 + \left(1 +\frac{15 (p + 6)^2p}{2 b_1 \ell_1} \right)   L_{1,G}^2 \bigg)\|v_k\|^2\\
            &+ 4 \left(1 + \frac{p + 2}{b_2 \ell_2} \right) \left\|\nabla_z F(z_k, x_k) \right\|^2\\
        & +2\left( \frac{(p + 6)^3}{2 b_2 \ell_2} + \left(\frac{3}{b_2} + 1 \right)(p + 3)^{3}  \right) L_{1,F}^2 h_{k}^2 + 4\left(\frac{p + 2}{\ell_2} + 3 \right ) \frac{\sigma_{1,F}^2}{b_2}.
        \end{aligned}
    \end{equation*}
    Since $F$ is $L_{0,F}$-Lipschitz continuous, we have 
    \begin{equation*}
        \begin{aligned}
            \mathbb{E}_k \left[ \| D_v^k \|^2 \right] &\leq 2 \bigg( \frac{4 L_{2,G}^2 (p +16)^4}{{b_1} \ell_1} h_{k}^2 + \left(1 +\frac{15 (p + 6)^2p}{2 b_1 \ell_1} \right)   L_{1,G}^2 \bigg)\|v_k\|^2\\
            &+ 4 \left(1 + \frac{p + 2}{b_2 \ell_2} \right)L_{0,F}^2\\
        & +2\left( \frac{(p + 6)^3}{2 b_2 \ell_2} + \left(\frac{3}{b_2} + 1 \right)(p + 3)^{3}  \right) L_{1,F}^2 h_{k}^2 + 4\left(\frac{p + 2}{\ell_2} + 3 \right ) \frac{\sigma_{1,F}^2}{b_2}.
        \end{aligned}
    \end{equation*}
    Adding and subtracting $v^*(x_k)$, we get
    \begin{equation*}
        \begin{aligned}
            \mathbb{E}_k \left[ \| D_v^k \|^2 \right] &\leq 4 \bigg( \frac{4 L_{2,G}^2 (p +16)^4}{{b_1} \ell_1} h_{k}^2 + \left(1 +\frac{15 (p + 6)^2p}{2 b_1 \ell_1} \right)   L_{1,G}^2 \bigg)\|v_k - v^*(x_k)\|^2\\
            &+ 4 \bigg( \frac{4 L_{2,G}^2 (p +16)^4}{{b_1} \ell_1} h_{k}^2 + \left(1 +\frac{15 (p + 6)^2p}{2 b_1 \ell_1} \right)   L_{1,G}^2 \bigg) \| v^*(x_k)\|^2\\
            &+ 4 \left(1 + \frac{p + 2}{b_2 \ell_2} \right)L_{0,F}^2\\
        & +2\left( \frac{(p + 6)^3}{2 b_2 \ell_2} + \left(\frac{3}{b_2} + 1 \right)(p + 3)^{3}  \right) L_{1,F}^2 h_{k}^2 + 4\left(\frac{p + 2}{\ell_2} + 3 \right ) \frac{\sigma_{1,F}^2}{b_2}.
        \end{aligned}
    \end{equation*}
    Let $C_v^k = 4 \bigg( \frac{4 L_{2,G}^2 (p +16)^4}{{b_1} \ell_1} h_{k}^2 + \left(1 +\frac{15 (p + 6)^2p}{2 b_1 \ell_1} \right)   L_{1,G}^2 \bigg)$. By Lemma \ref{lem:bound_norm_v}, we have
    \begin{equation*}
        \begin{aligned}
            \mathbb{E}_k \left[ \| D_v^k \|^2 \right] &\leq C_v^k\|v_k - v^*(x_k)\|^2+ C_v^k \frac{L_{0,F}^2}{\mu_G^2} \\
            &+ 4 \left(1 + \frac{p + 2}{b_2 \ell_2} \right)L_{0,F}^2 + 4\left(\frac{p + 2}{\ell_2} + 3 \right ) \frac{\sigma_{1,F}^2}{b_2}\\
            & +2\left( \frac{(p + 6)^3}{2 b_2 \ell_2} + \left(\frac{3}{b_2} + 1 \right)(p + 3)^{3}  \right) L_{1,F}^2 h_{k}^2.
        \end{aligned}
    \end{equation*}
    Finally, we prove eq. \eqref{eqn:bound_norm_dx}. By eq. \eqref{eqn:dx},
    \begin{equation*}
        \begin{aligned}
            \mathbb{E}_k\left[ \left\| D_x^k \right\|^2 \right] &= \mathbb{E}_k \left[ \left\| \left(\frac{1}{b_1} \sum\limits_{i=1}^{b_1} \hat{\nabla}^2_{xz}g_k(z_k,x_k,\xi_{i,k}) \right)v_k + \frac{1}{b_2}\sum\limits_{i=1}^{b_2} \hat{\nabla}_x f_k(z_k,x_k,\zeta_{i,k})\right\|^2 \right]\\
            &\leq 2 \mathbb{E}_k\left[\left\| \left(\frac{1}{b_1} \sum\limits_{i=1}^{b_1} \hat{\nabla}^2_{xz}g_k(z_k,x_k,\xi_{i,k}) \right)v_k  \right\|^2 \right]\\
            &+2\mathbb{E}_k \left[ \left\| \frac{1}{b_2}\sum\limits_{i=1}^{b_2} \hat{\nabla}_x f_k(z_k,x_k,\zeta_{i,k}) \right\|^2 \right].
        \end{aligned}
    \end{equation*}
    By Lemma \ref{lem:approx_error} and Lemma \ref{lem:hess_approx_error}, we have
    \begin{equation*}
        \begin{aligned}
            \mathbb{E}_k\left[ \left\| D_x^k \right\|^2 \right] &\leq 2 \bigg(\frac{1}{b_1 \ell_1} \bigg(8L_{2,G}^2 h_k^2 \left( (p + 8)^4 + p(d + 12)^3 \right)\\
            &+ 6(p + 4)(p + 2)p + 36 (p + 2) \min(p, d)\\
            &+ 30 p(d + 2)d L_{1,G}^2\bigg) + L_{1,G}^2\bigg)\| v_k\|^2\\
            &+4 \left(1 + \frac{d + 2}{b_2 \ell_2} \right) \left\|\nabla_x F(z_k, x_k) \right\|^2\\
        & +2\left( \frac{(d + 6)^3}{2 b_2 \ell_2} + \left(\frac{3}{b_2} + 1 \right)(d + 3)^{3}  \right) L_{1,F}^2 h_{k}^2\\
        &+ 4\left(\frac{d + 2}{b_2 \ell_2} +\frac{3}{b_2} \right )\sigma_{1,F}^2.
        \end{aligned}
    \end{equation*}
    By $L_{0,F}$-Lipschitz continuity of $F$,
    \begin{equation*}
        \begin{aligned}
            \mathbb{E}_k\left[ \left\| D_x^k \right\|^2 \right] &\leq 2 \bigg(\frac{1}{b_1 \ell_1} \bigg(8L_{2,G}^2 h_k^2 \left( (p + 8)^4 + 2p(d + 12)^3 \right)\\
            &+ 6(p + 4)(p + 2)p + 36 (p + 2) \min(p, d)\\
            &+ 30 p(d + 2)d L_{1,G}^2\bigg) + L_{1,G}^2\bigg)\| v_k\|^2\\        
            & +2\left( \frac{(d + 6)^3}{2 b_2 \ell_2} + \left(\frac{3}{b_2} + 1 \right)(d + 3)^{3}  \right) L_{1,F}^2 h_{k}^2\\
            &+4 \left(1 + \frac{d + 2}{b_2 \ell_2} \right) L_{0,F}^2 + 4\left(\frac{d + 2}{b_2 \ell_2} +\frac{3}{b_2} \right )\sigma_{1,F}^2.
        \end{aligned}
    \end{equation*}
    Let $C_x^k=2 \bigg(\frac{1}{b_1 \ell_1} \bigg(8L_{2,G}^2 h_k^2 \left( (p + 8)^4 + 2p(d + 12)^3 \right) + 6(p + 4)(p + 2)p + 36 (p + 2) \min(p, d)+ 30 p(d + 2)d L_{1,G}^2\bigg) + L_{1,G}^2\bigg)$. Adding and subtracting $v^*(x_k)$, by Lemma \ref{lem:bound_norm_v}, we get
    \begin{equation*}
        \begin{aligned}
            \mathbb{E}_k\left[ \left\| D_x^k \right\|^2 \right] &\leq 2C_x^k\| v_k - v^*(x_k)\|^2\\
            &+2C_x^k \frac{L_{0,F}^2}{\mu_G^2} +2\left( \frac{(d + 6)^3}{2 b_2 \ell_2} + \left(\frac{3}{b_2} + 1 \right)(d + 3)^{3}  \right) L_{1,F}^2 h_{k}^2\\
            &+4 \left(1 + \frac{d + 2}{b_2 \ell_2} \right) L_{0,F}^2 + 4\left(\frac{d + 2}{b_2 \ell_2} +\frac{3}{b_2} \right )\sigma_{1,F}^2.
        \end{aligned}
    \end{equation*}

\end{proof}
\noindent Now, in the next lemma, we provides bounds on the error of the sequences $z_k, v_k$ generated by Algorithm~\ref{alg:zoba} relative to the corresponding optimal solutions $z^*(x_k), v^*(x_k)$ at each iterate $x_k$.
\begin{lemma}[Bound of sequences]\label{lem:smooth_bound_zv}
    Let Assumptions \ref{asm:F_smooth}, \ref{asm:G_asm} holds. Let $(z_k)_{k\in \mathbb{N}},(v_k)_{k\in \mathbb{N}}$ be the sequences generated by Algorithm \ref{alg:zoba}. Let $\omega_k^z = \left( 1 + \frac{4}{\mu_G \rho_k} \right)$, the for every $k \in \mathbb{N}$
    \begin{equation}\label{eqn:bound_z_smooth}
        \begin{aligned}
        \mathbb{E}_k\left[\| z_{k + 1} - z^*(x_{k + 1})\|^2\right] &\leq  \left(1 - \frac{\mu_G \rho_k}{2}\right)\|z_{k} -z^*(x_k) \|^2 + 2\rho_k^2 \mathbb{E}_k\left[\left\| D_z^k \right\|^2\right]\\
        &+2 \omega_k^zL_*^2 \gamma_k^2 \mathbb{E}_k \left[\left\| D_x^k \right\|^2 \right] + \frac{L_{1,G}^2}{4 \mu_G}(d + 3)^{3} \rho_k h_{k}^2.   
        \end{aligned}
    \end{equation}
    Moreover, let $\omega_{v,1} = \left( L_{1,F} + \frac{L_{0,F} L_{2,G}}{\mu_G} \right)$, $\omega_{v,2} = \frac{16 L_{2,G}^2}{9} \left( (d + 5)^{5/2} + (d + 3)^{3/2} \right)^2 $, $\omega_k^v = \left( 1 + \frac{2}{\mu_G \rho_k} \right)$ and $h_{k} \leq \frac{\mu_G}{4 \sqrt{\omega_{v,2}}}$. Then, we have
    \begin{equation}\label{eqn:bound_v_smooth}
        \begin{aligned}
        \mathbb{E}_k\left[ \| v_{k + 1} - v^*(x_{k + 1}) \|^2 \right] &\leq \left(1 - \mu_G\rho_k \right)\|v_k - v^*(x_k) \|^2 +4\frac{\omega_{v,1}^2}{\mu_G}\rho_k \|z_k - z^*(x_k)\|^2\\
        &+ 2\rho_k^2 \mathbb{E}_k\left[\|D_v^k\|^2 \right] + 2\omega_k^vL_*^2 \gamma_k^2  \mathbb{E}_k \left[\| D_x^k\|^2\right]\\
        &+ \left(2  \frac{L_{1,G}^2(d + 3)^{3}}{\mu_G}  + 4 \frac{\omega_{v,2}L_{0,F}^2}{\mu_G^3} \right) \rho_k h_{k}^2.        
        \end{aligned}
    \end{equation}
\end{lemma}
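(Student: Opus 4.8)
The plan is to read each of the two updates as a single step of stochastic zeroth-order gradient descent on a $\mu_G$-strongly convex surrogate, tracking a moving minimizer, and to absorb the displacement of that minimizer induced by the outer step $x_k \to x_{k+1}$ through the Lipschitz continuity of $z^*$ and $v^*$ (Lemma~\ref{lem:smt_setting_smooth_value}(II)). Throughout I use that $D_z^k$ and $D_v^k$ are, conditionally on the past, unbiased estimators of gradients of the $z$-smoothed surrogates $G_{h_k}$ and $F_{h_k}$ (Lemma~\ref{lem:grad_hess_smoothing}), and that smoothing preserves $\mu_G$-strong convexity and $L_{1,\cdot}$-smoothness (Proposition~\ref{prop:smoothing_properties}).

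For \eqref{eqn:bound_z_smooth}, I would first isolate the change of minimizer with Young's inequality, $\|z_{k+1}-z^*(x_{k+1})\|^2 \le (1+\beta)\|z_{k+1}-z^*(x_k)\|^2 + (1+\beta^{-1})\|z^*(x_k)-z^*(x_{k+1})\|^2$, bounding the last term by $(1+\beta^{-1})L_*^2\gamma_k^2\|D_x^k\|^2$ since $x_{k+1}-x_k=-\gamma_k D_x^k$. For the first term I expand $z_{k+1}-z^*(x_k)=(z_k-z^*(x_k))-\rho_k D_z^k$, take $\mathbb{E}_k$, and use $\mathbb{E}_k[D_z^k]=\nabla_z G_{h_k}(z_k,x_k)$ together with $\mu_G$-strong monotonicity of $\nabla_z G_{h_k}(\cdot,x_k)$. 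The minimizer residual $\nabla_z G_{h_k}(z^*(x_k),x_k)$ is nonzero only through smoothing; since $\nabla_z G(z^*(x_k),x_k)=0$, Lemma~\ref{lem:smoothing_error} bounds it by $L_{1,G}h_k$ times a dimension-dependent polynomial, which after squaring and scaling by $\rho_k/\mu_G$ yields the $h_k^2$ error term. Taking $\beta=\mu_G\rho_k/2$ (admissible under the stepsize restriction) converts the raw factor $(1-\rho_k\mu_G)$ into $(1-\mu_G\rho_k/2)$ and makes $(1+\beta^{-1})\le 2\omega_k^z$.

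For \eqref{eqn:bound_v_smooth} I would run the same template on the $v$-update, working first at fixed $x_k$. Here $D_v^k$ is an unbiased estimate of $r_k:=\nabla_{zz}^2 G_{h_k}(z_k,x_k)v_k + \nabla_z F_{h_k}(z_k,x_k)$; crucially, because $v_k$ is $\mathcal{F}_{k-1}$-measurable, the delayed information gives $\mathbb{E}_k[\hat{\nabla}_{zz}^2 g_k\,v_k]=\nabla_{zz}^2 G_{h_k}(z_k,x_k)v_k$ exactly. Decomposing $r_k=\nabla_{zz}^2 G_{h_k}(z_k,x_k)(v_k-v^*(x_k))+e_k$ with $e_k:=\nabla_{zz}^2 G_{h_k}(z_k,x_k)v^*(x_k)+\nabla_z F_{h_k}(z_k,x_k)$, the positive-definiteness $\nabla_{zz}^2 G_{h_k}(z_k,x_k)\succeq\mu_G I$ (Proposition~\ref{prop:smoothing_properties}) and a Young step on $\langle v_k-v^*(x_k),e_k\rangle$ that charges only $\tfrac12\mu_G$ to the squared error give a sharper raw contraction $(1-\tfrac32\mu_G\rho_k)$; this is what lets the $(1-\mu_G\rho_k)$ factor survive the subsequent $\beta=\mu_G\rho_k/2$ displacement split (with $\|v^*(x_k)-v^*(x_{k+1})\|\le L_*\gamma_k\|D_x^k\|$, the Young constant encoded in $\omega_k^v$).

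The crux, and the step I expect to be the main obstacle, is bounding $\|e_k\|^2$. I would invoke the optimality identity $\nabla_{zz}^2 G(z^*(x_k),x_k)v^*(x_k)+\nabla_z F(z^*(x_k),x_k)=0$ to write $e_k$ as four differences: (i) the $F$-gradient smoothing error (Lemma~\ref{lem:smoothing_error}); (ii) $\nabla_z F(z_k,x_k)-\nabla_z F(z^*(x_k),x_k)$, bounded by $L_{1,F}\|z_k-z^*(x_k)\|$; (iii) $[\nabla_{zz}^2 G(z_k,x_k)-\nabla_{zz}^2 G(z^*(x_k),x_k)]v^*(x_k)$, bounded by $L_{2,G}\|z_k-z^*(x_k)\|\,\|v^*(x_k)\|$; and (iv) the Hessian-block smoothing error $[\nabla_{zz}^2 G_{h_k}(z_k,x_k)-\nabla_{zz}^2 G(z_k,x_k)]v^*(x_k)$, bounded via eq.~\eqref{eqn:hessian_block1_error}. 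Using $\|v^*(x_k)\|\le L_{0,F}/\mu_G$ (Lemma~\ref{lem:bound_norm_v}), terms (ii)+(iii) collapse to $\omega_{v,1}\|z_k-z^*(x_k)\|$ and (iv) to an $\mathcal{O}(\sqrt{\omega_{v,2}}\,h_k\,L_{0,F}/\mu_G)$ bias; squaring and scaling by $\rho_k/\mu_G$ then produces the $\tfrac{\omega_{v,1}^2}{\mu_G}\rho_k\|z_k-z^*(x_k)\|^2$, $\tfrac{\omega_{v,2}L_{0,F}^2}{\mu_G^3}\rho_k h_k^2$, and $F$-smoothing $h_k^2$ contributions. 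The hypothesis $h_k\le \tfrac{\mu_G}{4\sqrt{\omega_{v,2}}}$ is precisely what keeps the Hessian-smoothing contribution to $\|e_k\|^2$ small enough for the stated constants to close; matching every numerical factor is then routine Young/Cauchy--Schwarz bookkeeping.
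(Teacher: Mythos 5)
Your proposal is correct in substance but departs from the paper's proof at the two places that matter, and one of the departures is genuinely better while the other creates a constant-matching problem. On the $v$-update (the crux): the paper adds and subtracts the \emph{unsmoothed} direction $\bar{D}_v^k=\nabla_{zz}^2G(z_k,x_k)v_k+\nabla_z F(z_k,x_k)$, draws the contraction from strong convexity of $G$ itself, and then must bound the smoothing gap $\hat{D}_v^k-\bar{D}_v^k$, in which the Hessian-smoothing error multiplies $v_k$; after adding and subtracting $v^*(x_k)$ this leaves a term of the form $\omega_{v,2}h_k^2\tfrac{\rho_k}{\nu_2}\|v_k-v^*(x_k)\|^2$ that can only be absorbed into the contraction by invoking $h_k\le \mu_G/(4\sqrt{\omega_{v,2}})$. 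Your decomposition $r_k=\nabla_{zz}^2G_{h_k}(z_k,x_k)\left(v_k-v^*(x_k)\right)+e_k$, with the contraction coming from $\nabla_{zz}^2G_{h_k}\succeq\mu_G I$ (Proposition \ref{prop:smoothing_properties}) and with all smoothing/Hessian errors acting on $v^*(x_k)$ (whose norm is bounded by Lemma \ref{lem:bound_norm_v}), avoids that cross term entirely; it is cleaner, and it makes the hypothesis $h_k\le \mu_G/(4\sqrt{\omega_{v,2}})$ superfluous. This also means your closing remark mischaracterizes that hypothesis: in your route the bound on $\|e_k\|^2$ holds for every $h_k>0$, so nothing needs that restriction to "close"; the restriction is essential only on the paper's route, where the Hessian-smoothing error hits $\|v_k-v^*(x_k)\|^2$.

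The caveat concerns your handling of the moving minimizer. The paper never uses the $(1+\beta)/(1+\beta^{-1})$ split: it expands $\|z_{k+1}-z^*(x_{k+1})\|^2$ exactly into three terms and treats the cross term $-2\scalT{z_{k+1}-z^*(x_k)}{z^*(x_{k+1})-z^*(x_k)}$ by substituting $z_{k+1}=z_k-\rho_k D_z^k$ and applying Cauchy--Schwarz/Young to each piece separately. That is precisely what yields the stated coefficients ($2\rho_k^2$ on $\mathbb{E}_k[\|D_z^k\|^2]$, $2\omega_k^z$ and $2\omega_k^v$ on the $\|D_x^k\|^2$ terms, $\tfrac{4\omega_{v,1}^2}{\mu_G}$, $\tfrac{L_{1,G}^2}{4\mu_G}(d+3)^3$) with no inflation, and it does so for arbitrary $\rho_k>0$, which is how the lemma is stated. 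With your split, \emph{every} quantity produced by the fixed-$x_k$ analysis is inflated by $(1+\beta)=(1+\mu_G\rho_k/2)$: you need $\mu_G\rho_k\le 2$ just to keep the $D_z^k$-coefficient at $2\rho_k^2$, and several post-inflation constants land strictly above the stated ones even then --- e.g.\ charging $\mu_G/2$ in the Young step on $\scalT{e_k}{v_k-v^*(x_k)}$ gives the $z$-coupling coefficient $\tfrac{4\omega_{v,1}^2}{\mu_G}$ \emph{before} inflation, hence up to $\tfrac{8\omega_{v,1}^2}{\mu_G}$ after it, exceeding the claimed $\tfrac{4\omega_{v,1}^2}{\mu_G}$. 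So "matching every numerical factor" is not routine on your route: either adopt the paper's exact expansion for the displacement step, or re-tune the Young parameters so the pre-inflation constants are smaller by $(1+\beta)^{-1}$. As written, your argument proves the lemma only up to modestly larger constants and under an extra stepsize restriction, not the inequality verbatim.
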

\begin{proof}
We start by proving eq. \eqref{eqn:bound_z_smooth}. Adding and subtracting $z^*(x_k)$,
\begin{equation}\label{eqn:seq_z_1}
    \begin{aligned}
        \| z_{k + 1} - z^*(x_{k + 1})\|^2 &= \| z_{k + 1} - z^*(x_k) + z^*(x_k) - z^*(x_{k + 1})\|^2\\
        &=\|z_{k+1} -z^*(x_k) \|^2 + \|z^*(x_{k + 1}) - z^*(x_k) \|^2 + 2 \scalT{z_{k + 1} - z^*(x_k)}{z^*(x_k) - z^*(x_{k + 1})}\\
        &=\underbrace{\|z_{k+1} -z^*(x_k) \|^2}_{A} + \underbrace{\|z^*(x_{k + 1}) - z^*(x_k) \|^2}_{B} \underbrace{- 2 \scalT{z_{k + 1} - z^*(x_k)}{z^*(x_{k + 1}) - z^*(x_k)}}_{C}.
    \end{aligned}
\end{equation}
We start by bounding $A$. By eq. \eqref{eqn:z_update}, we have
\begin{equation*}
    \|z_{k+1} -z^*(x_k) \|^2 = \|z_{k} -z^*(x_k) \|^2 + \rho_k^2 \left\| D_z^k \right\|^2 - 2 \rho_k\scalT{D_z^k}{z_k - z^*(x_k)}.
\end{equation*}
Taking the conditional expectation, by eq. \eqref{eqn:dz},%
\begin{equation*}
    \mathbb{E}_k\left[\|z_{k+1} -z^*(x_k) \|^2\right] = \|z_{k} -z^*(x_k) \|^2 + \rho_k^2 \mathbb{E}_k\left[\left\| D_z^k \right\|^2\right] - 2 \rho_k\scalT{\nabla_z G_{h_{k}}(z_k,x_k) }{z_k - z^*(x_k)}.
\end{equation*}
Adding and subtracting $\nabla_z G(z_k, x_k)$, we get
\begin{equation*}
    \begin{aligned}
        \mathbb{E}_k\left[\|z_{k+1} -z^*(x_k) \|^2\right] &= \|z_{k} -z^*(x_k) \|^2 + \rho_k^2 \mathbb{E}_k\left[\left\| D_z^k \right\|^2\right]\\
        &- 2 \rho_k\scalT{\nabla_z G(z_k,x_k)}{z_k - z^*(x_k)}\\
        &- 2 \rho_k\scalT{\nabla_z G_{h_{k}}(z_k,x_k) - \nabla_z G(z_k, x_k) }{z_k - z^*(x_k)}.
    \end{aligned}
\end{equation*}
By $\mu_G$-strong convexity of $G$, we have
\begin{equation*}
    \begin{aligned}
        \mathbb{E}_k\left[\|z_{k+1} -z^*(x_k) \|^2\right] &= (1 - 2\mu_G \rho_k)\|z_{k} -z^*(x_k) \|^2 + \rho_k^2 \mathbb{E}_k\left[\left\| D_z^k\right\|^2\right]\\
        &- 2 \rho_k\scalT{\nabla_z G_{h_{k}}(z_k,x_k) - \nabla_z G(z_k, x_k) }{z_k - z^*(x_k)}.
    \end{aligned}
\end{equation*}
By Cauchy-Schwartz and Lemma \ref{lem:smoothing_error},
\begin{equation*}
    \begin{aligned}
        \mathbb{E}_k\left[\|z_{k+1} -z^*(x_k) \|^2\right] &= (1 - 2\mu_G \rho_k)\|z_{k} -z^*(x_k) \|^2 + \rho_k^2 \mathbb{E}_k\left[\left\| D_z^k \right\|^2\right]\\
        &+ 2 \rho_k \left( \frac{L_{1,G}}{2}(d + 3)^{3/2}h_{k} \right) \|z_k - z^*(x_k)\|.
    \end{aligned}
\end{equation*}
By Young's Lemma with parameter $\nu_1 > 0$,
\begin{equation*}
    \begin{aligned}
        \mathbb{E}_k\left[\|z_{k+1} -z^*(x_k) \|^2\right] &= (1 - 2\mu_G \rho_k)\|z_{k} -z^*(x_k) \|^2 + \rho_k^2 \mathbb{E}_k\left[\left\| D_z^k \right\|^2\right]\\
        &+ \frac{\rho_k}{\nu_1} \left( \frac{L_{1,G}}{2}(d + 3)^{3/2}h_{k} \right)^2 + \nu_1 \rho_k  \|z_k - z^*(x_k)\|^2.
    \end{aligned}
\end{equation*}
Let $\nu_1 = \mu_G$,
\begin{equation}\label{eqn:seq_z_bound_a}
    \begin{aligned}
        \mathbb{E}_k\left[\|z_{k+1} -z^*(x_k) \|^2\right] &= (1 - \mu_G \rho_k)\|z_{k} -z^*(x_k) \|^2 + \rho_k^2 \mathbb{E}_k\left[\left\| D_z^k \right\|^2\right]\\
        &+ \frac{L_{1,G}^2}{4 \mu_G}(d + 3)^{3} \rho_k h_{k}^2.
    \end{aligned}
\end{equation}
Now we bound the term $B$. By Lemma \ref{lem:smt_setting_smooth_value} and eq. \eqref{eqn:x_update}, we have
\begin{equation}\label{eqn:seq_z_bound_b}
    \|z^*(x_{k + 1}) - z^*(x_k) \|^2 \leq L_*^2 \|x_{k + 1} - x_k \|^2 = L_*^2 \gamma_k^2 \left\| D_x^k \right\|^2.
\end{equation}
Finally, we bound the term $C$. By eq. \eqref{eqn:x_update} and Cauchy-Schwartz,
\begin{equation*}
    \begin{aligned}
        - 2 \scalT{z_{k + 1} - z^*(x_k)}{z^*(x_{k + 1}) - z^*(x_k)} &= - 2 \scalT{z_{k} - z^*(x_k)}{z^*(x_{k + 1}) - z^*(x_k)}\\
        &+2\rho_k \scalT{D_z^k}{z^*(x_{k + 1}) - z^*(x_k)}\\
        &\leq - 2 \scalT{z_{k} - z^*(x_k)}{z^*(x_{k + 1}) - z^*(x_k)}\\
        &+2\rho_k \| D_z^k\| \| z^*(x_{k + 1}) - z^*(x_k) \|.
    \end{aligned}
\end{equation*}
By Lemma \ref{lem:smt_setting_smooth_value}, $z^*$ is $L_*$-Lipschitz. Thus,
\begin{equation*}
    \begin{aligned}
        - 2 \scalT{z_{k + 1} - z^*(x_k)}{z^*(x_{k + 1}) - z^*(x_k)} &\leq - 2 \scalT{z_{k} - z^*(x_k)}{z^*(x_{k + 1}) - z^*(x_k)}\\
        &+2 L_* \rho_k \| D_z^k\| \| x_{k + 1} - x_k \|.
    \end{aligned}
\end{equation*}
By Young's inequality and eq. \eqref{eqn:x_update},
\begin{equation*}
    \begin{aligned}
        - 2 \scalT{z_{k + 1} - z^*(x_k)}{z^*(x_{k + 1}) - z^*(x_k)} &\leq 
        - 2 \scalT{z_{k} - z^*(x_k)}{z^*(x_{k + 1}) - z^*(x_k)}\\
        &+\rho_k^2 \left\| D_z^k \right\|^2 + L_*^2 \gamma_k^2 \left\| D_x^k \right\|^2.
    \end{aligned}
\end{equation*}
By Cauchy-Schwartz and Lemma \ref{lem:smt_setting_smooth_value},%
\begin{equation*}
    \begin{aligned}
        - 2 \scalT{z_{k + 1} - z^*(x_k)}{z^*(x_{k + 1}) - z^*(x_k)} &\leq 2 L_* \| z_{k} - z^*(x_k) \| \|x_{k + 1} - x_k\|\\
        &+\rho_k^2 \left\| D_z^k \right\|^2 + L_*^2 \gamma_k^2 \left\| D_x^k \right\|^2.
    \end{aligned}
\end{equation*}
Let $\nu_2 > 0$, by $ab \leq \nu_2a^2 +\frac{b^2}{\nu_2}$
\begin{equation*}
    \begin{aligned}
        - 2 \scalT{z_{k + 1} - z^*(x_k)}{z^*(x_{k + 1}) - z^*(x_k)} &\leq 2 \nu_2 \| z_{k} - z^*(x_k) \|^2 +2 \frac{L_*^2}{\nu_2} \|x_{k + 1} - x_k\|^2\\
        &+\rho_k^2 \left\| D_z^k \right\|^2 + L_*^2 \gamma_k^2 \left\| D_x^k \right\|^2.
    \end{aligned}
\end{equation*}
Let $\nu_2 = \frac{\mu_G \rho_k}{4}$,
\begin{equation}\label{eqn:seq_z_bound_c}
    \begin{aligned}
        - 2 \scalT{z_{k + 1} - z^*(x_k)}{z^*(x_{k + 1}) - z^*(x_k)} &\leq \frac{\mu_G}{2} \rho_k \| z_{k} - z^*(x_k) \|^2 + \frac{8 L_*^2 \gamma_k^2}{\mu_G \rho_k} \| D_x^k\|^2\\
        &+\rho_k^2 \left\| D_z^k \right\|^2 + L_*^2 \gamma_k^2 \left\| D_x^k \right\|^2.
    \end{aligned}
\end{equation}
Restarting from eq. \eqref{eqn:seq_z_1}, taking the conditional expectation and using equations \eqref{eqn:seq_z_bound_a}, \eqref{eqn:seq_z_bound_b} and \eqref{eqn:seq_z_bound_c}, we get
\begin{equation*}
    \begin{aligned}
        \mathbb{E}_k\left[\| z_{k + 1} - z^*(x_{k + 1})\|^2\right] &\leq  \left(1 - \frac{\mu_G \rho_k}{2}\right)\|z_{k} -z^*(x_k) \|^2 + 2\rho_k^2 \mathbb{E}_k\left[\left\| D_z^k \right\|^2\right]\\
        &+2 \left( 1 + \frac{4}{\mu_G \rho_k} \right)L_*^2 \gamma_k^2 \mathbb{E}_k \left[\left\| D_x^k \right\|^2 \right]\\
        &+ \frac{L_{1,G}^2}{4 \mu_G}(d + 3)^{3} \rho_k h_{k}^2\\
    \end{aligned}
\end{equation*}
Now we prove eq. \eqref{eqn:bound_v_smooth}. Adding and subtracting $v^*(x_k)$, we have
\begin{equation}\label{eqn:bound_v_main}
    \begin{aligned}
    \|v_{k + 1} - v^*(x_{k + 1})\|^2 &= \| v_{k + 1} - v^*(x_k) + v^*(x_k) - v^*(x_{k+1}) \|^2\\
    &= \| v_{k + 1} - v^*(x_k) \|^2 + \|v^*(x_{k + 1}) - v^*(x_k)\|^2 + 2 \scalT{v_{k + 1} - v^*(x_k)}{v^*(x_k) - v^*(x_{k + 1})}\\
    &= \underbrace{\| v_{k + 1} - v^*(x_k) \|^2}_{A} + \underbrace{\|v^*(x_{k + 1}) - v^*(x_k)\|^2}_{B} \underbrace{- 2 \scalT{v_{k + 1} - v^*(x_k)}{v^*(x_{k + 1}) - v^*(x_k)}}_{C}.
    \end{aligned}
\end{equation}
We start bounding the term $A$. By eq. \eqref{eqn:v_update} we have
\begin{equation*}
    \begin{aligned}
        \| v_{k + 1} - v^*(x_k) \|^2 &=\| v_{k} - \rho_k D_v^k - v^*(x_k) \|^2\\
        &=\|v_k - v^*(x_k) \|^2 + \rho_k^2 \|D_v^k\|^2 - 2 \rho_k \scalT{D_v^k}{v_k - v^*(x_k)}.
    \end{aligned}
\end{equation*}
Let $\hat{D}_v^k = \nabla_{zz}G_{h_{k}}(z_k,x_k)v_k + \nabla_z F_{h_{k}}(z_k,x_k)$. Observing that $\mathbb{E}_k[D_v^k] = \hat{D}_v^k$, taking the conditional expectation, we have
\begin{equation*}
    \begin{aligned}
        \mathbb{E}_k \left[ \| v_{k + 1} - v^*(x_k) \|^2 \right] &=\|v_k - v^*(x_k) \|^2 + \rho_k^2 \mathbb{E}_k\left[\|D_v^k\|^2 \right] - 2 \rho_k \scalT{\hat{D}_v^k}{v_k - v^*(x_k)}.
    \end{aligned}
\end{equation*}
Adding and subtracting $\bar{D}_v^k = \nabla_{zz}^2G(z_k,x_k)v_k + \nabla_z F(z_k,x_k)$,
\begin{equation}\label{eqn:bound_v_A_1}
    \begin{aligned}
        \mathbb{E}_k \left[ \| v_{k + 1} - v^*(x_k) \|^2 \right] &=\|v_k - v^*(x_k) \|^2 + \rho_k^2 \mathbb{E}_k\left[\|D_v^k\|^2 \right]\\
        &- 2 \rho_k \scalT{\hat{D}_v^k -\bar{D}_v^k}{v_k - v^*(x_k)} \underbrace{- 2\rho_k\scalT{\bar{D}_v^k}{v_k - v^*(x_k)}}_{A_1}.
    \end{aligned}
\end{equation}
We focus on $A_1$. Let $\bar{D}_{v,*}^k = \nabla_{zz}^2G(z^*(x_k),x_k)v^*(x_k) + \nabla_z F(z^*(x_k), x_k) = 0$. Thus, we have
\begin{equation*}
    \begin{aligned}
        - 2\rho_k\scalT{\bar{D}_v^{k}}{v_k - v^*(x_k)} &= - 2\rho_k\scalT{\bar{D}_v^{k} - \bar{D}_{v,*}^k}{v_k - v^*(x_k)}\\
        &= - 2\rho_k\scalT{\nabla_{zz}^2G(z_k,x_k)v_k - \nabla_{zz}^2G(z^*(x_k),x_k)v^*(x_k)}{v_k - v^*(x_k)}\\
        &- 2\rho_k\scalT{\nabla_z F(z_k,x_k)-  \nabla_z F(z^*(x_k), x_k)}{v_k - v^*(x_k)}.
    \end{aligned}
\end{equation*}
Adding and subtracting $\nabla_{zz}^2 G(z_k,x_k)v^*(x_k)$,
\begin{equation*}
    \begin{aligned}
        - 2\rho_k\scalT{\bar{D}_v^{k}}{v_k - v^*(x_k)} &=- 2\rho_k\scalT{\nabla_{zz}^2G(z_k,x_k)v_k - \nabla_{zz}^2 G(z_k,x_k)v^*(x_k) }{v_k - v^*(x_k)}\\
        &- 2\rho_k\scalT{\nabla_{zz}^2 G(z_k,x_k)v^*(x_k) - \nabla_{zz}^2G(z^*(x_k),x_k)v^*(x_k)}{v_k - v^*(x_k)}\\
        &- 2\rho_k\scalT{\nabla_z F(z_k,x_k)-  \nabla_z F(z^*(x_k), x_k)}{v_k - v^*(x_k)}.
    \end{aligned}
\end{equation*}
By strong-convexity of $G$, Cauchy-Schwartz and Lemma \ref{lem:bound_norm_v}, we get %
\begin{equation*}
    \begin{aligned}
        - 2\rho_k\scalT{\bar{D}_v^{k}}{v_k - v^*(x_k)} &\leq - 2\rho_k\mu_G \| v_k - v^*(x_k) \|^2\\
        &+ \frac{2L_{0,F}}{\mu_G}\rho_k \|\nabla_{zz}^2 G(z_k,x_k) - \nabla_{zz}^2G(z^*(x_k),x_k)\| \|v_k - v^*(x_k)\|\\
        &+ 2\rho_k \| \nabla_z F(z_k,x_k)-  \nabla_z F(z^*(x_k), x_k) \| \|v_k - v^*(x_k)\|.
    \end{aligned}
\end{equation*}
Since hessian of $G$ are $L_{2,G}$-Lipschitz and $F$ is $L_{1,F}$-smooth, we have
\begin{equation*}
    \begin{aligned}
        - 2\rho_k\scalT{\bar{D}_v^{k}}{v_k - v^*(x_k)} &\leq - 2\rho_k\mu_G \| v_k - v^*(x_k) \|^2\\
        &+ \frac{2L_{0,F}}{\mu_G} L_{2,G}\rho_k \|z_k - z^*(x_k)\| \|v_k - v^*(x_k)\|\\
        &+ 2L_{1,F}\rho_k \| z_k - z^*(x_k) \| \|v_k - v^*(x_k)\|.
    \end{aligned}
\end{equation*}
Let $\omega_{v,1} = \left( L_{1,F} + \frac{L_{0,F} L_{2,G}}{\mu_G} \right)$, we get
\begin{equation*}
    \begin{aligned}
        - 2\rho_k\scalT{\bar{D}_v^{k}}{v_k - v^*(x_k)} &\leq - 2\rho_k\mu_G \| v_k - v^*(x_k) \|^2 + 2\rho_k \omega_{v,1} \| z_k - z^*(x_k) \| \|v_k - v^*(x_k)\|.
    \end{aligned}
\end{equation*}
By Young's inequality with parameter $\nu_1 > 0$, we have
\begin{equation*}
    \begin{aligned}
        - 2\rho_k\scalT{\bar{D}_v^{k}}{v_k - v^*(x_k)} &\leq - 2\rho_k\mu_G \| v_k - v^*(x_k) \|^2+ 2\rho_k \left(\frac{\omega_{v,1}^2}{2\nu_1} \| z_k - z^*(x_k) \|^2 + \frac{\nu_1}{2} \|v_k - v^*(x_k)\|^2 \right)\\
        &=-(2 \mu_G - \nu_1) \rho_k \|v_k - v^*(x_k)\|^2 + \frac{\omega_{v,1}^2}{\nu_1}\rho_k \|z_k - z^*(x_k)\|^2.
    \end{aligned}
\end{equation*}
Using this inequality in eq. \eqref{eqn:bound_v_A_1}, we get
\begin{equation*}
    \begin{aligned}
        \mathbb{E}_k \left[ \| v_{k + 1} - v^*(x_k) \|^2 \right] &=\left(1 - 2\mu_G\rho_k + \nu_1 \rho_k \right)\|v_k - v^*(x_k) \|^2 + \rho_k^2 \mathbb{E}_k\left[\|D_v^k\|^2 \right]\\
        &- 2 \rho_k \scalT{\hat{D}_v^k -\bar{D}_v^k}{v_k - v^*(x_k)}+ \frac{\omega_{v,1}^2}{\nu_1}\rho_k \|z_k - z^*(x_k)\|^2.
    \end{aligned}
\end{equation*}
By Cauchy-Schwartz and Young's inequality with parameter $\nu_2 > 0$, we have 
\begin{equation}\label{eqn:bound_v_A_2}
    \begin{aligned}
        \mathbb{E}_k \left[ \| v_{k + 1} - v^*(x_k) \|^2 \right] &=\left(1 - 2\mu_G\rho_k + \nu_1 \rho_k + \nu_2 \rho_k \right)\|v_k - v^*(x_k) \|^2 + \rho_k^2 \mathbb{E}_k\left[\|D_v^k\|^2 \right]\\
        &+\frac{\rho_k}{\nu_2} \underbrace{\| \hat{D}_v^k -\bar{D}_v^k \|^2}_{A_2} +\frac{\omega_{v,1}^2}{\nu_1}\rho_k \|z_k - z^*(x_k)\|^2.
    \end{aligned}
\end{equation}
Now we bound the term $A_2$.
\begin{equation*}
    \begin{aligned}
        \| \hat{D}_v^k -\bar{D}_v^k \|^2 &= \| \nabla_{zz}^2G_{h_{k}}(z_k, x_k)v_k + \nabla_z F_{h_{k}}(z_k, x_k) - \nabla_{zz}^2G(z_k, x_k)v_k - \nabla_z F(z_k, x_k) \|^2\\
        &\leq 2\| \left(\nabla_{zz}^2G_{h_{k}}(z_k, x_k) - \nabla_{zz}^2G(z_k, x_k)\right)v_k \|^2 + 2 \| \nabla_z F_{h_{k}}(z_k, x_k) - \nabla_z F(z_k, x_k) \|^2\\
        &\leq 2 \| \nabla_{zz}^2G_{h_{k}}(z_k, x_k) - \nabla_{zz}^2G(z_k, x_k) \|^2 \|v_k\|^2 + 2 \| \nabla_z F_{h_{k}}(z_k, x_k) - \nabla_z F(z_k, x_k) \|^2. 
    \end{aligned}
\end{equation*}
By Lemma \ref{lem:smoothing_error}, we have
\begin{equation*}
    \begin{aligned}
        \| \hat{D}_v^k -\bar{D}_v^k \|^2 &\leq  \frac{8L_{2,G}^2}{9}\left( (d + 5)^{5/2} + (d +3)^{3/2} \right)^2h_{k}^2 \|v_k\|^2\\
        &+ \frac{L_{1,G}^2}{2}(d + 3)^{3}h_{k}^2. 
    \end{aligned}
\end{equation*}
Adding and subtracting $v^*(x_k)$, we get
\begin{equation*}
    \begin{aligned}
        \| \hat{D}_v^k -\bar{D}_v^k \|^2 &\leq  \frac{16 L_{2,G}^2}{9}\left( (d + 5)^{5/2} + (d +3)^{3/2} \right)^2h_{k}^2 \|v_k - v^*(x_k)\|^2\\
        &+ \frac{16 L_{2,G}^2}{9}\left( (d + 5)^{5/2} + (d +3)^{3/2} \right)^2h_{k}^2 \|v^*(x_k)\|^2\\
        &+ \frac{L_{1,G}^2}{2}(d + 3)^{3}h_{k}^2. 
    \end{aligned}
\end{equation*}
Again by Lemma \ref{lem:bound_norm_v}, we have %
\begin{equation*}
    \begin{aligned}
        \| \hat{D}_v^k -\bar{D}_v^k \|^2 &\leq  \frac{16 L_{2,G}^2}{9}\left( (d + 5)^{5/2} + (d +3)^{3/2} \right)^2h_{k}^2 \|v_k - v^*(x_k)\|^2\\
        &+ \frac{16 L_{2,G}^2}{9}\left( (d + 5)^{5/2} + (d +3)^{3/2} \right)^2 \frac{L_{0,F}^2}{\mu_G^2}h_{k}^2\\
        &+ \frac{L_{1,G}^2}{2}(d + 3)^{3}h_{k}^2. 
    \end{aligned}
\end{equation*}
Let $\omega_{v,2} = \frac{16 L_{2,G}^2}{9}\left( (d + 5)^{5/2} + (d +3)^{3/2} \right)^2$.
Using this inequality in eq. \eqref{eqn:bound_v_A_2}, we get
\begin{equation*}
    \begin{aligned}
        \mathbb{E}_k \left[ \| v_{k + 1} - v^*(x_k) \|^2 \right] &\leq\left(1 - 2\mu_G\rho_k + \nu_1 \rho_k + \nu_2 \rho_k +\omega_{v,2} h_{k}^2 \frac{\rho_k}{\nu_2} \right)\|v_k - v^*(x_k) \|^2\\
        &+ \rho_k^2 \mathbb{E}_k\left[\|D_v^k\|^2 \right] +\frac{\omega_{v,1}^2}{\nu_1}\rho_k \|z_k - z^*(x_k)\|^2\\
        &+ \frac{L_{1,G}^2}{2}(d + 3)^{3} \frac{\rho_k}{\nu_2}h_{k}^2 + \omega_{v,2} \frac{L_{0,F}^2}{\mu_G^2} \frac{\rho_k}{\nu_2} h_{k}^2.
    \end{aligned}
\end{equation*}
Setting $\nu_1 =\nu_2 = \frac{\mu_G}{4}$ and since $h_{k}^2 \leq \frac{\mu_G^2}{16 \omega_{v,2}}$, we have
\begin{equation}\label{eqn:bound_v_A_3}
    \begin{aligned}
        \mathbb{E}_k \left[ \| v_{k + 1} - v^*(x_k) \|^2 \right] &\leq\left(1 - \frac{5 \mu_G}{4}\rho_k \right)\|v_k - v^*(x_k) \|^2\\
        &+ \rho_k^2 \mathbb{E}_k\left[\|D_v^k\|^2 \right] +4\frac{\omega_{v,1}^2}{\mu_G}\rho_k \|z_k - z^*(x_k)\|^2\\
        &+ 2  \frac{L_{1,G}^2(d + 3)^{3}}{\mu_G} \rho_kh_{k}^2 + 4 \frac{\omega_{v,2}L_{0,F}^2}{\mu_G^3} \rho_k h_{k}^2.
    \end{aligned}
\end{equation}
Now, we bound the term $B$ of eq. \eqref{eqn:bound_v_main}. By Lemma \ref{lem:smt_setting_smooth_value}, $v^*$ is $L_*$-Lipschitz continuous. Thus, by eq. \eqref{eqn:x_update}, we have
\begin{equation}\label{eqn:bound_v_B}
    \| v^*(x_{k + 1}) - v^*(x_k) \|^2 \leq L_*^2 \|x_{k + 1} - x_k \|^2 = L_*^2 \gamma_k^2 \| D_x^k\|^2.
\end{equation}
Finally, we bound the term $C$ of eq. \eqref{eqn:bound_v_main}. By eq. \eqref{eqn:v_update}, we have
\begin{equation*}
    \begin{aligned}
        -2\scalT{v_{k + 1} - v^*(x_k)}{v^*(x_{k + 1}) - v^*(x_k)} &= -2\scalT{v_{k} - v^*(x_k)}{v^*(x_{k + 1}) - v^*(x_k)}\\
        &+ 2 \rho_k \scalT{D_v^k}{v^*(x_{k + 1}) - v^*(x_k)}.
    \end{aligned}
\end{equation*}
By Cauchy-Schwartz and Lemma \ref{lem:smt_setting_smooth_value}, we have
\begin{equation*}
    \begin{aligned}
        -2\scalT{v_{k + 1} - v^*(x_k)}{v^*(x_{k + 1}) - v^*(x_k)} &\leq -2\scalT{v_{k} - v^*(x_k)}{v^*(x_{k + 1}) - v^*(x_k)}\\
        &+ 2 L_* \rho_k \| D_v^k \| \| x_{k + 1} - x_k \|.
    \end{aligned}
\end{equation*}
By Young's inequality with $\nu_3 > 0$ and eq. \eqref{eqn:x_update}, we get
\begin{equation*}
    \begin{aligned}
        -2\scalT{v_{k + 1} - v^*(x_k)}{v^*(x_{k + 1}) - v^*(x_k)} &\leq -2\scalT{v_{k} - v^*(x_k)}{v^*(x_{k + 1}) - v^*(x_k)}\\
        &+ \rho_k^2 \nu_3 \| D_v^k \|^2 + \frac{L_*^2}{\nu_3} \gamma_k^2 \| D_x^k \|^2.
    \end{aligned}
\end{equation*}
Similarly, by Cauchy-Schwartz, Lemma \ref{lem:smt_setting_smooth_value} (i.e. $L_*$-Lipschitz of $v^*$) and by Young's inequality with parameter $\nu_4 > 0$,
\begin{equation*}
    \begin{aligned}
        -2\scalT{v_{k + 1} - v^*(x_k)}{v^*(x_{k + 1}) - v^*(x_k)} &\leq 2 L_* \| v_{k} - v^*(x_k) \| \| x_{k + 1} - x_k \|\\
        &+ \rho_k^2 \nu_3 \| D_v^k \|^2 + \frac{L_*^2}{\nu_3} \gamma_k^2 \| D_x^k \|^2\\
        &\leq  \nu_4 \| v_{k} - v^*(x_k) \|^2 +  \frac{1}{\nu_4} L^2_*\| x_{k + 1} - x_k \|^2\\
        &+ \rho_k^2 \nu_3 \| D_v^k \|^2 + \frac{L_*^2}{\nu_3} \gamma_k^2 \| D_x^k \|^2.
    \end{aligned}
\end{equation*}
Let $\nu_3 = 1$ and $\nu_4 = \frac{\mu_G \rho_k}{4}$, we have
\begin{equation}\label{eqn:bound_v_C}
    \begin{aligned}
        -2\scalT{v_{k + 1} - v^*(x_k)}{v^*(x_{k + 1}) - v^*(x_k)} &\leq  \frac{\mu_G}{4} \rho_k \| v_{k} - v^*(x_k) \|^2 +  \frac{4}{\mu_G \rho_k} L^2_* \gamma_k^2 \| D_x^k \|^2\\
        &+ \rho_k^2  \| D_v^k \|^2 + L_*^2 \gamma_k^2 \| D_x^k \|^2.
    \end{aligned}
\end{equation}
Let $\omega_k^v = \left(1 + \frac{2}{\mu_G \rho_k} \right)$. Restarting from eq. \eqref{eqn:bound_v_main}, taking the conditional expectation and by equations \eqref{eqn:bound_v_A_3}, \eqref{eqn:bound_v_B},\eqref{eqn:bound_v_C}, we get the claim
\begin{equation*}
    \begin{aligned}
        \mathbb{E}_k\left[ \| v_{k + 1} - v^*(x_{k + 1}) \|^2 \right] &\leq \left(1 - \mu_G\rho_k \right)\|v_k - v^*(x_k) \|^2 +4\frac{\omega_{v,1}^2}{\mu_G}\rho_k \|z_k - z^*(x_k)\|^2\\
        &+ 2\rho_k^2 \mathbb{E}_k\left[\|D_v^k\|^2 \right] + 2\omega_k^vL_*^2 \gamma_k^2  \mathbb{E}_k \left[\| D_x^k\|^2\right]\\
        &+ 2  \frac{L_{1,G}^2(d + 3)^{3}}{\mu_G} \rho_kh_{k}^2 + 4 \frac{\omega_{v,2}L_{0,F}^2}{\mu_G^3} \rho_k h_{k}^2.
    \end{aligned}
\end{equation*}

\end{proof}

\noindent The following lemma characterizes the descent of the value function $\Psi$ along the sequence of the iterates $x_k$ generated by Algorithm~\ref{alg:zoba}.
\begin{lemma}[Value Function sequence bound]\label{lem:smooth_val_fun_descent}
    Under Assumption \ref{asm:F_smooth}, \ref{asm:G_asm}. Let $(x_k)_{k \in \mathbb{N}}$ be the sequence generated by Algorithm \ref{alg:zoba}. Let $\omega_{v,1} =\left( L_{1,F} + \frac{L_{0,F} L_{2,G}}{\mu_G} \right)$. Then,
    \begin{equation*}
        \begin{aligned}
            \mathbb{E}_k\left[\Psi(x_{k + 1})\right] &\leq \Psi(x_k) - \frac{\gamma_k}{2}\| \nabla \Psi(x_k) \|^2 -\frac{\gamma_k}{2} \| \hat{D}_x^{k} \|^2 + \frac{L_\Psi}{2} \gamma_k^2 \mathbb{E}_k\left[\|D_x^k\|^2\right]\\
            &+ 2\gamma_k \omega_{v,1}^2 \| z_k - z^*(x_k) \|^2 + 2 \gamma_k L_{1,G}^2 \|v_k - v^*(x_k) \|^2\\ %
            &+ C_1^2 \gamma_k h_{k}^2,
        \end{aligned}
    \end{equation*}
    where $C_1= \left(2\frac{L_{0,F} L_{2,G}}{3\mu_G} \left( (d + 5)^{5/2} +  (d + 3)^{3/2} + (d + 1) (p + 3)^{3/2} \right) + \frac{L_{1,F}}{2}(d + 3)^{3/2} \right)$.
\end{lemma}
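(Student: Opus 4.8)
The plan is to combine the smoothness of $\Psi$ with the fact that the conditional mean of the outer search direction $D_x^k$ is a smoothed approximate hypergradient whose bias is controlled by Lemma~\ref{lem:bilevel_smth_error}. First I would invoke the $L_\Psi$-smoothness of $\Psi$ from Lemma~\ref{lem:smt_setting_smooth_value}(I) together with the update rule~\eqref{eqn:x_update}, $x_{k+1} - x_k = -\gamma_k D_x^k$, to obtain
\begin{equation*}
\Psi(x_{k+1}) \le \Psi(x_k) - \gamma_k \scalT{\nabla\Psi(x_k)}{D_x^k} + \frac{L_\Psi}{2}\gamma_k^2 \|D_x^k\|^2.
\end{equation*}
Taking the conditional expectation $\mathbb{E}_k[\cdot]$ and writing $\hat{D}_x^k := \mathbb{E}_k[D_x^k]$, the key observation (mirroring the proof of Lemma~\ref{lem:smooth_bound_zv}) is that, by Lemma~\ref{lem:grad_hess_smoothing} and the unbiasedness in Assumption~\ref{asm:bc_condition}, the surrogate $D_x^k$ averages to the smoothed quantity $\hat{D}_x^k = \nabla_{xz}^2 G_{h_k,h_k}(z_k,x_k)v_k + \nabla_x F_{0,h_k}(z_k,x_k)$; here $z_k,v_k,x_k$ are $\mathbb{E}_k$-measurable, so $v_k$ factors out of the Hessian-block expectation.

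Next I would split the inner product via the polarization identity $-\scalT{a}{b} = \tfrac12\|a-b\|^2 - \tfrac12\|a\|^2 - \tfrac12\|b\|^2$ with $a=\nabla\Psi(x_k)$ and $b=\hat{D}_x^k$, giving
\begin{equation*}
\mathbb{E}_k[\Psi(x_{k+1})] \le \Psi(x_k) - \frac{\gamma_k}{2}\|\nabla\Psi(x_k)\|^2 - \frac{\gamma_k}{2}\|\hat{D}_x^k\|^2 + \frac{\gamma_k}{2}\|\nabla\Psi(x_k) - \hat{D}_x^k\|^2 + \frac{L_\Psi}{2}\gamma_k^2\,\mathbb{E}_k[\|D_x^k\|^2].
\end{equation*}
All the terms of the claim except the bias term $\tfrac{\gamma_k}{2}\|\nabla\Psi(x_k) - \hat{D}_x^k\|^2$ are now in place.

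The heart of the argument is bounding this bias through Lemma~\ref{lem:bilevel_smth_error}, applied with the smoothing parameters dictated by the algorithm: the inner cross-Hessian block perturbs both variables by $h_k$, so $h_1 = \eta_1 = h_k$, whereas the outer $x$-gradient surrogate leaves $z_k$ unperturbed, so $h_2 = 0$ and $\eta_2 = h_k$ (admissible since the $x$-gradient smoothing bound in Lemma~\ref{lem:smoothing_error} holds for $h \ge 0$, which annihilates the $h_2^2/\eta_2$ term). With $\tfrac{\eta_1^3}{h_1^2}=h_k$, the lemma yields
\begin{equation*}
\|\nabla\Psi(x_k) - \hat{D}_x^k\| \le \omega_{v,1}\|z_k - z^*(x_k)\| + L_{1,G}\|v_k - v^*(x_k)\| + C_1 h_k,
\end{equation*}
where $\omega_{v,1} = L_{1,F} + \tfrac{L_{0,F}L_{2,G}}{\mu_G}$ and $C_1$ is exactly the constant in the statement. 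Finally I would square this using $(x+y)^2 \le 2x^2 + 2y^2$ twice — first peeling off the $C_1 h_k$ term, then splitting the $z$- and $v$-errors — to obtain $\|\nabla\Psi(x_k)-\hat{D}_x^k\|^2 \le 4\omega_{v,1}^2\|z_k-z^*(x_k)\|^2 + 4L_{1,G}^2\|v_k-v^*(x_k)\|^2 + 2C_1^2 h_k^2$; substituting and multiplying by $\gamma_k/2$ reproduces the coefficients $2\gamma_k\omega_{v,1}^2$, $2\gamma_k L_{1,G}^2$, and $C_1^2\gamma_k h_k^2$ exactly.

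I expect the main obstacle to be the careful identification of the smoothing parameters: one must track that the $F$-surrogate is one-sided in $z$ (forcing $h_2=0$), match the resulting $h_k$ coefficients to the definition of $C_1$, and reconcile the coefficient of $\|z_k-z^*(x_k)\|$ with $\omega_{v,1}$, which carries the Hessian-Lipschitz constant $L_{2,G}$. Everything else reduces to routine Cauchy–Schwarz and Young's-inequality bookkeeping.
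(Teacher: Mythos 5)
Your proposal is correct and follows essentially the same route as the paper's proof: the $L_\Psi$-descent lemma combined with the update \eqref{eqn:x_update}, the identity $\scalT{a}{b}=\tfrac12(\|a\|^2+\|b\|^2-\|a-b\|^2)$ applied to $\nabla\Psi(x_k)$ and $\hat{D}_x^k=\mathbb{E}_k[D_x^k]$, the bias bound from Lemma~\ref{lem:bilevel_smth_error} with $h_1=\eta_1=\eta_2=h_k$ and $h_2=0$, and two applications of $(a+b)^2\le 2a^2+2b^2$, whose coefficients you track correctly to land exactly on $2\gamma_k\omega_{v,1}^2$, $2\gamma_k L_{1,G}^2$, and $C_1^2\gamma_k h_k^2$. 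You even flag the one genuine wrinkle — that Lemma~\ref{lem:bilevel_smth_error} as stated carries $L_{1,G}$ in the $\|z_k-z^*(x_k)\|$ coefficient while $\omega_{v,1}$ carries $L_{2,G}$ — which is an inconsistency the paper's own proof silently inherits ($L_{2,G}$ is what the Hessian-Lipschitz argument in that lemma's proof actually produces).
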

\begin{proof}
    By Lemma \ref{lem:smt_setting_smooth_value}, $\Psi$ is $L_\Psi$-smooth. Thus, by the Descent Lemma \cite{polyak1987introduction} we have
    \begin{equation*}
        \Psi(x_{k + 1}) \leq \Psi(x_k) + \scalT{\nabla \Psi(x_k)}{x_{k + 1} - x_k} + \frac{L_\Psi}{2} \| x_{k + 1} - x_k \|^2.
    \end{equation*}
    By eq. \eqref{eqn:x_update}, we have
    \begin{equation*}
        \begin{aligned}
            \Psi(x_{k + 1}) \leq \Psi(x_k) - \gamma_k \scalT{\nabla \Psi(x_k)}{D_x^k} + \frac{L_\Psi}{2} \gamma_k^2\|D_x^k\|^2.
        \end{aligned}
    \end{equation*}
    Let $\hat{D}_x^{k} = \nabla_{xz}^2 G_{h_{k}}(z_k,x_k)v_k +\nabla_x F_{h_{k}}(z_k, x_k)$. Taking the conditional expectation, we get
    \begin{equation*}
        \mathbb{E}_k[\Psi(x_{k + 1})] \leq \Psi(x_k) - \gamma_k \scalT{\nabla \Psi(x_k)}{\hat{D}_x^{k}} + \frac{L_\Psi}{2} \gamma_k^2 \mathbb{E}_k\left[\|D_x^k\|^2\right].
    \end{equation*}
    By $\scalT{a}{b} = \frac{1}{2}\left( \|a\|^2 + \|b\|^2 - \|a - b\|^2  \right)$, we have
    \begin{equation*}
        \begin{aligned}
            \mathbb{E}_k\left[\Psi(x_{k + 1})\right] &\leq \Psi(x_k) - \frac{\gamma_k}{2} \left( \| \nabla \Psi(x_k) \|^2 + \| \hat{D}_x^{k} \|^2 - \| \hat{D}_x^{k} - \nabla \Psi(x_k) \|^2 \right)\\
            &+ \frac{L_\Psi}{2} \gamma_k^2 \mathbb{E}_k\left[\|D_x^k\|^2\right]\\
            &= \Psi(x_k) - \frac{\gamma_k}{2}\| \nabla \Psi(x_k) \|^2 -\frac{\gamma_k}{2} \| \hat{D}_x^{k} \|^2\\
            &+ \frac{\gamma_k}{2}\| \hat{D}_x^{k} - \nabla \Psi(x_k) \|^2+ \frac{L_\Psi}{2} \gamma_k^2 \mathbb{E}_k\left[\|D_x^k\|^2\right].
        \end{aligned}
    \end{equation*}
    Let $\omega_{v,1} = \left( L_{1,F} + \frac{L_{0,F} L_{2,G}}{\mu_G} \right)$. By Lemma \ref{lem:bilevel_smth_error} with $h_1 = h_{k}$, $\eta_1 = h_{k}$, $h_2 = 0$ and $\eta_2 = h_{k}$, we get
    \begin{equation*}
        \begin{aligned}
            \mathbb{E}_k\left[\Psi(x_{k + 1})\right] &\leq  \Psi(x_k) - \frac{\gamma_k}{2}\| \nabla \Psi(x_k) \|^2 -\frac{\gamma_k}{2} \| \hat{D}_x^{k} \|^2\\
            &+ \frac{L_\Psi}{2} \gamma_k^2 \mathbb{E}_k\left[\|D_x^k\|^2\right]+ \frac{\gamma_k}{2} \bigg(\omega_{v,1} \| z_k - z^*(x_k) \| + L_{1,G} \|v_k - v^*(x_k) \|\\
            &+ 2\frac{L_{0,F} L_{2,G}}{3\mu_G} \bigg( h_{k} (d + 5)^{5/2} + h_{k} (d + 3)^{3/2} + h_k(d + 1) (p + 3)^{3/2} \bigg)\\
            &+ \frac{L_{1,F} }{2} (d + 3)^{3/2} h_k^2 \bigg)^2\\
        \end{aligned}
    \end{equation*}
    Let $C_1:=\left(2\frac{L_{0,F} L_{2,G}}{3\mu_G} \bigg(  (d + 5)^{5/2} + (d + 3)^{3/2} + (d + 1) (p + 3)^{3/2} \bigg)+ \frac{L_{1,F}}{2}(d + 3)^{3/2}\right) h_{k}$.
    \begin{equation*}
        \begin{aligned}
            \mathbb{E}_k\left[\Psi(x_{k + 1})\right] &\leq  \Psi(x_k) - \frac{\gamma_k}{2}\| \nabla \Psi(x_k) \|^2 -\frac{\gamma_k}{2} \| \hat{D}_x^{k} \|^2\\
            &+ \frac{L_\Psi}{2} \gamma_k^2 \mathbb{E}_k\left[\|D_x^k\|^2\right]+ \frac{\gamma_k}{2} \bigg(\omega_{v,1} \| z_k - z^*(x_k) \| + L_{1,G} \|v_k - v^*(x_k) \|\\
            &+ C_1 h_{k}\bigg)^2.
        \end{aligned}
    \end{equation*}
    By $(a + b)^2 \leq 2a^2 + 2b^2$, we get
    \begin{equation*}
        \begin{aligned}
            \mathbb{E}_k\left[\Psi(x_{k + 1})\right] &\leq\Psi(x_k) - \frac{\gamma_k}{2}\| \nabla \Psi(x_k) \|^2 -\frac{\gamma_k}{2} \| \hat{D}_x^{k} \|^2\\
            &+ \frac{L_\Psi}{2} \gamma_k^2 \mathbb{E}_k\left[\|D_x^k\|^2\right]\\
            &+ \gamma_k \bigg(\omega_{v,1} \| z_k - z^*(x_k) \| + L_{1,G} \|v_k - v^*(x_k) \| \bigg)^2\\
            &+ C_1^2 \gamma_k h_{k}^2\\
            &\leq\Psi(x_k) - \frac{\gamma_k}{2}\| \nabla \Psi(x_k) \|^2 -\frac{\gamma_k}{2} \| \hat{D}_x^{k} \|^2 + \frac{L_\Psi}{2} \gamma_k^2 \mathbb{E}_k\left[\|D_x^k\|^2\right]\\
            &+ 2\gamma_k \omega_{v,1}^2 \| z_k - z^*(x_k) \|^2 + 2 \gamma_k L_{1,G}^2 \|v_k - v^*(x_k) \|^2\\ %
            &+ C_1^2 \gamma_k h_{k}^2.
        \end{aligned}
    \end{equation*}
    
\end{proof}

\subsection{Auxiliary results for Algorithm \ref{alg:hfzoba}}\label{app:aux_res_hfzoba}

In this appendix, we collect all lemmas required for the analysis of Algorithm \ref{alg:hfzoba}. Unlike Algorithm \ref{alg:zoba}, which relies on unbiased estimators of the Hessian of the smoothing, Algorithm \ref{alg:hfzoba} directly approximates Hessian–vector products using first-order finite differences, and then replaces the resulting gradients with zeroth-order surrogates. We begin by introducing a lemma that bounds the approximation error of the first-order finite-difference for a twice-differentiable function with Lipschitz-continuous Hessians.

\begin{lemma}[First-order approximation of Hessian-vector products] \label{lem:biased_hess_approx}
Let $f: \mathbb{R}^p \times\mathbb{R}^d \to \mathbb{R}$ be a twice-differentiable function with $L_{2,G}$-Lipschitz continuous hessian. Then,  for every $z,v \in \mathbb{R}^p$, $x \in \mathbb{R}^d$ and $\bar{h} > 0$, 
\begin{equation*}
    \begin{aligned}
    \left\| \frac{1}{\bar{h}} \left( \nabla_z f(z + \bar{h} v, x) -\nabla_z f(z,x) \right) -\nabla_{zz}^2 f(z,x)v \right\|&\leq \frac{L_{2,G}}{2} \bar{h} \|v\|^2,\\
    \left\| \frac{1}{\bar{h}} \left( \nabla_x f(z + \bar{h} v, x) -\nabla_x f(z,x) \right) -\nabla_{xz}^2 f(z,x)v \right\|&\leq \frac{L_{2,G}}{2} \bar{h} \|v\|^2.
    \end{aligned}
\end{equation*}
\end{lemma}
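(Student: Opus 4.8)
The plan is to reduce both inequalities to a single first-order Taylor remainder estimate for the gradient, using only the fundamental theorem of calculus together with the Lipschitz continuity of the full Hessian assumed in the statement.

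First I would fix $z,v\in\mathbb{R}^p$ and $x\in\mathbb{R}^d$ and introduce the curve $t\mapsto \nabla_z f(z+t\bar{h}v,x)$ on $[0,1]$. Since $f$ is twice differentiable, this map is $C^1$ in $t$ with derivative $\nabla_{zz}^2 f(z+t\bar{h}v,x)\,\bar{h}v$, so the fundamental theorem of calculus gives
\[
\nabla_z f(z+\bar{h}v,x)-\nabla_z f(z,x)=\bar{h}\int_0^1 \nabla_{zz}^2 f(z+t\bar{h}v,x)\,v\,dt.
\]
Dividing by $\bar{h}$ and writing $\nabla_{zz}^2 f(z,x)v=\int_0^1 \nabla_{zz}^2 f(z,x)v\,dt$, I would rewrite the quantity to be bounded as a single integral of the Hessian increment,
\[
\frac{1}{\bar{h}}\big(\nabla_z f(z+\bar{h}v,x)-\nabla_z f(z,x)\big)-\nabla_{zz}^2 f(z,x)v=\int_0^1\big(\nabla_{zz}^2 f(z+t\bar{h}v,x)-\nabla_{zz}^2 f(z,x)\big)v\,dt.
\]

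Next I would pass the norm inside the integral and control the integrand. The block $\nabla_{zz}^2 f$ is a principal submatrix of the full Hessian $\nabla^2 f$, hence its operator norm (and that of any difference of blocks) is dominated by the operator norm of the corresponding full Hessian. Combining this with the $L_{2,G}$-Lipschitz continuity of $\nabla^2 f$ and noting that the two arguments differ only in the $z$-variable by $t\bar{h}v$, I obtain $\|\nabla_{zz}^2 f(z+t\bar{h}v,x)-\nabla_{zz}^2 f(z,x)\|\le L_{2,G}\,t\bar{h}\|v\|$. Substituting this bound, using Cauchy--Schwarz on $(\cdot)v$, and computing $\int_0^1 t\,dt=1/2$ yields exactly $\tfrac{L_{2,G}}{2}\bar{h}\|v\|^2$.

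The second inequality follows verbatim with $\nabla_z$ replaced by $\nabla_x$: the curve $t\mapsto \nabla_x f(z+t\bar{h}v,x)$ has $t$-derivative $\nabla_{xz}^2 f(z+t\bar{h}v,x)\,\bar{h}v$ (the Jacobian of $z\mapsto\nabla_x f$ being the $d\times p$ cross block), and since $\nabla_{xz}^2 f$ is again a submatrix of the full Hessian, the same Lipschitz estimate and integration apply. There is no genuine obstacle in this proof; the only points requiring care are the submatrix-to-full-Hessian operator-norm comparison, needed because the regularity assumption is stated for the full Hessian rather than for the individual blocks, and the correct identification that the displacement between the two evaluation points lies entirely in the $z$-block and has length $t\bar{h}\|v\|$.
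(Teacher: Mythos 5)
Your proof is correct and follows essentially the same route as the paper's: the fundamental theorem of calculus applied to $t\mapsto\nabla_z f(z+t\bar{h}v,x)$, rewriting the error as $\int_0^1(\nabla_{zz}^2 f(z+t\bar{h}v,x)-\nabla_{zz}^2 f(z,x))v\,dt$, and bounding via the Hessian's Lipschitz continuity and $\int_0^1 t\,dt=1/2$. The only difference is that you make explicit the block-to-full-Hessian operator-norm comparison, a step the paper uses implicitly; this is a welcome clarification but not a different argument.
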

\begin{proof}
The proof of both inequalities follows the same line of Lemma 1.2.4 of \cite{nesterov2018lectures}. For completeness, we include the proof of the first inequality (the second follows analogously). Since $f$ is twice differentiable, we have
\begin{equation*}
\nabla_z f(z + \bar h v, x) = \nabla_z f(z, x) + \int_0^{1} \nabla^2_{zz} f(z + t \bar{h}v, x) \, \bar{h}v \, dt.    
\end{equation*}
Adding and subtracting $\nabla^2_{zz} f(z, x) \, \bar{h}v$
\begin{equation*}
\nabla_z f(z + \bar h v, x) = \nabla_z f(z, x) + \nabla^2_{zz} f(z, x) \, \bar{h}v + \int_0^{1} (\nabla^2_{zz} f(z + t \bar{h}v, x) - \nabla^2_{zz} f(z, x))  \, \bar{h}v \, dt.    
\end{equation*}
Therefore, we have
\begin{equation*}
    \begin{aligned}
        \left\| \frac{1}{\bar{h}} \left( \nabla_z f(z + \bar{h} v, x) -\nabla_z f(z,x) \right) -\nabla_{zz}^2 f(z,x)v \right\| %
        &= \frac{1}{\bar{h}}\left\|  \int_0^{1} (\nabla^2_{zz} f(z + t \bar{h}v, x) - \nabla^2_{zz} f(z, x))  \, \bar{h}v \, dt \right\|\\
        &\leq \frac{1}{\bar{h}}\int_0^{1}\left\|   (\nabla^2_{zz} f(z + t \bar{h}v, x) - \nabla^2_{zz} f(z, x))  \, \bar{h}v  \right\| \, dt\\
        &\leq \frac{1}{\bar{h}}\int_0^{1}\left\|\nabla^2_{zz} f(z + t \bar{h}v, x) - \nabla^2_{zz} f(z, x)\right\| \| \bar{h}v\| \, dt
    \end{aligned}
\end{equation*}
By $L_{2,G}$-Lipschitz continuity of the Hessian, we get the claim
\begin{equation*}
    \begin{aligned}
        \left\| \frac{1}{\bar{h}} \left( \nabla_z f(z + \bar{h} v, x) -\nabla_z f(z,x) \right) -\nabla_{zz}^2 f(z,x)v \right\| &\leq L_{2,G} \bar{h} \|v\|^2\int_0^{1}t \, dt= \frac{L_{2,G}}{2}\bar{h}\|v\|^2.
    \end{aligned}
\end{equation*}
\end{proof}
\noindent In the following lemma, we derive bounds on the norm of the finite-difference approximations of the Hessian-block–vector products used in Algorithm~\ref{alg:hfzoba}.
\begin{lemma}[Bound on surrogate difference]\label{lem:hfzoba_sur_diff}
    Let Assumptions \ref{asm:F_smooth} and \ref{asm:G_asm} hold. Let $z_k,v_k$ and $x_k$ be the sequences generated by Algorithm \ref{alg:hfzoba}. Let $h_k, \bar{h}_k > 0$. Then, for every $k \in \mathbb{N}$
    \begin{equation*}
        \begin{aligned}
            \mathbb{E}_k\left[\left\| \hat{\nabla}_zg_k(z_k + \bar{h}_kv_k,x_k) - \hat{\nabla}_z g_k(z_k,x_k) \right\|^2\right] &\leq C_1L_{1,G}^2 \bar{h}_k^2 \|v_k - v^*(x_k)\|^2 +C_1\frac{L_{1,G}^2 L_{0,F}^2}{\mu_G^2}\bar{h}_k^2 +\frac{3L_{1,G}^2 (p + 6)^3}{b_1 \ell_1}h_k^2,\\
            \mathbb{E}_k\left[\left\| \hat{\nabla}_xg_k(z_k + \bar{h}_kv_k,x_k) - \hat{\nabla}_x g_k(z_k,x_k) \right\|^2\right] &\leq C_2L_{1,G}^2 \bar{h}_k^2 \|v_k - v^*(x_k)\|^2 +C_2\frac{L_{1,G}^2 L_{0,F}^2}{\mu_G^2}\bar{h}_k^2 +\frac{3L_{1,G}^2 (d + 6)^3}{b_1 \ell_1}h_k^2.
        \end{aligned}
    \end{equation*}    
    where $C_1 = 4\left(1 + \frac{3(p + 2)}{b_1 \ell_1} \right)$ and $C_2 = 4\left(1 + \frac{3(d + 2)}{b_1 \ell_1} \right)$.
\end{lemma}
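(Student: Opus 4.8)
The plan is to bound each finite-difference gradient-difference by splitting the deviation from its expected value (the gradient of the smoothing) and the smoothing expectation itself, then use the first-order finite-difference identity together with the structure of $\hat{\nabla}_z g_k$. The two claims are symmetric, so I would prove the first (for the $z$-variable) in detail and note that the second follows identically by swapping $w$-directions and the dimension $p$ for $u$-directions and $d$.

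\textbf{Step 1 (decompose via the smoothing gradient).} First I would introduce the smoothing-gradient difference $\Delta_k := \nabla_z g_{h_k}(z_k + \bar h_k v_k, x_k) - \nabla_z g_{h_k}(z_k, x_k)$ and write
\begin{equation*}
    \hat{\nabla}_z g_k(z_k + \bar h_k v_k, x_k) - \hat{\nabla}_z g_k(z_k, x_k) = \big(\text{deviation of the estimator from its mean}\big) + \Delta_k .
\end{equation*}
Applying $\|a+b\|^2 \le 2\|a\|^2 + 2\|b\|^2$ and taking $\mathbb{E}_k$, the deviation term is controlled using the same variance-style computation as in Lemma~\ref{lem:approx_error}: the finite-difference surrogate $\hat{\nabla}_z g_k$ is an unbiased estimator (over $w_k^{(i,j)}$ and $\xi_{i,k}$) of $\nabla_z g_{h_k}$ by Lemma~\ref{lem:grad_hess_smoothing}, so its second moment splits into the squared mean plus a variance term scaling like $1/(b_1\ell_1)$, producing the $(p+2)/(b_1\ell_1)$ and $(p+6)^3 h_k^2/(b_1\ell_1)$ contributions. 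Since the \emph{difference} of two such surrogates is taken, the leading $\|\nabla_z g_{h_k}\|^2$ terms cancel and only the smoothing-gradient difference $\|\Delta_k\|^2$ and the $h_k^2$ bias survive, which is where the $3L_{1,G}^2(p+6)^3 h_k^2/(b_1\ell_1)$ term comes from.

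\textbf{Step 2 (bound $\Delta_k$ by smoothness).} By Proposition~\ref{prop:smoothing_properties}(III) the smoothed surrogate $g_{h_k}$ is $L_{1,G}$-smooth, so
\begin{equation*}
    \|\Delta_k\| = \|\nabla_z g_{h_k}(z_k + \bar h_k v_k, x_k) - \nabla_z g_{h_k}(z_k, x_k)\| \le L_{1,G}\,\bar h_k \|v_k\|,
\end{equation*}
giving $\|\Delta_k\|^2 \le L_{1,G}^2 \bar h_k^2 \|v_k\|^2$. I would then add and subtract $v^*(x_k)$, apply $\|a+b\|^2\le 2\|a\|^2+2\|b\|^2$, and invoke Lemma~\ref{lem:bound_norm_v} to replace $\|v^*(x_k)\|^2$ by $L_{0,F}^2/\mu_G^2$. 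Combining with the factor $2$ from the initial split and the factor $2$ from the $v^*$-split yields the constant $C_1 = 4(1 + 3(p+2)/(b_1\ell_1))$, where the $3(p+2)/(b_1\ell_1)$ absorbs both the variance contribution of the estimator difference and the second-moment expansion of $\|w\|$-weighted terms (Lemma~\ref{lem:norm_gaus_vector}).

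\textbf{The main obstacle} will be correctly tracking the constants through the estimator-difference variance computation so that the second-moment terms involving $\|\nabla_z g_{h_k}\|^2$ cancel between the two evaluation points rather than doubling: because both $\hat{\nabla}_z g_k(z_k+\bar h_k v_k,x_k)$ and $\hat{\nabla}_z g_k(z_k,x_k)$ are built from the \emph{same} sampled directions $w_k^{(i,j)}$ and samples $\xi_{i,k}$, they are correlated, and one must either exploit this common randomness to cancel the dominant gradient-norm terms or bound the difference conservatively. I expect the cleanest route is to bound the mean-deviation of the \emph{difference} directly, using that the per-direction summand is a difference of two forward finite differences sharing $w_k^{(i,j)}$, so the Lipschitz-type estimate of Step~2 applies pathwise and the residual variance is governed purely by $L_{1,G}^2\bar h_k^2\|v_k\|^2$ and the smoothing bias $h_k^2$. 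Verifying that the shared-randomness cancellation produces exactly the claimed constants $C_1, C_2$ and the $3L_{1,G}^2(p+6)^3/(b_1\ell_1)$ coefficient is the delicate bookkeeping; the remaining steps are routine applications of the already-established lemmas.
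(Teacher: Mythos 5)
Your proposal follows essentially the same route as the paper's proof: decompose the estimator difference into its conditional mean (the smoothed-gradient difference, bounded by $L_{1,G}$-smoothness of the smoothing plus the $v^*$ split via Lemma~\ref{lem:bound_norm_v}) and a zero-mean deviation, then bound the deviation's variance by treating each per-direction summand as a \emph{single} difference of forward finite differences sharing $w_k^{(i,j)}$ and $\xi_{i,k}$, so that a pathwise Taylor/descent-lemma argument together with Lemma~\ref{lem:norm_gaus_vector} yields the $3(p+2)L_{1,G}^2\bar h_k^2\|v_k\|^2$ and $\tfrac{3L_{1,G}^2}{2}(p+6)^3h_k^2$ contributions scaled by $1/(b_1\ell_1)$. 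The key point you flagged as the main obstacle — that the gradient-norm terms only cancel if one exploits the common randomness by bounding the mean-deviation of the difference directly, rather than bounding each surrogate's second moment separately — is exactly how the paper's proof proceeds, and your constant accounting ($C_1$ from the two factors of $2$ plus the $3(p+2)/(b_1\ell_1)$ variance term) matches the paper's computation.
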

\begin{proof}
    We start by proving the first inequality. To simplify the reading we will use the following notation.
    \begin{equation*}
        \begin{aligned}
            g_{v,k}^{(i,j)} &= \frac{g(z_k +\bar{h}_k v_k + h_kw_k^{(i,j)},x_k,\xi_{i,k}) - g(z_k +\bar{h}_k v_k,x_k,\xi_{i,k})}{h_k}w_k^{(i,j)},\\
            g_k^{(i,j)} &= \frac{g(z_k + h_kw_k^{(i,j)},x_k,\xi_{i,k}) - g(z_k,x_k,\xi_{i,k})}{h_k}w_k^{(i,j)},\\
            \delta_k^{(i,j)} &= g_{v,k}^{(i,j)} - g_k^{(i,j)} - (\nabla_z g_{h_k}(z_k +\bar{h}_k v_k, x_k, \xi_{i,k}) - \nabla_z g_{h_k}(z_k, x_k, \xi_{i,k})),\\
            \delta_k^{(i)} &= \sum\limits_{j=1}^{\ell_1} \delta_k^{(i,j)}.
        \end{aligned}
    \end{equation*}
    Adding and subtracting $\frac{1}{b_1\ell_1} \sum\limits_{i=1}^{b_1} \sum\limits_{j=1}^{\ell_1} \nabla_z g_{h_k}(z_k + \bar{h}_k v_k, x_k, \xi_{i,k} ) - \nabla_z g_{h_k}(z_k, x_k, \xi_{i,k} )$, we get
    \begin{equation*}
        \begin{aligned}
            \mathbb{E}_k \left[\left\| \hat{\nabla}_zg_k(z_k + \bar{h}_kv_k,x_k) - \hat{\nabla}_z g_k(z_k,x_k) \right\|^2 \right] &\leq 2 \mathbb{E}_k \left[\left\| \frac{1}{b_1 \ell_1} \sum\limits_{i=1}^{b_1} \delta_k^{(i)} \right\|^2 \right]\\
            &+ 2\mathbb{E}_k\left[ \left\| \nabla_z g_{h_k}(z_k +\bar{h}_k v_k, x_k, \xi_{i,k}) - \nabla_z g_{h_k}(z_k, x_k, \xi_{i,k}) \right\|^2 \right].
        \end{aligned}
    \end{equation*}
    By $L_{1,G}$-smoothness of $g$, we get
    \begin{equation*}
        \begin{aligned}
            \mathbb{E}_k \left[\left\| \hat{\nabla}_zg_k(z_k + \bar{h}_kv_k,x_k) - \hat{\nabla}_z g_k(z_k,x_k) \right\|^2 \right] &\leq 2 \mathbb{E}_k \left[\left\| \frac{1}{b_1 \ell_1} \sum\limits_{i=1}^{b_1} \delta_k^{(i)} \right\|^2 \right]\\
            &+ 2 L_{1,G}^2 \bar{h}_k^2 \left\|  v_k \right\|^2.
        \end{aligned}
    \end{equation*}
    Adding and subtracting $v^*(x_k)$ and by Lemma \ref{lem:bound_norm_v}, we get
    \begin{equation*}
        \begin{aligned}
            \mathbb{E}_k \left[\left\| \hat{\nabla}_zg_k(z_k + \bar{h}_kv_k,x_k) - \hat{\nabla}_z g_k(z_k,x_k) \right\|^2 \right] &\leq 2 \mathbb{E}_k \left[\left\| \frac{1}{b_1 \ell_1} \sum\limits_{i=1}^{b_1} \delta_k^{(i)} \right\|^2 \right]\\
            &+ 4 L_{1,G}^2 \bar{h}_k^2 \left\|  v_k - v^*(x_k) \right\|^2 + 4 \frac{L_{1,G}^2sL_{0,F}^2}{\mu_G^2} \bar{h}_k^2\\
        \end{aligned}
    \end{equation*}
    Since for every $i =1,\cdots,b_1$ and $t =1,\cdots,b_1$ with $i \neq t$, $w_k^{(i,j)}$ and $w_{k}^{(t,j)}$ are independent for every $j = 1,\cdots,\ell_1$, we get
    \begin{equation*}
        \begin{aligned}
            \mathbb{E}_k \left[\left\| \hat{\nabla}_zg_k(z_k + \bar{h}_kv_k,x_k) - \hat{\nabla}_z g_k(z_k,x_k) \right\|^2 \right] &\leq  \frac{2}{b_1^2 \ell_1^2} \left(\sum\limits_{i=1}^{b_1}\mathbb{E}_k \left[\left\| \delta_k^{(i)} \right\|^2 \right] + \sum\limits_{i=1}^{b_1} \sum\limits_{t \neq i} \scalT{\mathbb{E}_k[\delta_k^{(i)}]}{\mathbb{E}_k[\delta_k^{(t)}]} \right)\\
            &+ 4 L_{1,G}^2 \bar{h}_k^2 \left\|  v_k - v^*(x_k) \right\|^2 + 4 \frac{L_{1,G}^2sL_{0,F}^2}{\mu_G^2} \bar{h}_k^2.
        \end{aligned}
    \end{equation*}
    Observing that, by Lemma \ref{lem:grad_hess_smoothing}, we have
    \begin{equation*}
        \mathbb{E}_k[\delta_k^{(i)}] =  \sum\limits_{j=1}^{\ell_1} \mathbb{E}_{w_k^{(i,j)}}[g_{v,k}^{(i,j)} - g_k^{(i,j)}] - (\nabla_z g_{h_k}(z_k +\bar{h}_k v_k, x_k, \xi_{i,k}) - \nabla_z g_{h_k}(z_k, x_k, \xi_{i,k}))=0,
    \end{equation*}
    we get
    \begin{equation*}
        \begin{aligned}
            \mathbb{E}_k \left[\left\| \hat{\nabla}_zg_k(z_k + \bar{h}_kv_k,x_k) - \hat{\nabla}_z g_k(z_k,x_k) \right\|^2 \right] &\leq  \frac{2}{b_1^2 \ell_1^2} \sum\limits_{i=1}^{b_1}\mathbb{E}_k \left[\left\| \delta_k^{(i)} \right\|^2 \right] \\
            &+ 4 L_{1,G}^2 \bar{h}_k^2 \left\|  v_k - v^*(x_k) \right\|^2 + 4 \frac{L_{1,G}^2sL_{0,F}^2}{\mu_G^2} \bar{h}_k^2.
        \end{aligned}
    \end{equation*}
    Similarly, recalling that $\delta_{k}^{(i)} = \sum\limits_{j=1}^{\ell_1} \delta_{k}^{(i,j)}$, we have
    \begin{equation*}
        \begin{aligned}
            \mathbb{E}_k \left[\left\| \hat{\nabla}_zg_k(z_k + \bar{h}_kv_k,x_k) - \hat{\nabla}_z g_k(z_k,x_k) \right\|^2 \right] &\leq  \frac{2}{b_1^2 \ell_1^2} \sum\limits_{i=1}^{b_1} \left(\sum\limits_{j=1}^{\ell_1} \mathbb{E}_k \left[\left\| \delta_k^{(i,j)} \right\|^2 \right] + \sum\limits_{j=1}^{\ell_1} \sum\limits_{t \neq j} \mathbb{E}_k\left[ \scalT{\delta_k^{(i,j)}}{\delta_k^{(i,t)}} \right] \right) \\
            &+ 4 L_{1,G}^2 \bar{h}_k^2 \left\|  v_k - v^*(x_k) \right\|^2 + 4 \frac{L_{1,G}^2sL_{0,F}^2}{\mu_G^2} \bar{h}_k^2.
        \end{aligned}
    \end{equation*}
    For $j = 1,\cdots,\ell_1$ and $t = 1,\cdots,\ell_1$ with $j\neq t$, $w_k^{(i,j)}$ and $w_k^{(i,t)}$ are independent for every $i = 1,\cdots,b_1$. Thus, we get
    \begin{equation} \label{eqn:bound_grad_diff_1}
        \begin{aligned}
            \mathbb{E}_k \left[\left\| \hat{\nabla}_zg_k(z_k + \bar{h}_kv_k,x_k) - \hat{\nabla}_z g_k(z_k,x_k) \right\|^2 \right] &\leq \frac{2}{b_1^2 \ell_1^2} \sum\limits_{i=1}^{b_1} \sum\limits_{j=1}^{\ell_1} \mathbb{E}_k \left[\left\| \delta_k^{(i,j)} \right\|^2 \right] \\
            &+ 4 L_{1,G}^2 \bar{h}_k^2 \left\|  v_k - v^*(x_k) \right\|^2 + 4 \frac{L_{1,G}^2sL_{0,F}^2}{\mu_G^2} \bar{h}_k^2\\
            &\leq\frac{2}{b_1^2 \ell_1^2} \sum\limits_{i=1}^{b_1} \sum\limits_{j=1}^{\ell_1} \underbrace{\mathbb{E}_k \left[\left\| g_{v,k}^{(i,j)} - g_k^{(i,j)}\right\|^2 \right]}_{A} \\
            &+ 4 L_{1,G}^2 \bar{h}_k^2 \left\|  v_k - v^*(x_k) \right\|^2 + 4 \frac{L_{1,G}^2sL_{0,F}^2}{\mu_G^2} \bar{h}_k^2.
        \end{aligned}
    \end{equation} %
    Now, we focus on bounding $A$. Adding and subtracting $\scalT{\nabla g(z_k + \bar{h}_kv_k,x_k,\xi_{i,k})}{ [w_k^{(i,j)},0]}$ and $\scalT{\nabla g(z_k,x_k,\xi_{i,k})}{ [w_k^{(i,j)},0]}$, we have
    \begin{equation*}
        \begin{aligned}
        \mathbb{E}_k \left[\left\| g_{v,k}^{(i,j)} - g_k^{(i,j)}\right\|^2\right]&= \frac{1}{h_k^2} \mathbb{E}_k \bigg[ (g(z_k + \bar{h}_k v_k + h_kw_k^{(i,j)},x_k,\xi_{i,k}) - g(z_k + \bar{h}_kv_k,x_k,\xi_{i,k})\\
        &- (g(z_k + h_kw_k^{(i,j)},x_k,\xi_{i,k}) - g(z_k,x_k,\xi_{i,k})))^2 \bigg\|w_k^{(i,j)} \bigg\|^2\bigg]\\
        &\leq \frac{3}{h_k^2} \mathbb{E}_k \bigg[ (g(z_k + \bar{h}_k v_k + h_kw_k^{(i,j)},x_k,\xi_{i,k}) - g(z_k + \bar{h}_kv_k,x_k,\xi_{i,k})\\&- \scalT{\nabla g(z_k + \bar{h}_kv_k, x_k, \xi_{i,k})}{[h_k w_k^{(i,j)},0]})^2\|w_k^{(i,j)}\|^2\bigg]\\
        &+\frac{3}{h_k^2} \mathbb{E}_k \bigg[ (g(z_k + h_kw_k^{(i,j)},x_k,\xi_{i,k}) - g(z_k,x_k,\xi_{i,k})\\&- \scalT{\nabla g(z_k, x_k, \xi_{i,k})}{[h_k w_k^{(i,j)},0]})^2\|w_k^{(i,j)}\|^2\bigg]\\
        &+\frac{3}{h_k^2} \mathbb{E}_k \bigg[ (\scalT{\nabla g(z_k + \bar{h}_kv_k, x_k, \xi_{i,k}) - \nabla g(z_k,x_k,\xi_{i,k})}{[h_k w_k^{(i,j)},0]})^2\|w_k^{(i,j)}\|^2\bigg].
        \end{aligned}
    \end{equation*}
    Since $g$ is $L_{1,G}$-smooth, by the Descent Lemma \cite{polyak1987introduction},
    \begin{equation*}
        \begin{aligned}
        \mathbb{E}_k \left[\left\| g_{v,k}^{(i,j)} - g_k^{(i,j)}\right\|^2\right]&\leq  \frac{3L_{1,G}^2}{2}h_k^2 \mathbb{E}_k[\| w_k^{(i,j)} \|^6] \\
        &+\frac{3}{h_k^2} \mathbb{E}_k \bigg[ (\scalT{\nabla g(z_k + \bar{h}_kv_k, x_k, \xi_{i,k}) - \nabla g(z_k,x_k,\xi_{i,k})}{[h_k w_k^{(i,j)},0]})^2\|w_k^{(i,j)}\|^2\bigg]\\
        &=\frac{3L_{1,G}^2}{2}h_k^2 \mathbb{E}_k[\| w_k^{(i,j)} \|^6] \\
        &+3 \mathbb{E}_{\xi_{i,k}} \bigg[ (\nabla_z g(z_k + \bar{h}_kv_k, x_k, \xi_{i,k}) - \nabla_z g(z_k,x_k,\xi_{i,k}))^\top\\
        &\mathbb{E}_{w_k^{(i,k)}}[w_k^{(i,j)}w_k^{(i,j)\top}w_k^{(i,j)}w_k^{(i,j)\top}] (\nabla_z g(z_k + \bar{h_k}v_k , x_k, \xi_{i,k}) - \nabla_z g(z_k,x_k,\xi_{i,k})) \bigg].
        \end{aligned}
    \end{equation*}
    By Proposition \ref{lem:norm_gaus_vector}, we have
    \begin{equation*}
        \begin{aligned}
        \mathbb{E}_k \left[\left\| g_{v,k}^{(i,j)} - g_k^{(i,j)}\right\|^2\right]&\leq  \frac{3L_{1,G}^2}{2}(p + 6)^3h_k^2 +3 (p + 2) \mathbb{E}_{\xi_{i,k}} \bigg[\|\nabla_z g(z_k + \bar{h}_kv_k, x_k, \xi_{i,k}) - \nabla_z g(z_k,x_k,\xi_{i,k}) \|^2 \bigg].
        \end{aligned}
    \end{equation*}
    Again by $L_{1,G}$-smoothness of $g$, we have
    \begin{equation*}
        \begin{aligned}
        \mathbb{E}_k \left[\left\| g_{v,k}^{(i,j)} - g_k^{(i,j)}\right\|^2\right]&\leq  \frac{3L_{1,G}^2}{2}(p + 6)^3h_k^2 +3 (p + 2) L_{1,G}^2\bar{h}_k^2 \|v_k\|^2.
        \end{aligned}
    \end{equation*}
    Adding and subtracting $v^*(x_k)$, by Lemma \ref{lem:bound_norm_v}, we get
    \begin{equation*}
        \begin{aligned}
        \mathbb{E}_k \left[\left\| g_{v,k}^{(i,j)} - g_k^{(i,j)}\right\|^2\right]&\leq  \frac{3L_{1,G}^2}{2}(p + 6)^3h_k^2 +6 (p + 2) L_{1,G}^2\bar{h}_k^2 \|v_k - v^*(x_k)\|^2\\
        &+ \frac{6 (p + 2) L_{1,G}^2 L_{0,F}^2}{\mu_G^2}\bar{h}_k^2.
        \end{aligned}
    \end{equation*}
    Using this bound in eq. \eqref{eqn:bound_grad_diff_1}, we get the first claim
    \begin{equation*}
        \begin{aligned}
            \mathbb{E}_k \left[\left\| \hat{\nabla}_zg_k(z_k + \bar{h}_kv_k,x_k) - \hat{\nabla}_z g_k(z_k,x_k) \right\|^2 \right] &\leq%
            4\left(1 + \frac{3(p + 2)}{b_1 \ell_1} \right)L_{1,G}^2 \bar{h}_k^2 \|v_k - v^*(x_k)\|^2\\
            &+4\left(1 + \frac{3(p + 2)}{b_1 \ell_1} \right)\frac{L_{1,G}^2 L_{0,F}^2}{\mu_G^2}\bar{h}_k^2 +\frac{3L_{1,G}^2 (p + 6)^3}{b_1 \ell_1}h_k^2.
        \end{aligned}
    \end{equation*} %
    The proof of the second inequality follows the same line.
\end{proof}
\noindent In the next lemma, we provide bounds on the norm of the search directions computed in Algorithm \ref{alg:hfzoba}. 

\begin{lemma}[Bound on search directions]\label{lem:hfzoba_search_directions}
    Let Assumptions \ref{asm:F_smooth},\ref{asm:bc_condition} and \ref{asm:G_asm} hold. Let $\hat{D}_z^k, \hat{D}_v^k$ and $\hat{D}_x^k$ be the search directions defined in eq. \eqref{eqn:hfzoba_search_directions}. Let $z_k,v_k$ and $x_k$ be the sequences generated by Algorithm \ref{alg:hfzoba}. Then, for every $k \in \mathbb{N}$,
    \begin{equation*}
        \begin{aligned}
            \mathbb{E}_k \left[\| \hat{D}_z^k \|^2 \right]  &\leq 2 \left(2 + \frac{p + 2}{b_1 \ell_1} \right) L_{1,G}^2 \| z_k - z^*(x_k) \|^2\\
            &+ \left( \frac{(p + 6)^3}{2 b_1 \ell_1} + \left( \frac{3}{b_1} + 1 \right) (p + 3)^3 \right)L_{1,G}^2 h_{k}^2 + 2 \left(3 +\frac{p + 2}{\ell_1} \right)\frac{\sigma_{1,G}^2}{b_1}.
        \end{aligned}
    \end{equation*}
    Let $C_1 = 4\left(1 + \frac{3(p + 2)}{b_1 \ell_1} \right)$, $B_1 = 2 \left(1 + \frac{p + 2}{b_2 \ell_2} \right)$, $B_2 = \left( \frac{(p + 6)^3}{2 b_2 \ell_2} + \left(\frac{3}{b_2} + 1 \right)(p + 3)^{3}  \right) L_{1,F}^2$ and $B_3 = 2\left(\frac{p + 2}{\ell_2} + 3 \right ) \frac{\sigma_{1,F}^2}{b_2}$. Then for every $k \in \mathbb{N}$,
    \begin{equation*}
        \begin{aligned}
            \mathbb{E}_k\left[\left\| \hat{D}_v^k \right\|^2\right] &\leq 2 C_1L_{1,G}^2 \|v_k - v^*(x_k)\|^2 +2C_1\frac{L_{1,G}^2 L_{0,F}^2}{\mu_G^2}\\
            &+\frac{6L_{1,G}^2 (p + 6)^3}{b_1 \ell_1}\frac{h_k^2}{\bar{h}_k^2}+ 2 B_1 L_{0,F}^2 +2 B_2 h_{k}^2 + 2B_3.
        \end{aligned}
    \end{equation*}
    Moreover, let $C_2 = 4\left(1 + \frac{3(d + 2)}{b_1 \ell_1} \right)$, $B_4 = 2 \left(1 + \frac{d + 2}{b_2 \ell_2} \right)$, $B_5 = \left( \frac{(d + 6)^3}{2 b_2 \ell_2} + \left(\frac{3}{b_2} + 1 \right)(d + 3)^{3}  \right) L_{1,F}^2$ and $B_6 = 2\left(\frac{d + 2}{b_2 \ell_2} +\frac{3}{b_2} \right )\sigma_{1,F}^2$. Then,
    \begin{equation*}
        \begin{aligned}
            \mathbb{E}_k\left[\left\| \hat{D}_x^k \right\|^2\right] &\leq 2 C_2L_{1,G}^2 \|v_k - v^*(x_k)\|^2 +2C_2\frac{L_{1,G}^2 L_{0,F}^2}{\mu_G^2}\\
            &+\frac{6L_{1,G}^2 (d + 6)^3}{b_1 \ell_1}\frac{h_k^2}{\bar{h}_k^2}+ 2 B_4 L_{0,F}^2 +2 B_5 h_{k}^2 + 2B_6.
        \end{aligned}
    \end{equation*}
\end{lemma}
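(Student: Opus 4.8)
The plan is to treat the three search directions separately, since each reduces to bounds already proved in Lemma \ref{lem:approx_error} and Lemma \ref{lem:hfzoba_sur_diff}, with the only genuinely new element being the bookkeeping of the $\bar{h}_k$ scaling inside the first-order Hessian--vector surrogates.

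For the first claim, $\hat{D}_z^k = \hat{\nabla}_z g_k(z_k,x_k)$ is exactly the mini-batch forward-difference gradient surrogate of the inner objective. I would invoke the gradient approximation error bound of Lemma \ref{lem:approx_error}, whose argument applies verbatim to the forward-difference surrogate of $g$, producing a leading term proportional to $\|\nabla_z G(z_k,x_k)\|^2$ together with the finite-difference bias and stochastic variance terms. Since $\nabla_z G(z^*(x_k),x_k)=0$ and $G$ is $L_{1,G}$-smooth by Assumption \ref{asm:G_asm}, I bound $\|\nabla_z G(z_k,x_k)\|^2 \leq L_{1,G}^2\|z_k-z^*(x_k)\|^2$; the remaining terms carry over directly and the stated constant $2(2+\tfrac{p+2}{b_1\ell_1})$ is a valid upper bound for the resulting coefficient.

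For the second and third claims I exploit the additive structure $\hat{D}_v^k = H_{zz}^k + \hat{\nabla}_z f_k$ and $\hat{D}_x^k = H_{xz}^k + \hat{\nabla}_x f_k$. First I split with $\|a+b\|^2 \leq 2\|a\|^2+2\|b\|^2$, so the Hessian--vector surrogate and the outer gradient surrogate are handled independently. For the $H$ terms I use that $H_{zz}^k = \bar{h}_k^{-1}(\hat{\nabla}_z g_k(z_k+\bar{h}_k v_k,x_k)-\hat{\nabla}_z g_k(z_k,x_k))$, whence $\mathbb{E}_k[\|H_{zz}^k\|^2] = \bar{h}_k^{-2}\,\mathbb{E}_k[\|\hat{\nabla}_z g_k(z_k+\bar{h}_k v_k,x_k)-\hat{\nabla}_z g_k(z_k,x_k)\|^2]$, and I apply Lemma \ref{lem:hfzoba_sur_diff}. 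The key observation is that the first two terms of that bound carry an explicit $\bar{h}_k^2$ factor cancelled by the $\bar{h}_k^{-2}$, leaving the $\|v_k-v^*(x_k)\|^2$ coefficient and the constant term independent of $\bar{h}_k$, while the bias term becomes the irreducible $h_k^2/\bar{h}_k^2$. The identical argument with the second inequality of Lemma \ref{lem:hfzoba_sur_diff} handles $H_{xz}^k$. For the $\hat{\nabla}_z f_k$ and $\hat{\nabla}_x f_k$ terms I apply the corresponding bounds of Lemma \ref{lem:approx_error} and use that $F$ is $L_{0,F}$-Lipschitz (Assumption \ref{asm:F_smooth}) to replace $\|\nabla_z F(z_k,x_k)\|^2$ and $\|\nabla_x F(z_k,x_k)\|^2$ by $L_{0,F}^2$. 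Collecting the factors of two from the splitting and reading off the constants $C_1,C_2,B_1,\dots,B_6$ yields the three stated bounds.

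The main obstacle is not any single inequality: the heavy lifting — the variance decomposition across directions and samples, the sixth-moment Gaussian estimates, and the bound on the difference of two forward-difference surrogates — is already absorbed into Lemmas \ref{lem:approx_error} and \ref{lem:hfzoba_sur_diff}. The one point requiring care is the $\bar{h}_k$ accounting: one must check that dividing by $\bar{h}_k$ leaves the dominant $\|v_k-v^*(x_k)\|^2$ coefficient free of $\bar{h}_k$, so that the later descent analysis can tune $\bar{h}_k$ independently, while the residual $h_k^2/\bar{h}_k^2$ term is retained as the unavoidable signature of approximating a gradient by finite differences inside a further finite-difference Hessian--vector product.
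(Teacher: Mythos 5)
Your proposal is correct and follows essentially the same route as the paper's proof: the same $\|a+b\|^2 \le 2\|a\|^2 + 2\|b\|^2$ splitting, the same use of Lemma \ref{lem:approx_error} (with $\nabla_z G(z^*(x_k),x_k)=0$ and $L_{1,G}$-smoothness for the first claim, and $L_{0,F}$-Lipschitz continuity of $F$ for the outer-gradient terms) and of Lemma \ref{lem:hfzoba_sur_diff} for the Hessian--vector surrogates, where the $\bar{h}_k^{-2}$ factor cancels the $\bar{h}_k^2$ in the first two terms of that bound exactly as you describe, leaving the irreducible $h_k^2/\bar{h}_k^2$ bias. Your remark that the forward-difference coefficient for $\hat{D}_z^k$ is dominated by the stated constant $2\left(2+\tfrac{p+2}{b_1\ell_1}\right)$ is likewise consistent with how the paper treats the first inequality.
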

\begin{proof}
    The proof of the first inequality follows the same line of the proof of eq. \eqref{eqn:bound_norm_dz}. We focus on proving the second inequality. By $\|a + b\|^2 \leq 2\|a\|^2 + 2\|b\|^2$, 
    \begin{equation*}
        \begin{aligned}
            \mathbb{E}_k\left[\left\| \hat{D}_v^k \right\|^2\right] &= \mathbb{E}_k\left[\left\| \frac{1}{\bar{h}_k}\left( \hat{\nabla}_zg_k(z_k + \bar{h}_k v_k, x_k) - \hat{\nabla}_zg_k(z_k, x_k) \right) + \hat{\nabla}_z f_k(z_k,x_k) \right\|^2\right]\\
            &\leq 2 \underbrace{\mathbb{E}_k\left[\left\| \frac{1}{\bar{h}_k}\left( \hat{\nabla}_zg_k(z_k + \bar{h}_k v_k, x_k) - \hat{\nabla}_zg_k(z_k, x_k) \right)  \right\|^2\right]}_{A}+ 2 \underbrace{\mathbb{E}_k \left[ \left\| \hat{\nabla}_z f_k(z_k,x_k) \right\|^2 \right]}_{B}.
        \end{aligned}
    \end{equation*}
    We bound the $B$ term by Lemma \ref{lem:approx_error},
    \begin{equation*}
        \begin{aligned}
           \mathbb{E}_k \left[ \left\| \hat{\nabla}_z f_k(z_k,x_k) \right\|^2 \right] &\leq \underbrace{2 \left(1 + \frac{p + 2}{b_2 \ell_2} \right)}_{B_1} \left\|\nabla_z F(z_k, x_k) \right\|^2\\
        & +\underbrace{\left( \frac{(p + 6)^3}{2 b_2 \ell_2} + \left(\frac{3}{b_2} + 1 \right)(p + 3)^{3}  \right) L_{1,F}^2}_{B_2} h_{k}^2 + \underbrace{2\left(\frac{p + 2}{\ell_2} + 3 \right ) \frac{\sigma_{1,F}^2}{b_2}}_{B_3}.
        \end{aligned}
    \end{equation*}
    We bound the $A$ term by Lemma \ref{lem:hfzoba_sur_diff}. Let $C_1 = 4\left(1 + \frac{3(p + 2)}{b_1 \ell_1} \right)$. Then,
    \begin{equation*}
        \begin{aligned}
            \frac{2}{\bar{h}_k^2} \mathbb{E}_k [\| \hat{\nabla}_zg_k(z_k + \bar{h}_k v_k, x_k) - \hat{\nabla}_zg_k(z_k, x_k) \|^2]&\leq C_1L_{1,G}^2 \|v_k - v^*(x_k)\|^2 +C_1\frac{L_{1,G}^2 L_{0,F}^2}{\mu_G^2}\\
            &+\frac{3L_{1,G}^2 (p + 6)^3}{b_1 \ell_1}\frac{h_k^2}{\bar{h}_k^2}.
        \end{aligned}
    \end{equation*}
    Thus, by these bounds, we have
    \begin{equation*}
        \begin{aligned}
            \mathbb{E}_k\left[\left\| \hat{D}_v^k \right\|^2\right] &\leq 2 C_1L_{1,G}^2 \|v_k - v^*(x_k)\|^2 +2C_1\frac{L_{1,G}^2 L_{0,F}^2}{\mu_G^2}\\
            &+\frac{6L_{1,G}^2 (p + 6)^3}{b_1 \ell_1}\frac{h_k^2}{\bar{h}_k^2}+ 2 B_1 \left\|\nabla_z F(z_k, x_k) \right\|^2\\
        & +2 B_2 h_{k}^2 + 2B_3.
        \end{aligned}
    \end{equation*}
    Since $F$ is $L_{0,F}$-Lipschitz continuous, we get the first claim
    \begin{equation*}
        \begin{aligned}
            \mathbb{E}_k\left[\left\| \hat{D}_v^k \right\|^2\right] &\leq 2 C_1L_{1,G}^2 \|v_k - v^*(x_k)\|^2 +2C_1\frac{L_{1,G}^2 L_{0,F}^2}{\mu_G^2}\\
            &+\frac{6L_{1,G}^2 (p + 6)^3}{b_1 \ell_1}\frac{h_k^2}{\bar{h}_k^2}+ 2 B_1 L_{0,F}^2 +2 B_2 h_{k}^2 + 2B_3.
        \end{aligned}
    \end{equation*}
    The proof of the last inequality follows the same line.
        \begin{equation*}
        \begin{aligned}
            \mathbb{E}_k\left[\left\| \hat{D}_x^k \right\|^2\right] &= \mathbb{E}_k\left[\left\| \frac{1}{\bar{h}_k}\left( \hat{\nabla}_xg_k(z_k + \bar{h}_k v_k, x_k) - \hat{\nabla}_xg_k(z_k, x_k) \right) + \hat{\nabla}_x f_k(z_k,x_k) \right\|^2\right]\\
            &\leq 2 \underbrace{\mathbb{E}_k\left[\left\| \frac{1}{\bar{h}_k}\left( \hat{\nabla}_xg_k(z_k + \bar{h}_k v_k, x_k) - \hat{\nabla}_xg_k(z_k, x_k) \right)  \right\|^2\right]}_{A}+ 2 \underbrace{\mathbb{E}_k \left[ \left\| \hat{\nabla}_x f_k(z_k,x_k) \right\|^2 \right]}_{B}.
        \end{aligned}
    \end{equation*}
    We bound the $B$ term by Lemma \ref{lem:approx_error},
    \begin{equation*}
        \begin{aligned}
           \mathbb{E}_k \left[ \left\| \hat{\nabla}_x f_k(z_k,x_k) \right\|^2 \right] &\leq 
           \underbrace{2 \left(1 + \frac{d + 2}{b_2 \ell_2} \right)}_{B_4} \left\|\nabla_x F(z_k, x_k) \right\|^2\\
        & +\underbrace{\left( \frac{(d + 6)^3}{2 b_2 \ell_2} + \left(\frac{3}{b_2} + 1 \right)(d + 3)^{3}  \right) L_{1,F}^2}_{B_5} h_{k}^2\\
        &+ \underbrace{2\left(\frac{d + 2}{b_2 \ell_2} +\frac{3}{b_2} \right )\sigma_{1,F}^2}_{B_6}.
        \end{aligned}
    \end{equation*}
    We bound the $A$ term by Lemma \ref{lem:hfzoba_sur_diff}. Let $C_2 = 4\left(1 + \frac{3(d + 2)}{b_1 \ell_1} \right)$. Then,
    \begin{equation*}
        \begin{aligned}
            \frac{2}{\bar{h}_k^2} \mathbb{E}_k [\| \hat{\nabla}_xg_k(z_k + \bar{h}_k v_k, x_k) - \hat{\nabla}_xg_k(z_k, x_k) \|^2]&\leq C_2L_{1,G}^2 \|v_k - v^*(x_k)\|^2 +C_2\frac{L_{1,G}^2 L_{0,F}^2}{\mu_G^2}\\
            &+\frac{3L_{1,G}^2 (d + 6)^3}{b_1 \ell_1}\frac{h_k^2}{\bar{h}_k^2}.
        \end{aligned}
    \end{equation*}
    Thus, by these bounds, we have
    \begin{equation*}
        \begin{aligned}
            \mathbb{E}_k\left[\left\| \hat{D}_x^k \right\|^2\right] &\leq 2 C_2L_{1,G}^2 \|v_k - v^*(x_k)\|^2 +2C_2\frac{L_{1,G}^2 L_{0,F}^2}{\mu_G^2}\\
            &+\frac{6L_{1,G}^2 (d + 6)^3}{b_1 \ell_1}\frac{h_k^2}{\bar{h}_k^2}+ 2 B_4 \left\|\nabla_z F(z_k, x_k) \right\|^2\\
        & +2 B_5 h_{k}^2 + 2B_6.
        \end{aligned}
    \end{equation*}
    Since $F$ is $L_{0,F}$-Lipschitz continuous, we get the first claim
    \begin{equation*}
        \begin{aligned}
            \mathbb{E}_k\left[\left\| \hat{D}_x^k \right\|^2\right] &\leq 2 C_2L_{1,G}^2 \|v_k - v^*(x_k)\|^2 +2C_2\frac{L_{1,G}^2 L_{0,F}^2}{\mu_G^2}\\
            &+\frac{6L_{1,G}^2 (d + 6)^3}{b_1 \ell_1}\frac{h_k^2}{\bar{h}_k^2}+ 2 B_4 L_{0,F}^2 +2 B_5 h_{k}^2 + 2B_6.
        \end{aligned}
    \end{equation*}
\end{proof}
\noindent In the next lemma, we bound the error of the sequence $(v_k)_{k \in \mathbb{N}}$ generated by Algorithm~\ref{alg:hfzoba} relative to the corresponding optimal solutions $v^*(x_k)$ at each iterate $x_k$. We do not need to bound the error of the sequence $(z_k)_{k \in \mathbb{N}}$, since it differs from that of ZOBA only in the construction of the gradient estimator (as no Hessian is involved in the $z_k$ update of ZOBA). Indeed, in Algorithm~\ref{alg:hfzoba} this surrogate is constructed using forward finite differences instead of central differences. As shown in Lemma~\ref{lem:approx_error}, this change yields the same bound on the surrogate norm, analogously to the bound obtained for the gradient surrogate of the outer objective $f$ in ZOBA.
\begin{lemma}[Bound on $v_k$ iteration]\label{lem:hfzoba_vbound}
  Let Assumptions \ref{asm:F_smooth} and \ref{asm:G_asm} hold. Let $(v_k)_{k \in \mathbb{N}}$ be the sequence generated in Algorithm \ref{alg:hfzoba} and let $\bar{h}_k = \begin{cases}
    \frac{\hat{h}_k}{\|v_k\|} & \text{if} \quad \|v_k\| \neq 0\\
    \hat{h}_k
\end{cases}$ with $\hat{h}_k \leq \frac{\mu_G}{\sqrt{8} L_{2,G}}$. Let $\omega_{v, 1} = \left(L_{1,F} + \frac{L_{0,F} L_{2,G}}{\mu_G} \right)$, $\omega_{v,2} = \frac{16 L_{2,G}^2}{9} \left( (d + 5)^{5/2} + (d + 3)^{3/2} \right)^2$ and $\omega_k^v = \left(1 + \frac{2}{\mu_G \rho_k} \right)$. Let $h_k \leq \frac{\mu_G}{\sqrt{16 \omega_{v,2}}}$. Then, for every $k \in \mathbb{N}$,
  \begin{equation*}
    \begin{aligned}
      \mathbb{E}_k \left[ \| v_{k + 1} - v^*(x_{k + 1}) \|^2 \right] &\leq \left(1 - \frac{\mu_G}{2} \rho_k \right) \| v_k - v^*(x_k) \|^2 +4\frac{\omega_{v,1}^2}{\mu_G} \rho_k \|z_k - z^*(x_k)\|^2\\
        &+ 2\rho_k^2 \mathbb{E}_k[\| \hat{D}_v^k\|^2] +2 \omega_k^v L_*^2 \gamma_k^2 \mathbb{E}_k[\| \hat{D}_x^k\|^2]\\
        &+2 \frac{L_{1,G}^2(d + 3)^2}{\mu_G} \rho_k h_k^2 + 4\frac{\omega_{v,2} L_{0,F}^2}{\mu_G^3} \rho_k h_k^2\\
        &+  \frac{8}{\mu_G} \frac{L_{2,G}^2}{4} \frac{L_{0,F}^2}{\mu_G^2} \rho_k\hat{h}_k^2.
    \end{aligned}
  \end{equation*}
\end{lemma}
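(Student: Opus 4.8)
The plan is to mirror, almost verbatim, the proof of the ZOBA $v$-bound (eq.~\eqref{eqn:bound_v_smooth} in Lemma~\ref{lem:smooth_bound_zv}), changing only the step that controls the \emph{expected} search direction, since in Algorithm~\ref{alg:hfzoba} the Hessian--vector product is replaced by a first-order finite difference of the zeroth-order gradient surrogates. First I would add and subtract $v^*(x_k)$ and split
\begin{equation*}
\|v_{k+1}-v^*(x_{k+1})\|^2 = \underbrace{\|v_{k+1}-v^*(x_k)\|^2}_{A}+\underbrace{\|v^*(x_{k+1})-v^*(x_k)\|^2}_{B}\underbrace{-2\scalT{v_{k+1}-v^*(x_k)}{v^*(x_{k+1})-v^*(x_k)}}_{C}.
\end{equation*}
Terms $B$ and $C$ are handled exactly as in that proof: $B\le L_*^2\gamma_k^2\|\hat D_x^k\|^2$ by $L_*$-Lipschitzness of $v^*$ (Lemma~\ref{lem:smt_setting_smooth_value}) and the update \eqref{eqn:x_update}, while $C$ is bounded by Cauchy--Schwarz and Young's inequality (parameters $\nu_3=1$, $\nu_4=\mu_G\rho_k/4$), producing a $\tfrac{\mu_G}{4}\rho_k\|v_k-v^*(x_k)\|^2$ contraction slack together with the $2\rho_k^2\mathbb{E}_k[\|\hat D_v^k\|^2]$ and $2\omega_k^v L_*^2\gamma_k^2\mathbb{E}_k[\|\hat D_x^k\|^2]$ contributions appearing in the claim.

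The core is term $A$. Expanding \eqref{eqn:v_update} and taking $\mathbb{E}_k$, the only nontrivial piece is $-2\rho_k\scalT{\mathbb{E}_k[\hat D_v^k]}{v_k-v^*(x_k)}$. Writing $\hat D_v^{k,\mathrm{s}}:=\nabla_{zz}^2G_{h_k}(z_k,x_k)v_k+\nabla_zF_{h_k}(z_k,x_k)$ (the \emph{smoothed} direction, analyzed as in ZOBA) and $\bar D_v^k:=\nabla_{zz}^2G(z_k,x_k)v_k+\nabla_zF(z_k,x_k)$, I would use $\mathbb{E}_k[\hat\nabla_z f_k]=\nabla_zF_{h_k}$ together with $\mathbb{E}_k[H_{zz}^k]=\tfrac{1}{\bar h_k}\big(\nabla_zG_{h_k}(z_k+\bar h_kv_k,x_k)-\nabla_zG_{h_k}(z_k,x_k)\big)$ (both expectations over the Gaussian directions and the mini-batch, $\bar h_k$ being constant under the conditioning) to decompose
\begin{equation*}
-2\rho_k\scalT{\mathbb{E}_k[\hat D_v^k]}{v_k-v^*(x_k)} = -2\rho_k\scalT{\bar D_v^k}{v_k-v^*(x_k)} -2\rho_k\scalT{\hat D_v^{k,\mathrm{s}}-\bar D_v^k}{v_k-v^*(x_k)} -2\rho_k\scalT{E_1^k}{v_k-v^*(x_k)},
\end{equation*}
where $E_1^k:=\mathbb{E}_k[H_{zz}^k]-\nabla_{zz}^2G_{h_k}(z_k,x_k)v_k$ is the finite-difference Hessian--vector bias. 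The first two summands reproduce the ZOBA bound: strong convexity of $G$ with $L_{2,G}$- and $L_{1,F}$-regularity (Young's $\nu_1=\mu_G/4$) gives $-(2\mu_G-\nu_1)\rho_k\|v_k-v^*\|^2+\tfrac{\omega_{v,1}^2}{\nu_1}\rho_k\|z_k-z^*\|^2$, and bounding $\|\hat D_v^{k,\mathrm{s}}-\bar D_v^k\|^2$ via Lemma~\ref{lem:smoothing_error} (Young's $\nu_2=\mu_G/4$ and the hypothesis $h_k\le\mu_G/\sqrt{16\omega_{v,2}}$) yields the $4\tfrac{\omega_{v,2}L_{0,F}^2}{\mu_G^3}\rho_k h_k^2$ and $L_{1,G}^2$-type $h_k^2$ terms, each eating $\tfrac{\mu_G}{4}\rho_k$ of contraction.

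The genuinely new term is $-2\rho_k\scalT{E_1^k}{v_k-v^*(x_k)}$. Here I would apply Lemma~\ref{lem:biased_hess_approx} to the ($z$-)smoothed inner objective $z\mapsto G_{h_k}(z,x_k)$, obtaining $\|E_1^k\|\le\tfrac{L_{2,G}}{2}\bar h_k\|v_k\|^2$; the adaptive choice $\bar h_k=\hat h_k/\|v_k\|$ (for $v_k\neq0$; when $v_k=0$ the bias vanishes) turns this into $\|E_1^k\|\le\tfrac{L_{2,G}}{2}\hat h_k\|v_k\|$. Cauchy--Schwarz and Young's inequality with $\nu_5=\mu_G/4$, then $\|v_k\|^2\le 2\|v_k-v^*(x_k)\|^2+2\|v^*(x_k)\|^2$ and $\|v^*(x_k)\|\le L_{0,F}/\mu_G$ (Lemma~\ref{lem:bound_norm_v}), produce a deficit $\tfrac{\mu_G}{4}\rho_k+\tfrac{2L_{2,G}^2\hat h_k^2}{\mu_G}\rho_k$ on $\|v_k-v^*\|^2$ and the constant bias term $\tfrac{8}{\mu_G}\tfrac{L_{2,G}^2}{4}\tfrac{L_{0,F}^2}{\mu_G^2}\rho_k\hat h_k^2$; invoking $\hat h_k\le\mu_G/(\sqrt8 L_{2,G})$ bounds the former by $\tfrac{\mu_G}{2}\rho_k$. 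Adding the total $A$-deficit ($\tfrac{5\mu_G}{4}\rho_k$) and the $\tfrac{\mu_G}{4}\rho_k$ slack of $C$ against $-2\mu_G\rho_k$ yields the stated factor $(1-\tfrac{\mu_G}{2}\rho_k)$.

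I expect the main obstacle to be the clean treatment of $E_1^k$: it requires that the Gaussian smoothing $G_{h_k}$ inherit the $L_{2,G}$-Lipschitz Hessian of $G$ so that Lemma~\ref{lem:biased_hess_approx} can be applied to the smoothed objective rather than to $G$, and it requires that the data-dependent radius $\bar h_k$ be measurable with respect to the conditioning $\mathbb{E}_k$ so that $\mathbb{E}_k[H_{zz}^k]$ has the asserted difference-quotient form. The remaining difficulty is purely the bookkeeping of the three Young's parameters and the two stepsize restrictions $\hat h_k\le\mu_G/(\sqrt8 L_{2,G})$ and $h_k\le\mu_G/\sqrt{16\omega_{v,2}}$, which are forced precisely by matching the target $\tfrac{\mu_G}{2}\rho_k$ contraction.
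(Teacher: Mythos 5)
Your proposal is correct and follows essentially the same route as the paper's proof: the same $A$/$B$/$C$ decomposition with identical handling of $B$ and $C$, the same splitting of the expected direction into the true direction, the Gaussian-smoothing error, and the finite-difference Hessian--vector bias $E_1^k$, the same application of Lemma~\ref{lem:biased_hess_approx} with the adaptive radius $\bar{h}_k = \hat{h}_k/\|v_k\|$ to turn the $\|v_k\|^4$ bias into $\hat{h}_k^2\|v_k\|^2$, and the same Young parameters and stepsize restrictions yielding the $(1-\tfrac{\mu_G}{2}\rho_k)$ contraction. If anything, you are more explicit than the paper on the one delicate point (that $G_{h_k}$ must inherit the $L_{2,G}$-Lipschitz Hessian of $G$ for Lemma~\ref{lem:biased_hess_approx} to apply to the smoothed objective), which the paper glosses over with slightly inconsistent notation between $G_{h_k}$ and $G$ in its definition of $\Delta_k$.
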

\begin{proof}
    The proof follows the same line of Lemma \ref{lem:smooth_bound_zv}. Adding and subtracting $v^*(x_k)$, we have
    \begin{equation}\label{eqn:hfzoba_bound_v}
        \begin{aligned}
            \| v_{k + 1} - v^*(x_{k + 1}) \|^2 &=\underbrace{\| v_{k + 1} - v^*(x_k)\|^2}_{A} + \underbrace{\| v^*(x_{k + 1}) - v^*(x_k) \|^2}_{B} \underbrace{-2\scalT{v_{k + 1} - v^*(x_k)}{v^*(x_{k + 1}) - v^*(x_k)}}_{C}.
        \end{aligned}
    \end{equation}
    We start by bounding $A$.  By Algorithm \ref{alg:hfzoba} and taking the conditional expectation
    \begin{equation*}
        \begin{aligned}
        \mathbb{E}_k[\| v_{k + 1} - v^*(x_k)\|^2] &= \| v_k - v^*(x_k) \|^2 + \rho_k^2 \mathbb{E}_k[\| \hat{D}_v^k\|^2]\\
        &- 2\rho_k \scalT{\frac{1}{\bar{h}_k} ( \nabla_z G_{h_k}(z_k + \bar{h}_k v_k, x_k) - \nabla_z G(z_k,x_k)) + \nabla_z F_{h_k}(z_k,x_k)}{v_k - v^*(x_k)}.            
        \end{aligned}
    \end{equation*}
    Let $\tilde{D}_v^k = \nabla^2_{zz}G_{h_k}(z_k,x_k)v_k + \nabla_z F_{h_k}(z_k,x_k)$. Adding and subtracting $\nabla_{zz}G_{h_k}(z_k,x_k)v_k$, we get
        \begin{equation*}
        \begin{aligned}
        \mathbb{E}_k[\| v_{k + 1} - v^*(x_k)\|^2] &= \| v_k - v^*(x_k) \|^2 + \rho_k^2 \mathbb{E}_k[\| \hat{D}_v^k\|^2]\\
        &- 2\rho_k \scalT{\tilde{D}_v^k}{v_k - v^*(x_k)}\\
        &- 2\rho_k \scalT{\frac{1}{\bar{h}_k} ( \nabla_z G_{h_k}(z_k + \bar{h}_k v_k, x_k) - \nabla_z G(z_k,x_k)) - \nabla_{zz}^2 G(z_k, x_k) v_k}{v_k - v^*(x_k)}.
        \end{aligned}
    \end{equation*}
    Let $\omega_{v, 1} = \left(L_{1,F} + \frac{L_{0,F} L_{2,G}}{\mu_G} \right)$, $\omega_{v,2} = \frac{16 L_{2,G}^2}{9} \left( (d + 5)^{5/2} + (d + 3)^{3/2} \right)^2$. For $h_k^2 \leq \frac{\mu_G^2}{16 \omega_{v,2}}$, following the same line of the proof of Lemma \ref{lem:smooth_bound_zv},  we get
    \begin{equation*}
        \begin{aligned}
        \mathbb{E}_k[\| v_{k + 1} - v^*(x_k)\|^2] &= \left(1 - \frac{5\mu_G}{4} \rho_k \right) \| v_k - v^*(x_k) \|^2\\
        &+ \rho_k^2 \mathbb{E}_k[\| \hat{D}_v^k\|^2] +4\frac{\omega_{v,1}^2}{\mu_G} \rho_k \|z_k - z^*(x_k)\|^2\\
        &+2 \frac{L_{1,G}^2(d + 3)^2}{\mu_G} \rho_k h_k^2 + 4\frac{\omega_{v,2} L_{0,F}^2}{\mu_G^3} \rho_k h_k^2\\
        &- 2\rho_k \scalT{\frac{1}{\bar{h}_k} ( \nabla_z G_{h_k}(z_k + \bar{h}_k v_k, x_k) - \nabla_z G(z_k,x_k)) - \nabla_{zz}^2 G(z_k, x_k) v_k}{v_k - v^*(x_k)}.
        \end{aligned}
    \end{equation*}
    Let 
    \begin{equation*}
        \Delta_k = \frac{1}{\bar{h}_k} ( \nabla_z G_{h_k}(z_k + \bar{h}_k v_k, x_k) - \nabla_z G(z_k,x_k)) - \nabla_{zz}^2 G(z_k, x_k) v_k.
    \end{equation*}
    By Cauchy-Schwartz and Young's inequality with parameter $\frac{\mu_G}{4}$, we have
    \begin{equation*}
        \begin{aligned}
- 2\rho_k \scalT{\Delta_k}{v_k - v^*(x_k)} &\leq \frac{4\rho_k}{\mu_G} \| \Delta_k \|^2 + \frac{\mu_G}{4} \rho_k \|v_k - v^*(x_k)\|^2.
        \end{aligned}
    \end{equation*}
    By Lemma \ref{lem:biased_hess_approx} and  by the choice, 
    \begin{equation*}
        \begin{aligned}
            \bar{h}_k = \begin{cases}
                \frac{\hat{h}_k}{\|v_k\|} & \|v_k\| \neq 0\\
                \hat{h}_k
            \end{cases},
        \end{aligned}
    \end{equation*}
    we have
    \begin{equation*}
        \begin{aligned}
- 2\rho_k \scalT{\Delta_k}{v_k - v^*(x_k)} &\leq %
\frac{4\rho_k}{\mu_G} \frac{L_{2,G}^2}{4} \bar{h}_k^2\|v_k\|^4 + \frac{\mu_G}{4} \rho_k \|v_k - v^*(x_k)\|^2\\
&= \frac{4\rho_k}{\mu_G} \frac{L_{2,G}^2}{4} \hat{h}_k^2\|v_k\|^2 + \frac{\mu_G}{4} \rho_k \|v_k - v^*(x_k)\|^2.
        \end{aligned}
    \end{equation*}
    Adding and subtracting $v^*(x_k)$ and by Lemma \ref{lem:bound_norm_v},
    \begin{equation*}
        \begin{aligned}
- 2\rho_k \scalT{\Delta_k}{v_k - v^*(x_k)} &\leq  \frac{8\rho_k}{\mu_G} \frac{L_{2,G}^2}{4} \hat{h}_k^2\|v_k -v^*(x_k)\|^2 + \frac{\mu_G}{4} \rho_k \|v_k - v^*(x_k)\|^2 + \frac{8\rho_k}{\mu_G} \frac{L_{2,G}^2}{4} \hat{h}_k^2\|v^*(x_k)\|^2\\
&\leq  \left(\frac{8}{\mu_G} \frac{L_{2,G}^2}{4} \hat{h}_k^2 + \frac{\mu_G}{4}\right) \rho_k\|v_k -v^*(x_k)\|^2 +  \frac{8\rho_k}{\mu_G} \frac{L_{2,G}^2}{4} \frac{L_{0,F}^2}{\mu_G^2} \hat{h}_k^2.%
        \end{aligned}
    \end{equation*}
    Since $\hat{h}_k \leq \frac{\mu_G}{\sqrt{8}L_{2,G}}= \sqrt{\frac{\mu_G}{4} \frac{\mu_G}{8} \frac{4}{L_{2,G}^2}}$,
    \begin{equation*}
        \begin{aligned}
- 2\rho_k \scalT{\Delta_k}{v_k - v^*(x_k)} &\leq   \frac{\mu_G}{2} \rho_k\|v_k -v^*(x_k)\|^2 +  \frac{8}{\mu_G} \frac{L_{2,G}^2}{4} \frac{L_{0,F}^2}{\mu_G^2} \rho_k\hat{h}_k^2.%
        \end{aligned}
    \end{equation*}
    Thus, we conclude the bound of term $A$ obtaining
    \begin{equation*}
        \begin{aligned}
        \mathbb{E}_k[\| v_{k + 1} - v^*(x_k)\|^2] &\leq \left(1 - \frac{5\mu_G}{4} \rho_k + \frac{\mu_G}{2} \rho_k \right) \| v_k - v^*(x_k) \|^2\\
        &+ \rho_k^2 \mathbb{E}_k[\| \hat{D}_v^k\|^2] +4\frac{\omega_{v,1}^2}{\mu_G} \rho_k \|z_k - z^*(x_k)\|^2\\
        &+2 \frac{L_{1,G}^2(d + 3)^2}{\mu_G} \rho_k h_k^2 + 4\frac{\omega_{v,2} L_{0,F}^2}{\mu_G^3} \rho_k h_k^2\\
        &+  \frac{8}{\mu_G} \frac{L_{2,G}^2}{4} \frac{L_{0,F}^2}{\mu_G^2} \rho_k\hat{h}_k^2.
        \end{aligned}
    \end{equation*}
    We bound the $B$ term by Lemma \ref{lem:smt_setting_smooth_value} and Algorithm \ref{alg:hfzoba},
    \begin{equation*}
        \begin{aligned}
            \|v^*(x_{k +1}) - v^*(x_k) \|^2 \leq L_*^2 \|x_{k + 1} - x_k \|^2 = L_*^2 \gamma_k^2 \| \hat{D}_x^k\|^2.
        \end{aligned}
    \end{equation*}
    The bound on term $C$ follows the same line of the analogous bound in the proof of Lemma \ref{lem:smooth_bound_zv},
    \begin{equation*}
        \begin{aligned}
            -2\scalT{v_{k + 1} - v^*(x_k)}{v^*(x_{k + 1}) - v^*(x_k)} &\leq \frac{\mu_G}{4} \rho_k \| v_{k} - v^*(x_k) \|^2 +  \frac{4}{\mu_G \rho_k} L^2_* \gamma_k^2 \| \hat{D}_x^k \|^2\\
        &+ \rho_k^2  \| \hat{D}_v^k \|^2 + L_*^2 \gamma_k^2 \| \hat{D}_x^k \|^2.
        \end{aligned}
    \end{equation*}
    Let $\omega_k^v = \left(1 + \frac{2}{\mu_G \rho_k} \right)$. Restarting from eq. \eqref{eqn:hfzoba_bound_v}, taking the conditional expectation and using the bounds on the terms $A,B$ and $C$, we get the claim
    \begin{equation*}
        \begin{aligned}
            \mathbb{E}_k[\|v_{k+1} - v^*(x_{k + 1}) \|^2] &\leq \left(1 - \frac{\mu_G}{2} \rho_k \right) \| v_k - v^*(x_k) \|^2 +4\frac{\omega_{v,1}^2}{\mu_G} \rho_k \|z_k - z^*(x_k)\|^2\\
        &+ 2\rho_k^2 \mathbb{E}_k[\| \hat{D}_v^k\|^2] +2 \omega_k^v L_*^2 \gamma_k^2 \mathbb{E}_k[\| \hat{D}_x^k\|^2]\\
        &+2 \frac{L_{1,G}^2(d + 3)^2}{\mu_G} \rho_k h_k^2 + 4\frac{\omega_{v,2} L_{0,F}^2}{\mu_G^3} \rho_k h_k^2\\
        &+  \frac{8}{\mu_G} \frac{L_{2,G}^2}{4} \frac{L_{0,F}^2}{\mu_G^2} \rho_k\hat{h}_k^2.
        \end{aligned}
    \end{equation*}
\end{proof}
\noindent In the following lemma, we derive the bound on the value function $\Psi$ along the sequence $(x_k)_{k \in \mathbb{N}}$ generated by Algorithm \ref{alg:hfzoba}.
\begin{lemma}[Function value decrease]\label{lem:hfzoba_fun_dec}
    Let Assumptions \ref{asm:F_smooth}, \ref{asm:bc_condition}, \ref{asm:G_asm} hold. Let $(x_k)_{k \in \mathbb{N}}$ be the sequence generated by Algorithm \ref{alg:hfzoba}. Then for every $k \in \mathbb{N}$,
    \begin{equation*}
        \begin{aligned}
            \mathbb{E}_k[\Psi(x_{k + 1})] &\leq \Psi(x_k) - \frac{\gamma_k}{2} \|\nabla \Psi(x_k)\|^2 + \frac{L_\Psi}{2} \gamma_k^2 \mathbb{E}_k[\| \hat{D}_x^k \|^2]\\
            &+4 C_{1,\Psi}^2 \gamma_k \|z_k - z^*(x_k)\|^2 + C_{4,\Psi} \gamma_k \|v_k - v^*(x_k)\|^2\\
            &+ 2 C_{2,\Psi}^2 \gamma_kh_k^2 + C_{3,\Psi} \hat{h}_k^2\gamma_k,
        \end{aligned}
    \end{equation*}
    where 
    \begin{equation*}
        \begin{aligned}
            C_{1,\Psi} &= L_{1,F} + \frac{L_{0,F} L_{1,G}}{\mu_G},\\
            C_{2,\Psi} &= \frac{2L_{2,G}L_{0,F}}{3\mu_G}\left((d + 5)^{5/2} +  (d +3)^{3/2} +  (d + 1) (p +3)^{3/2} \right) + \frac{L_{1,F} d\sqrt{p}}{2}\\
            C_{3,\Psi} &= \frac{L_{2,G}^2L_{0,F}^2}{2\mu_G^2} \quad \text{and}\quad C_{4,\Psi} =4L_{1,G}^2 + \frac{\mu_G^2}{16}.
        \end{aligned}
    \end{equation*}
\end{lemma}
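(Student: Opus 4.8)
The plan is to mirror the descent argument of Lemma~\ref{lem:smooth_val_fun_descent}, replacing the explicit Hessian-block surrogate by the first-order finite-difference Hessian--vector product used in Algorithm~\ref{alg:hfzoba}. First I would invoke the $L_\Psi$-smoothness of $\Psi$ (Lemma~\ref{lem:smt_setting_smooth_value}) and the Descent Lemma to write
\begin{equation*}
\Psi(x_{k+1}) \le \Psi(x_k) - \gamma_k \langle \nabla\Psi(x_k), \hat D_x^k\rangle + \frac{L_\Psi}{2}\gamma_k^2 \|\hat D_x^k\|^2,
\end{equation*}
using $x_{k+1} = x_k - \gamma_k \hat D_x^k$. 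Taking $\mathbb{E}_k$, I would identify the mean-field direction $\tilde D_x^k := \mathbb{E}_k[\hat D_x^k]$; by Lemma~\ref{lem:grad_hess_smoothing} the gradient surrogates are unbiased for the gradients of the smoothed objectives, so $\tilde D_x^k = \bar h_k^{-1}\big(\nabla_x G_{h_k}(z_k + \bar h_k v_k, x_k) - \nabla_x G_{h_k}(z_k,x_k)\big) + \nabla_x F_{h_k}(z_k,x_k)$. Applying the polarization identity $\langle a,b\rangle = \tfrac12(\|a\|^2 + \|b\|^2 - \|a-b\|^2)$ and discarding the nonpositive $-\tfrac{\gamma_k}{2}\|\tilde D_x^k\|^2$ term leaves
\begin{equation*}
\mathbb{E}_k[\Psi(x_{k+1})] \le \Psi(x_k) - \frac{\gamma_k}{2}\|\nabla\Psi(x_k)\|^2 + \frac{\gamma_k}{2}\|\tilde D_x^k - \nabla\Psi(x_k)\|^2 + \frac{L_\Psi}{2}\gamma_k^2 \mathbb{E}_k[\|\hat D_x^k\|^2].
\end{equation*}

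The core of the argument is the bias term $\|\tilde D_x^k - \nabla\Psi(x_k)\|$, which I would split by the triangle inequality into a finite-difference part and a smoothing part:
\begin{equation*}
\begin{aligned}
\|\tilde D_x^k - \nabla\Psi(x_k)\| &\le \Big\| \tfrac{1}{\bar h_k}\big(\nabla_x G_{h_k}(z_k + \bar h_k v_k,x_k) - \nabla_x G_{h_k}(z_k,x_k)\big) - \nabla^2_{xz}G_{h_k}(z_k,x_k)v_k\Big\|\\
&+ \big\| \nabla^2_{xz}G_{h_k}(z_k,x_k)v_k + \nabla_x F_{h_k}(z_k,x_k) - \nabla\Psi(x_k)\big\|.
\end{aligned}
\end{equation*}
The first summand is the first-order finite-difference error of a Hessian--vector product: applying Lemma~\ref{lem:biased_hess_approx} to the smoothed inner objective $G_{h_k}$ (which, like $G$, has an $L_{2,G}$-Lipschitz Hessian since Gaussian smoothing is a convolution with a probability measure and hence preserves derivative-Lipschitz constants) bounds it by $\tfrac{L_{2,G}}{2}\bar h_k \|v_k\|^2$, and the choice $\bar h_k = \hat h_k/\|v_k\|$ collapses this to $\tfrac{L_{2,G}}{2}\hat h_k\|v_k\|$. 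The second summand is precisely the quantity controlled by Lemma~\ref{lem:bilevel_smth_error} (with $h_1=\eta_1=h_k$, $h_2=0$, $\eta_2=h_k$), yielding $C_{1,\Psi}\|z_k-z^*(x_k)\| + L_{1,G}\|v_k - v^*(x_k)\| + C_{2,\Psi}h_k$ with $C_{1,\Psi},C_{2,\Psi}$ as in the statement.

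To conclude I would square the bias, first using $\|a+b\|^2 \le 2\|a\|^2 + 2\|b\|^2$ to separate the smoothing and finite-difference contributions, then $(a+b)^2 \le 2a^2 + 2b^2$ twice to distribute the smoothing part into its $\|z_k-z^*\|^2$, $\|v_k-v^*\|^2$ and $h_k^2$ pieces (producing the coefficients $4C_{1,\Psi}^2$, $4L_{1,G}^2$ and $2C_{2,\Psi}^2$ after the prefactor $\tfrac{\gamma_k}{2}$). For the finite-difference part I would bound $\|v_k\|^2 \le 2\|v_k-v^*(x_k)\|^2 + 2\|v^*(x_k)\|^2$ and use $\|v^*(x_k)\| \le L_{0,F}/\mu_G$ (Lemma~\ref{lem:bound_norm_v}); the constant piece yields $C_{3,\Psi}\hat h_k^2\gamma_k$, while the $\hat h_k^2\|v_k-v^*\|^2$ piece carries coefficient $\tfrac{L_{2,G}^2}{2}\hat h_k^2$, which the standing restriction $\hat h_k \le \mu_G/(\sqrt 8\,L_{2,G})$ caps at $\mu_G^2/16$, giving exactly $C_{4,\Psi}=4L_{1,G}^2+\mu_G^2/16$. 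The main obstacle I anticipate is the finite-difference Hessian--vector bias: correctly computing $\mathbb{E}_k[\hat D_x^k]$ via Lemma~\ref{lem:grad_hess_smoothing}, justifying that the smoothed $G_{h_k}$ retains an $L_{2,G}$-Lipschitz Hessian so that Lemma~\ref{lem:biased_hess_approx} applies, and arranging the normalization $\bar h_k = \hat h_k/\|v_k\|$ so that the $\hat h_k$-dependent errors fold cleanly into $C_{3,\Psi}$ and the $\mu_G^2/16$ contribution to $C_{4,\Psi}$ rather than leaving an uncontrolled factor $\|v_k\|$.
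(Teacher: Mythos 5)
Your proposal matches the paper's proof step for step: descent lemma for the $L_\Psi$-smooth $\Psi$, conditional expectation, polarization identity with the nonpositive $-\tfrac{\gamma_k}{2}\|\tilde D_x^k\|^2$ term discarded, splitting of the bias into the first-order Hessian--vector finite-difference error (Lemma~\ref{lem:biased_hess_approx} applied to $G_{h_k}$) plus the bilevel smoothing error (Lemma~\ref{lem:bilevel_smth_error}), and then the identical $\bar h_k = \hat h_k/\|v_k\|$ normalization, Lemma~\ref{lem:bound_norm_v}, and the cap $\hat h_k \le \mu_G/(\sqrt{8}\,L_{2,G})$ producing $C_{3,\Psi}$ and $C_{4,\Psi} = 4L_{1,G}^2 + \mu_G^2/16$, with the same coefficient pattern $4C_{1,\Psi}^2$, $4L_{1,G}^2$, $2C_{2,\Psi}^2$. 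The only deviation is constant bookkeeping: you take $h_2 = 0$ in Lemma~\ref{lem:bilevel_smth_error} (which in fact matches the forward-difference estimator $\hat{\nabla}_x f_k$ and the choice made in the ZOBA analogue, Lemma~\ref{lem:smooth_val_fun_descent}), so your last term of $C_{2,\Psi}$ would read $\tfrac{L_{1,F}}{2}(d+3)^{3/2}$ rather than the stated $\tfrac{L_{1,F} d\sqrt{p}}{2}$ --- a discrepancy that already exists between the paper's own proof and its stated constant, and which does not affect the structure or order of the bound.
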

\begin{proof}
    The proof follows the same line of Lemma \ref{lem:smooth_val_fun_descent}. By Lemma \ref{lem:smt_setting_smooth_value}, $\Psi$ is $L_\Psi$-smooth. Thus, by the Descent Lemma \cite{polyak1987introduction} and Algorithm \ref{alg:hfzoba}, we have
    \begin{equation*}
        \Psi(x_{k + 1}) \leq \Psi(x_k) - \gamma_k\scalT{\nabla \Psi(x_k)}{\hat{D}_x^k} + \frac{L_\Psi}{2} \gamma_k^2 \| \hat{D}_x^k \|^2. 
    \end{equation*}
    Taking the conditional expectation,
    \begin{equation*}
        \begin{aligned}
            \mathbb{E}_k[\Psi(x_{k + 1})] &\leq \Psi(x_k)\\
            &- \gamma_k\scalT{\nabla \Psi(x_k)}{\frac{1}{\bar{h}_k}(\nabla_x G_{h_k}(z_k +\bar{h}_k v_k,x_k) -\nabla_x G_{h_k}(z_k,x_k)) +\nabla_x F_{h_k}(z_k, x_k)}\\
            &+ \frac{L_\Psi}{2} \gamma_k^2 \mathbb{E}_k[\| \hat{D}_x^k \|^2]. 
        \end{aligned}
    \end{equation*}
    By $\scalT{a}{b} = \frac{1}{2} (\|a\|^2 +\|b\|^2 - \|a - b\|^2)$, we get
    \begin{equation}\label{eqn:val_dec_hfzoba_1}
        \begin{aligned}
            \mathbb{E}_k[\Psi(x_{k + 1})] &\leq \Psi(x_k)\\
            &- \frac{\gamma_k}{2} \bigg( \|\nabla \Psi(x_k)\|^2\\
            &+ \|\frac{1}{\bar{h}_k}(\nabla_x G_{h_k}(z_k +\bar{h}_k v_k,x_k) -\nabla_x G_{h_k}(z_k,x_k)) +\nabla_x F_{h_k}(z_k, x_k) \|^2\\
            &- \|\frac{1}{\bar{h}_k}(\nabla_x G_{h_k}(z_k +\bar{h}_k v_k,x_k) -\nabla_x G_{h_k}(z_k,x_k)) +\nabla_x F_{h_k}(z_k, x_k) - \nabla \Psi(x_k) \|^2\bigg)\\
            &+ \frac{L_\Psi}{2} \gamma_k^2 \mathbb{E}_k[\| \hat{D}_x^k \|^2]\\
            &\leq \Psi(x_k) - \frac{\gamma_k}{2} \|\nabla \Psi(x_k)\|^2 + \frac{L_\Psi}{2} \gamma_k^2 \mathbb{E}_k[\| \hat{D}_x^k \|^2]\\
            &+\underbrace{\frac{\gamma_k}{2}\|\frac{1}{\bar{h}_k}(\nabla_x G_{h_k}(z_k +\bar{h}_k v_k,x_k) -\nabla_x G_{h_k}(z_k,x_k)) +\nabla_x F_{h_k}(z_k, x_k) - \nabla \Psi(x_k) \|^2}_{A}. 
        \end{aligned}
    \end{equation}
    Now we bound $A$. Let $\bar{\Delta}_k = \frac{1}{\bar{h}_k}(\nabla_x G_{h_k}(z_k +\bar{h}_k v_k,x_k) -\nabla_x G_{h_k}(z_k,x_k)) +\nabla_x F_{h_k}(z_k, x_k) $, $\Delta_k = \frac{1}{\bar{h}_k}(\nabla_x G_{h_k}(z_k +\bar{h}_k v_k,x_k) -\nabla_x G_{h_k}(z_k,x_k))- \nabla_{xz}^2G_{h_k}(z_k,x_k)v_k$ and $\tilde{D}_x^k =\nabla_{xz}^2 G_{h_k}(z_k,x_k)v_k +\nabla_x F_{h_k}(z_k, x_k)$. Adding and subtracting $\nabla_{xz}^2G_{h_k}(z_k,x_k)v_k$ and by $\|a + b\|^2 \leq 2\|a\|^2 + 2\|b\|^2$,
    \begin{equation*}
        \begin{aligned}
            \frac{\gamma_k}{2}\|\bar{\Delta}_k - \nabla \Psi(x_k) \|^2 &\leq \gamma_k \| \Delta_k \|^2 + \gamma_k \| \tilde{D}_x^k - \nabla \Psi(x_k)\|^2.
        \end{aligned}
    \end{equation*}
    By Lemma \ref{lem:biased_hess_approx} and Lemma \ref{lem:bilevel_smth_error} with $h_1 = \eta_1 = \eta_2 = h_k$, we have
    \begin{equation*}
        \begin{aligned}
            \frac{\gamma_k}{2}\|\bar{\Delta}_k - \nabla \Psi(x_k) \|^2 &\leq \frac{L_{2,G}^2}{4} \bar{h}_k^2\gamma_k \|v_k \|^4\\
            &+ \gamma_k \bigg( \underbrace{\left(L_{1,F} + \frac{L_{0,F} L_{1,G}}{\mu_G} \right)}_{C_{1,\Psi}}\|z_k -z^*(x_k)\| +  L_{1,G} \|v_k - v^*(x_k)\|\\
           &+ \underbrace{\left(\frac{L_{0,F}}{\mu_G}\frac{2L_{2,G}}{3}\left((d + 5)^{5/2} +  (d +3)^{3/2} +  (d + 1) (p +3)^{3/2} \right) + \frac{L_{1,F} d\sqrt{p}}{2} \right)}_{=:C_{2,\Psi}} h_k\bigg)^2\\
           &\leq \frac{L_{2,G}^2}{4} \bar{h}_k^2\gamma_k \|v_k \|^4\\
           &+ 4 C_{1,\Psi}^2 \gamma_k \|z_k - z^*(x_k)\|^2 + 4 L_{1,G}^2 \gamma_k \|v_k - v^*(x_k)\|^2 + 2 C_{2,\Psi}^2 \gamma_k h_k^2.
        \end{aligned}
    \end{equation*}
    By the choice 
    \begin{equation*}
        \begin{aligned}
            \bar{h}_k = \begin{cases}
                \frac{\hat{h}_k}{\|v_k\|} & \|v_k\| \neq 0\\
                \hat{h}_k
            \end{cases},
        \end{aligned}
    \end{equation*}
    we have
    \begin{equation*}
        \begin{aligned}
            \frac{\gamma_k}{2}\|\bar{\Delta}_k - \nabla \Psi(x_k) \|^2 &\leq \frac{L_{2,G}^2}{4} \hat{h}_k^2\gamma_k \|v_k \|^2\\
           &+ 4 C_{1,\Psi}^2 \gamma_k \|z_k - z^*(x_k)\|^2 + 4 L_{1,G}^2 \gamma_k \|v_k - v^*(x_k)\|^2 + 2 C_{2,\Psi}^2 \gamma_k h_k^2.
        \end{aligned}
    \end{equation*}
    Adding and subtracting $v^*(x_k)$ and by Lemma \ref{lem:bound_norm_v}, we get
    \begin{equation*}
        \begin{aligned}
            \frac{\gamma_k}{2}\|\bar{\Delta}_k - \nabla \Psi(x_k) \|^2 &\leq \underbrace{\frac{L_{2,G}^2}{2} \frac{L_{0,F}^2}{\mu_G^2}}_{=:C_{3,\Psi}} \hat{h}_k^2\gamma_k \\
           &+ 4 C_{1,\Psi}^2 \gamma_k \|z_k - z^*(x_k)\|^2 + \left(4 L_{1,G}^2 + \frac{L_{2,G}^2}{2} \hat{h}_k^2 \right) \gamma_k \|v_k - v^*(x_k)\|^2 + 2 C_{2,\Psi}^2 \gamma_kh_k^2.
        \end{aligned}
    \end{equation*}
    Since $\hat{h}_k \leq \frac{\mu_G}{\sqrt{8}L_{2,G}}$, we have
    \begin{equation*}
        \begin{aligned}
            \frac{\gamma_k}{2}\|\bar{\Delta}_k - \nabla \Psi(x_k) \|^2 &\leq 4 C_{1,\Psi}^2 \gamma_k \|z_k - z^*(x_k)\|^2 + \underbrace{\left(4 L_{1,G}^2 + \frac{\mu_G^2}{16} \right)}_{=:C_{4,\Psi}} \gamma_k \|v_k - v^*(x_k)\|^2\\
            &+ 2 C_{2,\Psi}^2 \gamma_kh_k^2 + C_{3,\Psi} \hat{h}_k^2\gamma_k.
        \end{aligned}
    \end{equation*}
    Using this inequality in eq. \eqref{eqn:val_dec_hfzoba_1}, we get
    \begin{equation*}
        \begin{aligned}
            \mathbb{E}_k[\Psi(x_{k + 1})] &\leq \Psi(x_k) - \frac{\gamma_k}{2} \|\nabla \Psi(x_k)\|^2 + \frac{L_\Psi}{2} \gamma_k^2 \mathbb{E}_k[\| \hat{D}_x^k \|^2]\\
            &+4 C_{1,\Psi}^2 \gamma_k \|z_k - z^*(x_k)\|^2 + C_{4,\Psi} \gamma_k \|v_k - v^*(x_k)\|^2\\
            &+ 2 C_{2,\Psi}^2 \gamma_kh_k^2 + C_{3,\Psi} \hat{h}_k^2\gamma_k. 
        \end{aligned}
    \end{equation*}
\end{proof}

\section{Proofs of Main Results}\label{app:proof_main_results}

\subsection*{Proof of Theorem \ref{thm:conv_rate}} 
    Let $\delta_{z_k} = \| z_k - z^*(x_k) \|^2$ and $\delta_{v_k} = \| v_k - v^*(x_k) \|^2$. For $\phi_z^k, \phi_v^k > 0$ s.t. $\phi_v^{k + 1} \leq \phi_v^k$ and $\phi_z^{k+1} \leq \phi_z^k$, define
    \begin{equation*}
        L_k = \Psi(x_k) + \phi_z^k \delta_{z_k} +\phi_v^k \delta_{v_k}.
    \end{equation*}
    Consider
    \begin{equation*}
        \begin{aligned}
        L_{k + 1} - L_k &= \Psi(x_{k + 1}) - \Psi(x_k) + \phi_z^{k+ 1} \delta_{z_{k + 1}} -   \phi_z^k \delta_{z_k} + \phi_v^{k + 1} \delta_{v_{k + 1}} - \phi_v^k \delta_{v_k}  \\
        &\leq\Psi(x_{k + 1}) - \Psi(x_k) + \phi_z^k (\delta_{z_{k + 1}} -  \delta_{z_k}) + \phi_v^k( \delta_{v_{k + 1}} - \delta_{v_k}).            
        \end{aligned}
    \end{equation*}
    Taking the conditional expectation, we have
    \begin{equation*}
        \mathbb{E}_k\left[L_{k + 1}\right] - L_k \leq \mathbb{E}_k\left[\Psi(x_{k + 1})\right] - \Psi(x_k) + \phi_z^k (\mathbb{E}_k\left[\delta_{z_{k + 1}}\right] -  \delta_{z_k}) + \phi_v^k( \mathbb{E}_k\left[\delta_{v_{k + 1}}\right] - \delta_{v_k}).
    \end{equation*}    
    By Lemma \ref{lem:smooth_val_fun_descent}, we get
    \begin{equation*}
        \begin{aligned}
            \mathbb{E}_k\left[L_{k + 1}\right] - L_k &\leq - \frac{\gamma_k}{2}\| \nabla \Psi(x_k) \|^2 -\frac{\gamma_k}{2} \| \hat{D}_x^{k} \|^2 + \frac{L_\Psi}{2} \gamma_k^2 \mathbb{E}_k\left[\|D_x^k\|^2\right]\\
            &+ 2\gamma_k \omega_{v,1}^2 \delta_{z_k} + 2 \gamma_k L_{1,G}^2 \delta_{v_k}\\ %
            &+\phi_z^k (\mathbb{E}_k\left[\delta_{z_{k + 1}}\right] -  \delta_{z_k}) + \phi_v^k( \mathbb{E}_k\left[\delta_{v_{k + 1}}\right] - \delta_{v_k})\\
            &+ C_1^2 \gamma_k h_{k}^2.
        \end{aligned}
    \end{equation*}
    By Lemma \ref{lem:smooth_bound_zv}, we have
    \begin{equation*}
        \begin{aligned}
            \mathbb{E}_k\left[L_{k + 1}\right] - L_k &\leq - \frac{\gamma_k}{2}\| \nabla \Psi(x_k) \|^2 -\frac{\gamma_k}{2} \| \hat{D}_x^{k} \|^2 \\
        &+2 \phi_v^k \rho_k^2 \mathbb{E}_k\left[\|D_v^k\|^2 \right] +2 \phi_z^k \rho_k^2 \mathbb{E}_k\left[\left\| D_z^k \right\|^2\right]\\
        &- \bigg( \mu_G \phi_v^k \rho_k - 2 L_{1,G}^2 \gamma_k \bigg) \delta_{v_k}\\
        &- \bigg( \frac{\mu_G}{2}\phi_z^k \rho_k - 4\frac{\omega_{v,1}^2}{\mu_G} \phi_v^k \rho_k  - 2 \omega_{v,1}^2 \gamma_k \bigg) \delta_{z_k}\\
        &+ \bigg( \frac{L_\Psi}{2} + 2 w_k^z L_*^2 \phi_z^k + 2\omega_k^vL_*^2 \phi_v^k   \bigg)\gamma_k^2 \mathbb{E}_k \left[ \| D_x^k \|^2 \right] \\
            &+ C_1^2 \gamma_k h_{k}^2\\ %
        &+ \left( \frac{L_{1,G}^2}{4 \mu_G}(d + 3)^{3} \phi_z^k + 2  \frac{L_{1,G}^2(d + 3)^{3}}{\mu_G} \phi_v^k  + 4 \frac{\omega_{v,2}L_{0,F}^2}{\mu_G^3} \phi_v^k \right) \rho_k h_{k}^2.
        \end{aligned}
    \end{equation*}
    Let $\omega_3 = 2 \left(2 + \frac{p + 2}{b_1 \ell_1} \right) L_{1,G}^2$ and $\omega_{dx}^k = \bigg( \frac{L_\Psi}{2} + 2 w_k^z L_*^2 \phi_z^k + 2\omega_k^vL_*^2 \phi_v^k   \bigg)$.
    By Lemma \ref{lem:bound_search_directions}, we get
    \begin{equation*}
        \begin{aligned}
            \mathbb{E}_k\left[L_{k + 1}\right] - L_k &\leq - \frac{\gamma_k}{2}\| \nabla \Psi(x_k) \|^2 -\frac{\gamma_k}{2} \| \hat{D}_x^{k} \|^2 \\
            &- \bigg( \mu_G \phi_v^k \rho_k - 2  C_v^k \phi_v^k \rho_k^2 - 2 L_{1,G}^2 \gamma_k - \omega_{dx}^k  2C_x^k \gamma_k^2  \bigg) \delta_{v_k}\\
            &- \bigg( \frac{\mu_G}{2}\phi_z^k \rho_k - 4\frac{\omega_{v,1}^2}{\mu_G} \phi_v^k \rho_k   - 2\omega_3 \phi_z^k \rho_k^2 - 2 \omega_{v,1}^2 \gamma_k \bigg) \delta_{z_k}\\
                &+ C_1^2 \gamma_k h_{k}^2\\ %
             &+ \left( \frac{L_{1,G}^2}{4 \mu_G}(d + 3)^{3} \phi_z^k + 2  \frac{L_{1,G}^2(d + 3)^{3}}{\mu_G} \phi_v^k  + 4 \frac{\omega_{v,2}L_{0,F}^2}{\mu_G^3} \phi_v^k \right) \rho_k h_{k}^2\\
            &+ 2 \left( \frac{(p + 6)^3}{2 b_1 \ell_1} + \left( \frac{3}{b_1} + 1 \right) (p + 3)^3 \right)L_{1,G}^2 \phi_z^k \rho_k^2 h_{k}^2\\
            &+4  \left(3 +\frac{p + 2}{\ell_1} \right)\frac{\sigma_{1,G}^2}{b_1} \phi_z^k \rho_k^2\\
            &+2 \phi_v^k \rho_k^2 \bigg( C_v^k \frac{L_{0,F}^2}{\mu_G^2} + 4 \left(1 + \frac{p + 2}{b_2 \ell_2} \right)L_{0,F}^2 + 4\left(\frac{p + 2}{\ell_2} + 3 \right ) \frac{\sigma_{1,F}^2}{b_2}\\
            & +2\left( \frac{(p + 6)^3}{2 b_2 \ell_2} + \left(\frac{3}{b_2} + 1 \right)(p + 3)^{3}  \right) L_{1,F}^2 h_{k}^2 \bigg)\\
            &+\omega_{dx}^k \gamma_k^2 \bigg(2C_x^k \frac{L_{0,F}^2}{\mu_G^2}+2\left( \frac{(d + 6)^3}{2 b_2 \ell_2} + \left(\frac{3}{b_2} + 1 \right)(d + 3)^{3}  \right) L_{1,F}^2 h_{k}^2\\
            &+4 \left(1 + \frac{d + 2}{b_2 \ell_2} \right) L_{0,F}^2 + 4\left(\frac{d + 2}{b_2 \ell_2} +\frac{3}{b_2} \right )\sigma_{1,F}^2\bigg).
        \end{aligned}
    \end{equation*}    
    Let 
    \begin{equation*}
        \begin{aligned}
            C_2^k &:= \left( \frac{L_{1,G}^2}{4 \mu_G}(d + 3)^{3} \phi_z^k + 2  \frac{L_{1,G}^2(d + 3)^{3}}{\mu_G} \phi_v^k  + 4 \frac{\omega_{v,2}L_{0,F}^2}{\mu_G^3} \phi_v^k \right)\\
            C_3 &:= 4  \left(3 +\frac{p + 2}{\ell_1} \right)\frac{\sigma_{1,G}^2}{b_1}\\
            C_4&:= 2 \left( \frac{(p + 6)^3}{2 b_1 \ell_1} + \left( \frac{3}{b_1} + 1 \right) (p + 3)^3 \right)L_{1,G}^2\\
            C_5^k(h_k)&:=\bigg(2C_x^k \frac{L_{0,F}^2}{\mu_G^2}+2\left( \frac{(d + 6)^3}{2 b_2 \ell_2} + \left(\frac{3}{b_2} + 1 \right)(d + 3)^{3}  \right) L_{1,F}^2 h_{k}^2\\
            &+4 \left(1 + \frac{d + 2}{b_2 \ell_2} \right) L_{0,F}^2 + 4\left(\frac{d + 2}{b_2 \ell_2} +\frac{3}{b_2} \right )\sigma_{1,F}^2\bigg)\\
            C_6^k(h_k)&:=\bigg( C_v^k \frac{L_{0,F}^2}{\mu_G^2} + 4 \left(1 + \frac{p + 2}{b_2 \ell_2} \right)L_{0,F}^2 + 4\left(\frac{p + 2}{\ell_2} + 3 \right ) \frac{\sigma_{1,F}^2}{b_2}\\
            & +2\left( \frac{(p + 6)^3}{2 b_2 \ell_2} + \left(\frac{3}{b_2} + 1 \right)(p + 3)^{3}  \right) L_{1,F}^2 h_{k}^2 \bigg).
        \end{aligned}
    \end{equation*}
    Thus, we have
    \begin{equation*}
        \begin{aligned}
            \mathbb{E}_k\left[L_{k + 1}\right] - L_k &\leq - \frac{\gamma_k}{2}\| \nabla \Psi(x_k) \|^2 -\frac{\gamma_k}{2} \| \hat{D}_x^{k} \|^2 \\
            &- \bigg( \mu_G \phi_v^k \rho_k - 2  C_v^k \phi_v^k \rho_k^2 - 2 L_{1,G}^2 \gamma_k - \omega_{dx}^k  2C_x^k \gamma_k^2  \bigg) \delta_{v_k}\\
            &- \bigg( \frac{\mu_G}{2}\phi_z^k \rho_k - 4\frac{\omega_{v,1}^2}{\mu_G} \phi_v^k \rho_k   - 2\omega_3 \phi_z^k \rho_k^2 - 2 \omega_{v,1}^2 \gamma_k \bigg) \delta_{z_k}\\
                &+ C_1^2 \gamma_k h_{k}^2 + C_2^k \rho_k h_{k}^2 + C_4 \phi_z^k \rho_k^2 h_{k}^2\\
            &+C_3 \phi_z^k \rho_k^2 +2 \phi_v^k C_6^k(h_k) \rho_k^2  +\omega_{dx}^k C_5^k(h_k) \gamma_k^2 .
        \end{aligned}
    \end{equation*}        
    Let $\bar{C}_k = C_1^2 \gamma_k h_{k}^2 + C_2^k \rho_k h_{k}^2 + C_4 \phi_z^k \rho_k^2 h_{k}^2 +C_3 \phi_z^k \rho_k^2 +2 \phi_v^k C_6^k(h_k) \rho_k^2  +\omega_{dx}^k C_5^k(h_k) \gamma_k^2 $. Observing that $-\frac{\gamma_k}{2} \| \hat{D}_x^{k} \|^2 \leq 0$, we have
        \begin{equation*}
        \begin{aligned}
            \mathbb{E}_k\left[L_{k + 1}\right] - L_k &\leq - \frac{\gamma_k}{2}\| \nabla \Psi(x_k) \|^2\\
            &- \bigg( \mu_G \phi_v^k \rho_k - 2  C_v^k \phi_v^k \rho_k^2 - 2 L_{1,G}^2 \gamma_k - \omega_{dx}^k  2C_x^k \gamma_k^2  \bigg) \delta_{v_k}\\
            &- \bigg( \frac{\mu_G}{2}\phi_z^k \rho_k - 4\frac{\omega_{v,1}^2}{\mu_G} \phi_v^k \rho_k   - 2\omega_3 \phi_z^k \rho_k^2 - 2 \omega_{v,1}^2 \gamma_k \bigg) \delta_{z_k}\\
            &+ \bar{C}_k.
        \end{aligned}
    \end{equation*}    
    Choosing $h_k$ such that $h_{k}^2 \leq  \min \left(\frac{{b_1} \ell_1}{4 L_{2,G}^2 (p +16)^4} \left(1 +\frac{15 (p + 6)^2p}{2 b_1 \ell_1} \right)L_{1,G}^2, \frac{\mu_G^2}{16 \omega_{v,2}}, \frac{1}{16L_{2,G}^2\left((p + 8)^4 + p(d + 12)^3 \right)}\right)$, we have,
    \begin{equation*}
        C_v^k = 8 \left(1 +\frac{15 (p + 6)^2p}{2 b_1 \ell_1} \right) L_{1,G}^2 =:  \bar{C}_v .
    \end{equation*}
    Therefore, we get
    \begin{equation*}
        \begin{aligned}
            \mathbb{E}_k\left[L_{k + 1}\right] - L_k &\leq - \frac{\gamma_k}{2}\| \nabla \Psi(x_k) \|^2\\
            &- \bigg( \mu_G \phi_v^k \rho_k - 2 \bar{C}_v  \phi_v^k \rho_k^2 - 2 L_{1,G}^2 \gamma_k - \omega_{dx}^k  2C_x^k \gamma_k^2  \bigg) \delta_{v_k}\\
            &- \bigg( \frac{\mu_G}{2}\phi_z^k \rho_k - 4\frac{\omega_{v,1}^2}{\mu_G} \phi_v^k \rho_k   - 2\omega_3 \phi_z^k \rho_k^2 - 2 \omega_{v,1}^2 \gamma_k \bigg) \delta_{z_k}\\
            &+ \bar{C}_k.
        \end{aligned}
    \end{equation*}    
    Recalling that
    \begin{equation*}
        \begin{aligned}
            \frac{1}{\omega_k^z} = \frac{\mu_G \rho_k}{\mu_G \rho_k + 4} \quad \text{and} \quad \frac{1}{\omega_k^v} = \frac{\mu_G \rho_k}{\mu_G \rho_k + 2}.
        \end{aligned}
    \end{equation*}
    Let
    \begin{equation*}
        \phi_z^k = \frac{\bar{\phi}_z}{\omega_k^z} \quad \text{and} \quad \phi_v^k = \frac{\bar{\phi}_v}{\omega_k^v}.   
    \end{equation*}
    Notice that for $\rho_k$ decreasing sequence or constant $\phi_z^{k + 1} \leq \phi_z$ and $\phi_v^{k + 1} \leq \phi_z$. Then, we have
    \begin{equation*}
        \begin{aligned}
            \omega_{dx}^k = \frac{L_\Psi}{2} + 2L_*^2 (\bar{\phi}_z + \bar{\phi}_v) =: \omega_{dx}.
        \end{aligned}
    \end{equation*}
    Moreover, for $h_{k}^2 \leq \frac{1}{16L_{2,G}^2\left((p + 8)^4 + p(d + 12)^3 \right)}$ , we get
    \begin{equation*}
        C_x^k=2 \bigg(\frac{1}{b_1 \ell_1} \bigg( \frac{1}{2} + 6(p + 4)(p + 2)p + 36 (p + 2) \min(p, d)+ 30 p(d + 2)d L_{1,G}^2\bigg) + L_{1,G}^2\bigg) =: \bar{C}_x.
    \end{equation*}
    Therefore, we have
    \begin{equation*}
        \begin{aligned}
            \mathbb{E}_k\left[L_{k + 1}\right] - L_k &\leq - \frac{\gamma_k}{2}\| \nabla \Psi(x_k) \|^2\\
            &- \bigg( \frac{\mu_G}{\omega_k^v} \bar{\phi}_v \rho_k - \frac{2 \bar{C}_v}{\omega_k^v}  \bar{\phi}_v \rho_k^2 - 2 L_{1,G}^2 \gamma_k - \omega_{dx}  2\bar{C}_x \gamma_k^2  \bigg) \delta_{v_k}\\
            &- \bigg( \frac{\mu_G}{2\omega_k^z}\bar{\phi}_z \rho_k - 4\frac{\omega_{v,1}^2}{\mu_G \omega_k^v} \bar{\phi}_v \rho_k   - 2 \frac{\omega_3}{\omega_k^z} \bar{\phi}_z \rho_k^2 - 2 \omega_{v,1}^2 \gamma_k \bigg) \delta_{z_k}\\
            &+ \bar{C}_k.
        \end{aligned}
    \end{equation*}    
    Notice that since $\rho_k <  \min(1,\frac{\mu_G}{2 \bar{C}_v})$, we have
    \begin{equation*}
        \begin{aligned}
            -\bigg( \frac{\mu_G}{2\omega_k^v} \bar{\phi}_v \rho_k  - 2 L_{1,G}^2 \gamma_k - \omega_{dx}  2\bar{C}_x \gamma_k^2  \bigg) \leq  -\bigg( \underbrace{\frac{\mu_G^2 }{2(\mu_G + 2)} \bar{\phi}_v}_{A} \rho_k^2  - 2 L_{1,G}^2 \gamma_k - \omega_{dx}  2\bar{C}_x \gamma_k^2  \bigg).
        \end{aligned}
    \end{equation*}
    Thus for $\gamma_k < \frac{ - 2 L_{1,G}^2 + \sqrt{4 L_{1,G}^4 + 8\omega_{dx} \bar{C}_x A\rho_k^2} }{4 \omega_{dx} \bar{C}_x} $, we have
    \begin{equation*}
        -\bigg( A \rho_k^2  - 2 L_{1,G}^2 \gamma_k - \omega_{dx}  2\bar{C}_x \gamma_k^2  \bigg) < 0.
    \end{equation*}
    Moreover, since for $\rho_k \leq \min(1, \frac{\mu_G}{2 \bar{C}_v})$, we have
    \begin{equation*}
        \begin{aligned}
            \frac{\mu_G \rho_k}{4} &> \frac{1}{w_k^z} =\frac{\mu_G \rho_k}{\mu_G \rho_k + 4} > \frac{\mu_G \rho_k}{\mu_G + 4},\\
            \frac{\mu_G \rho_k}{2} &> \frac{1}{w_k^v} =\frac{\mu_G \rho_k}{\mu_G \rho_k + 2} > \frac{\mu_G \rho_k}{\mu_G + 2}.         
        \end{aligned}
    \end{equation*}
    Let $T_k = \frac{\mu_G}{2\omega_k^z}\bar{\phi}_z \rho_k - 4\frac{\omega_{v,1}^2}{\mu_G \omega_k^v} \bar{\phi}_v \rho_k   - 2 \frac{\omega_3}{\omega_k^z} \bar{\phi}_z \rho_k^2$. Thus, we get
    \begin{equation*}
        \begin{aligned}
            T_k &\geq \frac{\mu_G^2}{2(\mu_G + 4)}\bar{\phi}_z \rho_k^2 - 2 \omega_{v,1}^2\bar{\phi}_v \rho_k^2 -\frac{\omega_3 \mu_G}{2}\bar{\phi}_z \rho_k^3\\
            &= \left(\frac{\mu_G^2}{2(\mu_G + 4)}\bar{\phi}_z  - 2 \omega_{v,1}^2\bar{\phi}_v  -\frac{\omega_3 \mu_G}{2}\bar{\phi}_z\rho_k \right) \rho_k^2.
        \end{aligned}
    \end{equation*}
    Setting $\bar{\phi}_v = \frac{\mu_G^2}{8 (\mu_G + 4)\omega_{v,1}^2} \bar{\phi}_z$, we have
    \begin{equation*}
        \begin{aligned}
            T_k &\geq \left(\frac{\mu_G^2}{2(\mu_G + 4)}\bar{\phi}_z  - 2 \omega_{v,1}^2\bar{\phi}_v  -\frac{\omega_3 \mu_G}{2}\bar{\phi}_z\rho_k \right) \rho_k^2\\
            &= \left(\frac{\mu_G^2}{4(\mu_G + 4)}\bar{\phi}_z   -\frac{\omega_3 \mu_G}{2}\bar{\phi}_z\rho_k \right) \rho_k^2.
        \end{aligned}
    \end{equation*}
    Notice that for $\rho_k < \frac{\mu_G}{2 \omega_3 (\mu_G + 4)}$,
    \begin{equation*}
        \begin{aligned}
            T_k &>\left(\frac{\mu_G^2}{4(\mu_G + 4)}\bar{\phi}_z   -\frac{\omega_3 \mu_G}{2}\bar{\phi}_z\rho_k \right) \rho_k^2\\
            &>\left(\frac{\mu_G^2}{4(\mu_G + 4)}\bar{\phi}_z   -\frac{\mu_G^2}{8 (\mu_G + 4)}\bar{\phi}_z  \right) \rho_k^2\\
            &=\underbrace{\frac{\mu_G^2}{8(\mu_G + 4)}\bar{\phi}_z  \rho_k^2}_{=:\bar{T}_k} > 0.
        \end{aligned}
    \end{equation*}
    Thus, for $\gamma_k <  \frac{\mu_G^2}{16 \omega_{v,1}^2(\mu_G + 4)}\bar{\phi}_z   \rho_k^2$ we have
    \begin{equation*}
        T_k - 2 \omega_{v,1}^2 \gamma_k  > \bar{T}_k - 2 \omega_{v,1}^2 \gamma_k  > 0.
    \end{equation*}
    This implies that
    \begin{equation*}
        -\bigg( T_k - 2 \omega_{v,1}^2 \gamma_k \bigg) \leq 0.
    \end{equation*}
    Thus, for%
    \begin{equation*}
        \boxed{
        \begin{aligned}
            \rho_k &< \min\left(1, \frac{\mu_G}{2 \omega_3 (\mu_G + 4)},  \frac{\mu_G}{2 \bar{C}_v} \right),\\
            \gamma_k &< \min\left(\frac{\mu_G^2}{16 \omega_{v,1}^2(\mu_G + 4)}\bar{\phi}_z   \rho_k^2, \frac{ - 2 L_{1,G}^2 + \sqrt{4 L_{1,G}^4 + 8\omega_{dx} \bar{C}_x A\rho_k^2} }{4 \omega_{dx} \bar{C}_x} \right),\\
            h_k^2 & \leq  \min \left(\frac{{b_1} \ell_1}{4 L_{2,G}^2 (p +16)^4} \left(1 +\frac{15 (p + 6)^2p}{2 b_1 \ell_1} \right)L_{1,G}^2, \frac{\mu_G^2}{16 \omega_{v,2}}, \frac{1}{16L_{2,G}^2\left((p + 8)^4 + p(d + 12)^3 \right)}\right),
        \end{aligned}
        }
    \end{equation*}
    we have    
    \begin{equation*}
        \begin{aligned}
            \mathbb{E}_k\left[L_{k + 1}\right] - L_k &\leq - \frac{\gamma_k}{2}\| \nabla \Psi(x_k) \|^2 + \bar{C}_k.
        \end{aligned}
    \end{equation*}  
    Notice that, for $\rho_k \leq \bar{\rho} < \min\left(1, \frac{\mu_G}{2 \omega_3 (\mu_G + 4)},  \frac{\mu_G}{2 \bar{C}_v} \right)$, taking $\bar{\phi}_z = \mathcal{O} (\frac{1}{\bar{\rho}})$, there exists $c_\gamma > 0$ such that $\gamma_k \leq c_\gamma  \rho_k < \min\left(\frac{\mu_G^2}{16 \omega_{v,1}^2(\mu_G + 4)}\bar{\phi}_z   \rho_k^2, \frac{ - 2 L_{1,G}^2 + \sqrt{4 L_{1,G}^4 + 8\omega_{dx} \bar{C}_x A\rho_k^2} }{4 \omega_{dx} \bar{C}_x} \right)$. Taking the full expectation and summing for $k = 1,\cdots,K$, we get
    \begin{equation*}
        \begin{aligned}
            \sum\limits_{k=0}^K\frac{\gamma_k}{2} \mathbb{E} \left[ \left\| \nabla \Psi(x_k) \right\|^2\right] &\leq \mathbb{E}[L_0 - L_{K + 1}] + \sum\limits_{k=0}^K\bar{C}_k\\
            &= \mathbb{E}[\Psi(x_0) + \phi_z^0 \delta_{z_0} + \phi_v^0 \delta_{v_0} \underbrace{- \Psi(x_{K + 1})}_{\leq - \min \Psi} \underbrace{- \phi_z^{K+1} \delta_{z_{K+1}} - \phi_v^{K+1} \delta_{v_{K+1}}}_{\leq 0}] + \sum\limits_{k=0}^K\bar{C}_k\\
            &\leq [\Psi(x_0) - \min \Psi] + \phi_z^0 \delta_{z_0} + \phi_v^0 \delta_{v_0} + \sum\limits_{k=0}^K\bar{C}_k.
        \end{aligned}
    \end{equation*}  
    Dividing both sides by $\sum\limits_{k=0}^K \gamma_k$, we get the claim i.e.
    \begin{equation*}
        \begin{aligned}
            \frac{1}{\sum\limits_{k=0}^K \gamma_k}\sum\limits_{k=0}^K\frac{\gamma_k}{2} \mathbb{E} \left[ \left\| \nabla \Psi(x_k) \right\|^2\right] &\leq \frac{[\Psi(x_0) - \min \Psi]}{\sum\limits_{k=0}^K \gamma_k} + \frac{\phi_z^0 \delta_{z_0}  + \phi_v^0 \delta_{v_0}}{\sum\limits_{k=0}^K \gamma_k} + \frac{\sum\limits_{k=0}^K\bar{C}_k}{\sum\limits_{k=0}^K \gamma_k}.
        \end{aligned}
    \end{equation*}  

    \subsection*{Proof of Corollary \ref{cor:param_choices}}
    By Theorem \ref{thm:conv_rate}, we have
    \begin{equation*}
        \begin{aligned}
            \frac{1}{\sum\limits_{k=0}^K \gamma_k}\sum\limits_{k=0}^K\frac{\gamma_k}{2} \mathbb{E} \left[ \left\| \nabla \Psi(x_k) \right\|^2\right] &\leq \frac{[\Psi(x_0) - \min \Psi]}{\sum\limits_{k=0}^K \gamma_k} + \frac{\phi_z^0 \delta_{z_0}  + \phi_v^0 \delta_{v_0}}{\sum\limits_{k=0}^K \gamma_k} + \frac{\sum\limits_{k=0}^K\bar{C}_k}{ \sum\limits_{k=0}^K \gamma_k}.
        \end{aligned}
    \end{equation*}
    Recalling that
    \begin{equation*}
        \bar{C}_k = C_1^2 \gamma_k h_{k}^2 + C_2^k \rho_k h_{k}^2 + C_4 \phi_z^k \rho_k^2 h_{k}^2 +C_3 \phi_z^k \rho_k^2 +2 \phi_v^k C_6^k(h_k) \rho_k^2  +\omega_{dx}^k C_5^k(h_k) \gamma_k^2.
    \end{equation*}
    where,
    \begin{equation}\label{eqn:cor_constants}
        \begin{aligned}
            C_2^k &:= \left( \frac{L_{1,G}^2}{4 \mu_G}(d + 3)^{3} \phi_z^k + 2  \frac{L_{1,G}^2(d + 3)^{3}}{\mu_G} \phi_v^k  + 4 \frac{\omega_{v,2}L_{0,F}^2}{\mu_G^3} \phi_v^k \right)\\
            C_3 &:= 4  \left(3 +\frac{p + 2}{\ell_1} \right)\frac{\sigma_{1,G}^2}{b_1}\\
            C_4&:= 2 \left( \frac{(p + 6)^3}{2 b_1 \ell_1} + \left( \frac{3}{b_1} + 1 \right) (p + 3)^3 \right)L_{1,G}^2\\
            C_5^k(h_k)&:=\bigg(2C_x^k \frac{L_{0,F}^2}{\mu_G^2}+2\left( \frac{(d + 6)^3}{2 b_2 \ell_2} + \left(\frac{3}{b_2} + 1 \right)(d + 3)^{3}  \right) L_{1,F}^2 h_{k}^2\\
            &+4 \left(1 + \frac{d + 2}{b_2 \ell_2} \right) L_{0,F}^2 + 4\left(\frac{d + 2}{b_2 \ell_2} +\frac{3}{b_2} \right )\sigma_{1,F}^2\bigg)\\
            C_6^k(h_k)&:=\bigg( C_v^k \frac{L_{0,F}^2}{\mu_G^2} + 4 \left(1 + \frac{p + 2}{b_2 \ell_2} \right)L_{0,F}^2 + 4\left(\frac{p + 2}{\ell_2} + 3 \right ) \frac{\sigma_{1,F}^2}{b_2}\\
            & +2\left( \frac{(p + 6)^3}{2 b_2 \ell_2} + \left(\frac{3}{b_2} + 1 \right)(p + 3)^{3}  \right) L_{1,F}^2 h_{k}^2 \bigg).
        \end{aligned}
    \end{equation}    
    Recalling that
        \begin{equation*}
        \begin{aligned}
             \frac{\mu_G \rho_k}{4} >\frac{1}{\omega_k^z} = \frac{\mu_G \rho_k}{\mu_G \rho_k + 4} \quad \text{and} \quad  \frac{\mu_G \rho_k}{2} >\frac{1}{\omega_k^v} = \frac{\mu_G \rho_k}{\mu_G \rho_k + 2}.
        \end{aligned}
    \end{equation*}
    Let
    \begin{equation*}
        \phi_z^k = \frac{\bar{\phi}_z}{\omega_k^z} \quad \text{and} \quad \phi_v^k = \frac{\bar{\phi}_v}{\omega_k^v}.   
    \end{equation*}
    By the choice of $\phi_v^k,\phi_z^k$ we have
    \begin{equation*}
        \begin{aligned}
            \omega_{dx}^k = \frac{L_\Psi}{2} + 2L_*^2 (\bar{\phi}_z + \bar{\phi}_v) =: \omega_{dx}.
        \end{aligned}
    \end{equation*}
    Since $h_{k}^2 \leq \min(\frac{1}{16L_{2,G}^2\left((p + 8)^4 + p(d + 12)^3 \right)}, \frac{{b_1} \ell_1}{4 L_{2,G}^2 (p +16)^4} \left(1 +\frac{15 (p + 6)^2p}{2 b_1 \ell_1} \right)L_{1,G}^2, \frac{\mu_G^2}{16 \omega_{v,2}} )$, we have,
    \begin{equation*}
        C_v^k = 8 \left(1 +\frac{15 (p + 6)^2p}{2 b_1 \ell_1} \right) L_{1,G}^2 =:  \bar{C}_v,
    \end{equation*} 
    and 
    \begin{equation*}
        C_x^k=2 \bigg(\frac{1}{b_1 \ell_1} \bigg( \frac{1}{2} + 6(p + 4)(p + 2)p + 36 (p + 2) \min(p, d)+ 30 p(d + 2)d L_{1,G}^2\bigg) + L_{1,G}^2\bigg) =: \bar{C}_x.
    \end{equation*}
    By the choice of parameters (I), we bound the terms in eq. \eqref{eqn:cor_constants} as
    \begin{equation*}
        \begin{aligned}
            C_1^2 \gamma_k h_{k}^2 &= C_1^2 \gamma h^2, \qquad C_3 \phi_z^k \rho_k^2 < \underbrace{C_3 \frac{\mu_G}{4} \bar{\phi}_z}_{=:C_{3,1}} \rho^3, \qquad C_4 \phi_z^k \rho_k^2 h_{k}^2 < \underbrace{C_4 \bar{\phi}_z \frac{\mu_G}{4}}_{=:C_{4,1}} \rho^3h^2 \\
            C_2^k \rho_k h_k^2 &< \underbrace{\left(\frac{L_{1,G}^2}{16}(d + 3)^3 \bar{\phi}_z  + L_{1,G}^2 (d + 3)^3 \bar{\phi}_v  + \frac{2 \omega_{v,2} L_{0,F}^2}{\mu_G^2} \bar{\phi}_v \right)}_{=:C_2} \rho h^2\\
            C_5(h_k) &< 2\bar{C}_x \frac{L_{0,F}^2}{\mu_G^2} \omega_{dx} \gamma^2 + 2\left( \frac{(d + 6)^3}{2 b_2 \ell_2} + \left(\frac{3}{b_2} + 1 \right)(d + 3)^{3}  \right) L_{1,F}^2 \omega_{dx} h^2 \gamma^2\\
            &+ 4 \left(1 + \frac{d + 2}{b_2 \ell_2} \right) L_{0,F}^2\omega_{dx} \gamma^2 + 4\left(\frac{d + 2}{b_2 \ell_2} +\frac{3}{b_2} \right )\sigma_{1,F}^2\omega_{dx} \gamma^2\\
            &= \underbrace{\bigg(2\bar{C}_x \frac{L_{0,F}^2}{\mu_G^2}+ 4 \left(1 + \frac{d + 2}{b_2 \ell_2} \right) L_{0,F}^2 + 4\left(\frac{d + 2}{b_2 \ell_2} +\frac{3}{b_2} \right )\sigma_{1,F}^2 \bigg) \omega_{dx}}_{C_{5,1}} \gamma^2\\
            &+ \underbrace{ 2\left( \frac{(d + 6)^3}{2 b_2 \ell_2} + \left(\frac{3}{b_2} + 1 \right)(d + 3)^{3}  \right) L_{1,F}^2 \omega_{dx}}_{C_{5,2}}h^2 \gamma^2\\
            C_6(h_k) &< \underbrace{\bigg(\bar{C}_v\frac{L_{0,F}^2 }{\mu_G}  + 4 \left(1 + \frac{p + 2}{b_2 \ell_2} \right)L_{0,F}^2  \mu_G + 4\left(\frac{p + 2}{\ell_2} + 3 \right ) \frac{\mu_G \sigma_{1,F}^2}{b_2}\bigg)}_{C_{6,1}} \rho^3\\
            &+ \underbrace{2\mu_G\left( \frac{(p + 6)^3}{2 b_2 \ell_2} + \left(\frac{3}{b_2} + 1 \right)(p + 3)^{3}  \right) L_{1,F}^2}_{C_{6,2}} h^2 \rho^3
        \end{aligned}
    \end{equation*}
    Using these bound, we get the claim
    \begin{equation*}
        \begin{aligned}
            \frac{1}{K + 1}\sum\limits_{k=0}^K \mathbb{E} \left[ \left\| \nabla \Psi(x_k) \right\|^2\right] &\leq 
            \frac{[\Psi(x_0) - \min \Psi] + \phi_z^0 \delta_{z_0}  + \phi_v^0 \delta_{v_0}}{\gamma (K + 1)} + \left(C_1^2 + C_2\frac{\rho}{\gamma} \right)h^2\\
            &+ \left(C_{3,1} + C_{4,1}h^2 + C_{6,1} + C_{6,2} h^2 \right) \frac{\rho^3}{\gamma} + \left(C_{5,1} + C_{5,2} h^2 \right)\gamma.%
        \end{aligned}
    \end{equation*}
    By the choice of parameters (II), we have $\gamma_k = \gamma (k + 1)^{-\alpha_1}$, $\rho_k = \rho (k + 1)^{-\alpha_2}$ and $h_k = h (k + 1)^{-\alpha_3}$. Thus, we bound the terms in eq. \eqref{eqn:cor_constants} as
    \begin{equation*}
        \begin{aligned}
            C_1^2 \gamma_k h_k^2 &= C_1^2 \gamma h^2 (k + 1)^{-(\alpha_1 + 2 \alpha_3)}, \quad C_3 \phi_z^k \rho_k^2 \leq \frac{C_3 \mu_G}{4}\bar{\phi}_z \rho^3 (k + 1)^{-3\alpha_2}\\
            C_4 \phi_z^k \rho_k^2 h_k^2 &\leq \frac{C_4 \mu_G}{4} \bar{\phi}_z \rho^3 h^2 (k + 1)^{-(3\alpha_1 + 2\alpha_3)}, \quad C_2^k \rho_k h_k^2 \leq C_2\rho h^2 (k + 1)^{-(\alpha_2 + 2\alpha_3)}\\
            C_5(h_k)\omega_{dx}^k \gamma_k^2 &\leq C_{5,1}\gamma^2 (k+1)^{-2\alpha_1} + C_{5,2} h^2 \gamma^2 (k + 1)^{-2(\alpha_1 + \alpha_3)}\\
            2 \phi_v^k C_6(h_k) \rho_k^2&\leq C_{6,1} \rho^3 (k + 1)^{-3\alpha_2} + C_{6,2}h^2 \rho^3 (k + 1)^{-(3\alpha_2 + 2 \alpha_3)}.
        \end{aligned}
    \end{equation*}
    Since $\alpha_1, \alpha_2 \in (1/2, 1)$ and $\alpha_3 > 1$, we have
    \begin{equation*}
        \sum\limits_{k=0}^K \gamma_k \leq \frac{(K +1)^{1 - \alpha_1} - 1}{1 - \alpha_1} \gamma \quad \text{and} \quad \sum\limits_{k=0}^K \rho_k \leq \frac{(K +1)^{1 - \alpha_2} - 1}{1 - \alpha_2} \rho.
    \end{equation*}
    Moreover, we get
    \begin{equation*}
        \begin{aligned}
            \sum\limits_{k=0}^K C_1^2 \gamma_k h_k^2 &= C_1^2 \gamma^2 h^2 \sum\limits_{k=0}^K (k+1)^{-(\alpha_1 + 2 \alpha_3)} \leq C_1^2\gamma^2h^2 \frac{\alpha_1 + 2 \alpha_3}{\alpha_1 + 2 \alpha_3 - 1},\\
            \sum\limits_{k=0}^K C_2^k \rho_k h_k^2 &\leq C_2\rho h^2 \sum\limits_{k=0}^K(k + 1)^{-(\alpha_2 + 2\alpha_3)} \leq C_2\rho h^2 \frac{\alpha_2 + 2 \alpha_3}{\alpha_2 + 2 \alpha_3 -1},\\
            \sum\limits_{k=0}^K C_3 \phi_z^k \rho_k^2 &\leq \frac{C_3 \mu_G}{4}\bar{\phi}_z \sum\limits_{k=0}^K \rho_k^3 \leq \frac{C_3 \mu_G}{4}\bar{\phi}_z \rho^3 \frac{3\alpha_2}{3\alpha_2 - 1},\\
            \sum\limits_{k=0}^KC_4 \phi_z^k \rho_k^2 h_k^2 &\leq \frac{C_4 \mu_G}{4} \bar{\phi}_z \rho^3 h^2 \sum\limits_{k=0}^K (k + 1)^{-(3\alpha_1 + 2\alpha_3)} \leq \frac{C_4 \mu_G}{4} \bar{\phi}_z \rho^3 h^2  \frac{3 \alpha_1 + 2\alpha_3}{ 3 \alpha_1 + 2\alpha_3 - 1},\\
             \sum\limits_{k=0}^K C_5(h_k)\omega_{dx}^k \gamma_k^2 &\leq C_{5,1}\gamma^2 \sum\limits_{k=0}^K (k+1)^{-2\alpha_1} + C_{5,2} h^2 \gamma^2 \sum\limits_{k=0}^K(k + 1)^{-2(\alpha_1 + \alpha_3)}\\
             &\leq C_{5,1}\gamma^2 \frac{2\alpha_1}{2\alpha_1 - 1} + C_{5,2} h^2 \gamma^2 \frac{2(\alpha_1 + \alpha_3)}{2(\alpha_1 + \alpha_3) - 1}\\
             2 \sum\limits_{k=0}^k \phi_v^k C_6(h_k) \rho_k^2&\leq C_{6,1} \rho^3 \sum\limits_{k=0}^k (k + 1)^{-3\alpha_2} + C_{6,2}h^2 \rho^3 \sum\limits_{k=0}^k (k + 1)^{-(3\alpha_2 + 2 \alpha_3)}\\
             &\leq C_{6,1} \rho^3 \frac{3\alpha_2}{3\alpha_2 - 1}  + C_{6,2}h^2 \rho^3 \frac{3 \alpha_2 + 2 \alpha_3}{3 \alpha_2 + 2 \alpha_3 - 1}.
        \end{aligned}
    \end{equation*}
    Therefore, using these bounds, we get the claim
    \begin{equation*}
        \begin{aligned}
        \frac{1}{\sum\limits_{k=0}^K \gamma_k}\sum\limits_{k=0}^K\gamma_k \mathbb{E} \left[ \left\| \nabla \Psi(x_k) \right\|^2\right] &\leq  \frac{[\Psi(x_0) - \min \Psi] + \phi_z^0 \delta_{z_0}  + \phi_v^0 \delta_{v_0}}{\gamma (K + 1)^{1 - \alpha_1}} + \mathcal{O} \left(\frac{1}{(K + 1)^{1 - \alpha_1}} \right).%
        \end{aligned}
    \end{equation*}
    Finally, to prove the last claim, by choosing $\gamma_k = \gamma$, $\rho_k = \rho$ and $h_k = h$ for every $k \in \mathbb{N}$, we have
    \begin{equation*}
        \begin{aligned}
            \frac{1}{K + 1}\sum\limits_{k=0}^K \mathbb{E} \left[ \left\| \nabla \Psi(x_k) \right\|^2\right] &\leq  \frac{[\Psi(x_0) - \min \Psi] + \phi_z^0 \delta_{z_0}  + \phi_v^0 \delta_{v_0}}{\gamma (K + 1)} + \left(C_1^2 + C_2\frac{\rho}{\gamma} \right)h^2\\
            &+ \left(C_3 + C_4h^2  \right)\mu_G \bar{\phi}_z \frac{\rho^3}{\gamma} + \left(C_{5,1} + C_{5,2} h^2 \right)\gamma + \left(C_{6,1} + C_{6,2} h^2\right) \frac{\rho^2}{\gamma}.
        \end{aligned}
    \end{equation*}
    Rearranging the terms and by $\phi_v^k = \frac{\bar{\phi}_v}{\omega_k^v}$, $\phi_z^k = \frac{\bar{\phi}_z}{\omega_k^z}$ and $\bar{\phi}_v = \frac{\mu_G^2}{8(\mu_G + 4) \omega_{v,1}^2} \bar{\phi}_z$ we have
    \begin{equation*}
        \begin{aligned}
            \frac{1}{K + 1}\sum\limits_{k=0}^K \mathbb{E} \left[ \left\| \nabla \Psi(x_k) \right\|^2\right] &\leq \frac{1}{\gamma(K + 1)} \left(  [\Psi(x_0) - \min \Psi] + \frac{\bar{\phi}_z}{\omega_0^z} \delta_{z_0}  + \frac{\mu_G^2}{8(\mu_G + 4) \omega_{v,1}^2 \omega_0^v} \bar{\phi}_z\delta_{v_0} \right)\\
            &+ \left( C_1^2 + C_2 \frac{\rho}{\gamma} + C_{5,2} \omega_{dx}\gamma + C_4\mu_G \bar{\phi}_z \frac{\rho^3}{\gamma} + C_{6,2} \frac{\rho^2}{\gamma} \right)h^2 \\
            &+  C_3 \mu_G \bar{\phi}_z \frac{\rho^3}{\gamma} + C_{5,1} \gamma + C_{6,1}\frac{\rho^2}{\gamma} .
        \end{aligned}
    \end{equation*}
    Due to the conditions on $\rho_k$, we have
        \begin{equation*}
        \begin{aligned}
             \frac{\mu_G }{4} >\frac{1}{\omega_k^z} = \frac{\mu_G \rho_k}{\mu_G \rho_k + 4} \quad \text{and} \quad  \frac{\mu_G }{2} >\frac{1}{\omega_k^v} = \frac{\mu_G \rho_k}{\mu_G \rho_k + 2}.
        \end{aligned}
    \end{equation*}
    Then, we get
        \begin{equation*}
        \begin{aligned}
            \frac{1}{K + 1}\sum\limits_{k=0}^K \mathbb{E} \left[ \left\| \nabla \Psi(x_k) \right\|^2\right] &\leq \frac{1}{\gamma(K + 1)} \underbrace{\left(  [\Psi(x_0) - \min \Psi] + \frac{\bar{\phi}_z \mu_G^2}{4} \delta_{z_0}  + \frac{\mu_G^3}{16(\mu_G + 4) \omega_{v,1}^2 } \bar{\phi}_z\delta_{v_0} \right)}_{=:\Delta_1}\\
            &+ \left( C_1^2 + C_2 \frac{\rho}{\gamma} + C_{5,2} \omega_{dx}\gamma + C_4\mu_G \bar{\phi}_z \frac{\rho^3}{\gamma} + C_{6,2} \frac{\rho^2}{\gamma} \right)h^2\\
            &+  C_3 \mu_G \bar{\phi}_z \frac{\rho^3}{\gamma} + C_{5,1} \gamma + C_{6,1}\frac{\rho^2}{\gamma} .
        \end{aligned}
    \end{equation*}
    Let $\varepsilon \in (0,1)$. Consider the following inequality,
    \begin{equation*}
        \begin{aligned}
            \frac{\Delta_1}{\gamma(K + 1)} + C(\gamma, \rho)h^2 + \left( C_3 \mu_G \bar{\phi}_z \frac{\rho^3}{\gamma} + C_{5,1} \gamma + C_{6,1}\frac{\rho^2}{\gamma} \right) \leq \varepsilon.
        \end{aligned}
    \end{equation*}
    Since the term with $C(\gamma, \rho)$ can be controlled with $h$, we choose $\gamma$ by minimizing $\frac{\Delta_1}{\gamma(K + 1)}  \left( C_3 \mu_G \bar{\phi}_z \frac{\rho^3}{\gamma} + C_{5,1} \gamma + C_{6,1}\frac{\rho^2}{\gamma} \right)$ i.e.
    \begin{equation*}
        \gamma = \sqrt{\frac{1}{C_{5,1}}\left(\frac{\Delta_1}{ K + 1} +C_3 \mu_G \bar{\phi}_z\rho^3 + C_{6,1} \rho^2 \right)}.
    \end{equation*}
    Thus, we get
     \begin{equation*}
        \begin{aligned}
            2\sqrt{C_{5,1} \left(\frac{\Delta_1}{(K + 1)} + C_3 \mu_G \bar{\phi}_z \rho^3 + C_{6,1} \rho^2 \right)} + C(\gamma, \rho)h^2 \leq \varepsilon.
        \end{aligned}
    \end{equation*}
    Let $h = \sqrt{\frac{\varepsilon}{2 C(\gamma, \rho)}}$, we get
     \begin{equation*}
        \begin{aligned}
            2\sqrt{C_{5,1} \left(\frac{\Delta_1}{(K + 1)} + C_3 \mu_G \bar{\phi}_z \rho^3 + C_{6,1} \rho^2 \right)} \leq \frac{\varepsilon}{2}.
        \end{aligned}
    \end{equation*}
    Multplying both sides by $1/2$, we have
     \begin{equation*}
        \begin{aligned}
            \frac{\Delta_1}{(K + 1)} \leq \frac{\varepsilon^2}{4C_{5,1}} - (C_3 \mu_G \bar{\phi}_z \rho + C_{6,1}) \rho^2.
        \end{aligned}
    \end{equation*}
    Choosing $\rho = \frac{\varepsilon}{\sqrt{16 C_{5,1} C_{6,1}}}$ and $\bar{\phi}_z = \frac{C_{6,1}\sqrt{16 C_{5,1} C_{6,1}}}{C_3 \mu_G \varepsilon} = \frac{C_{6,1}}{C_3 \mu_G \rho}$, we get
     \begin{equation*}
        \begin{aligned}
            \frac{\Delta_1}{(K + 1)} \leq \frac{\varepsilon^2}{8C_{5,1}}.%
        \end{aligned}
    \end{equation*}
    Therefore, choosing 
    \begin{equation*}
        (K + 1) \geq \frac{8 C_{5,1}}{\Delta_1} \varepsilon^{-2},
    \end{equation*}
    we get $\frac{1}{K + 1}\sum\limits_{k=0}^K \mathbb{E} \left[ \left\| \nabla \Psi(x_k) \right\|^2\right] \leq \varepsilon$. 
    Choosing $b_1\ell_1 = \lfloor \mathcal{O}(p(d + p)^2/c_1) \rfloor$ and $b_2 \ell_2 = \lfloor\mathcal{O}( p( p +d)^2/c_2) \rfloor$ for $c_1,c_2 \in \mathbb{R}_+$, the dependence on the dimensions $p,d$ inside the constants (and in the condition on the stepsizes $\rho,\gamma$ simplifies, yielding the claim (III) i.e. the complexity is
    \begin{equation*}
        \mathcal{O}\left( p(d + p)^2 \varepsilon^{-2} \right).
    \end{equation*}

    \subsection{Proof of Theorem \ref{thm:hfzoba_conv_rate}}
    Let $\delta_{z_k} = \|z_k - z^*(x_k)\|^2$ and $\delta_{v_k} = \| v_k - v^*(x_k)\|^2$. For $\phi_z^k, \phi_v^k > 0$ s.t. $\phi_v^{k+1} \leq \phi_v^k$ and $\phi_z^{k+1} \leq \phi_z^k$, define
    \begin{equation*}
        L_k = \Psi(x_k) + \phi_z^k \delta_{z_k} + \phi_v^k \delta_{v_k}.
    \end{equation*}
    Thus, we have
    \begin{equation*}
        \begin{aligned}
            L_{k + 1} - L_k  &\leq \Psi(x_{k + 1}) - \Psi(x_{k})+ \phi_z^{k} \left(\delta_{z_{k + 1}} - \delta_{z_{k}} \right) + \phi_v^{k} \left( \delta_{v_{k+1}} -  \delta_{v_k} \right).
        \end{aligned}
    \end{equation*}
    Taking the conditional expectation,
    \begin{equation*}
        \begin{aligned}
            \mathbb{E}_k[L_{k + 1}] - L_k  &\leq \mathbb{E}_k[\Psi(x_{k + 1})] - \Psi(x_{k})+ \phi_z^{k} \left(\mathbb{E}_k[\delta_{z_{k + 1}}] - \delta_{z_{k}} \right) + \phi_v^{k} \left( \mathbb{E}_k[\delta_{v_{k+1}}] -  \delta_{v_k} \right).
        \end{aligned}
    \end{equation*}
    Let
    \begin{equation*}
        \begin{aligned}
            C_{1,\Psi} &= L_{1,F} + \frac{L_{0,F} L_{1,G}}{\mu_G},\\
            C_{2,\Psi} &= \frac{2L_{2,G}L_{0,F}}{3\mu_G}\left((d + 5)^{5/2} +  (d +3)^{3/2} +  (d + 1) (p +3)^{3/2} \right) + \frac{L_{1,F} d\sqrt{p}}{2}\\
            C_{3,\Psi} &= \frac{L_{2,G}^2L_{0,F}^2}{2\mu_G^2} \quad \text{and}\quad C_{4,\Psi} =4L_{1,G}^2 + \frac{\mu_G^2}{16}.
        \end{aligned}
    \end{equation*}
    By Lemma \ref{lem:hfzoba_fun_dec},
    \begin{equation*}
        \begin{aligned}
            \mathbb{E}_k[L_{k + 1}] - L_k  &\leq - \frac{\gamma_k}{2} \|\nabla \Psi(x_k)\|^2 + \frac{L_\Psi}{2} \gamma_k^2 \mathbb{E}_k[\| \hat{D}_x^k \|^2]\\
            &+4 C_{1,\Psi}^2 \gamma_k \delta_{z_k}  + C_{4,\Psi} \gamma_k \delta_{v_k}\\
            &+ \phi_z^{k} \left(\mathbb{E}_k[\delta_{z_{k + 1}}] - \delta_{z_{k}} \right) + \phi_v^{k} \left( \mathbb{E}_k[\delta_{v_{k+1}}] -  \delta_{v_k} \right)\\
            &+ 2 C_{2,\Psi}^2 \gamma_kh_k^2 + C_{3,\Psi} \hat{h}_k^2\gamma_k.
        \end{aligned}
    \end{equation*}
    Notice that the bounds on $\delta_{z_k}$ of Lemma \ref{lem:smooth_bound_zv} holds also for Algorithm \ref{alg:hfzoba}. 
    Let 
    \begin{equation*}
        \begin{aligned}
            C_1 &= 2 \frac{L_{1,G}^2(d + 3)^2}{\mu_G} + 4\frac{\omega_{v,2} L_{0,F}^2}{\mu_G^3} \quad \text{and} \quad C_2 =\frac{8}{\mu_G} \frac{L_{2,G}^2}{4} \frac{L_{0,F}^2}{\mu_G^2},\\
            C_3 &= \frac{L_{1,G}^2}{4 \mu_G}(d + 3)^{3}.
        \end{aligned}
    \end{equation*}
    Thus, by Lemma \ref{lem:smooth_bound_zv} and Lemma \ref{lem:hfzoba_vbound}, we get
    \begin{equation*}
        \begin{aligned}
            \mathbb{E}_k[L_{k + 1}] - L_k  &\leq - \frac{\gamma_k}{2} \|\nabla \Psi(x_k)\|^2\\
            &- \left( \frac{\mu_G}{2} \phi_v^k \rho_k - C_{4,\Psi} \gamma_k\right) \delta_{v_k}\\
            &- \left( \frac{\mu_G}{2} \phi_z^k \rho_k - 4\frac{\omega_{v,1}^2}{\mu_G} \phi_v^k \rho_k  -4 C_{1,\Psi}^2 \gamma_k \right)  \delta_{z_k}\\
            &+ 2 \phi_z^k\rho_k^2 \mathbb{E}_k\left[\left\| \hat{D}_z^k \right\|^2\right] +2 \phi_v^k\rho_k^2 \mathbb{E}_k[\| \hat{D}_v^k\|^2] \\
            &+\left(2 \omega_k^v L_*^2 \phi_v^k +\frac{L_\Psi}{2} + 2 \omega_k^zL_*^2 \phi_z^k \right) \gamma_k^2 \mathbb{E}_k[\| \hat{D}_x^k\|^2]\\
            &+(C_1 h_k^2+  C_2 \hat{h}_k^2) \phi_v^k \rho_k + 2 C_{2,\Psi}^2 \gamma_kh_k^2 + C_{3,\Psi} \hat{h}_k^2\gamma_k\\
            &+ C_3 \phi_z^k \rho_k h_{k}^2.
        \end{aligned}
    \end{equation*}
    Observing that by  Lemma \ref{lem:hfzoba_search_directions}, we have
    \begin{equation*}
        \begin{aligned}
            \mathbb{E}_k\left[\left\| \hat{D}_z^k \right\|^2\right] &\leq \underbrace{2 \left(2 + \frac{p + 2}{b_1 \ell_1} \right)}_{=:C_{A,1}} \left\|\nabla_z G(z_k, x_k) \right\|^2\\
        &+ \underbrace{\left( \frac{(p + 6)^3}{2 b_1 \ell_1} + \left( \frac{3}{b_1} + 1 \right) (p + 3)^3 \right)L_{1,G}^2}_{=:C_{A,2}} h_{k}^2\\
        &+ \underbrace{2 \left(3 +\frac{p + 2}{\ell_1} \right)\frac{\sigma_{1,G}^2}{b_1}}_{C_{A,3}}
        \end{aligned}
    \end{equation*}
    Since $\nabla_z G(z^*(x_k), x_k) = 0$, by $L_{1,G}$-smoothness we have
    \begin{equation*}
        \begin{aligned}
            \mathbb{E}_k\left[\left\| \hat{D}_z^k \right\|^2\right] &\leq C_{A,1} \left\|\nabla_z G(z_k, x_k) -\nabla_z G(z^*(x_k), x_k) \right\|^2 + C_{A,2} h_{k}^2+ C_{A,3}\\
            &\leq C_{A,1} L_{1,G}^2 \delta_{z_k} + C_{A,2} h_{k}^2+ C_{A,3}.
        \end{aligned}
    \end{equation*}
    Thus, we have
    \begin{equation*}
        \begin{aligned}
            \mathbb{E}_k[L_{k + 1}] - L_k  &\leq - \frac{\gamma_k}{2} \|\nabla \Psi(x_k)\|^2\\
            &- \left( \frac{\mu_G}{2} \phi_v^k \rho_k - C_{4,\Psi} \gamma_k\right) \delta_{v_k}\\
            &- \left( \frac{\mu_G}{2} \phi_z^k \rho_k - 4\frac{\omega_{v,1}^2}{\mu_G} \phi_v^k \rho_k  -4 C_{1,\Psi}^2 \gamma_k - 2 \phi_z^k\rho_k^2 C_{A,1} L_{1,G}^2\right)  \delta_{z_k}\\
            &+2 \phi_v^k\rho_k^2 \mathbb{E}_k[\| \hat{D}_v^k\|^2] \\
            &+\left(2 \omega_k^v L_*^2 \phi_v^k +\frac{L_\Psi}{2} + 2 \omega_k^zL_*^2 \phi_z^k \right) \gamma_k^2 \mathbb{E}_k[\| \hat{D}_x^k\|^2]\\
            &+(C_1 h_k^2+  C_2 \hat{h}_k^2) \phi_v^k \rho_k + 2 C_{2,\Psi}^2 \gamma_kh_k^2 + C_{3,\Psi} \hat{h}_k^2\gamma_k\\
            &+ C_3 \phi_z^k \rho_k h_{k}^2 + 2C_{A,2}\phi_z^k\rho_k^2 h_{k}^2\\
            &+ 2\phi_z^k\rho_k^2C_{A,3}.
        \end{aligned}
    \end{equation*}
    Let $C_{B,1} = 4\left(1 + \frac{3(p + 2)}{b_1 \ell_1} \right)$, $C_{B,2} = 2 \left(1 + \frac{p + 2}{b_2 \ell_2} \right)$, $C_{B,3} = \left( \frac{(p + 6)^3}{2 b_2 \ell_2} + \left(\frac{3}{b_2} + 1 \right)(p + 3)^{3}  \right) L_{1,F}^2$ and $C_{B,4} = 2\left(\frac{p + 2}{\ell_2} + 3 \right ) \frac{\sigma_{1,F}^2}{b_2}$. Let $C_{C,1} = 4\left(1 + \frac{3(d + 2)}{b_1 \ell_1} \right)$, $C_{C,2} = 2 \left(1 + \frac{d + 2}{b_2 \ell_2} \right)$, $C_{C,3} = \left( \frac{(d + 6)^3}{2 b_2 \ell_2} + \left(\frac{3}{b_2} + 1 \right)(d + 3)^{3}  \right) L_{1,F}^2$ and $C_{C,4} = 2\left(\frac{d + 2}{b_2 \ell_2} +\frac{3}{b_2} \right )\sigma_{1,F}^2$. 
    Let
    \begin{equation*}
        S_k = 2 \omega_k^v L_*^2 \phi_v^k +\frac{L_\Psi}{2} + 2 \omega_k^zL_*^2 \phi_z^k. 
    \end{equation*}
    By Lemma \ref{lem:hfzoba_search_directions}, we have
    \begin{equation*}
        \begin{aligned}
            \mathbb{E}_k[L_{k + 1}] - L_k  &\leq - \frac{\gamma_k}{2} \|\nabla \Psi(x_k)\|^2\\
            &- \left( \frac{\mu_G}{2} \phi_v^k \rho_k - C_{4,\Psi} \gamma_k -4 C_{B,1}L_{1,G}^2 \phi_v^k \rho_k^2 - 2 C_{C,1}L_{1,G}^2 S_k\gamma_k^2\right) \delta_{v_k}\\
            &- \left( \frac{\mu_G}{2} \phi_z^k \rho_k - 4\frac{\omega_{v,1}^2}{\mu_G} \phi_v^k \rho_k  -4 C_{1,\Psi}^2 \gamma_k - 2 \phi_z^k\rho_k^2 C_{A,1} L_{1,G}^2\right)  \delta_{z_k}\\
            &+(C_1 h_k^2+  C_2 \hat{h}_k^2) \phi_v^k \rho_k + 2 C_{2,\Psi}^2 \gamma_kh_k^2 + C_{3,\Psi} \hat{h}_k^2\gamma_k\\
            &+2  C_{B,1}\frac{L_{1,G}^2 L_{0,F}^2}{\mu_G^2} \phi_v^k \rho_k^2\\
            &+ C_3 \phi_z^k \rho_k h_{k}^2 + 2C_{A,2}\phi_z^k\rho_k^2 h_{k}^2\\
            &+ 2C_{C,1}\frac{L_{1,G}^2 L_{0,F}^2}{\mu_G^2}S_k\gamma_k^2\\
            &+ 2\phi_z^k\rho_k^2C_{A,3}\\
            &+ \underbrace{\bigg(\frac{12L_{1,G}^2 (p + 6)^3}{b_1 \ell_1} \phi_v^k \rho_k^2 +\frac{6L_{1,G}^2 (d + 6)^3}{b_1 \ell_1}S_k\gamma_k^2\bigg)}_{C_{L,k}}\frac{h_k^2}{\bar{h}_k^2}\\
            &+ ( 2 C_{C,2} L_{0,F}^2 +2 C_{C,3} h_{k}^2 + 2C_{C,4})S_k\gamma_k^2\\
            &+2 \phi_v^k \rho_k^2 ( 2 C_{B,2} L_{0,F}^2 +2 C_{B,3} h_{k}^2 + 2C_{B,4}).\\ %
        \end{aligned}
    \end{equation*}
    Recall that
    \begin{equation*}
        \begin{aligned}
            \bar{h}_k = \begin{cases}
                \frac{\hat{h}_k}{\|v_k\|} & \text{if }\quad\|v_k\| \neq 0\\
                \hat{h}_k
            \end{cases}.
        \end{aligned}
    \end{equation*}
    Therefore, for every $k \in \mathbb{N}$, for which $\|v_k\| \neq 0$, we have
    \begin{equation*}
        \begin{aligned}
            C_{L,k} \frac{h_k^2}{\bar{h}_k^2} &= C_{L,k}\frac{h_k^2}{\hat{h}_k^2} \|v_k\|^2\\
            &\leq 2C_{L,k}\frac{h_k^2}{\hat{h}_k^2} \|v_k - v^*(x_k)\|^2 + 2C_{L,k}\frac{h_k^2}{\hat{h}_k^2} \|v^*(x_k)\|^2\\
            &\leq 2C_{L,k}\frac{h_k^2}{\hat{h}_k^2} \|v_k - v^*(x_k)\|^2 + \frac{2L_{0,F}^2}{\mu_G^2}C_{L,k}\frac{h_k^2}{\hat{h}_k^2},
        \end{aligned}
    \end{equation*}
    where the last inequality follows by Lemma \ref{lem:bound_norm_v}. For every $k \in \mathbb{N}$ for which $\|v_k \| = 0$, we have instead 
    \begin{equation*}
        C_{L,k} \frac{h_k^2}{\bar{h}_k^2} = C_{L,k} \frac{h_k^2}{\hat{h}_k^2}.
    \end{equation*}
    Let $\theta_k = \begin{cases}
        \frac{2L_{0,F}^2}{\mu_G^2} & \text{if } \|v_k\| \neq 0\\
        1
    \end{cases} < 1 + \frac{2 L_{0,F}^2}{\mu_G} =:\theta$.  Thus,  for every $k \in \mathbb{N}$ we have
    \begin{equation*}
        \begin{aligned}
            C_{L,k} \frac{h_k^2}{\bar{h}_k^2} &\leq 2C_{L,k}\frac{h_k^2}{\hat{h}_k^2} \|v_k - v^*(x_k)\|^2 1_{\|v_k \| \neq 0} + \theta_k C_{L,k}\frac{h_k^2}{\hat{h}_k^2}\\
            &\leq 2C_{L,k}\frac{h_k^2}{\hat{h}_k^2} \|v_k - v^*(x_k)\|^2 + \theta C_{L,k}\frac{h_k^2}{\hat{h}_k^2},
        \end{aligned}
    \end{equation*}
    where the last inequality follows from the fact that $2C_{L,k}\frac{h_k^2}{\hat{h}_k^2} \|v_k - v^*(x_k)\|^2 \geq 0$.
    Therefore, we get
    \begin{equation*}
        \begin{aligned}
            \mathbb{E}_k[L_{k + 1}] - L_k  &\leq - \frac{\gamma_k}{2} \|\nabla \Psi(x_k)\|^2\\
            &- \left( \frac{\mu_G}{2} \phi_v^k \rho_k - C_{4,\Psi} \gamma_k -4 C_{B,1}L_{1,G}^2 \phi_v^k \rho_k^2 - 2 C_{C,1}L_{1,G}^2 S_k\gamma_k^2 - 2C_{L,k} \frac{h_k^2}{\hat{h}_k^2}\right) \delta_{v_k}\\
            &- \left( \frac{\mu_G}{2} \phi_z^k \rho_k - 4\frac{\omega_{v,1}^2}{\mu_G} \phi_v^k \rho_k  -4 C_{1,\Psi}^2 \gamma_k - 2 \phi_z^k\rho_k^2 C_{A,1} L_{1,G}^2\right)  \delta_{z_k}\\
            &+(C_1 h_k^2+  C_2 \hat{h}_k^2) \phi_v^k \rho_k + 2 C_{2,\Psi}^2 \gamma_kh_k^2 + C_{3,\Psi} \hat{h}_k^2\gamma_k\\
            &+2  C_{B,1}\frac{L_{1,G}^2 L_{0,F}^2}{\mu_G^2} \phi_v^k \rho_k^2\\
            &+ C_3 \phi_z^k \rho_k h_{k}^2 + 2C_{A,2}\phi_z^k\rho_k^2 h_{k}^2\\
            &+ 2C_{C,1}\frac{L_{1,G}^2 L_{0,F}^2}{\mu_G^2}S_k\gamma_k^2\\
            &+ \theta C_{L,k} \frac{h_k^2}{\hat{h}_k^2}\\
            &+ ( 2 C_{C,2} L_{0,F}^2 +2 C_{C,3} h_{k}^2 + 2C_{C,4})S_k\gamma_k^2\\
            &+2 \phi_v^k \rho_k^2 ( 2 C_{B,2} L_{0,F}^2 +2 C_{B,3} h_{k}^2 + 2C_{B,4}).\\ %
        \end{aligned}
    \end{equation*}
    Let 
    \begin{equation}\label{eqn:thm2_hat_c}
        \begin{aligned}
            \hat{C}_k &= (C_1 h_k^2+  C_2 \hat{h}_k^2) \phi_v^k \rho_k + 2 C_{2,\Psi}^2 \gamma_kh_k^2 + C_{3,\Psi} \hat{h}_k^2\gamma_k\\
            &+2  C_{B,1}\frac{L_{1,G}^2 L_{0,F}^2}{\mu_G^2} \phi_v^k \rho_k^2+ C_3 \phi_z^k \rho_k h_{k}^2 + 2C_{A,2}\phi_z^k\rho_k^2 h_{k}^2+ 2C_{C,1}\frac{L_{1,G}^2 L_{0,F}^2}{\mu_G^2}S_k\gamma_k^2\\
            &+ 2\phi_z^k\rho_k^2C_{A,3} +  \frac{12L_{1,G}^2 }{b_1 \ell_1}  \phi_v^k \rho_k^2 +\frac{6L_{1,G}^2 }{b_1 \ell_1}S_k\gamma_k^2 + \theta C_{L,k} \frac{h_k^2}{\hat{h}_k^2}\\
            &+ ( 2 C_{C,2} L_{0,F}^2 +2 C_{C,3} h_{k}^2 + 2C_{C,4})S_k\gamma_k^2+2 \phi_v^k \rho_k^2 ( 2 C_{B,2} L_{0,F}^2 +2 C_{B,3} h_{k}^2 + 2C_{B,4}).
        \end{aligned}
    \end{equation}
    Then,
    \begin{equation*}
        \begin{aligned}
            \mathbb{E}_k[L_{k + 1}] - L_k  &\leq - \frac{\gamma_k}{2} \|\nabla \Psi(x_k)\|^2\\
            &- \left( \frac{\mu_G}{2} \phi_v^k \rho_k - C_{4,\Psi} \gamma_k -4 C_{B,1}L_{1,G}^2 \phi_v^k \rho_k^2 - 2 C_{C,1}L_{1,G}^2 S_k\gamma_k^2 - 2C_{L,k} \frac{h_k^2}{\hat{h}_k^2}\right) \delta_{v_k}\\
            &- \left( \frac{\mu_G}{2} \phi_z^k \rho_k - 4\frac{\omega_{v,1}^2}{\mu_G} \phi_v^k \rho_k  -4 C_{1,\Psi}^2 \gamma_k - 2 \phi_z^k\rho_k^2 C_{A,1} L_{1,G}^2\right)  \delta_{z_k}\\
            &+\hat{C}_k.\\ %
        \end{aligned}
    \end{equation*}
    Since $\rho_k \leq \min(1, \frac1{16 C_{B,1}L_{1,G}^2}, \frac{1}{4 C_{A,1}L_{1,G}^2})$, we have
    \begin{equation*}
        \begin{aligned}
            \mathbb{E}_k[L_{k + 1}] - L_k  &\leq - \frac{\gamma_k}{2} \|\nabla \Psi(x_k)\|^2\\
            &- \left( \frac{\mu_G}{4} \phi_v^k \rho_k - C_{4,\Psi} \gamma_k  - 2 C_{C,1}L_{1,G}^2 S_k\gamma_k^2- 2C_{L,k} \frac{h_k^2}{\hat{h}_k^2}\right) \delta_{v_k}\\
            &- \left( \left(\frac{\mu_G}{4} \phi_z^k  - 4\frac{\omega_{v,1}^2}{\mu_G} \phi_v^k \right)  \rho_k  -4 C_{1,\Psi}^2 \gamma_k \right)  \delta_{z_k}\\
            &+\hat{C}_k.\\ %
        \end{aligned}
    \end{equation*}
    Let
    \begin{equation*}
        \hat{S} := \frac{ L_{\Psi} + 4 (\bar{\phi}_v +\bar{\phi}_z)L_*^2}{2}.
    \end{equation*}
    Choosing $\phi_z^k =  \bar{\phi}_z/\omega_k^z$ and $\phi_v^k = \bar{\phi}_v/\omega_k^v$ .  we have
    \begin{equation*}
        \begin{aligned}
            \mathbb{E}_k[L_{k + 1}] - L_k  &\leq - \frac{\gamma_k}{2} \|\nabla \Psi(x_k)\|^2\\
            &- \left( \frac{\mu_G}{4} \frac{\bar{\phi}_v}{\omega_k^v}  \rho_k - C_{4,\Psi} \gamma_k  - 2 C_{C,1}L_{1,G}^2 \hat{S}\gamma_k^2 - 2C_{L,k} \frac{h_k^2}{\hat{h}_k^2}\right) \delta_{v_k}\\
            &- \left( \left(\frac{\mu_G}{4} \frac{\bar{\phi}_z}{\omega_k^z}  - 4\frac{\omega_{v,1}^2}{\mu_G} \frac{\bar{\phi}_v}{\omega_k^v} \right)  \rho_k  -4 C_{1,\Psi}^2 \gamma_k \right)  \delta_{z_k}\\
            &+\hat{C}_k.\\ 
        \end{aligned}
    \end{equation*}
    Let
    \begin{equation*}
        \begin{aligned}
            \gamma_\text{max}^{(1)} &=  \frac{-C_{4,\Psi} + \sqrt{C_{4,\Psi}^2 + C_{C,1}L_{1,G}^2 \hat{S} \frac{\mu_G^2}{\mu_G + 2} \bar{\phi}_v \rho_k^2}}{4 C_{C,1}L_{1,G}^2 \hat{S}},\\
            \gamma_\text{max}^{(2)} &=\frac{\mu_G^2}{32 C_{1,\Psi}^2(\mu_G + 4)}  \bar{\phi}_z\rho_k^2.\\
        \end{aligned}
    \end{equation*}
    Notice that since $\rho_k \leq \bar{\rho} <  \min(1, \frac1{16 C_{B,1}L_{1,G}^2}, \frac{1}{4 C_{A,1}L_{1,G}^2})$,  and $\gamma_k \leq \bar{\gamma} < \min(\gamma_\text{max}^{(1)}, \gamma_\text{max}^{(2)})$, we have that for every $k \in \mathbb{N}$,
    \begin{equation*}
        \begin{aligned}
           \frac{\mu_G \rho_k}{4} > \frac{1}{\omega_k^z} = \frac{\mu_G \rho_k}{\mu_G \rho_k + 4} > \frac{\mu_G \rho_k}{\mu_G + 4}\\
           \frac{\mu_G\rho_k}{2} > \frac{1}{\omega_k^v} = \frac{\mu_G \rho_k}{\mu_G \rho_k + 2} > \frac{\mu_G \rho_k}{\mu_G + 2},
        \end{aligned}
    \end{equation*}
    and
    \begin{equation*}
        \begin{aligned}
        C_{L,k} &= \bigg(\frac{12L_{1,G}^2 (p + 6)^3}{b_1 \ell_1} \phi_v^k \rho_k^2 +\frac{6L_{1,G}^2 (d + 6)^3}{b_1 \ell_1}S_k\gamma_k^2\bigg)\\
        &\leq\bigg(\frac{6L_{1,G}^2 (p + 6)^3}{b_1 \ell_1} \bar{\phi}_v \mu_G \bar{\rho}^2 +\frac{6L_{1,G}^2 (d + 6)^3}{b_1 \ell_1}\hat{S}\bar{\gamma}^2\bigg) =: \bar{C}_L
        \end{aligned}
    \end{equation*}
    Therefore, we have
    \begin{equation*}
        \begin{aligned}
            \mathbb{E}_k[L_{k + 1}] - L_k  &\leq - \frac{\gamma_k}{2} \|\nabla \Psi(x_k)\|^2\\
            &- \left( \frac{\mu_G^2}{4(\mu_G + 2)}  \bar{\phi}_v\rho^2_k - C_{4,\Psi} \gamma_k  - 2 C_{C,1}L_{1,G}^2 \hat{S}\gamma_k^2 - 2\bar{C}_{L} \frac{h_k^2}{\hat{h}_k^2}\right) \delta_{v_k}\\
            &- \left( \left(\frac{\mu_G^2}{4(\mu_G + 4)}  \bar{\phi}_z  - 2\omega_{v,1}^2\bar{\phi}_v \right)  \rho_k^2  -4 C_{1,\Psi}^2 \gamma_k \right)  \delta_{z_k}\\
            &+\hat{C}_k.\\ 
        \end{aligned}
    \end{equation*} 
    Since $h_k \leq \frac{\mu_G}{4 \sqrt{(\mu_G + 2) \bar{C}_L}} \sqrt{\bar{\phi}_v}\hat{h}_k$, we get
    \begin{equation*}
        \begin{aligned}
            \mathbb{E}_k[L_{k + 1}] - L_k  &\leq - \frac{\gamma_k}{2} \|\nabla \Psi(x_k)\|^2\\
            &- \left( \frac{\mu_G^2}{8(\mu_G + 2)}  \bar{\phi}_v\rho^2_k - C_{4,\Psi} \gamma_k  - 2 C_{C,1}L_{1,G}^2 \hat{S}\gamma_k^2\right) \delta_{v_k}\\
            &- \left( \left(\frac{\mu_G^2}{4(\mu_G + 4)}  \bar{\phi}_z  - 2\omega_{v,1}^2\bar{\phi}_v \right)  \rho_k^2  -4 C_{1,\Psi}^2 \gamma_k \right)  \delta_{z_k}\\
            &+\hat{C}_k.\\ 
        \end{aligned}
    \end{equation*} 
    Notice that for 
    \begin{equation*}
        \begin{aligned}
            \frac{\mu_G^2}{8 \omega_{v,1}^2(\mu_G + 4)}  \bar{\phi}_z > \bar{\phi}_v,
        \end{aligned}
    \end{equation*}
    we have that
    \begin{equation*}
        \left(\frac{\mu_G^2}{4(\mu_G + 4)}  \bar{\phi}_z  - 2\omega_{v,1}^2\bar{\phi}_v \right)  > 0.
    \end{equation*}
    Thus, due to the choice $\bar{\phi}_v = \frac{\mu_G^2}{16 \omega_{v,1}^2(\mu_G + 4)}  \bar{\phi}_z$, we have
    \begin{equation*}
        \begin{aligned}
            \mathbb{E}_k[L_{k + 1}] - L_k  &\leq - \frac{\gamma_k}{2} \|\nabla \Psi(x_k)\|^2\\
            &- \left( \frac{\mu_G^2}{8(\mu_G + 2)}  \bar{\phi}_v\rho^2_k - C_{4,\Psi} \gamma_k  - 2 C_{C,1}L_{1,G}^2 \hat{S}\gamma_k^2\right) \delta_{v_k}\\
            &- \left( \frac{\mu_G^2}{8(\mu_G + 4)}  \bar{\phi}_z   \rho_k^2  -4 C_{1,\Psi}^2 \gamma_k \right)  \delta_{z_k}\\
            &+\hat{C}_k.\\ 
        \end{aligned}
    \end{equation*} 
    For $\gamma_k <  \gamma_\text{max}^{(2)}$, we have
    \begin{equation*}
        \begin{aligned}
        - \left( \frac{\mu_G^2}{8(\mu_G + 4)}  \bar{\phi}_z   \rho_k^2  -4 C_{1,\Psi}^2 \gamma_k \right) < 0.
        \end{aligned}
    \end{equation*}
    For $\gamma_k < \gamma_\text{max}^{(1)}$, we have
    \begin{equation*}
        - \left( \frac{\mu_G^2}{8(\mu_G + 2)}  \bar{\phi}_v\rho^2_k - C_{4,\Psi} \gamma_k  - 2 C_{C,1}L_{1,G}^2 \hat{S}\gamma_k^2\right) < 0.
    \end{equation*}
    Thus, for $\gamma_k < \min(\gamma_\text{max}^{(1)}, \gamma_\text{max}^{(2)})$ and $\rho_k < \min(1, \frac1{16 C_{B,1}L_{1,G}^2}, \frac{1}{4 C_{A,1}L_{1,G}^2})$, we have
        \begin{equation*}
        \begin{aligned}
            \mathbb{E}_k[L_{k + 1}] - L_k  &\leq - \frac{\gamma_k}{2} \|\nabla \Psi(x_k)\|^2 +\hat{C}_k.
        \end{aligned}
    \end{equation*} 
    Notice that, denoting $\bar{\rho}$ such that $\rho_k \leq \bar{\rho} <\min(1, \frac1{16 C_{B,1}L_{1,G}^2}, \frac{1}{4 C_{A,1}L_{1,G}^2})$ and taking $\bar{\phi}_z = \mathcal{O}(\frac{1}{\bar{\rho}})$ then there exists $\bar{c}_\gamma >0$ s.t. $\gamma_k < \bar{c}_\gamma \rho_k$. Taking the full expectation and summing for $k = 0,\cdots, K$, we get
    \begin{equation*}
        \begin{aligned}
            \mathbb{E}[L_{K + 1}] - L_0  &\leq - \sum\limits_{k=0}^K\frac{\gamma_k}{2} \mathbb{E}\left[\|\nabla \Psi(x_k)\|^2 \right] + \sum\limits_{k=0}^K\hat{C}_k.
        \end{aligned}
    \end{equation*} 
    Rearranging the terms, and observing that $-\min \Psi \geq -\mathbb{E}[\Psi(x_{K + 1})]$, we get 
    \begin{equation*}
        \begin{aligned}
            \sum\limits_{k=0}^K\frac{\gamma_k}{2} \mathbb{E}\left[\|\nabla \Psi(x_k)\|^2 \right]  &\leq L_0- \mathbb{E}[L_{K + 1}] + \sum\limits_{k=0}^K\hat{C}_k\\
            &= \Psi(x_0) \underbrace{- \mathbb{E}[\Psi(x_{K+1})]}_{\leq - \min \Psi} + \phi_z^0 \|z_0 - z^*(x_0)\|^2 + \phi_v^0 \|v_0 - v^*(x_0)\|^2\\
            &\underbrace{- \phi_z^0 \mathbb{E}\left[\|z_{K+1} - z^*(x_{K+1})\|^2\right] - \phi_v^0 \mathbb{E}\left[\|v_{K+1} - v^*(x_{K+1})\|^2\right]}_{\leq 0}\\
            &+ \sum\limits_{k=0}^K \hat{C}_k.
        \end{aligned}
    \end{equation*} 
    Let 
    \begin{equation*}
        R_\text{init} = \Psi(x_0) - \min \Psi + \phi_z^0 \|z_0 - z^*(x_0)\|^2 + \phi_v^0 \|v_0 - v^*(x_0)\|^2.
    \end{equation*}
    Multiplying by $2$ and dividing by $\sum\limits_{k=0}^K \gamma_k$ in both sides, we get the claim
    \begin{equation*}
        \begin{aligned}
            \frac{1}{\sum\limits_{k=0}^K \gamma_k} \sum\limits_{k=0}^K \gamma_k \mathbb{E}\left[\|\nabla \Psi(x_k)\|^2 \right]  &\leq \frac{2}{\sum\limits_{k=0}^K \gamma_k} \left(R_\text{init}+ \sum\limits_{k=0}^K \hat{C}_k\right).
        \end{aligned}
    \end{equation*} 

    \subsection{Proof of Corollary \ref{cor:hfzoba_param_choices}}
    By Theorem \ref{thm:hfzoba_conv_rate}, we have
    \begin{equation*}
        \begin{aligned}
        \frac{1}{\sum\limits_{k=0}^K \gamma_k} \sum\limits_{k=0}^K\frac{\gamma_k}{2} \mathbb{E}\left[\|\nabla \Psi(x_k)\|^2 \right]  &\leq \frac{1}{\sum\limits_{k=0}^K \gamma_k} \left( R_\text{init}+ \sum\limits_{k=0}^K \hat{C}_k\right).            
        \end{aligned}
    \end{equation*}
    Recall that
    \begin{equation*}
        \begin{aligned}
            \hat{C}_k &= (C_1 h_k^2+  C_2 \hat{h}_k^2) \phi_v^k \rho_k + 2 C_{2,\Psi}^2 \gamma_kh_k^2 + C_{3,\Psi} \hat{h}_k^2\gamma_k\\
            &+2  C_{B,1}\frac{L_{1,G}^2 L_{0,F}^2}{\mu_G^2} \phi_v^k \rho_k^2+ C_3 \phi_z^k \rho_k h_{k}^2 + 2C_{A,2}\phi_z^k\rho_k^2 h_{k}^2+ 2C_{C,1}\frac{L_{1,G}^2 L_{0,F}^2}{\mu_G^2}S_k\gamma_k^2\\
            &+ 2\phi_z^k\rho_k^2C_{A,3} +  \frac{12L_{1,G}^2 }{b_1 \ell_1}  \phi_v^k \rho_k^2 +\frac{6L_{1,G}^2 }{b_1 \ell_1}S_k\gamma_k^2 + \theta C_{L,k} \frac{h_k^2}{\hat{h}_k^2}\\
            &+ ( 2 C_{C,2} L_{0,F}^2 +2 C_{C,3} h_{k}^2 + 2C_{C,4})S_k\gamma_k^2+2 \phi_v^k \rho_k^2 ( 2 C_{B,2} L_{0,F}^2 +2 C_{B,3} h_{k}^2 + 2C_{B,4}).
        \end{aligned}
    \end{equation*}
    where $\theta = 1 + \frac{2L_{0,F}^2}{\mu_G^2} $ and
    \begin{equation*}
        \begin{aligned}
         C_1 &= 2 \frac{L_{1,G}^2(d + 3)^2}{\mu_G} + 4\frac{\omega_{v,2} L_{0,F}^2}{\mu_G^3}, \quad C_2 =\frac{8}{\mu_G} \frac{L_{2,G}^2}{4} \frac{L_{0,F}^2}{\mu_G^2},\\
            C_3 &= \frac{L_{1,G}^2}{4 \mu_G}(d + 3)^{3}, \quad C_{A,1} = 2\left( 2 + \frac{p + 2}{b_1 \ell_1} \right), \quad C_{A,2} = \left( \frac{1}{2 b_1 \ell_1} + \left( \frac{3}{b_1} + 1 \right) (p + 3)^3 \right)L_{1,G}^2\\
            C_{A,3} &= 2 \left(3 + \frac{p + 2}{\ell_1} \right) \frac{\sigma_{1,G}^2}{b_1},\quad C_{B,1} = 4\left(1 + \frac{3(p + 2)}{b_1 \ell_1} \right), \quad C_{B,2} = 2 \left(1 + \frac{p + 2}{b_2 \ell_2} \right)\\
            C_{B,3} &= \left( \frac{(p + 6)^3}{2 b_2 \ell_2} + \left(\frac{3}{b_2} + 1 \right)(p + 3)^{3}  \right) L_{1,F}^2, \quad C_{B, 4} = 2\left(\frac{p + 2}{\ell_2} + 3 \right ) \frac{\sigma_{1,F}^2}{b_2}\\
            C_{C,1} &= 4\left(1 + \frac{3(d + 2)}{b_1 \ell_1} \right), \quad C_{C,2} = 2 \left(1 + \frac{d + 2}{b_2 \ell_2} \right), \quad C_{C,3} = \left( \frac{1}{2 b_2 \ell_2} + \left(\frac{3}{b_2} + 1 \right)(d + 3)^{3}  \right) L_{1,F}^2\\
            C_{C,4} &= 2\left(\frac{d + 2}{b_2 \ell_2} +\frac{3}{b_2} \right )\sigma_{1,F}^2, \quad %
            C_{L,k} = \bigg(\frac{12L_{1,G}^2 (p + 6)^3}{b_1 \ell_1} \phi_v^k \rho_k^2 +\frac{6L_{1,G}^2 (d + 6)^3}{b_1 \ell_1}S_k\gamma_k^2\bigg)
        \end{aligned}
    \end{equation*}
    With the choice  $\phi_z^k =  \bar{\phi}_z/\omega_k^z$ and $\phi_v^k = \bar{\phi}_v/\omega_k^v$, we have
    \begin{equation*}
        S_k = \frac{L_\Psi + 4(\bar{\phi}_v + \bar{\phi}_z)L_*^2}{2} = \hat{S}.
    \end{equation*}
    Moreover, due to the conditions on $\rho_k$, we have for every $k \in \mathbb{N}$
    \begin{equation*}
        \begin{aligned}
           \frac{\mu_G}{4} > \frac{\mu_G \rho_k}{4} > \frac{1}{\omega_k^z} = \frac{\mu_G \rho_k}{\mu_G \rho_k + 4} > \frac{\mu_G \rho_k}{\mu_G + 4}\\
           \frac{\mu_G}{2} >\frac{\mu_G\rho_k}{2} > \frac{1}{\omega_k^v} = \frac{\mu_G \rho_k}{\mu_G \rho_k + 2} > \frac{\mu_G \rho_k}{\mu_G + 2},
        \end{aligned}
    \end{equation*}
    and, thus,
    \begin{equation*}
        C_{L,k} \leq 6\bigg(\frac{L_{1,G}^2 (p + 6)^3 \mu_G}{b_1 \ell_1}  \bar{\phi}_v \rho_k^2 +\frac{L_{1,G}^2 (d + 6)^3}{b_1 \ell_1}\hat{S}\gamma_k^2\bigg)
    \end{equation*}
    Replacing $\gamma_k,\rho_k, h_k$ and $\hat{h}_k$ with the parameter choice (I), we have
    \begin{equation}\label{eqn:cor2_rate}
        \begin{aligned}
        \frac{1}{K + 1} \sum\limits_{k=0}^K \mathbb{E}\left[\|\nabla \Psi(x_k)\|^2 \right]  &\leq \frac{2}{\gamma (K + 1)} \left( R_\text{init}+ \sum\limits_{k=0}^K \hat{C}\right),
        \end{aligned}
    \end{equation}
    with
    \begin{equation*}
        \begin{aligned}
\hat C
&= (C_2 \bar{\phi}_v \rho^2 + C_{3,\Psi}\gamma + 2C_{C,3}\hat S \gamma^2) \hat h^2
+ (C_1 \bar{\phi}_v \rho^2 + C_3 \bar{\phi}_z \rho^2 + 2 C_{2,\Psi}^2 \gamma + 2 C_{A,2} \bar{\phi}_z \rho^3 + 2 C_{B,3} \bar{\phi}_v \rho^3) h^2\\
&\quad + 2C_{B,1} \frac{L_{1,G}^2 L_{0,F}^2}{\mu_G^2} \bar{\phi}_v \rho^3 + 2 C_{A,3} \bar{\phi}_z \rho^3 + \frac{12 L_{1,G}^2 }{b_1 \ell_1} \bar{\phi}_v \rho^3\\
&+ 6\theta \bigg(\frac{L_{1,G}^2 (p + 6)^3 \mu_G}{b_1 \ell_1}  \bar{\phi}_v \rho^2 +\frac{L_{1,G}^2 (d + 6)^3}{b_1 \ell_1}\hat{S}\gamma^2\bigg) \frac{h^2}{\hat{h}^2} \\
&\quad +2C_{C,1} \frac{L_{1,G}^2 L_{0,F}^2}{\mu_G^2} \hat S \gamma^2 + \frac{6 L_{1,G}^2 }{b_1 \ell_1} \hat S \gamma^2 + (2 C_{C,2} L_{0,F}^2 + 2 C_{C,4}) \hat S \gamma^2  + 2 \bar{\phi}_v \rho^3 (2 C_{B,2} L_{0,F}^2 + 2 C_{B,4}).
\end{aligned}
    \end{equation*}
    Rearranging the terms, we get the first claim. The proof of the point (II) follows the same line of point (II) of proof of Corollary \ref{cor:param_choices}. Now, we prove the point (III). Recalling that $\bar{\phi}_v = \frac{\mu_G^2}{16 \omega_{v,1}^2(\mu_G + 4)}  \bar{\phi}_z$, we have
    \begin{equation*}
        \begin{aligned}
            \hat C &= \underbrace{(C_2 \bar{\phi}_v \rho^2 + C_{3,\Psi}\gamma + 2C_{C,3}\hat S \gamma^2)}_{C_1(\gamma, \rho)} \hat h^2
+ \underbrace{(C_1 \bar{\phi}_v \rho^2 + C_3 \bar{\phi}_z \rho^2 + 2 C_{2,\Psi}^2 \gamma + 2 C_{A,2} \bar{\phi}_z \rho^3 + 2 C_{B,3} \bar{\phi}_v \rho^3)}_{C_2(\gamma, \rho)} h^2\\
&\quad+ \underbrace{6\theta \bigg(\frac{L_{1,G}^2 (p + 6)^3 \mu_G}{b_1 \ell_1}  \bar{\phi}_v \rho^2 +\frac{L_{1,G}^2 (d + 6)^3}{b_1 \ell_1}\hat{S}\gamma^2\bigg)}_{C_3(\gamma, \rho)} \frac{h^2}{\hat{h}^2}\\
&\quad + \underbrace{\left(2C_{C,1} \frac{L_{1,G}^2 L_{0,F}^2}{\mu_G^2}  + \frac{6 L_{1,G}^2}{b_1 \ell_1}  + (2 C_{C,2} L_{0,F}^2 + 2 C_{C,4}) \right)\hat{S}}_{\tilde{C}_1}  \gamma^2  \\
&\quad \underbrace{\left( C_{B,1} \frac{L_{1,G}^2 L_{0,F}^2}{8 \omega_{v,1}^2(\mu_G + 4)} + 2 C_{A,3} + \frac{12 L_{1,G}^2  \mu_G^2}{16 \omega_{v,1}^2(\mu_G + 4)b_1 \ell_1} + \frac{\mu_G^2 (2 C_{B,2} L_{0,F}^2 + 2 C_{B,4})}{8 \omega_{v,1}^2(\mu_G + 4)}    \right)}_{\tilde{C}_2}\bar{\phi}_z \rho^3.
        \end{aligned}
    \end{equation*}
    Therefore, plugging $\hat{C}$ in eq. \eqref{eqn:cor2_rate}, we get
    \begin{equation*}%
        \begin{aligned}
        \frac{1}{K + 1} \sum\limits_{k=0}^K \mathbb{E}\left[\|\nabla \Psi(x_k)\|^2 \right]  &\leq \frac{2R_\text{init}}{\gamma (K + 1)} + \tilde{C}_1\gamma + \tilde{C}_2 \bar{\phi}_z \frac{\rho^3}{\gamma} +\frac{C_1(\gamma,\rho)}{\gamma} \hat{h}^2 + \frac{C_2(\gamma,\rho)}{\gamma} h^2 + \frac{C_3(\gamma, \rho)}{\gamma} \frac{h^2}{\hat{h}^2}.%
        \end{aligned}
    \end{equation*}
    By the choice $h = \hat{h}^2$ and since $\hat{h} < 1$, we have
    \begin{equation*}%
        \begin{aligned}
        \frac{1}{K + 1} \sum\limits_{k=0}^K \mathbb{E}\left[\|\nabla \Psi(x_k)\|^2 \right]  &\leq \frac{2R_\text{init}}{\gamma (K + 1)} + \tilde{C}_1\gamma + \tilde{C}_2 \bar{\phi}_z \frac{\rho^3}{\gamma} + \frac{C_1(\gamma,\rho)}{\gamma} \hat{h}^2 + \frac{C_2(\gamma,\rho)}{\gamma} \hat{h}^4 + \frac{C_3(\gamma, \rho)}{\gamma} \hat{h}^2\\
        &<        \frac{2R_\text{init}}{\gamma (K + 1)} + \tilde{C}_1\gamma + \tilde{C}_2 \bar{\phi}_z \frac{\rho^3}{\gamma} + \frac{\left(C_1(\gamma,\rho)  + C_2(\gamma,\rho)  + C_3(\gamma, \rho) \right)}{\gamma} \hat{h}^2.%
        \end{aligned}
    \end{equation*}
    Since $\rho < \bar{\rho} < \min(1, \frac{1}{16 C_{B,1} L_{1,G}^2}, \frac{1}{16 C_{A,1} L_{1,G}^2})$, and by the choice  $\gamma = \hat{c}_\gamma \rho$, we have
    \begin{equation*}%
        \begin{aligned}
        \frac{1}{K + 1} \sum\limits_{k=0}^K \mathbb{E}\left[\|\nabla \Psi(x_k)\|^2 \right]  
        &<        \frac{2R_\text{init}}{\hat{c}_\gamma \rho (K + 1)} + \left(\tilde{C}_1\hat{c}_\gamma  + \frac{\tilde{C}_2}{\hat{c}_\gamma} \right)\rho + \frac{\left(C_1(\gamma,\rho)  + C_2(\gamma,\rho)  + C_3(\gamma, \rho) \right)}{\hat{c}_\gamma\rho} \hat{h}^2.%
        \end{aligned}
    \end{equation*}
    Moreover, we have
    \begin{equation*}
        \begin{aligned}
            C_1(\gamma,\rho)  &< \underbrace{\left(C_2 \frac{\mu_G^2}{16 \omega_{v,1}^2 (\mu_G + 4)}  + C_{3,\Psi} \hat{c}_\gamma + 2 C_{C,3} \hat{S} \hat{c}_\gamma^2 \bar{\rho} \right) }_{\bar{\Delta}_1}\rho\\    
            C_2(\gamma, \rho) &< \underbrace{\left( C_1 \frac{\mu_G^2}{16 \omega_{v,1}^2 (\mu_G + 4)} + C_3  + 2 C_{2,\Psi}^2 \hat{c}_\gamma  + 2 C_{A,2} \bar{\rho} + \frac{C_{B,3}\mu_G^2}{8 \omega_{v,1}^2(\mu_G + 4) } \bar{\rho} \right)}_{\bar{\Delta}_2}\rho\\
            C_3(\gamma, \rho) &< \underbrace{6\theta \left(\frac{L_{1,G}^2 (p + 6)^3 \mu_G^3}{ 16 \omega_{v,1}^2 (\mu_G + 4)b_1 \ell_1} + \frac{L_{1,G}^2 (d + 3)^3}{b_1 \ell_1} \hat{S} \hat{c}_\gamma \bar{\rho}  \right) }_{\bar{\Delta}_3}\rho
        \end{aligned}
    \end{equation*}
    Thus, we have
    \begin{equation*}%
        \begin{aligned}
        \frac{1}{K + 1} \sum\limits_{k=0}^K \mathbb{E}\left[\|\nabla \Psi(x_k)\|^2 \right]  
        &<        \frac{2R_\text{init}}{\hat{c}_\gamma \rho (K + 1)} + \left(\tilde{C}_1\hat{c}_\gamma  + \frac{\tilde{C}_2}{\hat{c}_\gamma} \right)\rho + \frac{\left( \bar{\Delta}_1 + \bar{\Delta}_2 + \bar{\Delta}_3 \right)}{\hat{c}_\gamma} \hat{h}^2 .%
        \end{aligned}
    \end{equation*}    
    Let $\varepsilon \in (0,1)$. By the choice $\hat{h} \leq \min(1,  \sqrt{\frac{ \hat{c}_\gamma\varepsilon}{2\left( \bar{\Delta}_1 + \bar{\Delta}_2 + \bar{\Delta}_3 \right)}})$, we get
    \begin{equation*}%
        \begin{aligned}
        \frac{1}{K + 1} \sum\limits_{k=0}^K \mathbb{E}\left[\|\nabla \Psi(x_k)\|^2 \right]  
        &<        \frac{2R_\text{init}}{\hat{c}_\gamma \rho (K + 1)} + \left(\tilde{C}_1\hat{c}_\gamma  + \frac{\tilde{C}_2}{\hat{c}_\gamma} \right)\rho + \frac{\varepsilon}{2} .%
        \end{aligned}
    \end{equation*}    
    Consider the following inequality
    \begin{equation*}
        \begin{aligned}
            \frac{2R_\text{init}}{\hat{c}_\gamma \rho (K + 1)} + \left(\tilde{C}_1\hat{c}_\gamma  + \frac{\tilde{C}_2}{\hat{c}_\gamma} \right)\rho \leq \frac{\varepsilon}{2}.
        \end{aligned}
    \end{equation*}
    We choose $\rho$ by minimizing the left handside i.e.
    \begin{equation*}
        \rho = \sqrt{\frac{2R_\text{init}}{\bar{c}_\gamma \bar{\Delta}_4 (K + 1)}}
    \end{equation*}
    Thus, we get
    \begin{equation*}
        \begin{aligned}
            2 \sqrt{\frac{2R_\text{init} \bar{\Delta}_4}{\hat{c}_\gamma (K+1)}}< \frac{\varepsilon}{2}.
        \end{aligned}
    \end{equation*}
    Hence, for 
    \begin{equation*}
        K + 1 > \frac{32 R_\text{init} \bar{\Delta}_4}{\hat{c}_\gamma} \varepsilon^{-2},
    \end{equation*}
    we have $\frac{1}{K + 1} \sum\limits_{k=0}^k \mathbb{E}\left[ \| \nabla \Psi(x_k) \|^2 \right] < \varepsilon$.
    Therefore, choosing  $b_1 \ell_1 = \mathcal{O}(\lfloor \frac{d +p }{c^\prime_1} \rfloor)$ and $b_2 \ell_2 = \mathcal{O}(\lfloor \frac{d + p}{c_2^\prime} \rfloor)$ with $c_1^\prime, c_2^\prime \in \mathbb{R}_+$, %
    we get that the complexity is 
    \begin{equation*}
        \mathcal{O}\left( (d + p)\varepsilon^{-2} \right).
    \end{equation*}

\end{document}